\newcommand{\M}{\mathcal{M}}
\newcommand{\R}{\mathbb{R}}
\newcommand{\C}{\mathbb{C}}
\newcommand{\T}{\mathbb{T}}
\newcommand{\Z}{\mathbb{Z}}
\newcommand{\N}{\mathbb{N}}
\newcommand{\K}{\mathbb{K}}
\newcommand{\cL}{\mathcal{L}}
\newcommand{\Pp}{\mathbb{P}}
\newcommand{\Ll}{\mathcal{L}}
\newcommand{\F}{\mathcal{F}}
\newcommand{\CO}{\mathcal{CO}}
\DeclareMathOperator{\id}{Id}
\DeclareMathOperator{\re}{Re}
\DeclareMathOperator{\im}{Im}
\DeclareMathOperator{\val}{val}
\DeclareMathOperator{\crit}{Crit}
\DeclareMathOperator{\Tt}{T^*\T}
\setlist[enumerate,1]{label={(\alph*)}}
\setlist[enumerate,2]{label={(\roman*)}}
\newtheorem{tm}{Theorem}[section]
\newtheorem{tmc}{Theorem\slash Construction}[section]
\newtheorem{lm}[tm]{Lemma}
\newtheorem{cy}[tm]{Corollary}
\theoremstyle{definition}
\newtheorem{df}[tm]{Definition}
\theoremstyle{remark}
\newtheorem{rem}[tm]{Remark}
\newtheorem{ex}[tm]{Example}
\newtheorem*{exs}{Examples}
\title[The wrapped Fukaya category for semi-toric Calabi-Yau]{The wrapped Fukaya category for semi-toric SYZ fibrations}
\author{Yoel Groman}
\begin{document}

\maketitle
\begin{abstract}
We introduce the wrapped Donaldson-Fukaya category of a (generalized) semi-toric SYZ fibration with Lagrangian section satisfying a tameness condition at infinity. Examples include the Gross fibration on the complement of an anti-canonical divisor in a toric Calabi-Yau 3-fold. We compute the wrapped Floer cohomology of a Lagrangian section and find that it is the algebra of functions on the Hori-Vafa mirror. The latter result is the key step in proving homological mirror symmetry for this case. The techniques developed here allow the construction in general of the wrapped Fukaya category on an open Calabi-Yau manifold carrying an SYZ fibration with nice behavior at infinity. We discuss the relation of this to the algebraic vs analytic aspects of mirror symmetry.
\end{abstract}
\section{Introduction}
Toric Calabi-Yau varieties have long been a testing ground for ideas in mirror symmetry and have generated a large literature, both in physics and mathematics. Most importantly for us, if one removes a smooth anti-canonical divisor $D$ from a toric Calabi-Yau $X$, one obtains a manifold $M=X\setminus D$ carrying a special Lagrangian torus fibration $\Ll:M\to\R^n$ \cite{Gross}, referred to later as the Gross fibration. The Gross fibration has been used in \cite{AurouxSYZ,CSN,AAK16} to construct an SYZ mirror to $M$. This setting is one of a handful in which such an explicit construction is known. However, a full understanding of homological mirror symmetry requires the wrapped Fukaya category\cite{AAEKO}. To the author's knowledge, the wrapped Fukaya category has to date only been considered in connection with manifolds that are Liouville \cite{AbouSeidel} or at least conical at infinity \cite{RitterSmith17}. On the other hand, the manifolds $M=X\setminus D$ are generally not conical at infinity. The present paper fills this gap. We introduce the wrapped Fukaya category associated with a semi-toric SYZ fibration. We then carry out the main step towards its full computation in dimension $n\leq 3$ by computing the wrapped Floer cohomology of a Lagrangian section. This in turn will be used in a forthcoming paper to prove homological mirror symmetry and to intrinsically calculate the mirror map. A secondary aim of this paper is to illustrate the use of the techniques of \cite{Groman15} in connection with SYZ mirror symmetry. While we restrict attention to the semi-toric case, the construction should be relevant for any well behaved SYZ fibration, a notion introduced herein.

\subsection{Semi-toric SYZ fibrations}
\begin{df}\label{dfSemiSYZ}
A semi-toric SYZ fibration is a pair $(M,\Ll=(\mu,H))$ as follows. $M=(M,\omega)$ is a $2n$-dimensional symplectic manifold satisfying $c_1(M)=0$ and carrying a Hamiltonian $\T^{n-1}$ action with moment map $\mu:M\to \R^{n-1}$. $H:M\to\R$ is a smooth function which commutes with $\mu$. We assume that the singular orbits are non-degenerate of codimension $2$ and of positive type\footnote{See Definition \ref{dfSTSYZ} below. In the literature\cite{pelayo13}, such integrable systems have been referred to as generalized semi-toric. This is due to the fact that in the SYZ setting, the moment map for the torus action is generally not proper, even though the fibration as whole is.}. We say that $(M,\Ll)$ is simple if the set of points with non-trivial isotropy is connected.
\end{df}
\begin{ex}
The Gross fibration is a simple semi-toric SYZ fibration. So is the cotangent bundle of a torus. On the other hand, toric Calabi-Yau varieties with their toric fibration over the Delzant polyhedron do \textit{not} fall under Definition \ref{dfSemiSYZ}. As an aside, by Lemma 9.11 in \cite{Groman15} the Fukaya category of a toric Calabi-Yau variety is trivial. Thus, for the purposes of mirror symmetry, semi-toric SYZ fibrations naturally follow cotangent bundles of tori in order of complexity.
\end{ex}
As motivation for considering semi-toric SYZ fibrations observe that an understanding of mirror symmetry requires the study of Floer theory of torus fibrations with singular fibres. In dimension three, a fairly general class\cite{Gross01} is with singularities occurring along a trivalent-valent graph such that, denoting the ranks of the middle homology groups by $(i,j)$, over the edges the singularity is of type $(2,2)$ and over the vertices the singularity is of one of two types, the positive, or type $(1,2)$, and the negative, or type $(2,1)$. In general, these singularities give rise to wall crossing phenomena which significantly complicate the picture due to scattering of walls. Semi-toric SYZ fibrations are those for which there are only positive singularities, and moreover, the local $T^{n-1}$ symmetries of the vertices are coherent. This leads to an intermediate level of difficulty where there are wall crossing phenomena, but no scattering of walls.
\begin{rem}
One can also consider the fibrations with  only negative singularities and with global $S^1$ symmetry. These are precisely the mirrors of semi-toric SYZ fibrations. Their wrapped Fukaya category have been studied in \cite{KPU16,KPU162}\footnote{The author apologizes for any omissions}. Note that the latter are affine varieties, and in particular have no non-trivial closed Gromov-Witten invariants.
\end{rem}
\subsection{The wrapped Donaldson-Fukaya category}
In our formulation, the input for the wrapped Fukaya category is a semi-toric SYZ fibration $(M,\Ll)$ together with a Lagrangian section $\sigma$ and a Landau-Ginzburg potential $(\pi,J)$ which we now turn to define.

\begin{df}\label{dfLgPotential}
A Landau-Ginzburg potential for $(M,\Ll)$ is a pair $(\pi,J)$ where $J$ is an almost complex structure tamed by $\omega$ and $\pi: M\to\C^*\subset\Pp^1$ is a $J$-holomorphic function such that $\crit(\Ll)\subset\pi^{-1}(1)$. A Lagrangian $L$ is said to be tautologically unobstructed with respect to $J$ if $L$ does not meet $\pi^{-1}(1)$ and the intersection number of $\pi^{-1}(1)$ with any element of $\pi_2(M,L)$ is $0$.  
\end{df}

In \S\ref{SecWrap} we introduce the notion of a tame semi-toric SYZ fibration which means that the fibration is well behaved at infinity. \textit{Henceforth, we restrict attention to the case $n\leq3$.}
\begin{tmc}\label{tmc}
To any tame semi-toric SYZ fibration $(M,\Ll)$ there is naturally associated the following data
\begin{enumerate}
 \item A semi-group $\mathcal{H}_\Ll\subset C^{\infty}(M)$.
 \item A graded unital BV algebra
    \[
    SH^*(M;\Ll):=\varinjlim_{H\in\mathcal{H}_\Ll} HF^*(M,H).
    \]
 \item
    A connected set $\Pi$ of Landau-Ginzburg potentials
$(\pi,J)$.
\end{enumerate}
To any $(\pi,J) \in \Pi$ and $(\pi,J)$-tautologically unobstructed Lagrangian section $\sigma$ we can associate
\begin{enumerate}
\item  A collection of admissible Lagrangians $A_{\pi,J}$, consisting of $(\pi,J)$-tautologically unobstructed Lagrangians that outside a compact set arise from $\sigma$ by a conormal construction.
\item For any $L_0,L_1 \in A_{\pi,J}$, a cofinal subset $\mathcal{H}_{L_0,L_1,\pi,J} \subset \mathcal{H}_{\mathcal L}$ and a graded vector space
\[
HW^*(L_0,L_1)  = \lim_{H \in \mathcal{H}_{L_0,L_1,\pi,J} } HF^*(L_0,L_1;H).
\]
\item For any $L_0,L_1,L_2 \in A_{\pi,J}$, a composition map
\[
HW^*(L_0,L_1) \otimes HW(L_1,L_2) \to HW^*(L_0,L_2)
\]
satisfying associativity and, for $L_i=L$, unitality.
\end{enumerate}
The category thus defined for a choice of $\sigma,\pi,J,$ is referred to as the wrapped Donaldson-Fukaya category of $(M,\Ll,\sigma,\pi,J)$.

\end{tmc}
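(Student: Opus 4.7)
The proof will be a systematic deployment of the dissipative Floer-theoretic technology of \cite{Groman15} in the geometric setting of tame semi-toric SYZ fibrations, and the work is really one of unpacking the structures (1)--(3) and checking that they behave well enough to support the wrapped cohomology of item (4). I begin by defining $\mathcal{H}_\Ll$ as the semigroup, under addition, of smooth Hamiltonians on $M$ which are dissipative in the sense of \cite{Groman15} with respect to a geometric bounded structure induced by $\Ll$: concretely, functions whose Hamiltonian flow generates orbits that remain in compact sets on bounded time intervals, and whose behavior at infinity is controlled by a proper function of the base coordinates of $\Ll$. That this forms a semigroup is immediate from the dissipative estimates, and the tameness hypothesis (to be introduced in \S\ref{SecWrap}) guarantees the existence of a cofinal such family.

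Next, to produce the BV algebra $SH^*(M;\Ll)$, I would, for each $H \in \mathcal{H}_\Ll$, invoke the $C^0$-compactness theorem of \cite{Groman15} to obtain well-defined Floer differentials, continuation maps for $H_1\leq H_2$, and the $S^1$-equivariant operations assembling into the BV structure on $HF^*(M,H)$. The direct limit over the directed set $\mathcal{H}_\Ll$ is then formed in the usual way, and the BV structure passes to the colimit because the continuation maps are morphisms of BV algebras up to coherent homotopy. For the set $\Pi$, I would exhibit a natural candidate $(\pi,J)$ tied to the semi-toric structure, with $\pi$ essentially a Hori-Vafa-type superpotential whose critical fiber contains $\crit(\Ll)$, and take $\Pi$ to be a connected neighborhood of this pair in the space of pairs satisfying Definition~\ref{dfLgPotential}; connectedness follows by interpolating through tame deformations of $J$ and bounded deformations of $\pi$ preserving the locus of $\crit(\Ll)$.

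For the Lagrangian-theoretic statements, I would define $A_{\pi,J}$ by requiring tautological unobstructedness together with a conormal model at infinity: outside a compact set, a Lagrangian $L \in A_{\pi,J}$ must agree with the image of $\sigma$ modified by a conormal construction from a submanifold of the base of $\Ll$. This is the correct asymptotic class because conormals interact predictably with dissipative Hamiltonians, and they admit well-behaved perturbations by elements of $\mathcal{H}_\Ll$. The cofinal subset $H_{\Ll,\pi,J}$ consists of those $H\in \mathcal{H}_\Ll$ whose Hamiltonian vector field preserves the conormal asymptotic structure sufficiently that all Floer chords between elements of $A_{\pi,J}$ lie in a compact set; cofinality is a standard smoothing and approximation argument in $\mathcal{H}_\Ll$. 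The wrapped Floer cohomology is then $HW^*(L_0,L_1)=\varinjlim HF^*(L_0,L_1;H)$ over $H_{\Ll,\pi,J}$, and the composition comes from counts of pairs of pants with the usual TQFT structure. Associativity and unitality are proved by the standard degeneration of moduli spaces of disks with three and one inputs, respectively, with tautological unobstructedness used to exclude disk bubbling against $\pi^{-1}(1)$; this is the one place the $(\pi,J)$ data enters the Floer-theoretic estimates.

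The main obstacle is not any single step but the compatibility of the conormal asymptotic model for the Lagrangians with the dissipative structure on the Hamiltonians. Verifying that $H_{\Ll,\pi,J}$ is simultaneously cofinal in $\mathcal{H}_\Ll$ and rich enough to generate all Hamiltonian chords needed for the wrapped theory requires a delicate analysis of the Floer equation on the ends of the fibration, balancing the conormal directions against the level sets of $H$. This is where the semi-toric hypothesis is essential: the coherent $\T^{n-1}$ symmetry and the positive-type of the singular orbits rigidify the geometry enough to produce an explicit cofinal family and, together with the restriction to $n\leq 3$, to rule out escape of Floer trajectories to infinity via the dissipative estimates.
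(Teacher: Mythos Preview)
Your proposal captures the broad architecture --- Floer theory in the style of \cite{Groman15}, conormal Lagrangians, tautological unobstructedness to suppress disk bubbling --- but it hand-waves past the two places where the paper does real work, and it misattributes the dimensional restriction.

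First, you never construct the uniform structure that makes ``Lipschitz Hamiltonian'' meaningful. The paper's $\mathcal{H}_\Ll$ is not the general dissipative class of \cite{Groman15} but specifically the proper, bounded-below, Lipschitz, Lyapunov Hamiltonians with respect to a canonical equivalence class of metrics on $M$ built from the tame SYZ data (\S\ref{SecWrap}, the theorems on the uniform structure on $B$ and on $M$). Producing this class requires analyzing the semi-global invariant of the fibration along the singular edges and showing that tameness yields uniform bounds on the period lattice, from which one extracts a global coordinate system bi-Lipschitz to action--angle coordinates away from $\Delta$. Without this, ``dissipative with respect to a geometrically bounded structure induced by $\Ll$'' is circular: the content of \S\ref{SecWrap} is precisely to build that structure.

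Second, and more seriously, your construction of $\Pi$ is a placeholder. Saying ``exhibit a natural candidate $(\pi,J)$ tied to the semi-toric structure'' and then taking a connected neighborhood misses the genuine difficulty, which the paper flags explicitly: one needs $J$ to be simultaneously $\omega$-tame, $(\Ll,\sigma)$-adapted (hence geometrically bounded with respect to the uniform structure above), and such that $\pi$ is $J$-holomorphic with $\crit(\Ll)\subset\pi^{-1}(1)$. The GIT complex structure does not survive the inflation needed to make $M$ complete, so the paper builds $(\pi,J)$ by hand in Appendix~\ref{AppA}: pull back from the toric model $X_M$ via the equivariant normal form near $\crit(\Ll)$, then extend $\pi=He^{i\theta}$ globally while controlling explicit angle and quasi-conformality inequalities that guarantee a compatible bounded $J$ exists. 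Contractibility of $\Pi$ then comes from contractibility of each choice in this staged construction, not from an unspecified interpolation.

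Finally, your attribution of the $n\leq 3$ restriction to the $C^0$ estimates is off: the energy--diameter estimates of Appendix~\ref{AppLipFloer} work in any dimension once the Lipschitz/Lyapunov setup is in place. The dimensional restriction enters through the normal-form theory for the singular fibers (focus--focus edges, type $(1,2)$ vertices), which underlies both the definition of tameness and the construction of the LG potential.
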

The proof of Theorem \ref{tmc} is carried out in  \S\ref{SecWrap} and \S\ref{SecWrapped} and wrapped up at the end of \S\ref{SecWrapped}.
%
\begin{rem}
The role of the Landau-Ginzburg potential in Theorem \ref{tmc} is to control disk bubbling. This allows us to implement a self-contained transversality package. The price is that we have to make a choice of LG potential. For a generic fixed admissible Lagrangian there are exactly two connected components of such. A fully invariant formulation requires considering objects which are pairs of Lagrangians and bounding cochains and applying the machinery of \cite{FOOO}. In that formulation, the choice of connected component corresponds to a certain choice of local coordinates on the SYZ mirror.

By contrast, the role of $\sigma$ appears to be required for singling out the algebraic structure on the mirror. It appears that it should be possible to eliminate the choice of $\sigma$ if one is willing to work in the rigid analytic category. We do not pursue this here.
\end{rem}


\begin{rem}
The $A_\infty$ refinement to the wrapped Fukaya category will be discussed in a forthcoming paper in connection with homological mirror symmetry.
\end{rem}

\subsection{Statement of the main results}
Without further notice we fix the data $M, \Ll,\pi,J,$ and $\sigma,$ and omit them from the notation.

Associated to a semi-toric SYZ fibration there is a generator of $H^1(M;\Z)$ which is, roughly, the class of a global angle coordinate defined away from the singular point of $\Ll$. See Theorem \ref{tmBasicTop} below. Thus, there is an extra grading on $SH^*(M)$ and $HW^*(L)$. We denote the degree of an element with respect to this grading by $|\cdot|$. Denote by $\Lambda_{nov}$ the universal Novikov field over $\C$. Namely,
\[
\Lambda_{nov}=\left\{\sum_{i=0}^{\infty}a_it^{\lambda_i}:a_i\in \C,\lambda_i\in\R,\lim_{i\to\infty}\lambda_i=\infty\right\}.
\]
By definition, $M$ carries a $\T^{n-1}$ action generated by the $\mu$ component of $\cL$. Henceforth we denote this torus by $G$. 
\begin{tm}\label{tmRoughform}
Henceforth, we restrict attention to the case $n\leq3$. 
\begin{enumerate}
\item There are natural isomorphisms
    \[
    SH^{0,0}(M)=HW^{0,0}(\sigma,\sigma)=\Lambda_{nov}[H_1(G;\Z)]\simeq\Lambda_{nov}[u_1^{\pm1},\dots,u_{n-1}^{\pm1}].
    \]
    and,
    \[
    SH^0(M)=HW^0(\sigma,\sigma).
    \]
\item\label{tmRoughformb}
    There are elements $x,y,g\in HW^0(\sigma,\sigma)$ with $|x|=1$, $|y|=-1$ and $|g|=0$ such that
    $HW^{0}(\sigma,\sigma)=\Lambda_{nov}[H_1(G;\Z),x,y]/xy-g$.
\item\label{tmRoughformc} $HW^i(\sigma,\sigma)=0$ for $i\neq 0$.
\end{enumerate}
\end{tm}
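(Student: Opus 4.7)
The plan is to compute everything via an acceleration by $\T^{n-1}$-equivariant Hamiltonians, so that each Floer complex in the direct limit decomposes according to the weights of the $G = \T^{n-1}$-action on $M$ and, by Theorem \ref{tmBasicTop}, inherits the extra $\Z$-grading from the distinguished generator of $H^1(M;\Z)$. Concretely, I would use a cofinal sequence $H_k \in \mathcal{H}_{\Ll,\pi,J}$ whose members are functions of $\mu$ and of the $H$-component of $\Ll$, so that they are automatically $G$-equivariant, and whose behavior near the circle of nontrivial isotropy matches the rotation-invariant local model associated to a positive node in Definition \ref{dfSTSYZ}. The $1$-periodic orbits and the time-$1$ chords on $\sigma$ of such an $H_k$ then split into two classes: Morse--Bott families of constant orbits on the principal $T^{n-1}$-orbits, and nonconstant orbits winding around the circle of isotropy in the local model.

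For part (a), a Morse--Bott analysis of the principal-orbit contribution gives, in each weight $k \in H_1(G;\Z)$, a single generator in bidegree $(0,0)$ after the equivariant perturbation cancels the higher cohomological-degree generators of $H^*(T^{n-1})$. Disk bubbling is ruled out by the Landau--Ginzburg constraint $\crit(\Ll) \subset \pi^{-1}(1)$ combined with tautological unobstructedness of $\sigma$. Summing over $k$ gives the Laurent algebra $\Lambda_{nov}[H_1(G;\Z)]$, and the PSS/unit map sends each generating orbit to the corresponding chord on $\sigma$, yielding $SH^{0,0}(M) = HW^{0,0}(\sigma,\sigma)$; extending this identification over all extra-grading weights simultaneously gives $SH^0(M) = HW^0(\sigma,\sigma)$.

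For parts (b) and (c), the additional generators $x, y$ of extra grading $\pm 1$ arise from the pair of nonconstant winding orbits produced by the singular circle in the local model. A Conley--Zehnder and Maslov-index computation in this local model places them in cohomological degree zero, and simultaneously shows that no generators of nonzero cohomological degree survive the Morse--Bott perturbation, which is exactly the vanishing statement in (c). The identification $xy = g$ in (b) is obtained by analyzing the moduli space of Floer triangles representing the product $x \cdot y$: stretching the neck along the singular circle forces these triangles to degenerate into a pair of Floer strips glued to a Maslov index $2$ holomorphic disk with boundary on $\sigma$ passing through the singular locus, and the weighted count of such disks is by definition the superpotential of the Hori--Vafa mirror, giving an element $g \in \Lambda_{nov}[H_1(G;\Z)]$.

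The principal obstacle will be the identification $xy = g$ in part (b). It combines three delicate ingredients: a neck-stretching and gluing theorem identifying the Floer product with an enumerative count of Maslov index $2$ disks, carried out in a non-exact setting where Novikov weights must be tracked throughout; the exclusion, via the tautological unobstructedness hypothesis of Definition \ref{dfLgPotential}, of all other contributions, in particular any disks intersecting $\pi^{-1}(1)$ which would otherwise produce an $m_0$ obstruction; and matching the resulting count with the Hori--Vafa superpotential, which relies on the semi-toric structure to reduce the open Gromov--Witten contribution to a tractable local calculation at each positive node.
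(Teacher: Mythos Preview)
Your overall architecture (equivariant acceleration, splitting by the extra $\Z$-grading, Morse--Bott analysis) is in the right spirit, but the proposal contains a genuine gap in part (b) and misses the paper's central mechanism.

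\textbf{The relation $xy=g$ is algebraic, not enumerative.} In the paper, once one establishes that $HW^{0,\geq 0}(\sigma,\sigma)\simeq HW^{0,0}\otimes\Lambda[x]$ and $HW^{0,\leq 0}(\sigma,\sigma)\simeq HW^{0,0}\otimes\Lambda[y]$ (this is Lemma~\ref{lmRoughform}), the element $g$ is simply \emph{defined} as the product $xy\in HW^{0,0}$, and the presentation $HW^0=\Lambda[H_1(G;\Z),x,y]/(xy-g)$ follows by a short graded-algebra argument: the natural map $HW^{0,\leq 0}\otimes_{HW^{0,0}}HW^{0,\geq 0}\to HW^0$ is surjective and its kernel is generated by $x\otimes y-g$. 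No neck-stretching, no disk counting. The explicit identification of $g$ with the Hori--Vafa polynomial is a \emph{separate} result (Theorem~\ref{tmLaurent0}), proved later by a completely different method involving the closed-open map and wall-crossing, not by counting disks on $\sigma$.

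\textbf{Your disk-counting step cannot work as stated.} You propose that Floer triangles for $x\cdot y$ degenerate to configurations containing a Maslov index $2$ disk with boundary on $\sigma$. But $\sigma$ is tautologically unobstructed with respect to $(\pi,J)$, and Theorem~\ref{tmLGSphereDiskMax} shows that such a Lagrangian bounds \emph{no} non-constant $J$-holomorphic disks whatsoever. So the proposed degeneration limit is empty.

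\textbf{You are missing the confinement argument.} The actual proof of Lemma~\ref{lmRoughform} uses Hamiltonians $H_j\in\mathcal{H}_{\mathcal{B}+}$ whose minimum in the $H_B$-direction lies in $U_+=\{\re\pi>0\}$, so that all chords of non-negative extra grading sit in $U_+$. Choosing $J$ so that $\pi$ is $J_t$-holomorphic for all $t$, a maximum principle for $\re\pi$ confines every holomorphic polygon to $U_+$. Since $U_+$ has no singularities of $\Ll$ and is simply connected, action-angle coordinates embed it in $T^*G\times T^*S^1$, and the computation reduces to the known wrapped Floer cohomology of a cotangent fibre. This is how one obtains $HW^{0,\geq 0}\simeq HW^{0,0}\otimes\Lambda[x]$; your Morse--Bott sketch does not provide this reduction. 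Part (c) then falls out immediately because the chords used all have Maslov degree $0$, rather than from an index computation at the singular circle.
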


We now give a more explicit description of the Laurent polynomial $g$. For definiteness take $n=3$. The image of the lower dimensional orbits under $\mu$ gives rise to the toric diagram $\mu(\crit(\Ll))$ in the plane. It is a rational balanced graph and possesses a dual graph denoted by $\Delta^*$. For $\alpha\in\Delta^*$ denote by $f(\alpha)$ the distance to the origin of the line dual to $\alpha$ in the toric diagram. This is well defined up to on overall constant in $\R^2$
.

\begin{tm}[Cf. \cite{AAK16} Theorem 8.4]\label{tmLaurent0}
We have
\[
g=\sum_{\alpha\in \Delta^*}(1+c_{\alpha})t^{f(\alpha)}u^\alpha,
\]
where $c_{\alpha}\in\Lambda_{nov}$ such that $\val{c}_{\alpha}>0$. More precisely,
\[
c_\alpha=\sum_{A\in H_2(M;\Z)|\omega(A)> 0}n_At^{\omega(A)},
\]
for some integers $n_A$.
\end{tm}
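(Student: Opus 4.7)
The plan is to identify $g = x \cdot y$ as computed by the Floer product on $HW^0(\sigma,\sigma)$ and then enumerate the contributing holomorphic curves using the extra grading, the semi-toric geometry, and the Lagrangian section structure. By Theorem \ref{tmRoughform} the elements $x$ and $y$ have $|\cdot|$-degrees $+1$ and $-1$, so $g = xy$ lies in $HW^{0,0}(\sigma, \sigma) = \Lambda_{nov}[H_1(G;\Z)]$ and thus automatically takes the form $\sum_\alpha a_\alpha(t)\, u^\alpha$. The content of the theorem is the computation of the coefficients $a_\alpha(t)$, and in particular the claim that only $\alpha \in \Delta^*$ contribute nontrivially.

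First I would make $x$ and $y$ geometrically explicit. They arise from Hamiltonian chord generators of the Floer complex that wrap, in opposite senses, the codimension-$2$ singular stratum $\crit(\Ll)$. A Floer pair-of-pants contributing to the $u^\alpha$ component of $xy$ is then a Maslov-$2$ $J$-holomorphic curve with boundary on a Hamiltonian perturbation of $\sigma$, whose boundary has winding class $\alpha$ in $H_1(G;\Z)$. Away from $\crit(\Ll)$ the fibration $\Ll$ is a principal torus bundle, so any nonconstant such curve must meet $\crit(\Ll)$, and the geometry of the moment map $\mu$ dictates that it does so along a unique codimension-$2$ component of the critical set; the relevant asymptotic directions are in bijection with the vertices $\alpha$ of the dual graph $\Delta^*$. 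A local computation near each vertex, modeled on the monotone toric case, produces a unique basic disk of symplectic area $f(\alpha)$ whose signed count contributes $t^{f(\alpha)} u^\alpha$ with coefficient $1$, matching \cite{AAK16} Theorem 8.4.

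The correction $c_\alpha$ arises from all other Maslov-$2$ configurations in the homotopy class $\alpha$. Any such configuration must absorb an auxiliary sphere class $A \in H_2(M;\Z)$, either via a genuine sphere bubble attached to the basic disk or via a stable-map degeneration, and carries a Novikov weight $t^{\omega(A)}$ with $\omega(A) > 0$. The tautological unobstructedness of $\sigma$ with respect to $(\pi, J)$ excludes boundary disk bubbles that would alter the winding class $\alpha$, so the only remaining contributions are parametrized by sphere classes $A$ with $\omega(A) > 0$. Summing the signed counts $n_A$ in each such class yields the stated expansion for $c_\alpha$.

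The main obstacle will be the compactness and transversality analysis for the relevant moduli spaces in the open SYZ setting. One must verify that the Maslov-$2$ moduli spaces underlying the count of $g$ are regular, compact up to the boundary strata already accounted for, and free of escape-to-infinity phenomena. These analytic inputs are provided by the techniques of \cite{Groman15}, the tameness hypothesis on $(M,\Ll)$, the admissibility conditions defining $A_{\pi,J}$, and crucially by the role of the LG potential $(\pi,J)$ in controlling disk bubbling. Once these are in place, the enumeration reduces to the toric-style local model near each vertex of $\Delta^*$, reproducing the leading formula of \cite{AAK16} and yielding the sphere-bubble corrections $c_\alpha$.
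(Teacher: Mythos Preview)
Your approach is genuinely different from the paper's, and the gap is in the core mechanism rather than in analytic details.

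You propose to compute $g=xy$ by directly enumerating pair-of-pants curves in $HW^0(\sigma,\sigma)$ and identifying them with ``Maslov-$2$ $J$-holomorphic curves'' that hit $\crit(\Ll)$ along a component indexed by $\alpha\in\Delta^*$. But the inputs $x,y$ are wrapped Hamiltonian chords on a section, not intersection points on a Lagrangian torus, so there is no direct Maslov-$2$-disk interpretation of the pair-of-pants, and your assertion that each such curve meets a \emph{unique} codimension-$2$ component of $\crit(\Ll)$, with the leading count equal to $1$ by a ``local toric model'', is exactly the hard part: nothing in the setup forces a Floer triangle to localize near a single edge of $\Delta$, and the leading coefficient $1$ is not a local fact but a global wall-crossing statement. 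Likewise, the interpretation of $c_\alpha$ as pure sphere-bubble contributions is not what ultimately emerges; the $n_A$ are coefficients of an analytic expansion of the closed-open map, not sphere counts attached to a basic disk.

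The paper avoids all of this by never computing the product $xy$ directly. Instead it passes through the isomorphism $\mathcal{CO}:SH^0(M;\Ll)\to HW^0(\sigma,\sigma)$ and defines, for each $s\in SH^0$, analytic functions $h_\pm(s)$ on the chambers $V_\pm$ by $\mathcal{CO}_{L_b}(s)=h_\pm(s)(b)\,1_{L_b}$, i.e.\ by evaluating on the \emph{torus fibers} rather than on $\sigma$. Energy--diameter estimates (Lemma~\ref{lmTLDG}) force $h_+(x)$ to be a single monomial, while $h_+(y)$ is constrained, one face $W_\alpha$ at a time, to lie in the cone $A_\alpha+C_\alpha$; intersecting over all $\alpha$ gives exactly $\Delta^*$ as the support (Lemma~\ref{lmIntCones}), and the leading coefficient $1$ comes from continuity across each wall (Lemma~\ref{lmGlobalConst}, Corollary~\ref{cyNAAlpha1}) together with the $T^*G$ model. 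The relation for $g$ then follows from $h_+(g)=h_+(x)h_+(y)$ and injectivity of $h_+$. The point is that the wall-crossing is analyzed entirely in the closed-string sector, where one can push the periodic orbits far from $\crit(\Ll)$ and use monotonicity to \emph{prove} that only trajectories in a fixed topological class survive; your direct open-string count has no analogous mechanism to isolate contributions by $\alpha$.
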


\begin{rem}
The case $n=2$ of the above theorems is carried out in \cite{Pasc1}
\end{rem}
\begin{rem}
The numbers $n_A$ count certain configurations arising from the analytic expansions of the closed open map. These expansions in turn arise from a particular choice of a pair of charts coming from the LG potential. In a forthcoming paper we give a more intrinsic formulation in terms of Gromov-Witten invariants.
\end{rem}

\begin{rem}
Note that the choice of LG potential potentially only affects the definition of $HW(\sigma,\sigma)$, not that of $SH^*(M)$. Thus, a-posteriori, Theorem~\ref{tmRoughform} tells us that $HW(\sigma,\sigma)$ is independent of the choice of $(\pi,J)$ up to isomorphism. If one takes on board the theory of \cite{FOOO}, it should follow that different choices are canonically identified since a section is simply connected.
\end{rem}
A particular consequence of Theorem~\ref{tmLaurent0} is that $HW^0(\sigma,\sigma)$ is a smooth algebra over the Novikov field. This follows because $\Delta^*$ is the dual partition of the Newton polygon for a smooth tropical curve. See Corollary~\ref{corMuSmooth}. This fact is the key step in the proof of the following theorem which will be given in a forthcoming paper.

\begin{tm}[\textbf{Homological mirror symmetry}]\label{tmHMS}
There is an $A_\infty$ refinement $\mathcal{W}(M,\Ll)$ of the Donaldson-Fukaya category and we have an equivalence of categories between
$D^\pi\mathcal{W}(M,\Ll)$ and finitely generated modules over $HW(\sigma,\sigma)$.
\end{tm}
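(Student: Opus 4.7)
The plan proceeds in three stages: construct the $A_\infty$ refinement, establish split-generation of $D^\pi\mathcal{W}(M,\Ll)$ by $\sigma$, and then apply Yoneda.

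First, I would construct the $A_\infty$ category $\mathcal{W}(M,\Ll)$ by defining, for each $k\geq 2$ and $L_0,\dots,L_k \in A_{\pi,J}$, the operation $\mu^k$ as a signed count of inhomogeneous pseudo-holomorphic $(k+1)$-gons with boundary on the $L_i$, perturbed on strip-like ends by wrapping Hamiltonians drawn from a family in $\mathcal{H}_{\Ll,\pi,J}$ chosen coherently over the operadic structure of the disk moduli. The LG potential $(\pi,J)$ controls sphere and disk bubbling exactly as for the cohomological product established in \S\ref{SecWrapped}, and the tameness and $C^0$-estimates underlying Theorem~\ref{tmc} extend to polygonal moduli without essential change; consistency of Hamiltonian choices, signs, and gluing data then produces the $A_\infty$ relations along the lines of \cite{Groman15}.

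Second, to upgrade the $A_\infty$ category to the desired equivalence, I would invoke Abouzaid's split-generation criterion: it suffices to exhibit a Hochschild cycle $\beta \in HH_0(\sigma,\sigma)$ whose image under the open-closed map $\mathcal{OC}\colon HH_{n-*}(\sigma,\sigma)\to SH^*(M)$ is the unit of $SH^0(M)$. By Theorem~\ref{tmRoughform}(a) the acceleration map $HW^0(\sigma,\sigma)\to SH^0(M)$ is an isomorphism sending the identity to the unit, so the task reduces to matching the leading-order term of $\mathcal{OC}$ applied to the identity bimodule class with this isomorphism. I expect this to be the principal obstacle: it requires a local-model computation in a neighborhood of $\sigma$, modelled on the classical PSS comparison but adapted to the non-conical geometry, combined with a Novikov energy filtration to absorb higher-valuation corrections. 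The connectedness of the isotropy locus in the simple case ensures that no wall-scattering obstructs this argument.

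Finally, combining split-generation with Theorem~\ref{tmRoughform}(c) (concentration of $HW^*(\sigma,\sigma)$ in degree zero) and Corollary~\ref{corMuSmooth} (smoothness of the commutative algebra $HW^0(\sigma,\sigma)$), the conclusion follows by a standard Yoneda argument. The Yoneda functor
\[
\mathcal{Y}\colon \mathcal{W}(M,\Ll)\longrightarrow \mathrm{Mod}\text{-}HW(\sigma,\sigma),\qquad L\mapsto HW^*(\sigma,L),
\]
is cohomologically fully faithful on the triangulated subcategory generated by $\sigma$ and sends $\sigma$ to the rank-one free module, which is a compact generator of the perfect module category. Split-generation extends $\mathcal{Y}$ to an equivalence between $D^\pi\mathcal{W}(M,\Ll)$ and the perfect derived category of $HW(\sigma,\sigma)$; smoothness then identifies the latter with the bounded derived category of finitely generated $HW(\sigma,\sigma)$-modules, yielding the stated equivalence.
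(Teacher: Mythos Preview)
The paper does not contain a proof of this theorem: immediately before the statement it says explicitly that the proof ``will be given in a forthcoming paper,'' with Corollary~\ref{corMuSmooth} (smoothness of $HW^0(\sigma,\sigma)$) flagged as ``the key step.'' So there is no in-paper argument to compare your proposal against.

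That said, your outline matches what the paper signals as the intended route. The text preceding Theorem~\ref{tmHMS} (including a commented-out intermediate theorem asserting that the unit of $SH^*(M)$ lies in the image of $\mathcal{OC}:HH_*(\sigma,\sigma)\to SH^{n-*}(M)$, followed by ``As a corollary of this and the generation criterion we have'') makes clear that the author's plan is precisely Abouzaid's criterion plus smoothness plus Yoneda, as you propose. Your identification of the main difficulty---verifying that $\mathcal{OC}$ hits the unit, via a local PSS-type computation near $\sigma$ filtered by Novikov valuation---is consistent with the paper already having established the closed-open isomorphism $\mathcal{CO}:SH^0\to HW^0(\sigma,\sigma)$ in Theorem~\ref{tmFloerInt}; the open-closed direction is the piece left for the sequel. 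One small correction: in your second step you cite Theorem~\ref{tmRoughform}(a) for the isomorphism $HW^0(\sigma,\sigma)\simeq SH^0(M)$, but the map realizing it is $\mathcal{CO}$ (Theorem~\ref{tmFloerInt}), not an ``acceleration map'' from $HW$ to $SH$.
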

\subsection{Comparison to previous work}
This paper is closely related to and builds on \cite{Auroux07} and the later works \cite{AAK16,CSN}. Among other things, these works construct an SYZ mirror for $M$ using family Floer homology. The main contribution of the current work is to construct and compute the wrapped Fukaya category of $M$. We mention that in the case $n=2$, the wrapped Fukaya category is studied in \cite{Pasc1}. One way to see the importance of the wrapped Fukaya category is to observe that in Theorem \ref{tmHMS} the wrapped Floer homology of a Lagrangian section gives the full mirror, not just the mirror to the regular fibers. On the other hand, to obtain the same in the family Floer approach, it is necessary to study family Floer theory for immersed Lagrangians\footnote{This has been carried out in dimension $n=2$ in \cite{ERT18}. The higher dimensional cases fall under a work in progress by M. Abouzaid and Z. Sylvan.}. We also mention that  explaining enumerative mirror symmetry, beyond numerical verification, involves the wrapped Fukaya category \cite{GPS}.

We point out that there is currently no widely accepted definition of the wrapped Fukaya category away from the conical setting. One of the contributions of the present paper is a proposed definition and construction which is natural from the standpoint of SYZ mirror symmetry. This is explained in the next subsection. From a more technical standpoint, for wrapped Floer theory to be well defined in the non-exact setting we need to develop the Lipschitz geometry associated with an SYZ fibration after an appropriate completion. This is partly responsible for the length of this paper. An overview of this is given at the beginning of section \S\ref{SecWrap}.

At last, section \S\ref{SecComp}, which is the main section, introduces a new approach to wall crossing analysis which replaces the super-potential of \cite{Auroux07,AAK16,CSN} by certain elements of $SH^0(M;\Ll)$. Following \cite{GKH1}, the latter are thought of as global functions on the mirror $M^\vee$. The evaluation of these functions on points of the mirror arises from the closed-open map $SH^0(M;\Ll)\to HW^0(L,L;\Ll)$. A similar idea has been employed previously by Pascaleff\cite{Pasc2}. The essence of this idea is spelled out for the case $n=2$ in subsection \S\ref{SubSecWorkedExample} below which can be understood for the most part without reading the rest of the paper. The upshot is that while the wall crossing could be analyzed in terms Maslov $0$ disc bubbling in the open-closed map, it can also be analyzed purely in the realm of closed strings in terms the appearance of non-trivial continuation trajectories in Hamiltonian Floer cohomology upon crossing a wall (which occurs by a homotopy of $(J,\pi)$). It is for this reason that our work, while informed by the philosophy of \cite{FOOO}, is independent of its foundations.

\subsection{Algebraic vs Analytic aspects of mirror symmetry}
The principle of GAGA states that on a projective manifold, geometric and analytic geometry are equivalent. On a quasi-projective manifold this is no longer true and the algebraic structure, when such exists, is an additional structure. In fact, there are examples of relevance in mirror symmetry where there exist inequivalent algebraic structures which are analytically isomorphic.
In \cite{Tu14,Abouzaid14,Abouzaid2017} the SYZ mirror and the mirror functor are constructed  in the rigid analytic category.  By contrast, Theorem \ref{tmHMS} is algebraic. We comment on how this arises Floer theoretically.

We show in \S\ref{SecWrap} that if $(M,\Ll)$ is sufficiently well behaved, there is a well defined notion of a Lipschitz function with respect to the integral affine structure on the base of $\Ll$. This is the semi-group $\mathcal{H}_{\Ll}$ alluded to in Theorem \ref{tmc}. As motivation in connection with homological mirror symmetry, consider the following toy example. Take $M=\Tt=\R\times S^1$ with the standard coordinates $(s,t)$. Let $\Ll:M\to\R$ be defined by $(s,t)\mapsto s$. Let $\sigma$ be the section $\{t=0\}$. Since there are no critical points the choice of $(\pi,J)$ is superfluous. Then $\mathcal{H}_{\Ll}$ is the set of Hamiltonians that are proper, bounded below and satisfy that $|\partial_sH|$ is bounded. The growth condition on Lagrangians is given by a similar Lipschitz condition in the standard coordinates. In this case (and, more generally, for $\Tt^n$) our definition reproduces the standard wrapped Fukaya category of the exact case. Observe now that $SH^*(\Tt)=\C[x,x^{-1}]$. Moreover, the filtering by the bound on $|\partial_sH|$ corresponds to the filtering of Laurent polynomials by the order of pole at $\{0,\infty\}$. Our definition of the wrapped Fukaya category for the semi-toric case reproduces the algebraic structure on the mirror in a similar way. The question for more complicated SYZ fibrations awaits further exploration.

To put this into the general context note that it was  observed in \cite{Groman15} there are two different flavors of symplectic cohomology on an open manifold. On the one hand, denoting by $C(M)\subset C^{\infty}(M)$ the additive group of smooth exhaustion functions, let $\mathcal{H}\subset C(M)$ be a semi-group admitting a cofinal sequence $\{H_i\}$. One then obtains an algebra by considering
\[
SH^*(M;\mathcal{H})=\varinjlim_iHF^*(H_i).
\]
On the other hand one can associate a ring $SH^*(M|K)$ with the structure of a Banach space to a pre-compact set $K\subset M$. This is formally the Floer cohomology of the function which is $0$ on $K$ and $\infty$ on $M\setminus K$. One then obtains a ring associated with the entire space by considering
\[
SH^*_c(M)=\varprojlim_{K_i}SH^*(M|K_i),
\]
for an exhaustion of $M$ by the precompact sets $\{K_i\}$. Note that the latter construction is a purely symplectic invariant while the first construction depends on the semi-group. There are a number of examples showing that this dependence is non-trivial. We expect that the choice of semi-group is relevant to determining an algebraic structure on the mirror. It is believed that the second construction when applied to invariant sets of an SYZ fibrations gives rise to the sheaf of analytic functions. Such a sheaf is studied in \cite{Umut}, and, in connection with mirror symmetry, in a forthcoming work joint to the author and U. Varolgunes. The open string version of this latter construction is explored in forthcoming work, joint with M. Abouzaid and U. Varolgunes.

\subsection{Acknowledgements}
The author would like to thank M. Abouzaid for many useful discussions. The author is grateful to the referee whose comments and suggestions have improved this work. 

Various stages of the work were carried out at the math departments of Hebrew University of Jerusalem, the ETH of Zurich, and Columbia University. They were supported by the ERC Starting Grant 337560, ISF Grant 1747/13, Swiss National
Science Foundation (grant number 200021$\_$156000) and the Simons Foundation/SFARI ($\#$385571,M.A.). The author is grateful to the Azrieli foundation for granting him an Azrieli fellowship.
\section{Geometric Setup}\label{SecSetup}
\subsection{Semi-toric SYZ fibrations}
Let $(M,\omega)$ be a symplectic manifold of dimension $2n=6$. Let $B=\R^3$. Let $\Ll:M\to B$ be a proper surjective map whose fibers are Lagrangian. By the Arnold-Liouville Theorem, the generic fibers are Lagrangian tori. Moreover the local functions on a neighborhood of  a point $b\in B_{reg}$ whose induced flow  at $b$ is $1$-periodic give rise to a canonical integral lattice $\Lambda^*\subset T^*B_{reg}$. $\Lambda^*$ is a locally constant sheaf on $B_{reg}$ naturally isomorphic to the sheaf $R^1\Ll_*(\Ll^{-1}(B_{reg}))$ whose restriction to any simply connected $U\subset B_{reg}$ is $H_1(\Ll^{-1}(U);\Z)$. The isomorphism on the stalk at $b\in B_{reg}$ is given by $\gamma\mapsto dI_{\gamma}$ where $\gamma\in H_1(L_b;\Z)$ and $I_{\gamma}:U\subset B_{reg}\to\R$ maps $x\in U$ to the integral of $\omega$ over the cylinder traced by moving $\gamma$ from $b$ to $x$. We will use this identification freely. Dually, we have an integral lattice $\Lambda\subset TB_{reg}$, giving rise to an integral affine structure on $B_{reg}$.

For the following definition we remind  the terminology of \cite{Gross01} for singular fibers in s-dimensional torus fibrations. A singular fiber $F$ is said to be of type $(i,j)$ if the  $rk H^1(F;\Z)=i$ and $rk H^2(F;\Z)=j$.  
\begin{df}\label{dfSTSYZ}
$\Ll$ is called a \textit{semi-toric SYZ fibration with a section} if the skeleton $\Delta:=B\setminus B_{reg}$ is a trivalent graph, and, in the terminology of \cite{Gross01}, the following are satisfied
\begin{enumerate}
\item The singular fibers over the edges of $\Delta$ are of type $(2,2)$ and the singular 1-dimensional orbits are non-degenerate in the sense of \cite{MirandaZung}.
\item The singular fibers over the vertices are of type $(1,2)$.
\item The set  of critical points of $\Ll$ is a union of submanifolds whose tangent spaces span $T_pM$ at each fixed point $p$ of $\Ll$.
\item There is a global codimension $1$ sub-sheaf $\Gamma$ of $\Lambda^*$ with trivial monodromy.
\item There is a Lagrangian section $\sigma$ which is disjoint of the critical points of $\Ll$. Such a section is referred to as \textit{admissible}.
\end{enumerate}
$(M,\Ll)$ is said to be \textit{simple} if $\Delta$ is connected.
\end{df}

\textit{Henceforth $(M,\Ll)$ is a simple semi-toric SYZ fibration with an admissible section $\sigma$.}

To justify the terminology note that the assumption about the monodromy implies there is a pair of Hamiltonians $(H_{F_1},H_{F_2})$ which factor through $\Ll$ and generate  a $\T^2$ action. Namely, pick two generators $\gamma_1,\gamma_2$ of $\Gamma$ over $b_0$, and define $H_{F_i}=I_{\gamma_i}\circ\Ll$.
Henceforth, we  denote the $2$-torus by $G$. Then $\Gamma$ is naturally identified with the integral lattice in the Lie algebra $\mathfrak{g}$. The action of $G$ has a moment map $\mu:M\to\mathfrak{g}^*$ which in the choice of basis for $G$ above is just $\mu=(H_{F_1},H_{F_2})$. By construction $\mu$  factors through $\Ll$. We will denote this by $\mu=f_{\mu}\circ\Ll$. Thus $f_{\mu}(\Delta)$ is a graph in $\mathfrak{g}^*$.

\begin{ex}\label{exNormalVertex}
Consider $M=\C^3\setminus\{z_1z_1z_3=1\}$ and let
\[
H_B=-\ln\|1-z_1z_2z_3\|,\quad H_{F_i}=|z|^2_i-|z|^2_3,\quad i=1,2.
\]
The Hamiltonian $H_{F_i}$ and $H_B$ commute with each other. The fibers of $\mathcal{L}=(H_B,H_{F_1},H_{F_2})$ are Lagrangian tori on which the holomorphic volume form
\[
\Omega:=\frac{dz_1\wedge dz_2\wedge dz_3}{1-z_1z_2z_3}
\]
has constant phase. The $H_{F_i}$ induce commuting Hamiltonian circle actions. The skeleton is given by the union of rays
\[
\Delta=\{H_{F_1}=0,H_{F_2}\geq 0\}\cup \{H_{F_2}=0,H_{F_1}\geq 0\}\cup \{H_{F_1}=H_{F_2}\leq 0\}
\]
\end{ex}
In fact, locally, near the singular points lying over vertices of $\Delta$ there are $\T^2$ equivariant neighborhoods which are symplectomorphic to corresponding neighborhoods of the singularity in the last example. See \cite{Castano04}. Further examples are given by the complements of anti-canonical divisors in toric Calabi-Yau. These are described below in \S\ref{subsecTCY}.

\textit{Henceforth, we use the notation $L_b:=\Ll^{-1}(b)$ for $b\in B$.}
\begin{tm}\label{tmBasicTop}
\begin{enumerate}
\item \label{tmBasicTop3} There is a generator of $H^1(M;\Z)$ whose restriction to $H^1(L_b;\Z)$ coincides with a lift of a global section of $\Lambda/\Gamma^*$.
\item $c_1(M)=0$.
\item The Maslov class of the torus fibers vanishes.
\item Pick an almost complex structure $J$ which is compatible with $\omega$. Then $\Ll$ determines a canonical homotopy class of sections $\Omega$ of $\Lambda_{\C}^3T^*M$ by the requirement that $arg (\Omega|_{T\Ll^{-1}(b)})$ be exact for any $b\in B_{reg}$.
\end{enumerate}
\end{tm}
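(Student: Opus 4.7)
The plan is to establish part (d) first, from which (b) and (c) follow immediately, and then handle (a) using the section $\sigma$ together with the trivial monodromy of $\Gamma$. For (d), I would construct the homotopy class of $\Omega$ by patching local models. On the regular locus $\Ll^{-1}(B_{reg})$, fix a local integral basis of $\Lambda^*$ on a simply connected $U \subset B_{reg}$; the corresponding action coordinates $I_j$ and dual angles $\theta_j$ allow one to define the complex $n$-form $\Omega_U := (dI_1 + id\theta_1) \wedge \cdots \wedge (dI_n + id\theta_n)$, whose restriction to any fiber $L_b \subset \Ll^{-1}(U)$ equals $i^n d\theta_1 \wedge \cdots \wedge d\theta_n$ and therefore has manifestly constant (hence exact) phase. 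Integral changes of basis lie in $SL_n(\Z)$ and preserve $\Omega_U$ up to sign, while the global Lagrangian section $\sigma$ picks out a consistent sign. To extend across the critical locus, I would invoke the local normal forms of \cite{MirandaZung} near codimension-$2$ singular orbits and of \cite{Castano04} near the trivalent vertices; the latter is the model of Example \ref{exNormalVertex}, where the explicit form $dz_1 \wedge dz_2 \wedge dz_3 /(1 - z_1 z_2 z_3)$ has constant phase on each Lagrangian fiber by direct computation. These local sections agree up to nowhere-vanishing multiplicative factors whose phases are exact on each fiber and can be homotoped to constants, yielding a global homotopy class of $\Omega$ on all of $M$. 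The exact-phase condition itself rigidifies the homotopy class and makes it canonical.

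Parts (b) and (c) are then immediate: a nowhere-vanishing section of $\Lambda_\C^n T^*M$ trivializes the canonical bundle, so $c_1(M) = 0$, and the Maslov class of a Lagrangian $L$ is represented by the winding of the phase of $\Omega|_{TL}$, which vanishes whenever that phase is exact on $L$.

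For part (a), the hypothesis that $\Gamma \subset \Lambda^*$ has codimension one and trivial monodromy implies that the rank-one quotient sheaf $\Lambda^*/\Gamma$ is globally trivial on $B_{reg}$. Choose a global generator and, using $\sigma$, lift it to a closed one-form on $\Ll^{-1}(B_{reg})$: identify each regular fiber with $T_bB_{reg}/\Lambda_b$ via the integral affine structure based at $\sigma(b)$, so that a representative of the quotient-section in $\Lambda^*_b$ induces a map $L_b \to \R/\Z$; the family of these maps over $B_{reg}$ is the primitive of a closed one-form $\eta$ on $\Ll^{-1}(B_{reg})$ whose restriction to each fiber represents the desired lift. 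To promote $[\eta]$ to a class in $H^1(M;\Z)$, I would observe that the vanishing cycles of monodromy at every edge and vertex of $\Delta$ lie inside $\Gamma$ (this is the content of the coherence of the $\T^{n-1}$-symmetries at the positive singularities, read off from the explicit vertex model of Example \ref{exNormalVertex}), so $[\eta]$ pairs trivially with these cycles and extends over tubular neighborhoods of the critical fibers. A Mayer--Vietoris argument on the decomposition $M = \Ll^{-1}(B_{reg}) \cup (\mathrm{nbhds\ of\ } \crit(\Ll))$ then assembles these local extensions into the required global class.

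The principal obstacle will be the gluing step in (d), namely checking that the three types of local models (regular patches, edge neighborhoods, vertex neighborhoods) assemble consistently up to a nowhere-vanishing multiplicative function whose phase is fiberwise exact and homotopic to a constant. The positivity hypothesis on the singularities is used precisely here, since it is what guarantees compatibility of the orientations of the vanishing cycles with the orientation fixed by $\sigma$, so that no nontrivial phase class accumulates as one crosses components of $\crit(\Ll)$; without positivity, one would in general pick up a nonzero monodromy contribution that would obstruct the canonical homotopy class.
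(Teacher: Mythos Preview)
Your argument for (a) is essentially the paper's: construct a closed $1$-form on $\Ll^{-1}(B_{reg})$ from $\sigma$ and a generator of $\Lambda^*/\Gamma$, then extend its cohomology class across $\crit(\Ll)$ by Mayer--Vietoris.

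For (b)--(d), however, there is a genuine gap. Your form $\Omega_U=\bigwedge_j(dI_j+id\theta_j)$ is an $(n,0)$-form for the \emph{action-angle} almost complex structure $J_U$ determined by the chart (the one with $J_U\,\partial_{I_j}=\partial_{\theta_j}$). Under a change of integral affine chart by $A\in SL_n(\Z)$ the $I_j$ transform by $A$ while the $\theta_j$ transform by $(A^T)^{-1}$; unless $A$ is orthogonal this changes $J_U$, so $\Omega_U$ and $\Omega_{U'}$ are sections of different line bundles and your claim that they agree up to sign is false as stated (a direct check with a unipotent $A$ already in dimension two produces extra terms such as $dI_1\wedge d\theta_1$ in the difference). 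Their restrictions to each fibre do agree, but that alone does not yield a global nowhere-vanishing section of $\Lambda^n_\C T^*M$, so $c_1=0$ does not follow and the subsequent deductions collapse. The paper avoids this by running the logic in the opposite order: (b) comes from the later structural result (Theorem~\ref{tmNormalTCY}) that $M$ deformation-retracts onto a neighborhood of $\crit(\Ll)$ symplectomorphic to an open set in a toric Calabi--Yau; (c) is checked directly on \emph{basic disks} (those foliated by vanishing cycles of an edge of $\Delta$), which generate $\pi_2(M,L_b)$ over $\pi_2(M)$, via the equivariant local normal form where the $S^1$-action has weights $(1,-1)$; and (d) is then immediate from (b) and (c).
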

\begin{proof}
\begin{enumerate}
\item
Dual to the two global action coordinates there exists a global angle coordinate defined on the complement of the singular fibers. Indeed, on $B_{reg}$ we have a globally defined vector field $\frac{\partial}{\partial x_3}$ defined as being in the kernel of the globally defined $1$-forms $dx_1,dx_2$ and normalized so that $dx_3(\frac{\partial}{\partial x_3})=1$. The latter condition is well defined, since $dx_3$ is well defined up to addition of a linear combination of $dx_1$ and $dx_2$. Choosing a Lagrangian section we can lift to a vector field $V$ on $\Ll^{-1}(B_{reg})$. We thus obtain a $1$-form $\theta_3=\iota_V\omega$. This $1$-form depends of course on the choice of lift, but its restriction to any fiber of $\Ll$  does not. The one form $\theta_3$ extends at least continuously to $M\setminus \crit(\Ll)$. Indeed, $dx_3$ diverges logarithmically \cite{Castano04} near a singular value, so $\frac{\partial}{\partial x_3}$ approaches $0$ on any sequence converging to a non-singular point of $\Ll$. But, due to the singularity at $\crit(\Ll)$, $\theta_3$ does not extend continuously to all of $M$. However the integral of $\theta_3$ vanishes on any loop contained in a small enough neighborhood of $\crit(\Ll)$. Thus, using Mayer-Vietoris, the cohomology class of $\theta_3$ extends across $M$. The one form $\theta_3$ satisfies the required property.
\item
We show below that the underlying symplectic manifold of a simple semi-toric SYZ fibration with section contains a deformation retract which is symplectomorphic to an open subset of a toric Calabi-Yau.
\item We first recall the definition of the Maslov class $\mu:\pi_2(M,L)\to\mathbb{Z}$ for a Lagrangian sub-manifold in a symplectic manifold $M$. For a map $u:(D,\partial D)\to (M,L)$ it is defined by 
$$\mu(u)=\mu(u^*TM,\partial u^*TL)$$
 where the right hand side is the Maslov index of the Lagrangian loop defined by the sub-bundle $u^*TL\subset u^*TM$ along $u(\partial D)$. At first sight this requires fixing a choice of trivialization $u^*TM$ along $u(\partial D)$. But up to homotopy there is a unique one which extends to $D$. For a reference see \cite[Appendix C]{MS2}.  In particular, when $\partial u$ is contractible we have $\mu(u)=c_1(u^*TM)$ where without loss of generality $u$ is taken to be a map from the sphere.
 
Proceeding with the situation at hand, call  a class in $\pi_2(M,L_b)$ \textit{basic} if it is represented by a disk which is foliated by vanishing cycles of some edge of $\Delta$. Then $\pi_2(M,L)$ is generated over $\pi_2(M)$ by the basic discs. Having established the vanishing of $c_1(M)$ it suffices to verify the vanishing of the Maslov class on the basic disks. Given a basic disc $D_{e,b}$ in $L_b$ corresponding to an edge $e$, we may move $b$ arbitrarily close to $e$ without changing the Maslov index. By the equivariant local normal form theorem we have an equivariant identification of a neighborhood of a point $x$ on $e$ with $\R^4\times \R^2$ together with a linear $S^1$ action on $\R^4$ with weights $(1,-1)$.  This identification takes $x$ to $0$ and the boundary of the disc to a an $S^1$ orbit. The claim is now clear.
\item
Pick a section $\Omega_0$ and consder the $1$-form $\alpha=d\arg\Omega_0|_{TL_b}$. Then class of  $\alpha$ maps to the Maslov class under the coboundary map $\delta:H^1(L;\Z)\to H^2(M,L;\Z)$.  Since $\delta(\alpha)=0$ by the previous item, we have that $\alpha$ lifts to a class $\beta\in H^1(M;\Z)$. $\beta$ can be represented as the class $d\theta$ for some map $\theta:M\to \R/\Z$. The section $\Omega=e^{-i\theta}\Omega_0$ satisfies what is required. 
\end{enumerate}
\end{proof}

\subsection{Monodromy and the dual graph}
In this subsection we will show that $f_\mu(\Delta)$ is a smooth tropical curve and we will interpret its dual graph, abusively denoted $\Delta^*$, in terms of monodromy. This will be important in the computation of symplectic cohomology.

The monodromy $\rho_e$ around any edge $e$ of $\Delta$ is unipotent. See, e.g., eq. \eqref{eqLocalSemiGlobal} below. By our assumption it follows further that $\rho_e-\id$ is a primitive lattice element of $\Lambda\otimes\Gamma$. In other words, choosing an integral basis $\eta_1,\eta_2$ for $\Gamma_b\subset\Lambda_b^*$ and extending it to an  integral basis of $\Lambda_b^*$ the monodromy around an edge is given by a matrix of the form
\begin{equation}\label{eqStForm}
\left(\begin{array}{ccc}
        1 & 0 & a \\
        0 & 1 & b \\
        0 & 0 & 1
      \end{array}\right),
\end{equation}
where the vector $a\eta_1+b\eta_2$ depends on the edge $e$ and $gcd(a,b)=1$.

\begin{lm}
For an edge $e\in\Delta$ and an $x\in\Ll^{-1}(\Delta)\cap\crit(\Ll)$, let $G_x\subset G$ be the stabilizer of $x$. Let $\gamma\in\pi_1(B_{reg},b_0)$ be a loop around $e$. Then under the identification $\Gamma=\mathfrak{g}_{\Z}$, we have that $\rho(\gamma)-Id$ surjects unto $\mathfrak{g}_{x,\Z}$, the integral lattice in $\mathfrak{g}_x$. In particular, $G_x$ depends only on $e$.
\end{lm}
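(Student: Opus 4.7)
My plan is to reduce the claim to the classical Picard--Lefschetz formula for focus--focus singularities, applied in a $G$-equivariant setting. Throughout I fix a point $x\in\Ll^{-1}(e)\cap\crit(\Ll)$ and work near the singular 1-orbit $G\cdot x$.

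First, I would set up the local model. By the non-degeneracy hypothesis, the equivariant normal form theorem of \cite{MirandaZung} yields a $G$-equivariant symplectomorphism from a neighborhood of $G\cdot x$ in $M$ onto a neighborhood of $\{0\}\times S^1\subset \C^2\times T^*S^1$. The stabilizer $G_x$ is 1-dimensional, and the vanishing of the Maslov class (Theorem \ref{tmBasicTop}) forces the weights of its linear action on the normal slice $\C^2$ to sum to zero, so they must be $(1,-1)$. Choosing a complementary circle $G'\subset G$, the Hamiltonians of a primitive $\xi_1\in\mathfrak{g}_{x,\Z}$ and a primitive $\xi_2\in\mathfrak{g}_{G',\Z}$ become, in these coordinates, $H_1=|z_1|^2-|z_2|^2$ and $H_2=p$, the cotangent momentum coordinate. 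Locally $\Ll$ factors through a map of the form $(H_1,H_2,H_3)$, where $H_3$ is a $G$-invariant Hamiltonian Poisson-commuting with $H_1,H_2$; on the $\C^2$ factor it is (up to sign) the argument of $z_1 z_2$, the classical focus--focus angle variable. Thus $\crit(\Ll)$ in this chart is $\{z_1=z_2=0\}\times S^1$, which lies over the local segment of $e$.

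Next, I would compute the monodromy in this model. Take a small loop $\gamma$ in $B_{reg}$ around the image of $e$ in the chart, i.e.\ a loop in the $(H_1,H_3)$-plane around the origin at fixed $H_2$. On a nearby regular fiber $L_b$, pick the basis $(\eta_1,\eta_2,\eta_3)$ of $\Lambda^*_b=H_1(L_b;\Z)$ where $\eta_i$ is the orbit of the flow of $H_i$. Since $H_1,H_2$ come from the global $G$-action, $\eta_1,\eta_2$ generate $\Gamma_b$ and are fixed by $\rho(\gamma)$. The classical focus--focus Picard--Lefschetz computation yields $\rho(\gamma)\eta_3=\eta_3+\eta_1$, the sign being dictated by the positive-type hypothesis. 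Hence the image of $\rho(\gamma)-\id$ is the primitive rank-one sublattice $\Z\eta_1\subset\Gamma_b$. Under the identification $\Gamma=\mathfrak{g}_\Z$, $\eta_1$ corresponds to $\xi_1$, a primitive generator of $\mathfrak{g}_{x,\Z}$, so the image equals $\mathfrak{g}_{x,\Z}$. Expressed in an arbitrary basis of $\Gamma$ extended to a basis of $\Lambda^*$, this is precisely the coordinate form \eqref{eqStForm}, with $(a,b)$ recording the coordinates of $\xi_1$ in that basis.

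Finally, the last assertion is immediate: the image of $\rho(\gamma)-\id$ is determined purely by $e$ (and the normalized positive generator of the local $\pi_1$), so $\mathfrak{g}_{x,\Z}$, and hence $G_x=\exp(\mathfrak{g}_x)$, is independent of the choice of $x\in\Ll^{-1}(e)\cap\crit(\Ll)$. The hardest single step is the equivariant Picard--Lefschetz calculation: identifying the $G_x$-orbit direction on a nearby fiber with the Picard--Lefschetz vanishing cycle, and pinning down the sign in $\rho(\gamma)\eta_3=\eta_3\pm\eta_1$ (where the positive-type assumption genuinely enters), both require careful bookkeeping in the local model but are otherwise standard.
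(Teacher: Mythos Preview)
Your argument is correct, but it takes a more computational route than the paper. The paper argues abstractly: for any $g\in\Lambda^*$, the element $(\rho(\gamma)-\id)g$ lies in $\Gamma$ and, viewed as a cycle on a nearby fibre, collapses when transported to the singular fibre over $e$; under the identification $\Gamma=\mathfrak{g}_{\Z}$ this is exactly the statement that the corresponding one-parameter subgroup fixes $x$, so the image of $\rho(\gamma)-\id$ is contained in $\mathfrak{g}_{x,\Z}$. Since both lattices have rank one (the slice theorem for the stabiliser, the unipotency assumption for the monodromy) and the image contains a primitive element by hypothesis, they coincide.

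You instead pass to the equivariant normal form and invoke the focus--focus Picard--Lefschetz formula directly, identifying the vanishing cycle $\eta_1$ with the $G_x$-orbit direction. This is really the same geometric content made explicit: the paper's phrase ``vanishes when transported to $e$'' \emph{is} the Picard--Lefschetz statement, just not unpacked. Your approach buys you an explicit matrix and a clear identification of the sign, at the cost of invoking the normal form theorem and carrying out the model computation; the paper's approach is shorter and coordinate-free but leaves the reader to supply the vanishing-cycle interpretation. Either is fine here.
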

\begin{proof}
For $g\in\Lambda^*$ we have that $(\rho(\gamma)-Id)g\in\Gamma$ and so it vanishes when transported to $e$. Thus the group element corresponding to $\rho(\gamma)-Id$ stabilizes $x$. Moreover, the stabilizer group is $1$-dimensional by the slice theorem. By our assumptions, the image of $(\rho(\gamma)-Id)$ contains a primitive element. The claim follows.
\end{proof}
\begin{lm}
The tangent space to any point $p$ in the edge $f_{\mu}(e)$ is the annihilator $\mathfrak{g}^o$ of the stabilizer $\mathfrak{g}_e$. In particular, $f_{\mu}(\Delta)$ has rational slopes.

\end{lm}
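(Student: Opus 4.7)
My proof plan rests on a classical fact from Hamiltonian torus geometry: the moment map of a torus action is locally constant on each connected component of its fixed-point set. I will apply this to the one-dimensional subtorus $G_e \subset G$ identified by the preceding lemma, exploiting that the critical locus of $\Ll$ over $e$ consists of $G_e$-fixed points.

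Fix $x \in \Ll^{-1}(e) \cap \crit(\Ll)$. The preceding lemma gives the stabilizer $G_e$ with Lie algebra $\mathfrak{g}_e$, independent of $x$. Let $\pi_e \colon \mathfrak{g}^* \to \mathfrak{g}_e^*$ be dual to the inclusion; then $\mu_e := \pi_e \circ \mu$ is the moment map of the restricted $G_e$-action. By the equivariant local normal form near $x$, the critical points of $\Ll$ over a small interval $I \subset e$ about $\Ll(x)$ form a smooth submanifold of $M^{G_e}$ contained in a single connected component. Since $\mu_e$ is constant on that component, $\pi_e \circ f_\mu$ is constant on $I$; equivalently, $f_\mu$ maps $I$ into an affine line of direction $\ker \pi_e = \mathfrak{g}_e^o$.

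This puts $T_p f_\mu(e) \subset \mathfrak{g}_e^o$ for every interior $p$. Both sides are one-dimensional, so it remains to rule out $f_\mu|_e$ being locally constant. I would do this by appealing to the $\T^2$-equivariant local model of Example~\ref{exNormalVertex}: on that model one reads off directly that $f_\mu$ is an immersion on $e$, with missing direction exactly $\mathfrak{g}_e^o$. Finally, from the preceding lemma $\mathfrak{g}_e$ is spanned over $\R$ by a primitive integral element of $\Gamma = \mathfrak{g}_{\Z}$, hence $\mathfrak{g}_e^o$ is a rational line in $\mathfrak{g}^*$, yielding the rational-slopes statement.

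The step I expect to require the most care is the ``single connected component'' claim used to invoke constancy of $\mu_e$. Globally along $e$, one either argues by connectedness of the critical submanifold over an open subedge, or propagates the local statement along $e$ via a covering argument with overlapping normal-form charts; both are routine but require spelling out the structure of $\crit(\Ll)$ above $e$.
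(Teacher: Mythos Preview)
Your argument is correct and is precisely the standard fact the paper invokes without proof: the moment map for the $G_e$-action is critical (hence locally constant) along $M^{G_e}$, forcing $f_\mu(e)$ into an affine line of direction $\mathfrak{g}_e^o$, with the immersion check supplied by the local normal form. The paper's own proof reads in its entirety ``This is standard,'' so you have simply unpacked what was left implicit.
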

\begin{proof}
This is standard.
\end{proof}

The monodromy representation gives rise to a dual graph $\Delta^*\subset\Gamma\subset\mathfrak{g}$ as follows. First define $\Delta^*$ as an abstract graph with a vertex corresponding to each face of $f_\mu(\Delta)$ and an edge connecting vertices corresponding to adjacent faces. We now show that we can canonically embed $\Delta^*$ up to a translation into $\mathfrak{g}$ so that the vertices land in $\mathfrak{g}_{\Z}=\Gamma$. More precisely, under this embedding, the edge connecting faces $\alpha,\beta$ will be a primitive generator of the annihilator of the edge in $\Delta$ separating the two faces. For this associate an element of $\Gamma$ with each directed edge in $\Delta^*$ as follows. Let $W\subset B$ be a hypersurface which contains $\Delta$ and maps diffeomorphically to $\R^2$ under $f_\mu$. Pick a base-point $b_0\in B_{reg}$ and an element $g$ of $\Lambda^*_{b_0}$ whose projection to $\Lambda^*_{b_0}/\Gamma$ is a primitive generator. For $\alpha\in\Delta^*$ denote by $F_{\alpha}$ the corresponding face. To the directed edge $\alpha\beta$ associate the loop $\gamma_{\alpha\beta}$ based at $b_0$ going through $F_{\alpha}$ and back through $F_{\beta}$. Let
\[
v_{\alpha\beta}:=(\rho(\gamma_{\alpha\beta})-\id)g\in\Gamma.
\]
We have
\begin{equation}\label{eqSymmVert}
v_{\alpha\beta}=-v_{\beta\alpha}.
\end{equation}
Also, if $\alpha,\beta,\gamma$ are vertices corresponding two a triple of faces meeting at a vertex in $\Delta$, we have the relation
\begin{equation}\label{eqCocycle}
v_{\alpha\beta}+v_{\beta\gamma}+v_{\gamma\alpha}=0.
\end{equation}
But trivalence of $\Delta$ implies that $\Delta^*$ is a triangulation. Thus for any loop in $\Delta^*$, the sum of the vectors $v_{\alpha\beta}$ along the loop vanishes. We can now embed $\Delta^*$ into $\mathfrak{g}$ as follows. Pick a spanning tree $T$ in $\Delta^*$ and pick a vertex $\alpha_0$  to be the root. Map $\alpha_0$ to $0$, and map each vertex $\beta_0$ the sum of the vectors $v_{\alpha\beta}$ along the directed path in $T$ connecting $\alpha_0$ to $\beta_0$. The relations above show this is independent of the choice of $T$.

The choices made in constructing $\Delta^*$ have the following effect. Changing the $\alpha_0$ amounts to a translation of $\Delta^*$ and thus will be of no concern to us. Changing either the generator of $\Lambda^*_{b_0}/\Gamma$ or the connected component of $B\setminus W$ containing $b_0$ has the effect of multiplying $\Delta^*$ by $-1$. The following Lemma singles out a choice and summarizes the above discussion.
\begin{figure}
\includegraphics[scale=0.8]{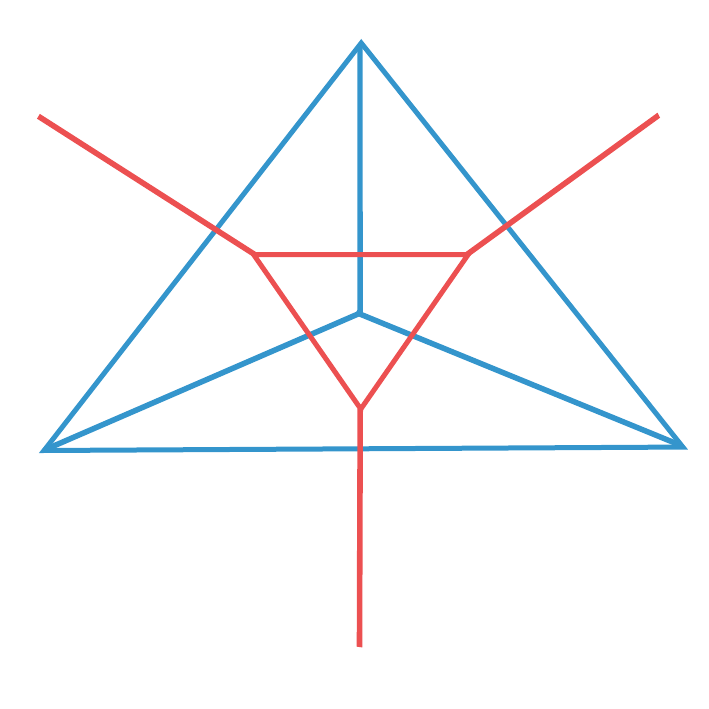}\label{FigDual}
\caption{The diagram $\Delta$ (red), and its dual $\Delta^*$ (blue). }
\centering
\end{figure}
\begin{lm}\label{lmDualGraph}
For any $\alpha,\beta\in\Delta^*$ the monodromy of $g_0$ for the loop based at $b_0$ around the dual edge which passes through the face $\beta$ and then through $\alpha$ is, given by $g_0\mapsto g_0+(\beta-\alpha)$. Moreover, $g_0$ can be chosen to satisfy the following. For any $\alpha\in\Delta^*$ corresponding to the face $F$, the vectors to $\alpha$ from its adjacent vertices generate the dual cone to $F$ over $\Z$. The latter can be characterized as the set of elements $\gamma\in\Gamma$ such that either $\gamma=0$ or for any sequence $w_n\in F$ we have $\langle \gamma,w_n\rangle \to\infty$  if the distance of $w_n$ to the closed set  $\partial F$ goes to infinity. Here $\langle\cdot,\cdot\rangle$ denotes the natural  pairing $\mathfrak{g}\times \mathfrak{g}^*\to\R$ and the distance is the one induced by some norm on $\mathfrak{g}^*\simeq\R^2$ with respect to some inner product. 
\end{lm}

\begin{rem}
The significance of the characterization of the dual cone to a face given in the Lemma will be made clear during  the discussion at the end of \S\ref{subsecLaurent} leading to the proof of Theoren~\ref{tmLaurent0}]. See in particular Lemma \ref{lmPositivCone}.
\end{rem}
\begin{proof}
Pick a basis of $\mathfrak{g}$ to identify it with $\mathfrak{g}^*$. Then the edges of $\Delta^*$ are orthogonal to the edges of $\Delta$. The claim is that after possibly multiplying by $-1$, the directed edges emanating from any $v\in\Delta^*$ corresponding to a face $F$, point into $F$. We have already observed that $\Delta^*$ is a triangulation of a polygon. For each triangle the vectors are either all point inwards or all outwards by \eqref{eqCocycle}. If they all point inwards for one vertex, they do for all vertices by \eqref{eqSymmVert}.

To see the characterization of the dual cone, note that the distance from $\partial F$ to a sequence $w_n$ of points of $F$ goes to infinity precisely when the distance to each component of $\partial F$ goes to infinity. It is thus clear that the covectors whose value on all such sequences goes to infinity are precisely the non-negative non-trivial combinations of the inward pointing normals to the components of $\partial F$ 
\end{proof}

For each vertex $v\in\Delta$ let $\Delta^*_v\subset V$ be the triangle in formed by the vertices corresponding to the face containing $V$.
\begin{lm}
$\Delta^*_v$ has area $1/2$.
\end{lm}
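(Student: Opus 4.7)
The plan is to reduce to the standard model of Example~\ref{exNormalVertex} via the equivariant local normal form for semi-toric singularities cited immediately after that example. First I would use this normal form to identify a $\T^2$-invariant neighborhood of $\Ll^{-1}(v)\cap\crit(\Ll)$ with a neighborhood of the origin in $\C^3\setminus\{z_1z_2z_3=1\}$ endowed with the action of Example~\ref{exNormalVertex}; under this identification the three edges of $\Delta$ meeting at $v$ correspond to the three coordinate axes, and the three faces of $\Delta$ containing $v$ correspond to the three faces of the associated standard toric diagram. The triangle $\Delta^*_v$ therefore has vertices dual to this triple of faces.

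Next, in the standard model the $T^2$-weights on the coordinates $z_1,z_2,z_3$ are $w_1=(1,0)$, $w_2=(0,1)$, $w_3=(-1,-1)\in\mathfrak{g}^*_{\Z}$, so the primitive generator of the stabilizer of the edge lying along the $z_i$-axis is the primitive lattice vector in $\ker w_i\subset\mathfrak{g}$, namely $(0,1)$, $(1,0)$, and $(1,-1)$ respectively. Any pair from this triple has $2\times 2$ determinant equal to $\pm 1$, hence forms a $\Z$-basis of $\Gamma=\mathfrak{g}_{\Z}$; this is the smoothness of the standard vertex.

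By the monodromy lemma preceding Lemma~\ref{lmDualGraph}, each vector $v_{\alpha\beta}$ associated to a directed edge of $\Delta^*_v$ is a primitive generator of the stabilizer lattice of the dual edge in $\Delta$. Thus the three directed edges of $\Delta^*_v$ are, up to sign, exactly the three generators listed above, with signs adjusted by the cocycle relation~\eqref{eqCocycle} so that the signed sum around the triangle is zero. Consequently two consecutive sides of $\Delta^*_v$ form a lattice basis of $\Gamma$, and the area of $\Delta^*_v$ with respect to the lattice-normalized volume form on $\mathfrak{g}\cong\R^2$ equals $\tfrac{1}{2}|\det|=\tfrac{1}{2}$.

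The main obstacle is the careful invocation of the equivariant local normal form so that the abstract monodromy data and the explicit weights of the standard model are identified correctly; in particular one must check that the natural integral lattices on $\mathfrak{g}$ and $\mathfrak{g}^*$ coming from the global $\T^2$ action on $M$ agree under the normal form identification with those of the standard $\C^3$ action. Once this is secured, everything else reduces to the elementary observation that any two elements of $\{(0,1),(1,0),(1,-1)\}$ give a $\Z$-basis of $\Z^2$.
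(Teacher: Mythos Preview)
Your proof is correct and takes a more explicit route than the paper's. The paper gives a one-line argument: since the sides of $\Delta^*_v$ are all primitive, the triangle can be mapped to the standard $2$-simplex by an element of $SL(2,\Z)$, hence has area $1/2$. You instead invoke the equivariant local normal form at the vertex (cited immediately after Example~\ref{exNormalVertex}) to reduce to the standard $\C^3$ model, compute the three stabilizer generators as $(0,1),(1,0),(1,-1)$ up to sign, and verify directly that any two form a $\Z$-basis of $\Gamma$. Your approach has a genuine advantage: the implication ``three primitive lattice vectors summing to zero $\Rightarrow$ area $1/2$'' is \emph{false} in general (for instance $(2,1),(-1,1),(-1,-2)$ are primitive, sum to zero, and span a triangle of area $3/2$), so the paper's one-liner tacitly uses the type-$(1,2)$ vertex hypothesis, and your normal-form reduction is exactly what makes that input explicit. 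Your concern about matching the integral lattices under the normal-form identification is not a real obstacle: the $\T^2$-equivariance of the symplectomorphism identifies $\Gamma\subset\mathfrak{g}$ with the standard $\Z^2$ automatically, since these lattices are the kernels of the respective exponential maps.
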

\begin{proof}
This follows since the sides of $\Delta^*_v$ are all primitive. Thus, they can be mapped to the standard $2$-simplex by an element of $SL(2;\Z)$.
\end{proof}


\begin{cy}\label{corMuSmooth}
$f_{\mu}(\Delta)$ is a smooth tropical curve.
\end{cy}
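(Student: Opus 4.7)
The plan is to verify the three conditions defining a smooth tropical curve in $\R^2$: rationality of slopes, the balancing condition at each vertex, and unimodularity of the dual subdivision (each triangle in the dual has lattice area $1/2$).

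First I would recall that rationality of the slopes of $f_\mu(\Delta)$ is already established in the lemma asserting that the tangent to $f_\mu(e)$ is the annihilator $\mathfrak{g}_e^o$ of an integral sub-Lie algebra $\mathfrak{g}_e \subset \mathfrak{g}$. Along each edge $e$ of $\Delta$ I would attach the primitive integer vector in $\mathfrak{g}^* \supset f_\mu(B)$ spanning this annihilator, oriented so that it points away from the relevant vertex; this is the weight data for the tropical structure.

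Next I would check the balancing condition at each vertex $p\in\Delta$. By Lemma \ref{lmDualGraph} the three directed edges of $\Delta^*$ emanating from the vertex $\alpha$ dual to a face meeting $p$ are primitive generators of the dual cone to that face, and the triangle $\Delta^*_v$ sits in the plane $\mathfrak{g}$. The three edges of $f_\mu(\Delta)$ incident to $f_\mu(p)$ are, up to sign and a rotation by $90^\circ$, the edges of the dual triangle $\Delta^*_v$. Balancing of the outgoing primitive weights at $f_\mu(p)$ is then exactly the vanishing relation \eqref{eqCocycle}, namely $v_{\alpha\beta}+v_{\beta\gamma}+v_{\gamma\alpha}=0$, after applying the symplectic duality identification and using \eqref{eqSymmVert} to align orientations. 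So balancing is immediate from the cocycle identity.

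Finally, the smoothness condition for a tropical curve is equivalent to requiring that the dual subdivision of the Newton polygon be unimodular, i.e.\ each triangle have lattice area $1/2$. This is precisely the content of the preceding lemma asserting $\mathrm{area}(\Delta^*_v)=1/2$. Putting these three ingredients together gives the corollary. The only mildly subtle point, which I would be careful to state explicitly, is the sign/orientation identification between the primitive edge weights of $f_\mu(\Delta)$ in $\mathfrak{g}^*$ and the primitive edges of $\Delta^*$ in $\mathfrak{g}$ fixed by Lemma \ref{lmDualGraph}; once this is in place, balancing and unimodularity fall out of the already-established structure of $\Delta^*$ with essentially no further computation.
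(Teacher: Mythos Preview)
Your proposal is correct and follows precisely the reasoning the paper intends: the corollary is stated in the paper without proof, as an immediate consequence of the preceding lemma that each dual triangle $\Delta^*_v$ has area $1/2$, together with the earlier rationality lemma and the cocycle relation \eqref{eqCocycle}. Your write-up simply makes explicit the three ingredients (rational slopes, balancing via \eqref{eqCocycle} under the $90^\circ$ identification noted in the proof of Lemma~\ref{lmDualGraph}, and unimodularity from the area-$1/2$ lemma) that the paper leaves implicit.
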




We have shown that any simple semi-toric SYZ fibration gives rise to a smooth tropical curve. The converse construction is described in the next subsection.

\subsection{Toric Calabi-Yau manifolds and the Gross fibration}\label{subsecTCY}
\begin{df}
A \textit{toric Calabi-Yau manifold} is a symplectic manifold constructed as follows. Let $k\in\mathbb{N}$ and let $G'\simeq\T^n$ be a closed subtorus of $\T^{n+k-1}$ embedded in $SU(n+k)$ acting on $\C^{n+k}$ as the group of diagonal matrices. Denote by $\mu_{G'}:\C^{n+k}\to \mathfrak{g}^{'*}$ the moment map and let $c\in\mathfrak{g}^{'*}$ be a regular value of $\mu_{G'}$. Let  $X:=\mu_{G'}^{-1}(c)/G'$. X carries an action $G''=\T^{n+k}/G'$.
\end{df}
The assumption $G'\hookrightarrow SU(n)$ implies that the canonical bundle of $X$ is trivial. Indeed, there is an induced nowhere vanishing complex volume form on $X$ defined as follows. Let $\Omega_0=dz_1\wedge\dots \wedge dz^{n+k}$ be the standard holomorphic volume form on $\C$. Pick a basis $g_1,...,g_n$ of $\mathfrak{g}'$. Let
\[
\overline{\Omega}:=\iota_{g_{1},...,g_{n}}\Omega_0.
\]
Then $\overline{\Omega}$ descends to a holomorphic volume form $\Omega$ on the quotient $X$.

The embedding $G'\hookrightarrow SU(n)$ implies that $X$ is non-compact. Indeed, let $v=\sum e_i\in\R^{n+k}$. Then identifying the Lie algebra of the standard torus with $R^{n+k}$ we have $\langle u,v\rangle=0$ for any component $u$ of $\mu_{G'}$. Thus, identifying $\R^{n+k}$ with it dual, $\mu_{\T^{n+k}}(\mu_{G'}^{-1}(c))$ contains an affine ray parallel to the one generated by $v$.

Let $G\subset G''=\T^{n+k}/G'$ be the $n+k-1$-dimensional torus which preserves the complex volume $\Omega$. The \textit{toric diagram} of $X$ is the image under $\mu_G$ of the $1$-dimensional orbits of $G=\T^{n+k}/G'$.

$X$ is also a complex variety since it can be realized as a GIT quotient. Denote its complex structure by $J$. On $X$ there is a globally defined function
\begin{equation}\label{eqStLG}
f:[z_1,\ldots,z_{n+k}]\mapsto z_1\cdot\ldots \cdot z_{n+k},
\end{equation}
which is well defined since $G'$ preserves complex volume.  Picking any regular value $c$ of the function $f$, we obtain an SYZ fibration by considering $M:=X\setminus f^{-}(c)$ and $H_B=\ln\|c-f\|$ \cite{Gross}. We refer to this as the Gross fibration.
\begin{tm}\label{tmStGross}
Let $X$ be toric Calabi-Yau threefold. Then $M=X\setminus D$ with the gross fibration $\Ll:=(\mu_G,H_B)$ is a semi-toric SYZ fibration with a section.
\end{tm}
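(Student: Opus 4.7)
The plan is to verify the five conditions of Definition \ref{dfSTSYZ} for $\Ll = (\mu_G, H_B)$ on $M = X \setminus f^{-1}(c)$. The first step is to locate $\crit(\Ll)$. Since $\mu_G$ is the moment map for the $G \simeq \T^2$-action preserving $\Omega$, its critical set is the toric locus in $X$ where $G$ has non-trivial isotropy. The function $H_B = -\ln\|c-f\|$ is critical exactly where $df$ vanishes, which by the product formula \eqref{eqStLG} happens on the toric boundary. Hence $\crit(\Ll) \subset$ (toric boundary) $\cap\, M$, and moreover since $f \equiv 0$ on the toric boundary, one has $H_B \equiv \ln|c|$ there. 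Therefore the skeleton $\Delta$ sits inside the hyperplane $\{H_B = \ln|c|\} \subset B$ and is identified, under the projection $(\mu_G, H_B) \mapsto \mu_G$, with the toric diagram of $X$. By the assumption that $X$ is a smooth toric CY threefold, this diagram is a trivalent planar graph whose edges come from 1-dimensional $G$-orbits and vertices from $G$-fixed points.

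Next I would verify the local models at edges and vertices by the equivariant symplectic slice theorem. At a point of a 1-dimensional $G$-orbit, the stabilizer is a circle $S^1 \subset G$, which by the Calabi--Yau condition $G \hookrightarrow SU(n+k)$ acts on its slice with weights summing to zero; since the slice is $\C^2$ and the other $G$-factor acts freely along the orbit, the slice action has weights $(1,-1)$. Together with the factor of $\C^*$ from the free $G$-direction and the presence of $H_B = \ln\|c - z_1 z_2 w\|$, this gives a singular fiber of type $(2,2)$ which is non-degenerate in the sense of \cite{MirandaZung}. At a toric fixed point, one gets a $\T^2$-equivariant symplectic neighborhood modeled on $\C^3$ with the two $S^1$-factors acting with weights $(1,0,-1)$ and $(0,1,-1)$; together with the globally defined $H_B$ this reproduces Example \ref{exNormalVertex} precisely, giving a singular fiber of type $(1,2)$. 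Condition (iii) on the span of tangent spaces at fixed points is then clear since the three coordinate axes in the local model $\C^3$ are $G$-invariant submanifolds whose tangent spaces span $T_p M$.

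For condition (iv), the required codimension-1 sub-sheaf $\Gamma \subset \Lambda^*$ is the lattice spanned by $dH_{F_1}, dH_{F_2}$, where $(H_{F_1}, H_{F_2}) = \mu_G$. These are globally defined exact $1$-forms, so $\Gamma$ has trivial monodromy. For condition (v), the Lagrangian section $\sigma$: I would follow the construction of \cite{AurouxSYZ,Gross} and take the image under the GIT quotient $\mu_{G'}^{-1}(c)/G'$ of a suitable slice to the $G''$-action in $\mu_{G'}^{-1}(c)$ given by a specific choice of argument conditions on the homogeneous coordinates $z_i$. Concretely, one can fix $\arg(c - f)$ and one further angle to produce a Lagrangian lift of any $G''$-orbit; this gives a multisection which after a canonical choice of sheet becomes a section, and one can arrange it to avoid the critical locus (since $\crit(\Ll)$ projects to the codimension-$1$ skeleton $\Delta$ in $B$, while $\sigma$ is only required to miss its preimage, which it does by construction away from the toric boundary).

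The main obstacle is condition (v): producing a global Lagrangian section disjoint from $\crit(\Ll)$, rather than a multisection or a section with monodromy defects. The natural sections coming from the real locus of $X$ typically pass through toric fixed points, so one must work with the angle-type section described above and verify its globality using the triviality of the monodromy of $\Gamma$. Once this is done, the verification of the other four conditions reduces to a combination of standard toric and moment-map geometry together with the local computation already embodied in Example \ref{exNormalVertex}.
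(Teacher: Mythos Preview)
Your overall plan is sound and largely matches the paper's, but you have inverted the emphasis on where the real work lies. You flag the existence of a Lagrangian section (condition (v)) as ``the main obstacle'' and spend the most space on it, while disposing of the non-degeneracy of the edge singularities in a single clause (``which is non-degenerate in the sense of \cite{MirandaZung}''). The paper does exactly the opposite: it takes the section for granted (implicitly deferring to \cite{Gross}) and devotes the entire written proof to the non-degeneracy check.

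Concretely, the slice argument you give pins down the linear $G$-action at an edge point (weights $(1,-1)$ on the slice), but non-degeneracy in the Miranda--Zung sense is a statement about the pair $(d^2 H_{F}, d^2 H_B)$ generating a Cartan subalgebra of $sp(V)$, equivalently that some linear combination has four distinct eigenvalues. Your sentence ``together with \ldots\ the presence of $H_B = \ln\|c - z_1 z_2 w\|$, this gives a singular fiber of type $(2,2)$ which is non-degenerate'' asserts this without argument; knowing the $G$-weights alone does not determine the Hessian of $H_B$ in the slice. The paper handles this by observing that the Cartan condition is stable under symplectic reduction, so it suffices to verify it upstairs in $\C^{n+k}$ for the explicit pair $H_1 = |z_1|^2 - |z_2|^2$ and $H_2 = \mathrm{Re}(z_1 z_2 \cdots z_n)$, which is then a direct linear-algebra computation. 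You should either do that computation or cite a reference where the focus-focus non-degeneracy of this specific pair is established; otherwise this step is a genuine gap.

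Your treatment of the remaining conditions (the trivalent shape of $\Delta$, the identification with the toric diagram, the type $(1,2)$ vertex via Example~\ref{exNormalVertex}, the span condition at fixed points, and the sub-sheaf $\Gamma$ from $d\mu_G$) is fine and more explicit than the paper's, which simply does not comment on them.
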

\begin{proof}
We start by studying the singularities of $\Ll$. These correspond to the fixed points of the torus action $\mu$ which occur when $\mu$ is on the toric diagram (the image of the $1$-skeleton of the moment polyhedron under the projection which forgets the scaling action) and $H_B=\ln c$. This gives $\Delta$ which is a trivalent graph, projecting to the toric diagram when forgetting $H_B$. Topologically, the singular fiber over a generic point of the graph is a of type $(2,2)$. Namely, at an edge $e$ it is the product of an $S^1$, generated by an $S^1$ action which is transverse to $e$, with a nodal torus generated by $H_B$ and the circle action which is fixed on $e$. We need to show that it is non-degenerate. This means that, denoting by $V\subset T_xM$ the fixed subspace at sum singular point $x$, the sub-algebra of $sp(V)$ generated by $d^2H_B,d^2H_F$ is Cartan. This is equivalent \cite{BoFo} to them being linearly independent and that there exists a linear combination $\lambda d^2H_B+\mu d^2H_F$ with four distinct eigenvalues. This property is stable under symplectic quotient so it suffices to verify this condition for the pair of function $H_1(z_1,\dots,z_n)=|z_1|^2-|z_2|^2$ and $H_2(z_1,\dots,z_n)=\re(z_1z_2\dots z_n)$. The property is invariant under the action of the torus, so we can verify this at a point for which $\im(z_2\dots z_n)=0$. The claim is now verified by a straightforward computation.
\end{proof}
\subsection{The local normal form}
We conclude our discussion of basic geometric setup with a local description near the critical points $\crit(\Ll)$. We will use this to show in Theorem \ref{tmLocForm} that while (semi-)globally there is an infinite dimensional space of symplectically inequivalent germs of toric SYZ fibrations with the same skeleton $f_\mu(\Delta)$, an equivariant (with respect to $(H_{F_1},H_{F_2})$) neighborhood of $\crit(\Ll)$ is determined up to equivariant symplectomorphism by the skeleton. This will be important in the construction of LG potentials.
\begin{tm}[\textbf{Equivariant local normal form}]\label{tmLocForm}
Suppose $(M_i,\Ll_i)$ are simple semi-toric SYZ fibrations with discriminant loci $\Delta_i$, for $i=1,2$ and suppose that $f_{\mu_1}(\Delta_1)$ can be mapped to $f_{\mu_2}(\Delta_2)$ by an element $A$ of $SL(2,\Z)$. Then there are neighborhoods $V_i$ of $f_{\mu_i}(\Delta_i)$ in $\mathfrak{g}$ and $U_i$ of $\Ll_i^{-1}(\Delta_i)\cap \crit(\Ll_i)$ in $M_i$ projecting onto $V_i$ and a commutative diagram
\[
\xymatrix{
U_1\ar[d]\ar[r]& U_2\ar[d] \\
V_1 \ar[r] &V_2,
}
\]
where the upper horizontal map is an equivariant symplectomorphism and the lower one is a the linear map $A$.
\end{tm}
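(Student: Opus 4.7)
The plan is to reduce everything to the equivariant symplectic local normal form (Marle--Guillemin--Sternberg), which says that an equivariant tubular neighborhood of an isotropic $G$-orbit is determined up to equivariant symplectomorphism by the stabilizer and the symplectic slice representation. The critical set $\crit(\Ll_i)\cap\Ll_i^{-1}(\Delta_i)$ stratifies into isolated fixed points (lying over the vertices of $\Delta_i$, by the $(1,2)$ condition) and $1$-dimensional $G$-orbits (lying over the interior of the edges, by the $(2,2)$ condition). First I would treat each stratum in isolation and show that the relevant local symplectic invariants are read off from $f_{\mu_i}(\Delta_i)$, so that the hypothesis that $A\in SL(2,\Z)$ carries $f_{\mu_1}(\Delta_1)$ to $f_{\mu_2}(\Delta_2)$ identifies the local models on the two sides.

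For a fixed point $x$ over a vertex $v$, the stabilizer is all of $G$ and the slice is $T_xM$ with a linear $G$-action whose three weights $w_1,w_2,w_3\in\mathfrak{g}_\Z^*=\Gamma$ are primitive and sum to zero (this is $c_1(M)=0$ combined with the fact that $G\hookrightarrow SU(3)$ at a fixed point). Moreover, each $w_i$ is the primitive normal to one of the three edges of $f_\mu(\Delta)$ emanating from $v$, since these edges parametrize directions in $\mathfrak{g}^*$ whose annihilator is a stabilizer subgroup of an adjacent $1$-dim orbit. Thus the weights, and therefore the standard local model at $x$, are determined by $f_{\mu_i}(\Delta_i)$ near $v$. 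For an orbit $G\cdot y$ over the interior of an edge $e$, the stabilizer $G_y\cong S^1$ is the annihilator in $G$ of the tangent direction of $f_\mu(e)$, and the symplectic slice $V_y$ is a $4$-dimensional $G_y$-representation whose two weights must sum to $0$ (Calabi-Yau), hence are $(+1,-1)$ up to sign; the $G$-equivariant model is then $G\times_{G_y}(\mathfrak{g}_y^o\times V_y)$ and depends only on the edge and its direction. In both cases $A$ provides a canonical identification of the local models on $M_1$ with those on $M_2$ intertwining $\mu_1$ with $\mu_2$ (up to a translation absorbed in the choice of $V_i$), giving equivariant symplectomorphisms on disjoint tubular neighborhoods of each vertex fiber and each connected component of the edge stratum.

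The remaining and main step is the patching. A neighborhood of each vertex stratum overlaps the neighborhoods of its three incident edge strata, and on each overlap we now have two equivariant symplectomorphisms $\phi_v,\phi_e\colon U\to U'$ that agree on the subcritical stratum. Both cover the same linear map $A$ on the bases, and both intertwine the moment map, so $\phi_e\circ\phi_v^{-1}$ is an equivariant symplectomorphism of an open subset of the standard model to itself that is the identity on the $1$-dim orbit over $e$. A standard equivariant Moser argument, relative to that orbit, then produces an equivariant isotopy reducing $\phi_v$ to $\phi_e$ on a smaller neighborhood; doing this for all edges incident to $v$ (which are $G$-equivariantly disjoint after shrinking) lets us replace the local vertex identification by one agreeing with all adjacent edge identifications. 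Carrying this out inductively over the finite graph $\Delta$ produces the desired neighborhoods $U_i\supset V_i$ and the global equivariant symplectomorphism covering $A$.

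The principal obstacle I anticipate is the patching step: one must verify both that the equivariant Moser isotopies at different vertices can be chosen compatibly (which uses the disjointness of edge strata inside any sufficiently small tube around $\crit(\Ll)$) and that shrinking to eliminate overlaps does not lose the neighborhoods $V_i$ of all of $f_{\mu_i}(\Delta_i)$. Both points are standard but require care; the rest is essentially bookkeeping combinatorial data off the tropical curve $f_\mu(\Delta)$, which by Corollary~\ref{corMuSmooth} is smooth and hence has the local combinatorial structure the local normal form needs.
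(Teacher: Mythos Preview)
Your proposal is correct and follows essentially the same strategy as the paper: use the equivariant local normal form at fixed points (weights determined by the combinatorics of $f_\mu(\Delta)$), then handle the $1$-dimensional orbits, then patch. The paper organizes the extension step a bit differently---rather than building independent edge-neighborhood models and Mosering them to the vertex models, it extends the vertex symplectomorphism along the $1$-dimensional orbits by first defining an equivariant moment-map-preserving diffeomorphism on the orbits themselves, then extending normally via the slice theorem, and finally invoking equivariant Darboux---but this is a cosmetic difference in how the patching is executed. One small caution: your justification that the three weights at a vertex sum to zero via ``$c_1(M)=0$'' is delicate in the paper's logical order (the proof of $c_1(M)=0$ in Theorem~\ref{tmBasicTop} defers to the embedding into a toric Calabi--Yau, which in turn uses the present theorem); it is cleaner to read the weights directly off the tropical balancing of $f_\mu(\Delta)$, which is already available from the monodromy analysis preceding Corollary~\ref{corMuSmooth}.
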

\begin{proof}
First observe that there exists a local equivariant symplectomorphism as in the theorem near each fixed point $x$. Indeed, germs of such neighborhoods are classified up to equivariant isomorphism by the weights of the torus action $G_x$ on $T_xM$. But these weights are determined by the requirement on the critical points in the definition of a semi-toric SYZ fibration. We extend this to a diffeomorphism in a neighborhood of the one dimensional orbits as follows. First, we define the map on the union of one-dimensional orbits equivariantly and in a way that preserves the moment map.   The condition on $f_\mu(\Delta)$ guarantees that this is possible. This guarantees that the $\omega_i$ one restricted to the tangent directions are intertwined. We now extend to a diffeomorphism $\phi$ that further intertwines the symplectic forms along these $1$-dimensional orbits in the normal directions. This can done locally by the slice theorem and any two such local diffeomorphism can be patched together equivariantly without changing anything along the $1$-dimensional orbits. The claim now follows by the equivariant Darboux theorem.
\end{proof}
The above local normal form involves only the $2$-torus action. We will also be interested in another normal form which also involves the local behavior of the third Hamiltonian $H_B$ near a $1$-dimensional orbit. We refer the reader to \cite{Castano04} for the proof.
\begin{tm}\label{tmLocNorFo}\cite{Castano04,MirandaZung}
Let $\mathcal{O}$ be a $1$-dimensional orbit of $\Ll$. Let $V=D^4\times (0,1)\times S^1$ with canonical coordinates $(x_1,y_1,x_2,y_2,r,\theta)$. Let
\begin{equation}\label{eqLocEdgeNormForm}
 q_1=x_1y_1+x_2y_2,\quad q_2=x_1y_2-x_2y_1\quad,q_3=r.
\end{equation}
 Then there is a neighborhood $U$ of $\mathcal{O}$, and a symplectic embedding
 \[
 \psi_{\mathcal{O}}:U\to V,
 \]
 which sends fibers of $\Ll$ to fibers of the map $\Ll_0=(q_1,q_2,q_3)$.
\end{tm}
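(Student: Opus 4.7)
The statement is essentially a translation of the Miranda--Zung normal form theorem for non-degenerate singular orbits of integrable Hamiltonian systems into our setting, and I would prove it by direct appeal to \cite{MirandaZung}, with the refinement due to \cite{Castano04} for the global structure along $\mathcal{O}$. By Definition~\ref{dfSTSYZ}(a), the one-dimensional singular orbits of $\mathcal{L}$ are non-degenerate in the Miranda--Zung sense, so the hypotheses of their theorem apply to any such $\mathcal{O}$.

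First I would identify the Williamson type of the singularity along $\mathcal{O}$. Because $\dim \mathcal{O} = 1$ and the ambient dimension is $6$, the singular block has rank $1$ and corank $2$; the remaining direction is regular, yielding an action-angle factor $(r,\theta) \in (0,1)\times S^1$ with canonical Hamiltonian $q_3 = r$. The assumption that the singular fiber over the corresponding edge of $\Delta$ is of type $(2,2)$ together with the positive-type hypothesis rules out the hyperbolic and elliptic cases for the corank-$2$ block, leaving only the focus-focus type. One way to see this is that the local topology of the singular fiber near $\mathcal{O}$ is $S^1$ times a nodal torus, which is exactly the topological model produced by crossing a focus-focus singularity with a regular orbit; alternatively, one can argue from the equivariant local model at an adjacent vertex, where the weights of the $\T^2$-action restricted along the edge determine the focus-focus type.

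With the Williamson type identified as (focus-focus)$\oplus$(regular), the Miranda--Zung theorem produces a symplectomorphism from a neighborhood $U$ of $\mathcal{O}$ onto a neighborhood of $\{0\}\times(0,1)\times S^1$ in the standard model $V = D^4 \times (0,1) \times S^1$, equipped with the symplectic form $dx_1\wedge dy_1 + dx_2\wedge dy_2 + dr\wedge d\theta$. The canonical commuting Hamiltonians for a focus-focus block are exactly $q_1 = x_1 y_1 + x_2 y_2$ and $q_2 = x_1 y_2 - x_2 y_1$, giving $\mathcal{L}_0 = (q_1, q_2, q_3)$ as displayed in \eqref{eqLocEdgeNormForm}.

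The main subtlety I would expect is the distinction between producing a symplectomorphism that intertwines $\mathcal{L}$ with $\mathcal{L}_0$ on the nose versus one that merely identifies their fiber structures. Only the weaker conclusion is claimed, and it follows immediately: the pair $(q_1, q_2)$ together with $q_3$ generates the same Lagrangian foliation on $V$ as any other choice of Poisson-commuting integrals for the focus-focus$\oplus$regular type, so after applying the Miranda--Zung symplectomorphism the fibers of $\mathcal{L}$ are carried to level sets of $(q_1, q_2, q_3)$, possibly composed with a diffeomorphism of the base, which is absorbed into the statement "fibers of $\mathcal{L}$ to fibers of $\mathcal{L}_0$".
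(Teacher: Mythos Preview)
Your proposal is correct and matches the paper's approach: the paper gives no proof of its own for this theorem, simply stating ``We refer the reader to \cite{Castano04} for the proof'' and tagging the statement with the citations \cite{Castano04,MirandaZung}. Your write-up supplies exactly the verification one needs to invoke those results---identifying the Williamson type as focus-focus~$\oplus$~regular from the type~$(2,2)$ and non-degeneracy hypotheses in Definition~\ref{dfSTSYZ}---and correctly notes that only the fiber-preserving (not integral-intertwining) conclusion is claimed.
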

We call $U$ a normal neighborhood of $\mathcal{O}$ and the maps $(\psi,\Ll_0)$ a normal form near $\mathcal{O}$. 
Examining the proof in \cite{MirandaZung} we strengthen this as follows.
\begin{lm}\label{lmEdgeLocalForm}
For each edge $e$ of $\Delta$ there a neighborhood $U_e$ of $\crit(\Ll)|_e,$  a symplectic ball $D^4\subset \R^4,$ and a symplectomorphism $\psi_e$ such that $\psi_e$ symplectically embeds $U_e$ in $D^4\times f_\mu(e)\times S^1$ intertwining $\Ll|_{U_e}$ with $\Ll_0$. Moreover, $\psi_e$ can be made to agree on any closed interval $e'\subset e$ with any normalizing $\psi$ already defined on a neighborhood of the endpoints of $e'$.
\end{lm}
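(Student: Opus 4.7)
The plan is to bootstrap from the pointwise normal form of Theorem~\ref{tmLocNorFo} by covering the compact $1$-dimensional critical locus over a closed sub-interval of $e$ by finitely many normal neighborhoods and patching them together using a Moser-type deformation that is tangent to the fibration.

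First I would fix a closed sub-interval $e'\subset e$ (so that $\crit(\Ll)|_{e'}$ is compact) and choose a finite cover of $\crit(\Ll)|_{e'}$ by normal neighborhoods $U_1,\dots,U_N$ with normalizing embeddings $\psi_i : U_i \to D_i^4 \times I_i \times S^1$ coming from Theorem~\ref{tmLocNorFo}, arranged so that the base intervals $I_i\subset f_\mu(e)$ form a chain of pairwise overlaps $I_i\cap I_{i+1}\ne\emptyset$ covering $f_\mu(e')$. After possibly shrinking the $U_i$ in the directions tangent to $D_i^4$ (using compactness of $\crit(\Ll)|_{e'}$), one can arrange that all $D_i^4$ are symplectomorphic to a common symplectic ball $D^4\subset\R^4$; composing with elements of $Sp(4)$ that preserve the model invariants $q_1,q_2$ allows us to replace each $D_i^4$ by this common model. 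A translation in the $r$-coordinate if necessary ensures that all $\psi_i$ cover the inclusion $f_\mu(e')\hookrightarrow\R$.

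The core step is patching $\psi_i$ and $\psi_{i+1}$ on the overlap $U_i\cap U_{i+1}$. On the overlap of the images, the transition $\tau = \psi_{i+1}\circ\psi_i^{-1}$ is a symplectomorphism of an open subset of the model $V$ which preserves the moment-like map $\Ll_0=(q_1,q_2,q_3)$ and hence sends fibers to fibers. Since the fibers are Lagrangian and the base of the fibration is simply connected locally, any such fiber-preserving symplectomorphism is generated by a fiberwise translation by a section of the period lattice combined with a Hamiltonian flow whose Hamiltonian is pulled back from the base; equivalently, it lies in the identity component of the group of symmetries of the model, and this component is contractible. One can therefore interpolate between $\tau$ and the identity through fiber-preserving symplectomorphisms $\tau_t$, generated by a time-dependent Hamiltonian $K_t$ pulled back from the base. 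Multiplying $K_t$ by a cutoff function that equals $1$ on a slightly smaller overlap and vanishes outside $U_i\cap U_{i+1}$, and using the resulting Hamiltonian flow to modify $\psi_i$ in the Moser manner, one obtains a new normalization $\tilde\psi_i$ that agrees with $\psi_{i+1}$ on a neighborhood of the right boundary of $I_i\cap I_{i+1}$ and with the original $\psi_i$ outside $U_i\cap U_{i+1}$. Induction on $i$ produces the desired global $\psi_e$ on $U_e$; the compatibility with a prescribed normalization on a neighborhood of the endpoints of $e'$ is achieved by taking $\psi_1$ and $\psi_N$ to be (restrictions of) those prescribed normalizations and leaving them untouched in the Moser step.

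The main obstacle is controlling the patching in the third paragraph: one must verify that every fiber-preserving symplectomorphism of the model which commutes with the torus action and projects to the identity on the base of $e$ is indeed generated by a Hamiltonian pulled back from the base, so that the Moser deformation stays within the class of normalizing maps. This is essentially the statement that the sheaf of germs of automorphisms of the focus-focus model along an edge is a soft sheaf, which is how the Miranda-Zung proof in \cite{MirandaZung,Castano04} proceeds; the present lemma amounts to tracking this softness across a finite cover and, for the last assertion, near prescribed boundary data, neither of which requires any new idea beyond the cutoff technique just described.
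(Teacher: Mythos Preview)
Your proposal is correct and is essentially a spelled-out version of the argument the paper defers to: the paper's entire proof is the sentence ``See the proof of Lemma 4.2 in \cite{MirandaZung},'' and what you have written is a faithful sketch of that proof, covering the edge by pointwise normal neighborhoods from Theorem~\ref{tmLocNorFo} and patching via a Moser-type isotopy generated by a base-pulled-back Hamiltonian with cutoff in the edge parameter. The only point worth making explicit is that your claim that the transition $\tau$ lies in the identity component follows because $\tau$ induces the identity on the base (it intertwines $\Ll_0$ with itself), which by the V\~u Ng\d oc classification \cite{Ngoc03} rules out the discrete sign ambiguity and the nontrivial base reparametrization, leaving only the fiberwise Hamiltonian isotopy you use.
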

\begin{proof}
See the proof of Lemma 4.2 in \cite{MirandaZung}.
\end{proof}
\begin{rem}
Observe that the local normal form is not unique. However \cite{Ngoc03}, its germ is determined up to a sign change, a map on the base which is tangent to the identity at the origin, i.e, it differs from the identity by map with derivatives at the origin vanishing to all orders, and a Hamiltonian isotopy  flowing in the fibers.
\end{rem}

\begin{tm}\label{tmNormalTCY}
For any simple semi-toric SYZ fibration $M$ there is a unique toric Calabi-Yau threefold $X_M$ up to equivariant symplectomorphism such that $f_\mu(\Delta_M)$ is the toric diagram of $X_M$. Moreover, there exists a map $\psi$ embedding a neighborhood of $Crit(\Ll_M)$ into a neighborhood of $Crit(\Ll_{X_M})$ locally intertwining $\Ll_M$ and $\Ll_{X_M}$.
\end{tm}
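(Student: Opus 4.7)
The plan splits naturally into two parts: first, I would construct the toric Calabi--Yau threefold $X_M$ from the combinatorial data of $f_\mu(\Delta_M)$ and establish its uniqueness; second, I would produce the local embedding $\psi$ by combining Theorem \ref{tmStGross} with the equivariant local normal form of Theorem \ref{tmLocForm}.

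For the construction of $X_M$, the starting point is Corollary \ref{corMuSmooth}, which tells us that $f_\mu(\Delta_M) \subset \mathfrak{g}^* \simeq \R^2$ is a smooth tropical curve, i.e.\ a rational trivalent graph with primitive integral edge directions satisfying a balancing condition at every vertex. Lemma \ref{lmDualGraph} further shows that the dual graph $\Delta^*$ is a unit-area triangulation of a lattice polygon. This is precisely the fan datum classifying toric Calabi--Yau threefolds: the primitive integer vectors emanating from the vertices of $\Delta^*$, together with the extra Calabi--Yau direction, determine a subtorus $G' \hookrightarrow \T^{n+k}$ that lands in $SU(n+k)$ (the balancing \eqref{eqCocycle} of $\Delta^*$ is exactly the $SU$-condition). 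Symplectic reduction at a generic level of the corresponding moment map produces a symplectic manifold $X_M$ whose toric diagram recovers $f_\mu(\Delta_M)$. Uniqueness up to equivariant symplectomorphism then follows from the Delzant-type classification of Hamiltonian torus actions with prescribed moment polytope.

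For the local intertwining statement, I would apply Theorem \ref{tmStGross} to $X_M$: picking a regular value of the canonical superpotential \eqref{eqStLG} gives an anti-canonical divisor $D \subset X_M$ and equips $X_M \setminus D$ with a Gross fibration $\Ll_{X_M}$ which, by that theorem, is itself a simple semi-toric SYZ fibration. By construction of $X_M$, the image $f_{\mu_{X_M}}(\Delta_{X_M})$ coincides with the toric diagram of $X_M$, which equals $f_\mu(\Delta_M)$. Theorem \ref{tmLocForm}, applied to the pair $(M,\Ll_M)$ and $(X_M \setminus D, \Ll_{X_M})$ with $A = \id$, then directly supplies neighborhoods $U_M \supset \crit(\Ll_M)$ and $U_{X_M} \supset \crit(\Ll_{X_M})$ together with an equivariant symplectomorphism $\psi \colon U_M \to U_{X_M}$ intertwining the two fibrations. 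This is the desired map.

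The main obstacle is the first part: one has to verify carefully that the abstract combinatorial data carried by $f_\mu(\Delta_M)$ and its dual triangulation $\Delta^*$ really assemble into a toric Calabi--Yau threefold, and moreover that the resulting $X_M$ is canonical. The key point is that the balancing/cocycle identity \eqref{eqCocycle} for $\Delta^*$ is equivalent to the Calabi--Yau condition $G' \subset SU(n+k)$, and that the primitivity statement in Lemma \ref{lmDualGraph} guarantees smoothness of the resulting variety. Once this construction is in place, uniqueness and the local intertwining $\psi$ are essentially immediate consequences of earlier results in the paper.
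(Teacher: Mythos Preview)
Your proposal is correct and follows essentially the same route as the paper's own proof: the paper dismisses the first half as ``a standard construction in toric geometry'' and deduces the second half as ``an immediate consequence of Theorem~\ref{tmLocForm}.'' You have simply unpacked the standard construction (via the dual triangulation $\Delta^*$ and Corollary~\ref{corMuSmooth}) and made explicit the intermediate appeal to Theorem~\ref{tmStGross} needed to equip $X_M$ with a semi-toric SYZ fibration before invoking Theorem~\ref{tmLocForm}; the paper leaves both of these implicit.
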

\begin{proof}
The first half is a standard construction in toric geometry. 
The second half is an immediate consequence of Theorem \ref{tmLocForm}.
\end{proof}

\section{Tame SYZ fibrations}\label{SecWrap}
\subsection{Overview}\label{SubsecTschRem}
In \cite{Auroux07,AAK16,CSN} Floer theory for $M$ is developed using bordered curves which are strictly holomorphic  with respect to the $J$ induced from the GIT construction of $X$. This holomorphicity implies a maximum principle which plays  a key role both in establishing the necessary $C^0$ estimates and in carrying out the wall crossing analysis.  In our approach, however, we need to consider the inhomogeneous Floer equation for which we do not have an obvious maximum principle. Beyond this technical problem, to construct Floer theory in a non-ad-hoc manner in the non-exact setting, we apply the approach of \cite{Groman15} to Floer theory on an open manifold.

The latter approach requires at its minimum that the underlying manifold $M=X\setminus D$ be geometrically bounded, and, in particular complete. For this, it is necessary that we inflate the symplectic form near $D$. It turns out, according to Theorem \ref{tmUniqueComp}, that there is a unique natural way, up to equivariant symplectomorphism, to do this.

Once that is done, the approach of \cite{Groman15} requires further that we specify a growth condition at infinity with respect to which wrapping is done.
It turns out that to get a growth condition which could induce an ``order of pole'' filtration as described in the introduction, we need that the action coordinates together with a Lagrangian section induce a uniform structure at infinity. The growth condition is that our Hamiltonians are Lipschitz in the uniform structure. This gives rise to finite dimensional increasing filtrations in wrapped Floer cohomology.

The existence of such a uniform structure translates into a simple requirement on the behavior of the semi-global invariant at infinity. An $\Ll$ satisfying this is called tame. 
Given this Lipschitz structure, there exist compatible almost complex structures on $M$ which are geometrically bounded. In appendix \ref{AppLipFloer} we spell out the details of how to obtain $C^0$ estimates for the wrapped Fukaya category with Lipschitz Floer data. Thus, a-priori, the input for the wrapped Fukaya category in general is a geometrically bounded symplectic manifold with a tame Lagrangian torus fibration. 

So far we have talked about the construction of the wrapped Fukaya category. To go beyond this  and actually compute the wrapped Fukaya category, or merely to control disk bubbling, we need Landau-Ginzburg potentials $(\pi,J)$ with $J$ that is compatible with uniform structure induced by $\Ll$ and $\sigma$. This is significantly harder. It is not known to the author whether there is a way to inflate the symplectic structure near $D$ so that the GIT complex structure remains geometrically bounded. For example, if one inflates by pulling back a form supported near $\pi(D)$, one easily constructs a sequence of loops in a fixed homology class with length going to $0$. Thus, we are forced to construct a Landau-Ginzburg potential ``by hand''. This point is responsible for the technical difficulty of Appendix \ref{AppA}.

\subsection{Uniform isoperimetry}
\begin{df}\label{dfGoBound}
An $\omega$-compatible almost complex structure $J$ is said to be \textit{uniformly isoperimetric} if the associated metric $g_J$ is complete and there are constants $\delta$ and $c$ such that $inj_{g_J}(M)<\delta$ and any loop $\gamma$ satisfying $\ell(\gamma)<\delta$ can be filled by a disc $D$ satisfying $Area(D)<c\ell(\gamma)^2.$ If $J$ is merely $\omega$-tame, we add the requirement that, for some $c>1$,
\[
\frac1c<\left|\frac{\omega(Jx,Jy)}{\omega(x,y)}\right|<c.
\]
We refer to almost complex structures satisfying the latter condition as uniformly $\omega$-tame.
\end{df}
\begin{rem}
The criterion is satisfied for $J$ compatible if $g_J$ is
\textit{geometrically bounded}, namely, it has sectional curvature bounded from above and radius of injectivity bounded from below.
\end{rem}
\begin{rem}\label{remObs}
It is clear from the definition that if $g_{J_0}$ is uniformly isoperimetric and $g_{J_1}$ is metrically equivalent to $g_{J_0}$ then $g_{J_1}$ is also uniformly isoperimetric. Metric equivalence here means there are constants $c ,C$ such that for any non-zero tangent vector $v\in TM$ we have
$$ c<\frac{g_{J_1}(v,v)}{g_{J_2}(v,v)}<C$$.
 \end{rem}

\begin{lm}\label{lmEquivCont}
Let $J_0,J_1$ be $\omega$-tame almost complex structures and write $Z=J_0J_1^{-1}$. Call $J_0$ and $J_1$ equivalent if $Z$ and $Z^{-1}$ both have bounded norm with respect to $g_{J_0}$ (and, therefore, $g_{J_1}$). 
Then any equivalence class 
is contractible.
\end{lm}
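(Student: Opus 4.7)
The plan is to construct an explicit deformation retraction of the equivalence class $[J_0]$ onto $\{J_0\}$, exploiting the pointwise contractibility of the space of $\omega$-tame linear complex structures. Without loss of generality I may assume $J_0$ is $\omega$-compatible, since the canonical polarization (described below) yields a compatible representative in any equivalence class and preserves bounded distortion.

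For the pointwise construction, recall that at each $p \in M$ the space $\mathcal{J}_p$ of $\omega_p$-tame complex structures on $T_p M$ is a contractible smooth manifold: it deformation retracts onto the subspace of $\omega_p$-compatible ACS, which is diffeomorphic to the Siegel upper half-space. A canonical path from $J(p)$ to $J_0(p)$ may be built as follows. First form the linear interpolation $A_s(p) := (1-s) J(p) + s J_0(p)$ as bundle endomorphisms, noting that $A_s$ is $\omega$-tame for every $s \in [0,1]$ since tameness is convex in $A$. Then apply a canonical polarization $\Pi$ that sends each $\omega$-tame endomorphism to an $\omega$-compatible ACS, for instance polar decomposition with respect to the symmetric part of $\omega(\cdot, A \cdot)$. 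Since $\Pi$ fixes compatible ACS, $\Pi(A_1) = \Pi(J_0) = J_0$, while a short preliminary interpolation through tame ACS joins $J = A_0$ to $\Pi(A_0) = \Pi(J)$; concatenating yields a path from $J$ to $J_0$ within $\omega$-tame ACS.

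Globally, setting $H(J, s)$ equal to the section produced by this pointwise path yields a continuous homotopy from $H(\cdot, 0) = \id$ to $H(\cdot, 1) \equiv J_0$ on $[J_0]$, continuous in the $C^\infty$ topology since the pointwise operations depend smoothly on their arguments.

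The main obstacle, and the step where the hypothesis $J \in [J_0]$ is actually used, is to verify that $H(J, s) \in [J_0]$ for every $s$, i.e., that the bounded-distortion condition with respect to $g_{J_0}$ is preserved uniformly in $s$ and $p$. Uniform bounds on $\|J\|_{g_{J_0}}$ and $\|J^{-1}\|_{g_{J_0}}$ translate to uniform two-sided spectral bounds on the symmetric part of $\omega(\cdot, A_s \cdot)$ in $g_{J_0}$-coordinates: bounded above by $\max(\|J\|_{g_{J_0}}, 1)$ and below by a positive constant depending only on $\|J^{-1}\|_{g_{J_0}}$. Standard functional calculus for bounded self-adjoint operators whose spectrum lies in a fixed interval bounded away from $0$ shows that $\Pi$ is operator-norm continuous with constants depending only on these spectral bounds. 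This gives uniform bounds on $\|H(J,s)\|_{g_{J_0}}$ and $\|H(J,s)^{-1}\|_{g_{J_0}}$ throughout $M$ and $s$, keeping the entire homotopy in $[J_0]$ and witnessing the claimed contractibility.
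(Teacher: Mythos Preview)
Your approach is correct in outline but takes a considerably more involved route than the paper's. The paper parametrizes $\omega$-tame almost complex structures $J'$ by applying the Cayley transform $\psi(Z)=(I-Z)(I+Z)^{-1}$ to $Z=J_0J'^{-1}$; this identifies the space of tame structures with a convex subset of the open unit ball in endomorphisms (cf.\ \cite[Prop.~2.5.13]{MS}), and the key observation is that the equivalence condition (bounded $Z$ and $Z^{-1}$) translates exactly into $\|\psi(Z)\|<1-\epsilon$ for some uniform $\epsilon>0$. The equivalence class is then the intersection of two convex sets, hence convex, and contractibility follows by a single linear homotopy in the parametrizing ball --- no polarization, no functional calculus, no separate treatment of the tame-to-compatible retraction.

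Your construction instead interpolates linearly in endomorphisms and then projects back via polar decomposition. This works, but two points are worth flagging. First, the ``short preliminary interpolation through tame ACS'' from $J$ to $\Pi(J)$ is not actually specified; the standard way to write down that retraction while controlling distortion uniformly is precisely via a Cayley-type parametrization, so this step tacitly imports the paper's argument. Second, your justification for the uniform lower spectral bound on the symmetric part of $\omega(\cdot,A_s\cdot)$ appeals only to $\|J^{-1}\|_{g_{J_0}}$, but boundedness of $J^{-1}$ controls $\|Jv\|$ from below, not the inner product $g_{J_0}(J_0v,Jv)=\omega(v,Jv)$. The correct input is that $Z=J_0J^{-1}$ is $g_{J_0}$-positive (this is exactly tameness of $J$ when $J_0$ is compatible), and the uniform lower bound on its symmetric part comes from boundedness of $\|Z\|$ together with positivity --- again the same ingredient underlying the paper's Cayley parametrization. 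So your argument is salvageable, but the paper's packaging is both shorter and avoids the implicit circularity.
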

\begin{rem}
Observe that equivalence in the sense of the last theorem entails equivalence of the associated metrics. Moreover, if both almost complex structures are compatible, it is the same as equivalence of the associated metrics.
\end{rem}
\begin{proof}
Applying the Cayley transform $\psi:Z\mapsto (Id-Z)(Id+Z)^{-1}$ then $\psi$ maps the set of positive definite matrices to the ball $\|W\|<1$. Now, the space of tame almost complex structures $J'$ maps diffeomorphically to a convex subset of the unit ball under the composition of $\psi$ with the map sending $J'$ to $Z=J_0J^{'-1}$. See \cite[Prop. 2.5.13, Proof 3]{MS}. Moreover $J_0$ is equivalent to $J_1$ if and only if there is an $\epsilon>0$ for which $\|\psi(Z)\|<1-\epsilon$. The claim follows.

\end{proof}
As we will see below, geometric boundedness in the sense of Definition \ref{dfGoBound} is the key to $C^0$-estimates in Floer theory. Combining the observation in remark \ref{remObs} with Lemma \ref{lmEquivCont} we see that in order to define Floer theoretic invariants associated to $\Ll$, we need to be able to specify a canonical metric up to equivalence. In the rest of the section we will show that if $(M,\Ll)$ satisfy a certain tameness condition then any Lagrangian section gives rise to an equivalence class of metrics. To formulate this precisely we need to first study the invariants of an SYZ fibration.
\subsection{The semi-global invariant}

\begin{df}
Semi-toric SYZ fibrations with section $(M_i,\Ll_i:M_i\to B_i,\sigma_i)$,  $i=0,1$, are said to \textit{have the same germ} if there exist neighborhoods $V_i$ of $\Delta_i$, a diffeomorphism $\phi:V_0\to V_1$ and a symplectomorphism $\psi:\Ll^{-1}_0(V_0)\to\Ll^{-1}_1(V_1)$, commuting with the $\Ll_i$ and the $\sigma_i$. This is an equivalence relation. We refer to the equivalence class of $(M,\Ll,\sigma)$ as its \textit{germ}.
\end{df}
\begin{rem}
Observe that the information about the section is redundant. Namely, any two admissible sections in a small enough neighborhood of $\Delta$ are easily seen to be Hamiltonian isotopic.
\end{rem}

\begin{tm}\label{tmGermCalsReg}
The fibrations $(\Ll_i,B_i,\sigma_i)$ are symplectomorphic if and only if they have the same germ and there is an integral affine isomorphism $B_{0,reg}\to B_{1,reg}$.
\end{tm}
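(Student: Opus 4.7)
The ``only if'' direction is straightforward: any symplectomorphism $\Psi: M_0 \to M_1$ respecting the fibrations and sections restricts to a germ equivalence near the discriminant loci, and induces a base map on $B_{0,reg}$ which is integral affine by Arnold--Liouville theory.

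For the ``if'' direction, my plan is to construct the global symplectomorphism by gluing. The integral affine isomorphism $\Phi: B_{0,reg} \to B_{1,reg}$, together with the Lagrangian sections $\sigma_i$, canonically determines a symplectomorphism $\Psi_{reg}: \Ll_0^{-1}(B_{0,reg}) \to \Ll_1^{-1}(B_{1,reg})$ covering $\Phi$. This is Arnold--Liouville: each $\sigma_i$ identifies $\Ll_i^{-1}(B_{i,reg})$ with $T^* B_{i,reg}/\Lambda_i^*$ as a symplectic torus bundle in which $\sigma_i$ becomes the zero section, and $\Phi$ lifts uniquely (using its derivative on cotangent vectors, which preserves the integral lattices) to a symplectomorphism of the quotients preserving zero sections.

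The key step is to glue $\Psi_{reg}$ with the germ symplectomorphism $\psi$ on the overlap $\Ll_0^{-1}(V_0 \cap B_{0,reg})$. First I would adjust $\Phi$ by an integral affine self-map of $B_{0,reg}$ so that it agrees with the base diffeomorphism $\phi$ of the germ equivalence on each connected component of $V_0 \cap B_{0,reg}$. Once the base maps agree on the overlap, the composition $\Psi_{reg}^{-1} \circ \psi$ becomes a fiber-preserving symplectomorphism over the identity that preserves the zero section. In local action-angle coordinates this has the form $(q,p) \mapsto (q, p + f(q))$ for a closed one-form $f$, and preservation of the zero section forces $f$ to take values in $\Lambda_0^*$; modulo the lattice quotient, $\Psi_{reg}^{-1} \circ \psi$ is the identity, so the two maps agree on the overlap and glue into the desired global symplectomorphism $\Psi: M_0 \to M_1$.

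The hard part will be the adjustment step: showing that the germ-level integral affine map $\Phi^{-1} \circ \phi$ on $V_0 \cap B_{0,reg}$ extends to a global integral affine self-map of $B_{0,reg}$. This is a local-to-global monodromy issue, and I would handle it using the specific structure of the simple semi-toric case, exploiting the connectedness and trivalence of $\Delta$, the dual graph $\Delta^*$, and the trivial-monodromy sub-sheaf $\Gamma$ of Definition \ref{dfSTSYZ}(d). Once this compatibility is in place, the rest of the argument is essentially formal.
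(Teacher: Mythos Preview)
Your proposal is correct and follows essentially the same strategy as the paper: build $\Psi_{reg}$ over $B_{reg}$ from the sections via Arnold--Liouville, take the germ symplectomorphism $\psi$ near $\Delta$, and glue. The paper's proof is a terse three sentences doing exactly this.

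The one place where your emphasis diverges from the paper is in the gluing step. You frame the ``hard part'' as extending the integral affine self-map $\Phi^{-1}\circ\phi$ from the punctured neighborhood to all of $B_{0,reg}$, and propose to use the specific semi-toric machinery ($\Gamma$, $\Delta^*$, trivalence) to do this. The paper instead dispatches the overlap compatibility in one line: ``locally near the singularities the only invariant of the integral affine structure is the monodromy.'' The point is that the germ of the integral affine structure on $B_{reg}$ near $\Delta$ is determined entirely by the monodromy representation (this is the content of the local normal form results of \cite{Ngoc03,Castano04} quoted in the paper), so any two integral affine maps intertwining the same monodromy data must agree as germs near $\Delta$. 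This handles the matching of $\Phi$ and $\phi$ directly, without needing to extend anything globally, and without invoking the semi-toric-specific structures you list. Your route would work, but it is more elaborate than what is needed.
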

\begin{proof}
We use the section to define a symplectomorphism over $B_{reg}$. Namely, the section together with any simply connected local affine chart on $V\subset B_{reg}$ define an identification of $\Ll^{-1}_i(V)$ with $V\times \T^n$. This induces a symplectomorphism over $B_{reg}$. Note this symplectomorphism is the unique one intertwining the sections.  Since the $\Ll_i$ have the same germ, there is also symplectomorphism over a neighborhood of $\Delta$. Since the overlap between $B_{reg}$ and a neighborhood of $\Delta$ is a subset of $B_{reg}$ and all the symplectomorphisms involved intertwine sections, these symplectomorphims agree on the overlap.
\end{proof}

To make the germ a little more concrete,
we describe the behavior of $\Ll$ and its period lattice near the critical points and singular values of $\Ll$. This is a summary of results of \cite{Castano04,Castano09} (which build upon results on the focus-focus singularity also appearing in \cite{Ngoc03}). Let $e$ be an edge of $\Delta$. In a small neighborhood $V_e\subset B$ of $e$ there is a (multi-valued) basis $\{\eta_1,\eta_2,\eta_3\}$ of the period lattice $\Lambda^*_\Z$ restricted to $V_e\setminus e$ in which the monodromy is given as in \eqref{eqStForm}. Using normal coordinates we identify $V_e$ with the product $D\times e$ where $D$ is a $2$-disc and there is a neighborhood $U_e$ of $\crit(\Ll)|_e$ projecting to $V_e$ for which the there is a local normal form as in Lemma \ref{tmLocNorFo}. The functions $q_1,q_2,q_3$ describing $\Ll$ in the local normal coordinates extend uniquely to $\Ll^{-1}(V_e).$ (In particular, they give rise to a Hamiltonian flow on $U_e$). We consider their values as defining local coordinates $b_1,b_2,b_3$ on $V_e$. Semi-global invariants for $\Ll$ near $e$ are obtained by expressing the period lattice in these coordinates. Namely,
\begin{tm}
There is a smooth function $h:D\times e\to\R$ such that writing
\begin{equation}\label{eqEta0}
        \eta_0=-\ln|b_1+ib_2|db_1+\arg(b_1+ib_2)db_2,
\end{equation}
the period lattice is given by
\begin{equation}\label{eqLocalSemiGlobal}
\eta_1=\eta_0+dh,\quad \eta_2=2\pi db_2,\quad \eta_3=dr.
\end{equation}
\end{tm}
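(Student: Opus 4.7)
The proof combines the local normal form at the singular fibers over $e$ with the classical semi-global invariant for a focus-focus singularity, as adapted to the edge setting in \cite{Castano04}.

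First, I would extract the ``toric'' generators $\eta_2$ and $\eta_3$ from the circle actions visible in the local normal form of Theorem~\ref{tmLocNorFo}. The function $q_2 = x_1y_2 - x_2y_1$ generates the standard weight-$(+1,-1)$ Hamiltonian $S^1$-action on $\R^4$ of period $2\pi$, so its primitive action coordinate is $2\pi b_2$ and thus $\eta_2 = 2\pi\, db_2$. Similarly, $q_3 = r$ is the moment map of the rotation of the $S^1$ factor in the model, normalized to generate a primitive period, giving $\eta_3 = dr$. These identities extend from $U_e$ to all of $V_e$ because, by definition of a semi-toric SYZ fibration, $q_2$ and $q_3$ are restrictions of the globally defined Hamiltonians generating the ambient $\T^2$-action in a neighborhood of $e$.

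For $\eta_1$, setting $b = b_1 + ib_2$ and introducing complex coordinates $z = x_1 + ix_2$, $w = y_1 + iy_2$, the joint level set of $(q_1,q_2)$ in the model becomes $\bar z w = b$, a cylinder for $b\neq 0$ degenerating to a nodal surface at $b=0$. The vanishing cycle in this cylinder generates the extra direction in $H_1$ of the semi-global fiber beyond the two $S^1$-orbits already accounted for. The period integral of the Liouville 1-form along it produces a well-defined action function $I_1$ on $V_e\setminus e$ whose semi-global classification (\cite{Ngoc03,Castano04}) gives
\[
I_1(b_1,b_2,r) = \re(b - b\log b) + h(b_1,b_2,r),
\]
with $h$ smooth across $b=0$ and depending on $r$ via the trivial product structure of the local model along $e$. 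A direct computation yields
\[
d\,\re(b - b\log b) = -\ln|b|\, db_1 + \arg(b)\, db_2 = \eta_0,
\]
so $\eta_1 = dI_1 = \eta_0 + dh$ on $V_e\setminus e$. Since $h$ is smooth on the contractible neighborhood $V_e \cong D\times e$, this identity extends across $e$ and proves the claim.

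The main technical obstacle is the structural formula for $I_1$ above: after subtracting the explicit logarithmic piece, the remaining action must extend smoothly across the critical value $b=0$. This is the content of the focus-focus semi-global classification, requiring both an Eliasson-type normal form in a semi-global (not merely germ-level) neighborhood and a careful desingularization of the period integral as the vanishing cycle shrinks. All remaining steps are straightforward algebraic manipulations in the local model, together with the trivial observation that a closed 1-form on a contractible set is exact.
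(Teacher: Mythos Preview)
Your proposal is correct and follows the standard route via the Eliasson--Vu Ngoc semi-global classification of focus-focus singularities, adapted fiberwise along the edge; this is precisely the approach of the references the paper cites. Note, however, that the paper does not actually supply its own proof of this statement: it presents the theorem as a summary of results from \cite{Castano04,Castano09} (building on \cite{Ngoc03}) and explicitly refers the reader there for the proofs. So your write-up is not so much a comparison point as a fleshed-out version of what those references establish.
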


Conversely,
\begin{tm}\label{tmExFibSem}
For every smooth function $h$ on $V_e$ there is a Lagrangian torus fibration over $V_e$ for which the periods are given by equation \eqref{eqLocalSemiGlobal}.
\end{tm}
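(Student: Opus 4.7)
The plan is to build the fibration by gluing a standard local model near the critical stratum $\crit(\Ll)\cap\Ll^{-1}(e)$ to a flat Lagrangian torus bundle over $V_e\setminus e$ whose period lattice is prescribed by \eqref{eqLocalSemiGlobal}. Over $V_e\setminus e$, the one-forms $\eta_1=\eta_0+dh$, $\eta_2=2\pi db_2$, $\eta_3=dr$ are closed (exactness of $\eta_2$ and $\eta_3$ is immediate, and $d\eta_0=0$ is a short direct computation), pointwise independent, and span an integral Lagrangian lattice $\Lambda^*_h\subset T^*(V_e\setminus e)$. The canonical symplectic form on $T^*V_e$ therefore descends to the quotient $T^*(V_e\setminus e)/\Lambda^*_h$ and gives a Lagrangian torus fibration over $V_e\setminus e$ with period lattice exactly $\Lambda^*_h$.

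Near $e$, I start from the local normal form of Theorem \ref{tmLocNorFo}: the model space $D^4\times(0,1)\times S^1$ with its standard symplectic form and the fibration $\Ll_0=(q_1,q_2,q_3)$ of \eqref{eqLocEdgeNormForm}. A direct computation of the action coordinates for the Hamiltonians $q_1,q_2,q_3$, reducing the $(q_1,q_2)$-part to the classical focus-focus case of \cite{Ngoc03}, shows that the period lattice on the complement of the singular fiber is of the shape \eqref{eqLocalSemiGlobal} for a specific smooth function $h_0:D\times e\to\R$ intrinsic to this model. To realize an arbitrary $h$, I write $\tilde h:=h-h_0$ and modify the local model by a Moser-type argument: add to the standard symplectic form an exact two-form of the shape $d(\tilde h\cdot\alpha)$, where $\alpha$ is a connection one-form dual to the $\eta_1$-cycle. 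This alters the $\eta_1$-period by precisely $d\tilde h$, and Moser stability on the complement of $\crit(\Ll)$ absorbs the modification into a fiber-preserving symplectomorphism whose pullback of the standard form has period lattice $\Lambda^*_h$ on an annular neighborhood of $e$.

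To finish, the regular and local pieces now carry identical period data on the overlap in $V_e\setminus e$; a Lagrangian torus fibration over an annular neighborhood of $e$ together with a Lagrangian section is determined up to fiber-preserving symplectomorphism by its period lattice, so the two pieces can be glued by any such identification. The main obstacle is verifying that the Moser modification extends smoothly across $\crit(\Ll)$: this is exactly where smoothness of $\tilde h$ on all of $V_e$ enters, since the logarithmic singularity of $\eta_0$ at $e$ is the universal one already present in the standard model, while the additional summand $d\tilde h$ in $\eta_1$, and with it the Moser vector field produced from $d(\tilde h\cdot\alpha)$, are smooth up to the critical stratum. This regularity point is essentially the content of \cite{Castano04,Ngoc03}, and the rest is bookkeeping.
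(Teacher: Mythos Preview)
The paper does not give its own proof of this theorem; it explicitly defers to \cite{Castano04,Castano09} immediately after the block of statements containing it. Your sketch---build the flat torus bundle $T^*(V_e\setminus e)/\Lambda^*_h$ over the regular locus, start from the universal local model of Theorem~\ref{tmLocNorFo} near the singular stratum (which realizes some specific $h_0$), and then adjust $h_0$ to the desired $h$ by a Moser-type deformation that extends smoothly across $\crit(\Ll)$---is precisely the strategy carried out in those references, so your approach is consistent with what the paper relies on.
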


The normal coordinates are only unique up to certain changes of coordinates. These can be factored as a discrete choice of a symplectomorphism which is non-isotopic to the identity (i.e, the sign inversions leaving \eqref{eqLocEdgeNormForm} invariant) following one that differs from the identity by a map that vanishes with all its derivatives along the critical points. The discrete choice affects the definition of $\eta_1$. Accordingly, it turns out that the germ of $h$ along $e$, modulo functions that vanish to all orders along $e$, and considered as a function on $V_e=D\times e$ is  independent of any choices up to a global constant. Thus,
\begin{tm}
The germs of a pair of Lagrangian torus fibrations  $\Ll_1,\Ll_2$ over $V_e$ are symplectomorphic if and only if writing
\[
\eta_1=\eta_0+dh_i,\quad i=1,2,
\]
we have that the derivatives to all orders of $h_1-h_2$ vanish along $e$. 
\end{tm}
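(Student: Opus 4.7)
The plan is to treat the two directions separately and to exploit the rigidity of the normal form provided by Theorem~\ref{tmLocNorFo} and Lemma~\ref{lmEdgeLocalForm} on a tubular neighborhood of $\crit(\Ll)|_e$. In both directions the key is that once one fixes normal coordinates $(q_1,q_2,q_3)$ near $e$, the semi-global invariant $h$ is determined by the period lattice up to an additive constant, while the residual freedom in the choice of normal coordinates is exhausted by a discrete group (the sign involutions preserving \eqref{eqLocEdgeNormForm}) together with diffeomorphisms of $U_e$ whose derivatives agree with those of the identity to all orders along $\crit(\Ll)|_e$.

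For the forward direction, suppose $\Psi\colon\Ll_1^{-1}(V_e)\to\Ll_2^{-1}(V_e)$ is a symplectomorphism of germs commuting with $\Ll_i$ (and with the sections). Then $\Psi$ sends $\crit(\Ll_1)|_e$ to $\crit(\Ll_2)|_e$ and intertwines the Hamiltonian flows defined by $q_1,q_2,q_3$. Pre- and post-composing with normal forms, we obtain a self-map of a neighborhood of $\crit(\Ll_0)|_e$ in the model which preserves the model fibration and its symplectic form. By the uniqueness statement for the normal form, this map lies in the aforementioned residual class. The pullback of $\eta_0$ under such a map agrees with $\eta_0$ up to a sign and up to a $1$-form whose components vanish to infinite order along $e$; consequently the two representations $\eta_1=\eta_0+dh_i$ force $d(h_1-h_2)$, and hence $h_1-h_2$, to vanish to all orders along $e$ after adjusting by the (irrelevant) global constant.

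For the converse, assume $h_1-h_2$ is flat along $e$. I would first invoke the local normal form of Lemma~\ref{lmEdgeLocalForm} to produce symplectic embeddings $\psi_i\colon U_{e,i}\to D^4\times f_\mu(e)\times S^1$ intertwining $\Ll_i$ with the model $\Ll_0=(q_1,q_2,q_3)$. Identifying via $\psi_2^{-1}\circ\psi_1$ produces a symplectomorphism on a neighborhood of $\crit(\Ll_1)|_e$. The task is then to extend this identification to all of $V_e$. Away from $e$ the fibration $\Ll_i$ is a regular Lagrangian torus fibration with a section, hence is determined up to fiber-preserving symplectomorphism by its period lattice $\Lambda_i^*$ together with the affine class of $\sigma_i$. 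Since the period lattices differ only by $d(h_1-h_2)$ on $V_e\setminus e$, a Moser-type argument on the symplectic model $T^*V_e/\Lambda^*$ produces the desired extension: the $1$-parameter family $\omega_t=\omega_0+t\,d\alpha$, with $\alpha$ an appropriate $\Z$-valued-period $1$-form built from $h_1-h_2$ and any chosen section, admits an isotopy by $X_t$ with $\iota_{X_t}\omega_t=-\alpha$, and the resulting diffeomorphism extends continuously across $e$.

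The main obstacle is the last step: ensuring that the Moser flow extends across the singular locus $\Ll^{-1}(e)$ and matches the model identification already chosen near $\crit(\Ll)|_e$. This is where the hypothesis that $h_1-h_2$ is flat along $e$ is essential. Flatness ensures that the primitive $\alpha$ and the generating vector field $X_t$ extend smoothly by zero to a neighborhood of $\Ll^{-1}(e)$, so the time-$1$ map is a bona fide diffeomorphism agreeing with the identity to infinite order along the singular fibers, and it can therefore be glued to the local normal form identification without disturbing the symplectic form. A cut-off of $\alpha$ in a tubular neighborhood of $\crit(\Ll)|_e$, using the infinite flatness as in the proof of the Whitney extension or Borel lemma, together with the fact that the residual normal-form freedom is precisely flat diffeomorphisms, allows both pieces to be patched into the required global symplectomorphism of germs.
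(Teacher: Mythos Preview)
The paper does not give its own proof of this statement: immediately after the theorem it writes ``We refer the reader to \cite{Castano04,Castano09} for the proofs of the above theorems.'' So there is no in-paper argument to compare against; the result is quoted from Casta\~no-Bernard (building on V\~u Ng\d{o}c's focus--focus classification).

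Your forward direction is essentially the standard one and is fine: a fibered symplectomorphism transports one normal form to another, and the paper already records (in the remark following Theorem~\ref{tmLocNorFo} and the discussion preceding the present theorem) that two normal forms differ by the discrete sign group composed with a base map flat along $e$; tracing the period lattice through such a change gives flatness of $h_1-h_2$.

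Your converse, however, takes a different route from the cited references and carries a real risk. The approach in \cite{Ngoc03,Castano04} is not to run Moser on the total space. Rather, one shows that a flat perturbation of $h$ can be absorbed by a flat reparametrization of the \emph{base} normal coordinates $(b_1,b_2,b_3)$ (this is where one actually uses that the residual normal-form freedom is exactly flat diffeomorphisms plus a fiberwise Hamiltonian isotopy). After this change the two period lattices \emph{coincide} on a neighborhood of $e$, and then Theorem~\ref{tmExFibSem} together with the Arnold--Liouville/section description on the regular part gives a genuine fibered symplectomorphism. In your scheme the two period lattices remain different on every punctured neighborhood of $e$, so the ``extension to all of $V_e$'' you describe cannot be fiber-preserving over the identity of the base; the Moser field $X_t$ you build from $\alpha$ encodes precisely a flat base reparametrization, but you have not checked that $X_t=-\omega_t^{-1}\alpha$ extends smoothly across the singular fibers. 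The danger is concrete: near $\crit(\Ll)|_e$ the form $\omega_t$ degenerates in the directions transverse to the orbit, so smallness or flatness of $\alpha$ along $e$ does not automatically give smallness of $X_t$ there. Making this step rigorous would amount to redoing, in disguise, the base-coordinate absorption argument of the cited papers. I would recommend switching to that argument: first use flatness of $h_1-h_2$ to find a flat diffeomorphism of $V_e$ equalizing the $h_i$, then invoke Theorem~\ref{tmExFibSem} and the section to conclude.
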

The data of of the germ of $h$ along each edge $e$ modulo flat functions is the semi-global invariant referred to in the title of this subsection.

We refer the reader to \cite{Castano04,Castano09} for the proofs of the above theorems.
Functions whose derivatives vanish to all orders at a point $p\in\R^n$ are called \emph{flat}. 
\begin{df}
Let $(M,\Ll)$ be a semitoric SYZ fibrations. We denote by $\Ll_{\Delta}$ the data of the smooth tropical curve $f_\mu(\Delta)$ together with for each edge $e$ the germ modulo flat functions $H_{e,\Ll}:V_e\to\R$.
\end{df}
In the following denote by $\Delta_\R$ the set $f_{\mu}^{-1}( f_\mu(\Delta))$. $\Delta_\R$ is diffeomorphic to the product of $\Delta$ with $\R$.
\begin{df}
We say that the affine structure on $B_{reg}$ is \textit{complete} if every sequence of points of $B_{reg}$ contained in an affine chart with bounded coordinates has a convergent subsequence in $B$. We say that it is \textit{nice} if there is a choice of global action coordinates on $f_{\mu}^{-1}( f_\mu(\Delta))$ defining an embedding into $\R^3$.
\end{df}
\begin{rem}
We comment on the niceness condition. Note that the components of $f_{\mu}$ are integral affine. The assumption of niceness is that there is a choice of a transversal integral affine coordinate $b_n$ which is injective on lines satisfying $f_{\mu}=const$. An example where an affine coordinate might not be injective is given by considering the universal cover of the complement of a point in $\R^2$ with its standard affine coordinates. One can use this to construct a counterexample to the second half of Theorem \ref{tmBijecCompDel} when the niceness is dropped. It appears that in the presence of the completeness condition, the niceness condition is redundant. We do not try to prove this.

The main consequence of these assumptions is as follows. Let $\Ll$ be complete and nice and let $B_0=B\setminus\Delta_\R$ then $B_0$ is a union of integral affine polyhedra each of which is a product of $\R$ with a (not necessarily closed) polygon. Moreover, these are embedded densely in $\R^3$ with its affine structure.

\end{rem}
\begin{tm}\label{tmBijecCompDel}
There is a bijection between germs of semi-toric SYZ fibrations and complete semi-toric SYZ fibrations which are nice. Moreover given a nice semi-toric SYZ fibration over on open set in $\R^3$ it can be uniquely completed so that it remains nice.
\end{tm}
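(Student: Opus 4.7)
The plan is to exhibit the obvious restriction map $(M,\Ll,\sigma)\mapsto$ germ as a bijection onto the set of germs, with inverse given by a canonical ``maximal extension inside $\R^3$'' construction; the \emph{moreover} clause then reads off as uniqueness of this construction.

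First, I would build from a given germ a Lagrangian torus fibration realizing it on a neighborhood of $\Delta$. The germ supplies, for each edge $e$, the semi-global invariant $H_{e,\Ll}$ modulo flat functions; applying Theorem~\ref{tmExFibSem} to any smooth representative produces a Lagrangian torus fibration over a neighborhood $V_e$ with the prescribed period lattice \eqref{eqLocalSemiGlobal}. Near each vertex, the equivariant local normal form of Theorem~\ref{tmLocForm} supplies a canonical model depending only on $f_\mu(\Delta)$. These local pieces can be patched on a neighborhood of $\Delta$ using the second half of Lemma~\ref{lmEdgeLocalForm}, which permits one to alter an edge normal form on a closed subinterval so as to match a prescribed vertex normal form at its endpoint. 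This yields a symplectic germ $\Ll_0$ over an open neighborhood $V$ of $\Delta$ realizing the given germ class.

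Second, I would extend into $\R^3$ using niceness. Niceness provides global action coordinates embedding $\Delta_\R$ into $\R^3$, and the integral affine structure on $V\cap B_{reg}$ extends this embedding to a neighborhood of $\Delta_\R$ in $\R^3$. Each connected component of $B\setminus\Delta_\R$ is the product of $\R$ (the $H_B$-direction) with a convex polygon in $\mathfrak{g}^*$ cut out by the adjacent edges of $f_\mu(\Delta)$; for each such component I take the interior of the closure in $\R^3$ of its image, thus extending the integral affine immersion maximally inside $\R^3$. Over each polyhedral piece $P$ the period lattice is the pullback of $\Z^3\subset T^*\R^3$, so I define $\Ll^{-1}(P)\cong P\times T^3$ with its canonical symplectic structure. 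The admissible section $\sigma$ (unique up to Hamiltonian fiberwise isotopy near $\Delta$ by the remark following the definition of germ) together with the already fixed integral affine coordinates trivializes this bundle and glues it to $\Ll_0$ along $V\cap P$; the monodromy encoded by the germ dictates the matching across each edge of $\Delta$. The resulting total space is nice by construction and complete because a bounded sequence of affine coordinates in $\R^3$ converges, its limit lying either in the interior of some $P$ or in $\Delta_\R$.

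Third, the germ of the fibration so constructed is the one we started with. Conversely, given a complete nice $(M,\Ll,\sigma)$, the integral affine structure on $B_{reg}$ is already embedded in $\R^3$ by niceness, and completeness forces it to coincide with the maximal extension described above once the germ is fixed; the fibration is then determined up to symplectomorphism by its germ via Theorem~\ref{tmGermCalsReg}. The \emph{moreover} clause is the same construction applied to a given nice fibration: only the limiting points inside $\R^3$ are added, and canonically so. The main technical obstacle is the patching between $\Ll_0$ near the skeleton and the flat pieces $P\times T^3$: one must verify that the angle coordinates defined via $\sigma$ agree with those implicit in the semi-global normal form \eqref{eqLocalSemiGlobal} up to Hamiltonian fiberwise isotopy. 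This reduces to the Hamiltonian isotopy of admissible sections near $\Delta$ noted after the definition of germ, together with the global angle coordinate supplied by Theorem~\ref{tmBasicTop}(a).
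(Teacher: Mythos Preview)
Your proposal is correct and follows essentially the same strategy as the paper: construct the complete nice fibration from the germ, then invoke Theorem~\ref{tmGermCalsReg} to obtain both injectivity and surjectivity of the germ-to-fibration map. The only notable difference is in presentation of the regular part. The paper describes $B_{reg}$ in one stroke by cutting $\R^3$ along the half-planes $P_e=\{(x,t)\in e\times\R : t\le\tau(x)\}$ (where $\tau$ is the $0$th order term of $\Ll_\Delta$) and regluing across each $P_e$ by the monodromy matrix dual to $e$; it then puts the unique regular torus fibration over this affine base and glues in the germ using a Lagrangian section. You instead build the singular neighborhood first (edge models via Theorem~\ref{tmExFibSem}, vertex models via Theorem~\ref{tmLocForm}, patched by Lemma~\ref{lmEdgeLocalForm}) and then attach flat polyhedral pieces $P\times T^3$. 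These are equivalent: the paper's cut-and-glue is a global shortcut for your maximal polyhedral extension, and your explicit vertex patching is what the paper's phrase ``glue it with the admissible section on the germ'' tacitly assumes. Either way the surjectivity argument is identical---niceness plus completeness pin down $B_{reg}$ affinely, and Theorem~\ref{tmGermCalsReg} finishes.
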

\begin{proof}
Let $\Ll$ be a germ of a fibration, let $\tau:f_\mu(\Delta)\to\R$ be the $0th$ order term of $\Ll_{\Delta}$. Then $\Delta\subset B=\R^3$ can be thought of as the graph of $\tau$. For each edge of $f_\mu(\Delta)$ let
\[
P_e:=\{(x,t)\subset e\times\R|t\leq\tau(x)\}\subset\R^3.
\]
Let $U=\R^3\setminus\cup_{e}P_e$ where the union runs over the closed edges of $\Delta$. We glue $U$ together across $P_e$ according to the monodromy matrix associated in the dual graph $\Delta^*$ to the dual edge. This defines $B_{reg}$ with its integral affine structure. There is a unique Lagrangian torus fibration $\Ll_{reg}$ with this integral affine structure \cite{Castano04}. Consider a small enough neighborhood $V$ of $\Ll_{\Delta}$ so that the action coordinates on any open and simply connected set of $\Ll_{\Delta}(V)$ define an embedding. Picking a Lagrangian section for $\Ll_{reg}$ we can now glue it with the admissible section on the germ in a unique way to give the required SYZ fibration.

For a germ $\delta$ denote the corresponding fibration constructed this way by $\Ll_\delta$. By Lemma \ref{tmGermCalsReg} and by construction this is an injection.  We now show this is a surjection. Namely, let $\Ll:M\to B$ be a complete nice semi-toric SYZ fibration with section and let $\delta$ be its germ. We need to show that $\Ll$ is symplectomorphic to $\Ll'=\Ll_{\delta}$. By assumption, there is an affine diffeomorphism between $B_0$ and $B'_0$. Moreover, the monodromy, which is determined by $f_\mu(\Delta)$, uniquely determines how the integral affine structure extends across $\Delta_\R$. The claim now follows by Lemma \ref{tmGermCalsReg}. 
\end{proof}

\begin{tm}\label{tmUniqueComp}
Let $\Ll_i=M_i\to B$ for $i=1,2$ be a pair of complete semi-toric SYZ fibrations such that $f_{\mu_1}(\Delta_1)$ is related to $f_{\mu_2}(\Delta_2)$ by an element of $SL(2,\R)$. The they are equivariantly symplectomorphic.
\end{tm}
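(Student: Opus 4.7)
The key observation is that an equivariant symplectomorphism need not intertwine the Hamiltonian $H_B$ which completes $\mu$ to the fibration $\Ll$; hence the equivariant symplectic type of $(M,\omega,\mu)$ depends only on the coarser data $f_\mu(\Delta)$, not on the full germ (including the edge functions $H_{e,\Ll}$) which classifies $\Ll$ itself up to symplectomorphism. My plan thus has three steps: a reduction, a local normal form, and a global extension via a relative equivariant Moser argument.

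First I would carry out the reduction. Since both $f_{\mu_i}(\Delta_i)$ are rational balanced graphs whose edges are primitive integer vectors in the standard lattice of $\mathfrak{g}^*\cong\R^2$, any $A\in SL(2,\R)$ relating them must preserve this lattice of primitive directions and hence, after subtracting a translation, lies in $SL(2,\Z)$. The $SL(2,\Z)$ factor is realized by an automorphism of $G=\T^2$ (reparametrizing the acting torus) and the translation by a shift of base point of $\mu_2$. After these adjustments I may assume $f_{\mu_1}(\Delta_1)=f_{\mu_2}(\Delta_2)$ as subsets of $\mathfrak{g}^*$.

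Next, Theorem \ref{tmLocForm} supplies an equivariant symplectomorphism $\phi_0\colon U_1\to U_2$ between open neighborhoods of the critical loci $\crit(\Ll_i)$ intertwining the moment maps. On the complement $M_i^\circ:=M_i\setminus \crit(\Ll_i)$ the action of $G$ is free and proper, so $M_i^\circ\to Q_i:=M_i^\circ/G$ is a principal $\T^2$-bundle over a $4$-manifold, and the moment map descends to $\bar\mu_i\colon Q_i\to\mathfrak{g}^*$ whose smooth fibers over $\mathfrak{g}^*\setminus f_\mu(\Delta)$ are symplectic cylinders $\R\times S^1$, classified up to symplectomorphism by their (infinite) area. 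Using the completeness hypothesis, the matching $f_{\mu_1}(\Delta_1)=f_{\mu_2}(\Delta_2)$, and the behavior of reduced cylinders along edges given by Theorem \ref{tmLocNorFo} and Lemma \ref{lmEdgeLocalForm}, I would extend $\phi_0$ to an equivariant diffeomorphism $\phi\colon M_1\to M_2$ intertwining $\mu_1$ and $\mu_2$. A relative equivariant Moser argument applied to $\omega_t:=(1-t)\omega_1+t\phi^*\omega_2$, a path of equivariant symplectic forms sharing the moment map $\mu_1$ and equal to $\omega_1$ near $\crit(\Ll_1)$, then yields the desired equivariant symplectomorphism.

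The main obstacle I anticipate lies in globalizing the extension: the reduced cylinders are individually determined by their area, but must be glued coherently across the edges of $f_\mu(\Delta)$, where the fibers of $\bar\mu$ develop nodes. This gluing is governed by the integral affine monodromy around each edge, which by the computation preceding Lemma \ref{lmDualGraph} is read off $f_\mu(\Delta)$ alone, while compatibility at vertices is built into Theorem \ref{tmLocForm}. Together these should ensure that the extension is unobstructed and the Moser argument is well-defined globally.
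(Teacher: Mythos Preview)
Your overall architecture (reduce via $SL(2,\Z)$, build an equivariant diffeomorphism intertwining $\mu$, then run an equivariant Moser argument) matches the paper's. The crucial difference, and the genuine gap in your proposal, is \emph{where} you arrange the two symplectic forms to coincide before running Moser.

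You make $\omega_1$ and $\phi^*\omega_2$ agree on a neighborhood of $\crit(\Ll_1)$ and then apply a relative Moser argument. But $M$ is non-compact, and you say nothing about the behavior of $\omega_1-\phi^*\omega_2$, or of the primitive $\sigma$ with $d\sigma=\phi^*\omega_2-\omega_1$, at infinity. Without such control the Moser vector field $X_t$ (defined by $\iota_{X_t}\omega_t=-\sigma$) need not have a flow existing for $t\in[0,1]$. This is exactly the point the paper's proof is organized around: rather than matching the forms near $\crit(\Ll)$, the paper arranges the equivariant diffeomorphism so that on each $\mu^{-1}(K)$ (for $K\subset\R^2$ compact) the fibrations $\Ll_i$, and hence the symplectic forms, are intertwined \emph{outside a compact set}. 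Then $\psi^*\omega_2-\omega_1$ is not merely exact but cohomologous to zero relative to these ends, so one can take $\sigma$ (after $G$-averaging) to vanish outside a compact set in each $\mu^{-1}(K)$. Consequently $X_t$ vanishes there and the flow is complete. Your relative-to-$\crit(\Ll)$ version does not supply this.

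A secondary point: you assert the convex path $\omega_t=(1-t)\omega_1+t\phi^*\omega_2$ is symplectic without justification, and you do not address why $\phi^*\omega_2-\omega_1$ is exact. The first is in fact fine because, once $\mu$ is intertwined, the difference is basic for the $G$-action and hence pulled back from the $2$-dimensional quotient, forcing $(\phi^*\omega_2-\omega_1)^2=0$; the second follows since the periods of $\omega$ over classes in $H_2(M;\Z)$ are recorded by the edge-lengths in $f_\mu(\Delta)$, hence agree under your hypothesis. But both deserve a sentence, and neither substitutes for the missing control at infinity.
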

\begin{rem}
Note that equivariant symplectomorphism is weaker than a symplectomorphism which intertwines the torus fibrations. Indeed the former involves only the components of $\Ll$ which generate periodic actions. A similar observation to Theorem \ref{tmUniqueComp} in the $2$ dimensional case is made in \cite[Corollary 5.4]{Symington2002}, though the equivariance isn't stated. 
\end{rem}
\begin{proof}
First observe that by the previous Theorem, the equivariant diffeomorphism type of a complete semi-toric SYZ fibration is determined by the combinatorial structure of $f_\mu(\Delta)$. If the condition of the current Theorem is satisfied we can find an equivariant diffeomorphism which in addition intertwines the moment maps. Moreover, for the preimage under $\mu$ of a compact set $K\subset\R^2$, we may assume that the $\Ll_i$ are intertwined outside of a compact set. The assumption on $f_{\mu_i}(\Delta_i)$ implies that $\psi^*\omega_2-\omega_1$ is exact. Moreover, denote by $V\subset M$ the ends where  $\psi^*\omega_2-\omega_1$ is assumed identically $0$. Then we may further assume that $\omega_1$ and $\psi^*\omega_2$ are cohomologous relative to $V$. Indeed, this is nothing but the intertwining of the action coordinates near the ends. We can therefore pick a primitive $\sigma$ such that for each $K\subset \R^2$ we have that for which $\sigma=0$ on the complement of a compact set in $\mu^{-1}(K)$. Moreover, by averaging, $\sigma$ can be taken to be $T^2$ invariant. Let $\omega_t =\omega_1+td\sigma$ and let $X_t$ be the $\omega_t$ dual of $\sigma$. Then $X_t$ is invariant, preserves the moment map $\mu$, and vanishes where $\sigma$ does. In particular, the flow exists for all times.   Moser's trick now implies the claim.
\end{proof}
\subsection{Tame SYZ fibrations}
Before stating the following definition we recap the following point from the previous subsection. Given an edge $e$ of $\Delta$, we can make a choice of equivariant normal coordinates as in Theorem \ref{tmLocForm} identifying $e$ with a subset of $\mathfrak{g}$ and a neighborhood $V$ of $e$ with the product of $e$ and the standard disk in $\R^2$. In these coordinates we obtain a smooth function $H_{e,\Ll}$ featuring in eq \eqref{eqLocalSemiGlobal}. Moreover, the germ of $H_{e,\Ll}$ modulo flat functions is independent of the choice of equivariant normal coordinates. Note that $e$ is identified with a  a subset of $\mathfrak{g}$ up to integral affine transformations. Thus we can talk about the $C^k$ norms of $H_{e,\Ll}$ along a half infinite edge $e$ being bounded or not without introducing any choices. 

\begin{df}\label{dfAdmSYZ}
We say that $\Ll$ is \textit{tame} if it is complete, nice, and, for any $k$, $dH_{e,\Ll}$ is uniformly bounded in $C^k$ along $e$ for any half infinite edge $e$
\end{df}
\begin{tm}[\textbf{The uniform structure on $B$}]\label{tmBaseCoord}
Let $\Ll$ be tame. There exists a global coordinate system $(x_1,x_2, x_3):B\to\R^3$ with the following properties. For $i=1,2$ denote by $I_i$  action coordinates associated with some basis for the periods in $\Gamma\subset\Lambda^*$. Fix a simply connected open and dense set $V\subset B_{reg}$ and choose a complementary action coordinate $I_3$ on $V$.
\begin{enumerate}
\item For any $\delta>0$ there is a bi-Lipschitz equivalence between the coordinates $x_1,x_2, x_3$ and $I_1,I_2, I_3$ on $V\setminus B_{\delta}(\Delta)$ with Lipschitz constants depending only on $\delta$ and the chosen basis.
\item the functions $I_1,I_{2}$ are Lipschitz on $B$ in the coordinates $\{x_i\}$.
\item For small enough $\delta>0$, for any edge $e$ and any $x\in B_{\delta}(e)\setminus\Delta$,  we have
\[
\left|\ln \frac1cd(x,\Delta)\right|<|\nabla I_{n}(x)|<|\ln cd(x,\Delta)|.
\]

\end{enumerate}
Any such coordinate system is determined uniquely up to bi-Lipschitz equivalence.
\end{tm}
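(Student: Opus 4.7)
The plan is to set $x_{1}=H_{F_{1}}$ and $x_{2}=H_{F_{2}}$, i.e.\ the two globally defined moment map components. These already are action coordinates for a basis of $\Gamma$, so property (b) is automatic and the map $(x_{1},x_{2}):B\to\R^{2}$ agrees with $f_{\mu}$. To produce $x_{3}$, I appeal to Theorem \ref{tmBijecCompDel} and niceness: completeness plus niceness give a preferred identification of $B$ with $\R^{3}$ under which $\Delta_{\R}=f_{\mu}^{-1}(f_{\mu}(\Delta))$ is sent to $f_{\mu}(\Delta)\times\R$, with the $\R$-factor being the integral affine coordinate along $\Delta_{\R}$ (in any edge normal form, this coordinate restricts to $r=q_{3}$ on $\Delta_{\R}$). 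I take $x_{3}$ to be the corresponding third Euclidean coordinate on $\R^{3}$.

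The main content lies in checking property (c), which I reduce to edge normal forms via Lemma \ref{lmEdgeLocalForm}. Fix an edge $e$ and a normal neighborhood $V_{e}$ with coordinates $b_{1},b_{2},b_{3}$. On $V_{e}$, the map $(b_{1},b_{2},b_{3})\mapsto (x_{1},x_{2},x_{3})$ is a smooth bi-Lipschitz diffeomorphism: the first two components differ by a fixed linear rearrangement (determined by which component of $\mu$ vanishes on $e$) together with a shift by a smooth function, and $b_{3}$ matches $x_{3}$ on $\Delta_{\R}$ by construction. The complementary action coordinate $I_{3}$ is characterized by $dI_{3}=\eta_{0}+dh$ with $\eta_{0}$ as in \eqref{eqEta0} and $h$ the semi-global invariant. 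Since $|b_{1}+ib_{2}|$ is comparable to $d(x,\Delta)$ up to uniform multiplicative constants, $|\eta_{0}|$ has exactly the logarithmic size demanded by (c), and the tameness assumption in Definition \ref{dfAdmSYZ} furnishes a uniform $C^{1}$ bound on $dh$ along the infinite parts of $e$. Consequently $|\nabla I_{3}|$ is sandwiched between $|\ln\tfrac{1}{c}d(x,\Delta)|$ and $|\ln cd(x,\Delta)|$ for some uniform $c$.

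Property (a) now follows from the same identity $dI_{3}=\eta_{0}+dh$: on $V\setminus B_{\delta}(\Delta)$ the singular part $\eta_{0}$ is bounded by a constant depending only on $\delta$, and $dh$ is uniformly bounded by tameness, so $I_{3}$ is Lipschitz in the $x_{i}$; conversely $dx_{3}$ is controlled by $dI_{3}$ modulo $dx_{1},dx_{2}$ because $x_{3}-I_{3}$ extends smoothly off $\Delta$. Passing through the monodromy cut $W$ only shifts $I_{3}$ by an integer combination of $I_{1},I_{2}$, which is harmless on a simply connected $V$ and absorbs into the Lipschitz constant. Away from edges we use the analogous normal form at vertices to patch the estimates; Theorem \ref{tmNormalTCY} makes the behavior near each vertex model-independent.

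Uniqueness is then formal: any other such triple $(x_{1}',x_{2}',x_{3}')$ must have $(x_{1}',x_{2}')$ bi-Lipschitz to $(I_{1},I_{2})=(x_{1},x_{2})$ by (b), and the third coordinates then differ by a function that is bi-Lipschitz in the $x_{i}$ away from $\Delta$ by (a), with behavior across $\Delta$ pinned down by (c). The main obstacle I expect is technical rather than conceptual: ensuring the estimates from the local normal forms genuinely extend to a global bi-Lipschitz comparison, uniformly toward the ends of the half-infinite edges. This is exactly where the tameness hypothesis on $dH_{e,\mathcal{L}}$ is needed, and where the argument breaks down without it.
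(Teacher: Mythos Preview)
Your overall strategy matches the paper's: set $x_{1}=I_{1}$, $x_{2}=I_{2}$, and build $x_{3}$ by some globalization of the third action coordinate, then read off properties (a)--(c) from the edge normal form $dI_{3}=\eta_{0}+dh$ together with tameness. The estimates you sketch for (c) and (a) are the right ones.

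However, there are two genuine gaps. First, your construction of $x_{3}$ is not well-defined as stated. You invoke ``a preferred identification of $B$ with $\R^{3}$'' coming from Theorem~\ref{tmBijecCompDel}, but that theorem only gives $B_{reg}$ as a gluing of $\R^{3}\setminus\cup_{e}P_{e}$ across cuts by monodromy; there is no canonical third coordinate extending the affine parameter along $\Delta_{\R}$. The paper handles this explicitly: since successive branches of $I_{3}$ differ by an integer combination of $I_{1},I_{2}$, the \emph{differential} $dI_{3}$ jumps by a bounded $1$-form across each cut, so one can interpolate the branches near the cuts to get a single-valued $J_{3}$ with $\{I_{1},I_{2},I_{3}\}$ bi-Lipschitz equivalent to $\{I_{1},I_{2},J_{3}\}$ on simply connected sets. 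Then $x_{3}$ is obtained by a further interpolation of (a rescaled) $J_{3}$ with the normal coordinate $b_{1}$ near $\Delta$. Your remark that the monodromy shift ``absorbs into the Lipschitz constant'' is the right observation, but it belongs in the construction of $x_{3}$, not as an afterthought in the verification of (a).

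Second, tameness only bounds $dH_{e,\Ll}$ and its derivatives \emph{at points of} $e$, not throughout $V_{e}$; your estimate for (c) needs a bound on $|dh|$ on all of $B_{\delta}(e)$. The paper closes this gap by a germ-preserving deformation: it replaces $h_{e}$ by a function agreeing with $H_{e,\Ll}$ to all orders along $e$ but satisfying uniform $C^{k}$ bounds on all of $V_{e}$, invokes Theorem~\ref{tmExFibSem} to realize this as an SYZ fibration, and then appeals to Theorem~\ref{tmBijecCompDel} to conclude that this modified fibration is symplectomorphic to the original. This reduction is what makes the subsequent estimates uniform toward infinity, and it is missing from your argument.
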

\begin{rem}
We may take $x_i=I_i$ for $i=1,2$. We may also assume $\Delta$ is contained in the plane $\{x_3=0\}$.
\end{rem}
\begin{proof}
For each half infinite edge $e$ of $\Delta$ let $V_e=D^2\times e$. Let $h_e: V_e\to\R$ whose germ at $e\times\{(0,0)\}$ is $H_{e,\Ll}$. Then $|dh_e(t,0,0)|<C$ for $t\in e$. Thus there is an open neighborhood of $e\times\{(0,0)\}$ where the same inequality holds. By appropriately deforming $h$ outside of an even smaller neighborhood we may assume without loss of generality that the same inequality holds on all of $V_e$. Moreover, we may assume that $h$ satisfies uniform estimates on all derivatives on all of $V_e$. By Theorem \ref{tmExFibSem} there exists an SYZ fibration for which the sets $V_e$ describe the local normal coordinates. By taking the disk $D^2$ to be small enough  we may assume that the corresponding action coordinates are embeddings on simply connected sets. Thus, after completing as in Theorem \ref{tmBijecCompDel}, this is the unique up to symplectomorphism complete nice SYZ fibration having with the same germ as the one we started with.

To define the coordinate $x_3$ observe first that the difference between two successive branches of $dI_3$ is a bounded function of $(I_1,I_2,I_3)$. We may thus define a new function $J_3$ by interpolating two branches on a neighborhood of some cuts in such a way that the system $\{I_1,I_2,I_3\}$ is bi-Lipschitz equivalent to the system $\{I_1,I_2,J_3\}$ on any simply connected subset of $B_{reg}$. Fix a constant $C$ such that $|dH_{\Delta,\Ll}|\ll C$. Then by taking $D^2$ to be the disk of radius $e^{-C}$ we can interpolate $CJ_3$ with the normal coordinate $b_1$ in such a way that the requirements are satisfied.
\end{proof}

\begin{df}
For $\Ll$ tame, call a local normal form  $\psi$ on a neighborhood $V$ of $\crit(\Ll)$ $\Ll$-tame if, with respect a metric as in Theorem \ref{tmBaseCoord}, $V$ projects under $\Ll$ onto a uniform tubular neighborhood of $\Delta$ and the functions $q_1,q_2,q_3$ defined by $\psi$ as in equation \eqref{eqLocEdgeNormForm} factor as $\tilde{q}\circ\Ll$ for $\tilde{q}$ Lipschitz with respect to said metric. Given a Lagrangian section $\sigma$ which does not meet $\crit(\Ll)$ we say that the local normal form is $(\Ll,\sigma)$-tame if, further, $\sigma$ is uniformly disjoint from $\crit(\Ll)$ in the coordinates defined by $\psi$.
\end{df}
\begin{df}
Let $(X,g_X),(Y,g_Y)$ be Riemannian manifolds. A submersion $f:X\to Y$ is \emph{quasi-Riemannian} there is a constant   $C>1$ such that for each $p\in Y$ and each $q\in f^{-1}(p)$ denoting by $g_q$ metric on $T_pY$ induced by considering the quotient metric on $T_qX/\ker df$ and identifying $T_qX/\ker df$ with $T_pY$, we have for each $v\in T_pY$
\begin{equation}
\frac1{C} g_q(v,v)<g_Y(v,v)<Cg_q(v,v).
\end{equation}
\end{df}
\begin{tm}[\textbf{The uniform structure on $M$}]\label{tmCanonicalMetric}
Suppose $\Ll$ is tame. For each section $\sigma$ of $\Ll$ which does not meet $\crit(\Ll)$ there is an $\omega$-compatible almost complex structure $J_\sigma$ such that the associated metric $g_{J_\sigma}$ has a finite cover by uniformly bi-Lipschitz coordinate systems for any $\delta>0$ as follows.
\begin{enumerate}
\item Pre-images under $\Ll$ of simply connected open subsets of $B\setminus B_\delta(\Delta)$ with action angle coordinates for which $\sigma$ is the $0$-section.
    \item
     $\Ll^{-1}(B_\delta(\Delta))\setminus B_{\delta}(\crit(\Ll))$ with coordinates given on base as in \ref{tmBaseCoord} and on the fibers coordinates $t\in [-T,T]^n$ giving the time $t$ Hamiltonian flow of the base coordinates with initial condition $\sigma$.
 \item
    $B_{\delta}(\crit(\Ll))$ with $(\sigma,\Ll)$-tame local normal coordinates.
 \end{enumerate}
 Any two almost complex structures $J_1$ and $J_2$ satisfying this condition give rise to equivalent metrics.
 With respect to such a metric, we have $\Ll$ is a quasi-Riemannian submersion on the complement of $B_{\delta}(\crit(\Ll))$ and $\sigma$ is a uniformly Lipschitz embedding.
\end{tm}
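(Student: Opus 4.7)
The strategy is to construct $J_\sigma$ locally on each of the three types of regions appearing in the statement, verify that the natural metrics associated to these local models are uniformly bi-Lipschitz on overlaps, and then glue by a partition of unity. Throughout, we fix a small $\delta>0$ and work with the three pieces
\[
U_1=\Ll^{-1}(B\setminus B_{\delta/2}(\Delta)),\quad U_2=\Ll^{-1}(B_\delta(\Delta))\setminus B_{\delta/2}(\crit(\Ll)),\quad U_3=B_\delta(\crit(\Ll)),
\]
whose union covers $M$.

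On $U_1$ we use the section $\sigma$ together with a choice of basis for $\Lambda^*$ on any simply connected affine chart $V\subset B\setminus B_{\delta/2}(\Delta)$ to obtain action-angle coordinates $(x,\theta)$ on $\Ll^{-1}(V)$ with $\sigma=\{\theta=0\}$; we declare $J^{(1)}$ to be the standard product complex structure, so that $g_{J^{(1)}}$ is the flat product metric $|dx|^2+|d\theta|^2$. Different local bases for $\Gamma\subset\Lambda^*$ differ by elements of $SL(n,\Z)$, so any two such choices yield uniformly bi-Lipschitz metrics, and Theorem~\ref{tmBaseCoord}(a) says they are uniformly bi-Lipschitz to the coordinates $(x_1,x_2,x_3)$ on $V$. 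On $U_2$ we replace action-angle coordinates by flow coordinates: the Hamiltonian flows of $x_1,x_2$ come from the global $G$-action and are therefore periodic, while the flow of $x_3$ exists on the relevant bounded time interval $[-T,T]$ because $U_2$ is uniformly separated from $\crit(\Ll)$; taking $\sigma$ as initial condition, this produces finitely many charts of the required form. The associated product metric $|dx|^2+|dt|^2$ defines a preferred $J^{(2)}$. Theorem~\ref{tmBaseCoord}(c) shows that the Jacobian matrix between these coordinates and the action-angle coordinates on the overlap $U_1\cap U_2$ has entries controlled by $\log d(\cdot,\Delta)$ in both directions, so the two metrics are uniformly bi-Lipschitz. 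On $U_3$ we first promote Lemma~\ref{lmEdgeLocalForm} to an $(\Ll,\sigma)$-tame normal form: combining the patching argument of that lemma with the tameness of $H_{e,\Ll}$ along the half-infinite edges (Definition~\ref{dfAdmSYZ}) and Hamiltonian isotoping $\sigma$ slightly off $\crit(\Ll)$ uniformly, we obtain normal coordinates in which $\psi_*\omega$ and $\sigma$ are uniformly comparable to the standard model on $D^4\times I\times S^1$. We take $J^{(3)}$ to be the pullback of the standard complex structure. On $U_2\cap U_3$, the tameness of the local normal form together with Theorem~\ref{tmBaseCoord}(c) again gives bi-Lipschitz equivalence.

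To assemble $J_\sigma$, we pick a smooth partition of unity $\{\rho_i\}$ subordinate to a locally finite refinement of $\{U_1,U_2,U_3\}$ whose derivatives are uniformly bounded in the pieced-together metric; since the space of $\omega$-tame almost complex structures that are equivalent (in the sense of Lemma~\ref{lmEquivCont}) to any of the $J^{(i)}$ is convex in the Cayley image and the latter depends smoothly on $J$, we can form a global $\omega$-compatible $J_\sigma$ by convex combination in local trivializations, and its metric is uniformly equivalent to each $g_{J^{(i)}}$ on the corresponding piece. This is exactly the finite cover by uniformly bi-Lipschitz coordinate systems claimed in (a)--(c). If $J_1,J_2$ are two choices, their metrics are uniformly bi-Lipschitz to the same local models on each piece, hence to each other; by Lemma~\ref{lmEquivCont}, $J_1$ and $J_2$ are equivalent. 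Finally, in the coordinate systems (a) and (b) the map $\Ll$ is expressed as a linear projection onto the base coordinates, which is a Riemannian submersion for the product metric; combined with the uniform bi-Lipschitz property this yields the quasi-Riemannian submersion statement on $M\setminus B_\delta(\crit(\Ll))$. In each of these charts $\sigma$ appears as the graph $\{\theta=0\}$ or $\{t=0\}$, which has uniformly bounded Lipschitz constant.

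The main technical obstacle is promoting the normal forms of Lemma~\ref{lmEdgeLocalForm} to uniformly $(\Ll,\sigma)$-tame normal forms: one needs simultaneously that the functions $\tilde{q}_i$ are uniformly Lipschitz in the $x_i$-coordinates of Theorem~\ref{tmBaseCoord}, that the projected $U_e$ is a uniform tubular neighborhood of the edges, and that $\sigma$ is uniformly transverse to $\crit(\Ll)$. The first two points require using the tameness hypothesis on $H_{e,\Ll}$ together with the freedom in the normal form described after Lemma~\ref{lmEdgeLocalForm} (change by maps tangent to the identity along $\crit(\Ll)$ and by fiberwise Hamiltonian isotopy) to control the normalizing diffeomorphism in $C^k$ uniformly along the noncompact edges; the last is achieved by an equivariant Hamiltonian isotopy of $\sigma$ supported in $U_3$. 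Once this tame normal form is in hand, the gluing and uniqueness arguments are essentially formal.
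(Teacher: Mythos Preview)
Your proposal is correct and follows essentially the same route as the paper: build local $J$'s on the three model regions, verify bi-Lipschitz equivalence on overlaps, and glue via the contractibility of equivalent tame structures (Lemma~\ref{lmEquivCont}). The paper's proof differs only in emphasis: it singles out the existence of a fixed $T$ such that the type-(b) and type-(c) charts actually cover $\Ll^{-1}(B_\delta(\Delta))$, arguing that $|dh|$ in \eqref{eqLocalSemiGlobal} governs the return time of the flow to the normal neighborhood and hence tameness gives a uniform bound; your justification ``because $U_2$ is uniformly separated from $\crit(\Ll)$'' does not quite capture this, though you implicitly recover it when you later ask that the projected $U_e$ be a uniform tubular neighborhood.
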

\begin{proof}
We start by taking an open cover by coordinate systems as in the statement interpreting $B_\delta(\crit(\Ll))$ as the ball with respect to some $(\Ll,\sigma)$-tame local coordinates.

We remark that to obtain a $\sigma$-tame local normal form we can start with an arbitrary tame local normal form, which exists by the tameness assumption and Theorem \ref{tmBaseCoord}, and modify by flowing $\sigma$ to the section $x_1=x_2=\delta,\theta=0$. Here we are referring to the normal coordinates introduced in Theorem \ref{tmLocNorFo}.

Observe also that for a given $\delta$ which is small enough there is a fixed $T$ such that the sets of the last two types are a cover of $\Ll^{-1}(B_\delta(\Delta))$. To see this note that $|dh|$ as in \eqref{eqLocalSemiGlobal} is roughly the time it takes for a point moving out of the normal neighborhood to hit it again under the Hamiltonian flow of the local normal coordinate on the base. See \cite{Ngoc03}. Tameness of $\Ll$ thus guarantees uniform boundedness of this time for fixed $\delta$.

We need to show that on the overlaps the coordinates are bi-Lipschitz equivalent. We comment on this for the overlap between the second and third type of coordinate system, the others being the same. Observe first that in both systems, we have that $\Ll$ is a uniformly quasi-Riemannian submersion \textit{on the overlap} and $\sigma$ is a uniformly Lipschitz embedding. The map from Poisson coordinates $(I_1,\dots, I_n,t_1,\dots t_n)$ to local normal coordinates $(x,y,r,\theta)$ involves $\Ll,\sigma,$ and the differential of the Hamiltonian flow of the functions $q_1,q_2,r$. On the overlaps, the gradients of these functions are bounded above and below. Moreover, their Hessians are uniformly bounded since they are $r$ independent. From this it is straightforward to deduce the desired bi-Lipschitz equivalence.

Using these coordinates, we may pick uniformly bounded compatible almost complex structures on each coordinate neighborhood. On the overlaps, the various almost complex structures induce equivalent metrics. By Lemma \ref{lmEquivCont} they may be glued together while preserving the equivalence.

For the second statement note that both the Poisson coordinates corresponding to the coordinate system of Theorem \ref{tmBaseCoord} and a tame local normal form are determined uniquely up to bi-Lipschitz equivalence given those coordinates and a Lagrangian section.

Finally the claims about $\sigma$ and $\Ll$ are immediate in each of these coordinate systems, so the claim follows by the bi-Lipschitz equivalence of the coordinate systems on the overlaps.
\end{proof}
\begin{df}
An almost complex structure as in Theorem \ref{tmCanonicalMetric} is called $(\mathcal{L},\sigma)$-adapted. 
\end{df}
\begin{tm}
The metric associated with any  $(\mathcal{L},\sigma)$-adapted 
almost complex structure  is uniformly isoperimetric in the sense of Definition \ref{dfGoBound}.
\end{tm}
\begin{proof}
On the complement of $B_{\delta/2}(\crit(\Ll))$ the metric is locally uniformly equivalent to the Euclidean metric on $\R^n\times\mathbb{T}^n$. On
\[
B_{\delta/2}(\Delta)(\crit(\Ll))
\]
the metric is up to bi-Lipschitz equivalence translation invariant outside of a compact set. The claim follows by the remark after Definition \ref{dfGoBound}.
\end{proof}
The following Lemma will be used in Appendix \ref{AppA} in the course of the proof of Theorem \ref{tmExLGPot}.
\begin{lm}\label{tmGoodCoord}
There exists a Lipschitz function $\theta:M\to S^1$ with the following properties.

\begin{enumerate}
\item $\theta$ is $G$-invariant.
\item $d\theta$ generates $H^1(M;\Z)$.
\item For any $\delta>0$ the restriction of $\Ll\times\theta$ to $M\setminus B_{\delta}(\crit(\Ll))$ is a quasi-Riemannian submersion.
\end{enumerate}

\end{lm}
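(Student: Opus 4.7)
The plan is to construct $\theta$ as the fiberwise integral (starting from $\sigma$) of a globally defined closed 1-form $\alpha$ on $M$ with integer periods, obtained by patching two natural candidates. Away from $\crit(\Ll)$, I would use the 1-form $\theta_3 = \iota_V\omega$ from the proof of Theorem \ref{tmBasicTop}, which is closed, represents the generator of $H^1(M; \Z)$, and can be taken $G$-invariant by averaging (since $H_{F_1}, H_{F_2}$ commute with the Hamiltonian generating the dual circle action). Near $\crit(\Ll)$, where $\theta_3$ fails to extend, I would use instead $d\tilde\theta$, where $\tilde\theta$ is the $S^1$-coordinate in a $(\sigma,\Ll)$-tame local normal chart from Lemma \ref{lmEdgeLocalForm}; note that $\tilde\theta$ is defined smoothly on the entire chart $V \cong D^4 \times (0,1) \times S^1$, including on the critical locus $\{x_1 = y_1 = x_2 = y_2 = 0\} \cong (0,1) \times S^1$.

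Next, on the overlap $B_\delta(\crit(\Ll)) \setminus B_{\delta/2}(\crit(\Ll))$, the two closed 1-forms $\theta_3$ and $d\tilde\theta$ represent the same integer cohomology class, hence differ by an exact form $df$ with $f$ chosen $G$-invariant by averaging. Let $\chi$ be a $G$-invariant cutoff equal to $1$ on $B_{\delta/2}(\crit(\Ll))$ and vanishing outside $B_\delta(\crit(\Ll))$, and set
\[
\alpha := \theta_3 + d(\chi f)
\]
on $M \setminus \crit(\Ll)$. This agrees with $d\tilde\theta$ on $B_{\delta/2}(\crit(\Ll))$, hence extends smoothly to all of $M$ as a closed, $G$-invariant 1-form with integer periods. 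Uniform Lipschitz bounds for $\alpha$ follow from the bi-Lipschitz equivalence of the coordinate charts in Theorem \ref{tmCanonicalMetric} together with the uniform bounds on the semi-global invariants in Definition \ref{dfAdmSYZ}. Define $\theta(p) := \int_{\sigma(\Ll(p))}^{p} \alpha \pmod{\Z}$ computed along any path in the fiber $\Ll^{-1}(\Ll(p))$; this is the sought-after Lipschitz function $M \to S^1$.

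Verifying the three properties: $G$-invariance of $\theta$ follows from $G$-invariance of $\alpha$ and of $\sigma$ together with the fact that $G$ acts along the fibers of $\Ll$. That $d\theta$ generates $H^1(M;\Z)$ is immediate from the construction combined with Theorem \ref{tmBasicTop}. For the quasi-Riemannian submersion property on $M \setminus B_\delta(\crit(\Ll))$: in charts of type (a) of Theorem \ref{tmCanonicalMetric}, $\theta$ differs boundedly from the angle coordinate dual to $x_3$; in type (b) charts it differs boundedly from the time coordinate $t_3$ of the Hamiltonian flow. In both cases $\Ll \times \theta$ becomes, after passing to these charts, the standard coordinate projection up to a uniformly bi-Lipschitz change of coordinates.

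The main obstacle is to simultaneously control integrality of periods and uniform Lipschitz bounds on $\alpha$. Integrality is handed to us for free by the topological content of Theorem \ref{tmBasicTop}, but Lipschitz control of the patching function $f$ requires that the change-of-coordinates between a tame local normal form and an action-angle chart be uniformly bi-Lipschitz on the overlap region. This is precisely what is set up in Theorem \ref{tmCanonicalMetric}, which in turn rests on the tameness assumption controlling the semi-global invariants $H_{e,\Ll}$ at infinity.
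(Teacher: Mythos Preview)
Your construction of $\theta$ and verification of properties (a) and (b) are fine, but your verification of property (c) has a genuine gap in charts of type (b), i.e., on the region $\Ll^{-1}(B_\delta(\Delta))\setminus B_\delta(\crit(\Ll))$. On that region your $\alpha$ equals $\theta_3=\iota_V\omega$, and you claim the resulting $\theta$ ``differs boundedly from the time coordinate $t_3$'' there. This is false. The vector field $V$ is the lift of $\partial/\partial I_3$, normalized against the third \emph{action} differential $dI_3$; but by Theorem~\ref{tmBaseCoord}(c) (and equation~\eqref{eqLocalSemiGlobal}) we have $|\nabla I_3|\sim|\ln d(x,\Delta)|$ in the $x$-coordinates of the type~(b) chart. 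Dually, the angle form $\theta_3$ restricted to the fiber direction has size $\sim 1/|\ln d(x,\Delta)|$ relative to $dt_3$. So your $d\theta$ goes to zero as one approaches the singular fibers (away from $\crit(\Ll)$), and $\Ll\times\theta$ is \emph{not} a quasi-Riemannian submersion there.

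The paper's proof identifies exactly this degeneration and addresses it by an explicit renormalization: on $U_1=\Ll^{-1}(B_\delta(\Delta))\setminus B_\delta(\crit(\Ll))$ it replaces $\theta_3$ by a function $\tilde\theta_3$ built from a \emph{rescaled} vector field $f(b)\partial_{x_3}$, normalized so that $\tilde\theta_3$ maps $U_1/G$ onto a half-circle; this forces $|d\tilde\theta_3|$ to be bounded below on $U_1$. Only then does one extend across $B_\delta(\crit(\Ll))$ (where no lower bound is claimed anyway). Your patching near $\crit(\Ll)$ via the local normal form $S^1$-coordinate is reasonable for that region, but you also need this intermediate renormalization on $U_1$; without it the quasi-submersion property fails.
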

\begin{proof}
On $M\setminus \Ll^{-1}(B_{\delta}(\Delta))$ we pick a global angle function $\theta_3$ to $S^1$ as in the proof of Theorem so that $\sigma$ is mapped to $1$. Note that as $\delta$ goes to $0$ the differential $d\theta_3$ goes to $0$ except at the critical points where it converges to $\infty$. We renormalize as follows. On $U_1=\Ll^{-1}(B_{\delta}(\Delta))\setminus B_{\delta}(\crit(\Ll))$ consider the function $\tilde{\theta}_3$ obtained from the vector field $f(b)\partial_{x_3}$ normalized so as to map $U_1/G$ onto the upper half circle. Let $U_2:=(B_{\delta}(Crit\Ll)$. Then $U_2\cap U_1$ has two connected components. Thus we can extend $\tilde{\theta}_3$ smoothly to $U_2$ so that it maps to the lower half circle.  In this procedure we have no control from below on the gradient of $\tilde{\theta_3}$ in the region $B_{\delta}(\crit(\Ll))$, but outside that region $\tilde{\theta}_3$ is a quasi-Riemannian submersion. We can now glue together $\theta_3$ and $\tilde{\theta_3}$ to obtain a function as required.
\end{proof}
\subsection{Landau-Ginzburg potentials and walls}
Recall Definition \ref{dfLgPotential} of a Landau Ginzburg potential for $\Ll$ and a tautologically unobstructed Lagrangian. We will prove in Appendix \ref{AppA} the following theorem
\begin{tm}\label{tmExLGPot}
If $\Ll$ is tame then for each Lagrangian section $\sigma$ of $\Ll$ there is a Landau Ginzburg potential $(\pi,J)$ such that $J$ is $(\Ll,\sigma)$-adapted. Moreover, there is a canonical contractible set of such.
\end{tm}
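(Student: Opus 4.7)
The plan is to construct $(\pi, J)$ in two stages: first on a neighborhood of $\crit(\Ll)$ using the normal form relating $M$ to its associated toric Calabi-Yau $X_M$ provided by Theorem \ref{tmNormalTCY}, and then extend globally to $M$ by exploiting the fact that $G$ acts freely on $M \setminus \crit(\Ll)$. Contractibility will follow because every choice along the way is drawn from a contractible set.

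For the local construction, Theorem \ref{tmNormalTCY} furnishes a symplectic embedding $\psi$ of a neighborhood $U$ of $\crit(\Ll)$ into the corresponding neighborhood of $\crit(\Ll_{X_M})$ in $X_M$, intertwining the Lagrangian fibrations. Pulling back the standard holomorphic function $f$ of \eqref{eqStLG}, normalized so that $f = 1$ on the image of $\crit(\Ll)$, and the GIT complex structure $J_{\mathrm{GIT}}$, yields $\pi_0 := \psi^{*} f$ and $J_0 := \psi^{*} J_{\mathrm{GIT}}$ with $\pi_0$ being $J_0$-holomorphic and $\crit(\Ll) \subset \pi_0^{-1}(1)$. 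Choosing the normal form to be $(\Ll, \sigma)$-tame and, if necessary, perturbing $J_0$ within its equivalence class using Lemma \ref{lmEquivCont}, one arranges that $J_0$ is $(\Ll, \sigma)$-adapted on $U$.

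The main obstacle is extending $\pi$ to all of $M$ while keeping its image in $\C^{*}$ and ensuring that $\mu \times \pi$ has full rank, the latter forcing the fibers of $\pi$ to be symplectic. Since $J_0$ is $G$-invariant I insist that $\pi$ be $G$-invariant as well, so that $\pi$ factors through the quotient $Q := (M \setminus \crit(\Ll)) / G$. By Theorem \ref{tmBaseCoord} combined with Lemma \ref{tmGoodCoord}, $Q$ is parametrized by $(x_1, x_2, x_3, \theta) \in \R^3 \times S^1$. On $U$ the modulus $|\pi_0|$ already factors through $\Ll$; I extend $|\pi|$ globally as a function of $\Ll$ alone, independent of $\mu$, bounded below by a positive constant so that the image avoids $0$. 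Similarly I extend $\arg \pi$ as a function of $(\Ll, \theta)$ matching $\arg \pi_0$ on $\partial U$. The tameness of $\Ll$ --- in particular the uniform control on the semi-global invariants $H_{e, \Ll}$ at infinity and the Lipschitz angle coordinate $\theta$ --- is precisely what permits these extensions to be carried out in a uniformly controlled way and to yield a $\pi$ with the required properties.

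Given the extended $\pi$, I extend $J$ by using $J_0$ on $U$ and, elsewhere, choosing a $G$-invariant $\omega$-compatible almost complex structure on the symplectic vertical subbundle $\ker d\pi$ which is $(\Ll, \sigma)$-adapted, and then requiring $\pi$ to be $J$-holomorphic on the horizontal complement. The space of such vertical almost complex structures is the space of sections of a bundle with contractible fibers, and Lemma \ref{lmEquivCont} ensures that gluing across $\partial U$ can be performed inside a fixed equivalence class without altering the $(\Ll, \sigma)$-adapted condition. Contractibility of the total space of LG potentials then follows because the construction is assembled from three contractible ingredients: the normal form $\psi$ (unique up to a discrete sign), the $G$-invariant extension of $\pi$ (convex in the relevant function space), and the extension of $J$ (contractible by Lemma \ref{lmEquivCont}). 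The hardest technical point, responsible for the length of Appendix \ref{AppA}, is verifying the uniform bounds on $\pi$ and on the vertical $J$ required by the $(\Ll, \sigma)$-adapted condition; this is where tameness of $\Ll$ plays the crucial role.
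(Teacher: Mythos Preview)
Your outline follows the same architecture as the paper's proof: transplant $\pi$ and $J$ from the toric model near $\crit(\Ll)$, extend $\pi$ through the $G$-quotient by separately extending its modulus (as a function of $\Ll$) and its argument (as a function of $\Ll$ and the global angle $\theta$), and finally build $J$ from a fiberwise choice on $\ker d\pi$ together with the holomorphicity requirement on the horizontal complement. The contractibility argument is also in the same spirit.

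There is, however, a genuine gap in your local step. You pull back $J_{\mathrm{GIT}}$ directly via the normal form of Theorem~\ref{tmNormalTCY} and then propose to perturb within its equivalence class. But the whole point of \S\ref{SubsecTschRem} is that after completion the GIT complex structure is \emph{not} known to lie in the $(\Ll,\sigma)$-adapted equivalence class; along the half-infinite edges of $\crit(\Ll)$ the standard K\"ahler structure on $X_M$ is not translation-invariant in the required sense, so there is no a~priori reason $\psi^{*}J_{\mathrm{GIT}}$ is equivalent to any $(\Ll,\sigma)$-adapted $J$, and Lemma~\ref{lmEquivCont} gives you nothing. The paper's remedy is Lemma~\ref{lmTameStNoFo}: one first deforms $(\omega,\Ll_{X_M})$ on a neighborhood $U_X\subset X_M$ so that $\omega_X$, $\Ll_X$, $J$ and $\pi_X$ all become invariant under a half-cylinder action outside a compact set. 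Only this prepared local model, pulled back via a tame normal form, lands in the correct metric class.

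A second, related point: you describe the extension of $\pi$ as ``uniformly controlled'' and the resulting vertical $J$ as $(\Ll,\sigma)$-adapted, but this hides the actual content. The paper isolates explicit conditions on an admissible LG function (Definition~\ref{dfLGIneq}): an angle bound \eqref{eqAngle1} between $\nabla H$ and $\nabla H_{F_i}$, and quasi-conformality of $\pi$. Lemma~\ref{lmAdmLGEx} then shows these are precisely what forces the $J$ built by your horizontal/vertical recipe to be metrically equivalent to the adapted class; without them, the $J$ you obtain may be $\omega$-tame yet fail to be $(\Ll,\sigma)$-adapted. Your last paragraph correctly flags this as the hard step but does not supply it. Finally, for the contractibility of the $J$-extension the relevant lemma is \ref{lmTameComp} (tame $J$'s preserving a symplectic hyperplane), not \ref{lmEquivCont}.
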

\begin{rem}
If we drop the requirement that $J$ is $(\Ll,\sigma)$-adapted, it is not hard to come up with a Landau-Ginzburg potential. The difficulty in proving Theorem \ref{tmExLGPot} is in constructing a $(\pi,J)$ such that $J$ is geometrically bounded and tamed by $\omega$.
\end{rem}
The significance of an LG potential is given by the following theorem.
\begin{tm}\label{tmLGSphereDiskMax}
Fix a Landau Ginzburg potential $(J,\pi)$. Then any $J$-holomorphic sphere is contained in $\pi^{-1}(1)$. Furthermore, suppose $L\subset M$ is a tautologically unobstructed Lagrangian sub-manifold. Then there are no non-constant $J$ holomorphic disks with boundary on $L$.
\end{tm}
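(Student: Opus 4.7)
The proof exploits two ingredients: the $J$-holomorphicity of $\pi$, which makes $\pi\circ u$ holomorphic for any $J$-holomorphic $u$, and positivity of intersection of $J$-holomorphic curves with the $J$-complex hypersurface $\pi^{-1}(1)$.

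\textbf{Sphere case.} Given a non-constant $J$-holomorphic $u:S^2\to M$, the composition $\pi\circ u:S^2\to\C^*$ is holomorphic. Viewing $S^2=\Pp^1$, a non-constant such map would extend to a non-constant rational map $\Pp^1\to\Pp^1$, which is surjective and hence hits $\{0,\infty\}$, contradicting $\pi(M)\subset\C^*$. So $\pi\circ u\equiv c$ and $u(S^2)\subset\pi^{-1}(c)$. For $c\neq 1$ the fiber $\pi^{-1}(c)$ is smooth (disjoint from $\crit(\Ll)$) and, in the examples governed by the LG potentials of \S\ref{SecSetup}, is Stein, hence admits no non-constant holomorphic spheres. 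Thus any non-constant sphere lies in $\pi^{-1}(1)$.

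\textbf{Disk case, positivity step.} Let $u:(D,\partial D)\to(M,L)$ be $J$-holomorphic. The hypersurface $\pi^{-1}(1)$, being the zero locus of the $J$-holomorphic function $\pi-1$, is $J$-complex. Since $L\cap\pi^{-1}(1)=\emptyset$, the curve $u$ cannot have image entirely in $\pi^{-1}(1)$, so positivity of intersection for $J$-holomorphic curves meeting a $J$-complex divisor guarantees every local intersection of $u$ with $\pi^{-1}(1)$ contributes non-negatively to the topological intersection number $[u]\cdot[\pi^{-1}(1)]$. By tautological unobstructedness this number vanishes, so $u(D)\cap\pi^{-1}(1)=\emptyset$, and $\pi\circ u$ factors through $\C^*\setminus\{1\}$.

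\textbf{Disk case, constancy step.} The main obstacle is to upgrade ``$u$ avoids $\pi^{-1}(1)$'' to ``$u$ is constant''. My plan is a two-stage argument. First, since $(\pi-1)\circ u$ is a nowhere-zero holomorphic function on $D$, the function $\log|(\pi-1)\circ u|$ is harmonic on $D$; using a maximum-principle estimate together with the boundary information encoded by $\pi(L)$ (controlled via the admissibility of $L$ and the vanishing of the winding of $\pi\circ u|_{\partial D}$ around both $0$ and $1$, where the latter uses tautological unobstructedness), I expect to force $\pi\circ u$ to be constant. This reduces $u$ to a holomorphic disk contained in a single smooth fiber $\pi^{-1}(c)$ with $c\neq 1$; being Stein in the relevant models and carrying only an appropriately restricted intersection with $L$, such a fiber supports only constant holomorphic disks, completing the argument. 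This last step is what I expect to be the most delicate, as it requires extracting enough information on $\pi(L)$ from the admissibility hypothesis.
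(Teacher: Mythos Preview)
Your sphere argument is fine and matches the paper's in spirit: $\pi\circ u$ is constant since $\C^*$ carries no nonconstant holomorphic spheres, and the regular fibers are aspherical (the paper observes they have the homotopy type of tori rather than invoking Stein-ness, but either suffices).

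The disk case, however, has a genuine gap. Your positivity-of-intersection step correctly shows that $u$ avoids $\pi^{-1}(1)$, but your plan to then force $\pi\circ u$ to be constant via the harmonicity of $\log|(\pi-1)\circ u|$ cannot succeed: a nowhere-zero holomorphic function on the disk with boundary values in some fixed set $\pi(L)\subset\C^*\setminus\{1\}$ has no reason to be constant (think of a small embedded holomorphic disk in $\C^*\setminus\{1\}$). Even granting constancy of $\pi\circ u$, you would still need to rule out nonconstant disks in a single fiber with boundary on $L\cap\pi^{-1}(c)$, and nothing in the hypothesis of tautological unobstructedness controls that intersection.

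The paper's argument bypasses both obstacles by reversing the logic. Rather than trying to show $u$ is constant once it avoids $\pi^{-1}(1)$, it shows that a nonconstant $u$ \emph{must} meet $\pi^{-1}(1)$, contradicting (via Theorem~\ref{tmSpheresMax}, which packages the positivity you invoked) the tautological unobstructedness of $L$. The key input is global topology: $M\setminus\pi^{-1}(1)$ is a torus fibration over $B_{reg}$, which has the homotopy type of a punctured surface, so $M\setminus\pi^{-1}(1)$ is aspherical. If $\partial u$ is contractible in $L$, capping it off with a disk in $L$ produces a sphere in an aspherical space, hence $\int u^*\omega=0$, forcing $u$ constant. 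If $\partial u$ is not contractible in $L$, one argues it remains non-contractible in $M\setminus\pi^{-1}(1)$, so $u$ cannot lie there. This asphericity of the complement of the singular fiber is the missing ingredient in your approach; no fiberwise or maximum-principle argument will substitute for it.
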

The proof of Theorem \ref{tmLGSphereDiskMax} relies on the following theorem.
\begin{tm}\label{tmSpheresMax}
Let $(M,J)$ be an almost complex manifold, let $V\subset \C^{n}$ be an open tubular neighborhood of a properly embedded complex hypersurface and let $f:M\to\C^n$ be a smooth map such that the restriction of  $f$ to $U:=f^{-1}(V)$ is $J$-holomorphic. Then  any closed $J$-holomorphic curve $u$ in $M$ meeting a point $p\in U$ satisfies $u\subset f^{-1}(p)\subset V$. Moreover, if $u$ is a  $J$-holomorphic map from a compact surface with boundary such that $u$ intersect $U$ but $\partial u\subset M\setminus U$, then $f(\partial u)$  represents a non-trivial class in $H_1(\C^n\setminus V;\Z)$.
\end{tm}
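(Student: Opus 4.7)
The plan is to exploit that on $W := u^{-1}(U) \subseteq \Sigma$, the composition $g := f\circ u$ is $J$-holomorphic by the chain rule (since both $u$ and $f|_U$ are), so $g|_W : W \to V$ is a genuine holomorphic map from an open Riemann surface. Any compactly supported differential form $\alpha$ on $V$, extended by zero to $\C^n$, has $g^*\alpha$ supported inside $g^{-1}(V) = W$, and on $W$ we can exploit the standard positivity of intersections of the holomorphic curve $g|_W$ with complex subvarieties of $V$.

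For the closed case, I would choose $\alpha$ to be a closed compactly supported $(1,1)$-form on $V$ representing the Poincar\'{e} dual of a codimension-$1$ complex subvariety $H_0 \subset V$ passing through a regular point of $g(W)$. Because $\C^n$ is contractible, $g : \Sigma \to \C^n$ is null-homotopic, so $g^*\alpha$ represents the trivial class in $H^2(\Sigma)$ and $\int_\Sigma g^*\alpha = 0$. On the other hand, $\int_\Sigma g^*\alpha = \int_W g^*\alpha$, and positivity of intersections of the holomorphic map $g|_W$ with $H_0$ would force this integral to be strictly positive if $g|_W$ were non-constant on some connected component of $W$; contradiction. Thus $g|_W$ is locally constant. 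Let $C_0$ be the component containing a preimage $x_0$ of $p$, on which $g \equiv q := f(p) \in V$. If $\partial C_0 \neq \emptyset$ in $\Sigma$, then for $y \in \partial C_0$ continuity from $C_0$ gives $g(y) = q \in V$, whereas $y \notin W$ forces $g(y) \in \C^n \setminus V$, a contradiction. Hence $C_0$ is clopen, and connectedness of $\Sigma$ yields $C_0 = \Sigma$, so $u(\Sigma) \subset f^{-1}(q) \subset U$.

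For the second part, I would use the long exact sequence of the pair $(\C^n, \C^n\setminus V)$ together with the contractibility of $\C^n$ to identify $H_2(\C^n, \C^n\setminus V;\Z) \simeq H_1(\C^n\setminus V;\Z)$ via the connecting homomorphism, sending $g_*[\Sigma,\partial\Sigma]$ to $g_*[\partial\Sigma] = f_*[\partial u]$. So it suffices to show $g_*[\Sigma, \partial \Sigma] \neq 0$, which I would detect by pairing against the Thom class of a codimension-$2$ (real) complex subvariety $H_0 \subset V$, a small piece of a complex hyperplane, chosen to meet $g(W)$ transversally at a regular value of $g|_W$: since $g|_W$ is holomorphic and $g(\partial \Sigma) \cap H_0 = \emptyset$ (as $H_0 \subset V$), all intersections count positively, producing a strictly positive pairing.

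The main obstacle is constructing the Thom representatives when $V$ is an arbitrary open subset of $\C^n$ and the relevant subvarieties $H_0$ need not be closed inside $V$. I would handle this by working in local holomorphic coordinates near a chosen regular value of $g|_W$ and building a compactly supported closed form concentrated near a small holomorphic disk transverse to $g(W)$, verifying both that its cohomology class in $H^*(\C^n)$ vanishes (so that Stokes' theorem applies for the closed $\Sigma$ case) and that positivity of intersections of $g|_W$ with it holds. The existence of a regular value of $g|_W$ where $g(W)$ meets such a hyperplane follows from Sard's theorem together with non-constancy and the open mapping principle applied coordinate-wise.
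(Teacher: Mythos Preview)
Your argument is correct and follows the same skeleton as the paper's: pull back a compactly supported $2$-form on $V$ via $g=f\circ u$, use exactness on $\C^n$ to force the integral over $\Sigma$ to vanish, and use holomorphicity of $g$ on $W=g^{-1}(V)$ to force positivity unless $g$ is locally constant there; then a clopen argument finishes. The difference is in the choice of form. You work with a Thom representative of a hyperplane through a regular value and invoke positivity of intersections, which is why you flag the construction as ``the main obstacle.'' The paper sidesteps this entirely: it just takes any exact non-negative $(1,1)$-form $\omega_V$ compactly supported in $V$ and strictly positive at the image point (for $n=1$ any bump $2$-form works; for $n>1$ a split form, e.g.\ a bump function times $\sum_j \tfrac{i}{2}dz_j\wedge d\bar z_j$). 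Then $g^*\omega_V\ge 0$ on $W$ automatically, with strict inequality wherever $dg\neq 0$, and no regular values or intersection theory are needed. For the bordered case the paper writes $\omega_V=d\alpha$ with $\alpha$ closed on $\C^n\setminus V$ and applies Stokes directly, $\int_\Sigma g^*\omega_V=\int_{\partial\Sigma} g^*\alpha$, which is exactly your long-exact-sequence argument unpacked. So your proof is fine, but the paper's choice of form makes the ``main obstacle'' disappear.
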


\begin{proof}[Proof of Theorem \ref{tmLGSphereDiskMax}]
By Theorem~\ref{tmSpheresMax} every $J$-holomorphic sphere is contained in a fiber of $\pi$. The regular fibers of $\pi$ have the homotopy type of tori and are thus a-spherical. The first part of the claim follows.

Suppose $u:(D,\partial D)\to (M,L)$ is $J$-holomorphic. The function $\pi$ can be considered as a function $f:M\to\C$ by adding either $\{0\}$ or $\{\infty\}$ to $\C^*$ in such a way that $f_*:H_1(L;\Z)\to H_1(\C\setminus 1;\Z)$ is trivial. Then the image of $u$ must intersect $f^{-1}(1)$ if $u$ is non-constant. Indeed, otherwise, if $\partial u$ is contractible in $L$, we have $\int u^*\omega=0$. This follows since $M\setminus f^{-1}(1)$ is topologically a-spherical being a torus fibration over $B_{reg}$ which has the homotopy type of a punctured Riemann surface. If $\partial u$ is non-contractible in $L$, then it is also non-contractible in $M\setminus f^{-1}(1)$. Either way, $u$ meets $f^{-1}(1)$. On the other hand $\partial u\subset L$, so by assumption, $f(\partial u)$ is homologically trivial in $\C\setminus\{1\}$. Moreover there is a neighborhood $V$ of $1$ such that $\partial u\subset M\setminus\pi^{-1}(V)$. The claim follows by Theorem~\ref{tmSpheresMax}.
\end{proof}
\begin{proof}[Proof of Theorem \ref{tmSpheresMax}]
The case $n>1$ can be reduced to the case $n=1$. Indeed, the complex analytic hypersurface in $\C^n$ is the zero locus of a global analytic function $g$ \cite{GH}, so the hypothesis with $n=1$ will be satisfied by the function $g\circ f$. So assume from now on $n=1$.  Pick an exact form $\omega_V$ with support in $V$, positive at $p$ and non-negative everywhere with respect to the standard complex structure on $\C$.
This can obviously be done in $\C$ since every $2$-form is closed and therefore exact. 
The pullback $f^*\omega_V$ is again an exact form. Thus $\int u^*f^*\omega_V=0$. But $f\circ u$ meets $p$ and is holomorphic on the pre-image of $V$, so unless $d(f\circ u)\equiv 0$ we have that $u^*f^*\omega_V$ is everywhere non-negative and positive somewhere. This contradiction proves the first part of the claim. For the second part, note that the same argument as above implies $\int u^*f^*\omega_V\neq 0$. Let $\alpha$ such that $d\alpha=\omega_V$. By assumption, $d\alpha=0$ on $\C^n\setminus V$ so
\[
\int u^*f^*\omega_V=\int f(\partial u)^*\alpha.
\]
The latter is only non-zero if $\partial u$ is a non-trivial class in $H_1(\C^n\setminus V;\Z)$.
\end{proof}

\begin{rem}
Observe that a Landau-Ginzburg potential is a flexible structure. Namely, given one such, we are free to deform $\pi$ away from $\crit(\Ll)$ so long as the fibers remain symplectic.  We can then extend $J$ from a neighborhood of $\crit(\Ll)$ by arbitrarily extending to the vertical bundle determined by the symplectic fibration, and then uniquely extend by the condition of $J$-holomorphicity and horizontality. Note also that carefully examining the proof of Theorem~\ref{tmLGSphereDiskMax} it becomes evident that one only needs to require $J$-holomorphicity of $\pi$ for the pre-image of a neighborhood of $0$.
\end{rem}

\subsection{Admissible Lagrangians}

Consider the Lagrangian torus fibration $\mathcal{L}:M\to B$ and the section $\sigma$ of $\Ll$. Let $b$ be a regular value of $\Ll$ and let $p=\sigma(b)$. Write $L_b:=\Ll^{-1}(b)$. There is a natural isomorphism $\iota:T_pL_b=T^*_bB$. This arises from the short exact sequence
\[
0\to T^*_bB\to T^*_pM\to T^*_pL \to 0,
\]
and the identification of $T_pL_b$ with the kernel of $T^*M\to T^*L$ via $\omega$.\textit{ Note that conormality also makes sense over singular values of $\Ll$ since $\sigma$ is disjoint of the critical points of $\Ll$.}

Given a locally affine submanifold $C\subset B_{reg}$, the conormal $C^\perp_p\subset T_pL_b$ of $C$ at $p$ is defined to be the annihilator of $T_bC$ under $\iota$. The image of the right inverse $\sigma_*:T_bB\to T_pM$ is isotropic. Therefore $Im\sigma+C^\perp_p\subset T_pM$ is Lagrangian.  If $C$ has rational slopes, $C^\perp_p$ defines a sub-torus of $L_b$ going through $p$ of dimension complementary to $C$. This sub-torus is also referred to as the conormal of $C$. We thus obtain a Lagrangian submanifold of $M$ fibering over $C$ with fiber over $c\in C$ the conormal sub-torus through $\sigma(c)$.

For $M$ connected, with a fixed almost complex structure $J$, a Hamiltonian $H$ is said to be \textit{Lyapunov} if there are constants $c,\lambda$ such that for any pair of points $x_0,x_1\in M$ and any $t>0$ we have
\[
\frac1cd(x_0,x_1)e^{-\lambda t}d(\psi_H^t(x_0),\psi_H^t(x_1))\leq cd(x_0,x_1)e^{\lambda t},
\]
the distance measured with respect to $J$.
The Lyapunov property is invariant under equivalence of metrics.

\begin{df}
A properly embedded Lagrangian submanifold $L\subset M$ is $(\mathcal{L},\sigma)$-adapted if after applying a Hamiltonian isotopy $\psi$ generated by a Lipschitz Lyapunov Hamiltonian, $\psi(L)$ is a as follows. There is a compact subset $K\subset M$ and an $N\subset B$ such that in affine coordinates $N\cap B_{reg}$ is locally affine with rational slopes and such that $\psi(L)\setminus K$ is the conormal of $\sigma(N)$. 
\end{df}

\begin{df}\label{dfAdmLag}
Fix a Landau-Ginzburg potential $(\pi,J)$. A Lagrangian submanifold is said to be admissible if it is
\begin{enumerate}
\item uniformly disjoint of $\pi^{-1}(0)$,
\item $(\Ll,\sigma)$-adapted,
\item spin, and,
\item  $arg(\Omega|_{TL})$ is exact.
\end{enumerate}
\end{df}
These give a grading on Hamiltonian chords, and orientation of the moduli spaces of discs with boundary on the Lagrangian, and, as we shall see, compactness of the moduli spaces.
\textit{Henceforth we fix an LG potential $(\pi,J)=(\pi_{LG},J_{LG})$ and consider admissibility with respect to it.}
\begin{exs}
\begin{enumerate}
    \item For any regular value $b\in B$, $\mathcal{L}^{-1}(b)$ is $\mathcal{L}$-admissible.
    \item
        Lagrangian sections $\sigma:B\to X$ which do not meet $\crit(\Ll)$ can be made admissible by a Hamiltonian isotopy.
    \item
        Conormals which are constructed out of a section  $\sigma$ as before and a submanifold of $B$ which is contained in a single affine chart and have linear slopes near infinity.
\end{enumerate}
\end{exs}

\begin{df}\label{dfLagGeoTame}
Let $J$ be uniformly isoperimetric. A properly embedded Lagrangian sub-manifold $L$ is said to be \textit{uniformly isoperimetric} if there are constants $\delta$ and $c$ such that any loop in $L$ of diameter less than $c\delta$ is contractible in $L$ and for any chord $\gamma:[0,1]\to M$ with endpoints on $L$ and of length $\leq \delta$ there is a disc $D\subset M$ and a path $\tilde{\gamma}:[0,1]\to L$ of length $\leq c\delta$ such that $\partial D=\gamma\cup\tilde{\gamma}$ and such that $Area(D)\leq c\ell^2(\gamma)$.

$L$ is said to be \textit{Lipschitz} if there exist constants $r$ and $C$ such that denoting by $d_L$ the induced distance on $L$, the following two hold.
\begin{enumerate}
\item
For $x,y\in L$ such that $d_M(x,y)<r$ we have $d_L(x,y)<Cd_M(x,y)$. In other words, on balls $B_r(x)\cap L$, the two distance functions are equivalent uniformly in $x\in L$.
\item
For any $x\in L$ we have that $B_r(x)\cap L$ is contractible in $L$.
\end{enumerate}
\end{df}
\begin{lm}
If $L$ is Lipschitz and $M$ is uniformly isoperimetric with respect to some $J$ then $L$ is also uniformly isoperimetric with respect to $J$.
\end{lm}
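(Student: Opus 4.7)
The plan is to verify the two requirements of Definition~\ref{dfLagGeoTame} directly, using the Lipschitz hypothesis as the bridge between the intrinsic geometry of $L$ and the ambient geometry of $M$. Let $(r,C)$ denote the Lipschitz constants of $L$ and $(\delta_M,c_M)$ the uniform isoperimetric constants of $M$. I would take as candidate constants for $L$
\[
c_L := \max\bigl(C+1,\ c_M(C+2)^2\bigr),\qquad \delta_L := \min\bigl(\delta_M/(C+2),\ r/(2c_L)\bigr),
\]
and then check each axiom in turn; the rest of the argument is just bookkeeping.

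For the contractibility condition, I would show that any loop in $L$ whose ambient diameter is less than $c_L\delta_L\leq r/2$ lies inside $B_r(x)\cap L$, where $x$ is any point on the loop. By the second clause of the Lipschitz condition, this neighborhood is contractible in $L$, so the loop itself is null-homotopic in $L$.

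For the chord-filling estimate, consider a chord $\gamma\colon[0,1]\to M$ with endpoints $x_0,x_1\in L$ and length $\ell(\gamma)\leq \delta_L$. Since $d_M(x_0,x_1)\leq\ell(\gamma)\leq\delta_L<r$, the first clause of the Lipschitz condition gives $d_L(x_0,x_1)\leq C\ell(\gamma)$, and hence there is a path $\tilde\gamma\subset L$ from $x_1$ to $x_0$ of length at most $(C+1)\ell(\gamma)\leq c_L\delta_L$, fulfilling the required bound on $\ell(\tilde\gamma)$. The concatenated loop $\gamma\cup\tilde\gamma$ in $M$ has length at most $(C+2)\ell(\gamma)\leq(C+2)\delta_L\leq\delta_M$, so by uniform isoperimetry of $M$ it bounds a disc $D\subset M$ of area at most $c_M(C+2)^2\ell(\gamma)^2\leq c_L\,\ell(\gamma)^2$, as desired.

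There is no genuine geometric obstacle once the Lipschitz hypothesis is available; the only care required is to match the constants so that both axioms hold simultaneously, and this is what motivates the particular choices of $c_L$ and $\delta_L$ above. The conceptual content is that the two Lipschitz axioms say precisely that $L$ is intrinsically quasi-isometric to its ambient neighborhoods at a definite scale and quantitatively locally contractible, which is exactly what is needed to convert ambient fillings of loops into $L$-relative fillings of chords.
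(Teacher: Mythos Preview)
Your proof is correct and follows the same approach as the paper's: complete the chord to a loop using the Lipschitz bound $d_L\leq C\,d_M$, then fill using the ambient isoperimetric inequality. You are in fact more thorough than the paper, which only writes out the chord-filling step and leaves the loop-contractibility axiom implicit; your explicit choice of constants with $c_L\delta_L\leq r/2$ handles that cleanly.
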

\begin{proof}
Given a small enough chord $\gamma$ in $M$ with endpoints in $L$, the Lipschitz condition guarantees it can be completed to a closed loop by concatenating a chord $\tilde{\gamma}$ contained in $L$ and such that $\ell(\tilde{\gamma})\leq C\ell(\gamma)$. Making $\ell(\gamma)$ even smaller, we can fill $\gamma\cup\tilde{\gamma}$ by a disc of area
\[
\leq c\ell(\gamma\cup\tilde{\gamma})^2\leq c(1+C)^2\ell(\gamma)^2.
\]
\end{proof}
\begin{lm}
An admissible Lagrangian $L$ is Lipschitz and therefore uniformly isoperimetric.
\end{lm}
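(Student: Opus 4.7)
The plan is to verify the two Lipschitz conditions in Definition \ref{dfLagGeoTame} directly; the uniform isoperimetric conclusion then follows from the preceding lemma.

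First I would observe that the Lipschitz property is preserved under Hamiltonian isotopies generated by Lipschitz Lyapunov Hamiltonians. Indeed, such an isotopy is bi-Lipschitz on bounded time intervals (with constants controlled by the Lyapunov exponent), so it distorts the ambient distance $d_M$ and the induced distance $d_L$ on $L$ by comparable factors and preserves contractibility of small intrinsic balls. Consequently it suffices to treat $L$ in the ``standard form'' of an $(\Ll,\sigma)$-adapted Lagrangian: there is a compact $K\subset M$ outside of which $L$ is the conormal in $\sigma(N)$ for a locally affine $N\subset B_{reg}$ with rational slopes.

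Next, on $K\cap L$ the two conditions are automatic by smoothness and compactness, so the work is concentrated outside $K$. There I would invoke the uniform cover of $M$ provided by Theorem \ref{tmCanonicalMetric}, consisting of three types of coordinate patches: action--angle coordinates over $B\setminus B_\delta(\Delta)$, Poisson-type coordinates over $B_\delta(\Delta)\setminus B_\delta(\crit(\Ll))$, and $(\Ll,\sigma)$-tame normal form coordinates near $\crit(\Ll)$. In the first two types, the conormal of $\sigma(N)$ is, by construction, a product of an affine subspace in the base coordinates with a sub-torus in the fiber coordinates (using that $\sigma$ corresponds to the zero section/origin), so it is a linear subspace in these coordinates and manifestly Lipschitz. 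Since $\sigma$ avoids $\crit(\Ll)$ and is Lipschitz, we may choose $K$ large enough so that outside $K$ the conormal $L$ is uniformly disjoint from $B_\delta(\crit(\Ll))$, which removes the third type of patch from consideration.

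It then remains to check that on the overlaps between patches of the first and second type, the linear model of the conormal in one chart is uniformly bi-Lipschitz equivalent to the linear model in the other. This is exactly the bi-Lipschitz equivalence of coordinates asserted in Theorem \ref{tmCanonicalMetric}, combined with the fact that $N$ has rational slopes (so the conormal fibers are sub-tori whose periods are controlled by the integral period lattice $\Gamma$). Gluing the finitely many uniform local estimates through this cover yields constants $r,C$ such that $d_L\leq C d_M$ on $r$-balls of $L$, and similarly yields contractibility of such balls since in each patch the conormal is an affine/toric piece. This establishes the Lipschitz property.

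The main obstacle I anticipate is the control near the discriminant: the action coordinate $I_3$ and the associated angle $\theta_3$ blow up logarithmically along $\Delta$, so the bi-Lipschitz transition between action--angle charts and the Poisson-type charts of Theorem \ref{tmCanonicalMetric} must be used carefully. However, uniformity of the estimates in Theorem \ref{tmBaseCoord} and tameness of the semi-global invariant ensure that the conormal to the locally affine $\sigma(N)$ is uniformly controlled across the transition, so the gluing goes through.
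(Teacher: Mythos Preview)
Your proposal is correct and follows essentially the same approach as the paper: reduce via the Lyapunov isotopy to the conormal form, then use the uniform coordinate cover of Theorem~\ref{tmCanonicalMetric} (together with the fact that $\sigma$ is uniformly disjoint from $\crit(\Ll)$) to exhibit $L$ outside a compact set as an affine subspace in each chart. The paper's proof is considerably more terse---it simply asserts that $L$ is covered by components equivalent to affine subspaces of $U_i\times[-T_i,T_i]^n$---while you spell out the chart-by-chart analysis and the overlap argument; but the logical skeleton is identical.
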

\begin{proof}
First observe that the both the property of being Lipschitz and of being admissible are invariant under equivalences of metrics and are thus preserved under the flow of a Lyapunov Hamiltonian. Thus we may assume that outside of a compact set $L$ is the conormal with respect to $\sigma$ of a locally affine submanifold of $B_{reg}$. Since $\sigma$ is uniformly disjoint of $\crit(\Ll)$ we have we can find a finite cover of $B_{reg}$ so that with respect to any metric as in Theorem \ref{tmCanonicalMetric} $L$ is covered by components equivalent outside of a compact set to an affine subspace of $U_i\times [-T_i,T_i]^n$.

\end{proof}
\section{Wrapped Floer cohomology}\label{SecWrapped}
\subsection{Admissible Floer data and $C^0$ estimates}
\textit{Henceforth fix an admissible Lagrangian section $\sigma$ and a $(\sigma,\Ll)$-tame LG potential $(\pi_{LG},J_{LG})$.} We allow  $(\pi_{LG},J_{LG})$ to be time dependent.
\begin{df}\label{dfAdmFloer}
Let $L_0$ and $L_1$ be admissible Lagrangians. A time dependent Hamiltonian $H$ is said to be \emph{admissible} if $H$ is proper, bounded below, and, Lipschitz and Lyapunov. It is said to be \emph{$(L_0,L_1)$-admissible} if there is a compact set $K$ and an $\epsilon>0$ such that, denoting by $\psi_H$ the time-1 flow of $H$, we have $d(L_1\setminus K,\psi_H(L_0))>\epsilon$. A time dependent almost complex structure $J_t$ is said to be admissible if $J_i=J_{LG}$ for $i=0,1$ and $J_t$ is $(\Ll,\sigma)$-adapted.
A pair $(H,J)$ of time dependent Floer data is called $(L_0,L_1)$-admissible if $H$ and $J$ are each admissible and $J$ is compatible with $\omega$ in the neighborhood of any Hamiltonian chord running from $L_0$ to $L_1$.
\end{df}
\begin{lm}\label{lmAbundance}
\begin{enumerate}
\item
For any Lipschitz Hamiltonian $H_0$ which is proper and bounded below, and any pair of  admissible Lagrangians $L_0,L_1$, there is an $(L_0,L_1)$-admissible $H$ such that $H>H_0$.
\item If $H$ is admissible for $(L_0,L_1)$ there is an $\epsilon>0$ for such that any Hamiltonian $H'$ satisfying $\|dH-dH'\|<\epsilon$ outside of a compact set is also admissible.
\end{enumerate}
\end{lm}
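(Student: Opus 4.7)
The plan is to reduce both parts to the behavior of the Hamiltonian flow at infinity, where admissible Lagrangians take the standard conormal form of Definition~\ref{dfAdmLag}. Throughout I will use the uniform structure on $M$ provided by Theorem~\ref{tmCanonicalMetric} and the base coordinates of Theorem~\ref{tmBaseCoord}.

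For part (a), I would first apply Lipschitz Lyapunov Hamiltonian isotopies (preserving all the properties in Definition~\ref{dfAdmFloer}) so that outside a compact set $K_0$, each $L_i$ is a conormal lift of a locally affine rational submanifold $N_i \subset B_{reg}$. Next I would construct a reference admissible Hamiltonian $H_1$ which factors through $\Ll$ outside a compact set and whose time-1 flow cleanly separates $\psi_{H_1}(L_0)$ from $L_1$ at infinity. Concretely, take $H_1 = \rho \circ f$ for $f$ a Lipschitz function of the base coordinates $(x_1,x_2,x_3)$ and $\rho$ smooth and asymptotically linear. On each regular fiber $L_b$ the time-one flow acts by translation by $dH_1(b) \in T^*_b B \cong T_b L_b$; over points of $N_0 \cap N_1$ the non-intersection condition amounts to $dH_1(b)$ being uniformly far, modulo the period lattice $\Lambda^*_b$, from the sum of normal directions $(T_b N_0)^\perp + (T_b N_1)^\perp$. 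Since $N_0, N_1$ have rational slopes and only finitely many asymptotic configurations occur, a generic choice of asymptotic slope for $\rho$ achieves uniform separation. I would then set $H := G + C H_1$ for $C$ large. Lipschitzness, properness and the lower bound for $H$ follow from those of $G$ and $H_1$; the Lyapunov property propagates through sums by a Gronwall-type estimate; and for $(L_0,L_1)$-admissibility the globally bounded addition $dG$ shifts fiber translations by a uniformly bounded amount, absorbed into the separation slack by taking $C$ large.

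For part (b), I would argue by Gronwall. Let $H$ be $(L_0,L_1)$-admissible with separation $\epsilon_0$ and compact set $K$. Suppose $\|dG - dH\| < \epsilon$ outside a compact set $K'$, and that $G$ satisfies the remaining admissibility conditions (each of which is manifestly open under small $C^1$-perturbations at infinity). Then $\|X_G - X_H\|\le C\epsilon$ outside $K'$ with $C$ depending on the uniform structure, and the Lyapunov bound on the $H$-flow gives $d(\psi_G^t(p), \psi_H^t(p)) \le C' \epsilon$ for $t\in[0,1]$ along trajectories staying outside $K'$, for some $C'$ depending only on $H$. Enlarging $K$ to contain all $H$- and $G$-trajectories of points which can reach $L_1$ from $L_0$, and choosing $\epsilon < \epsilon_0 / (2C')$, the $(L_0,L_1)$-separation for $G$ follows with constant $\epsilon_0 / 2$.

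The main obstacle I expect is in the first part: producing a \emph{single} choice of $\rho$ that achieves uniform separation over all of the asymptotic locus of $N_0 \cap N_1$, rather than just pointwise. This requires controlling both the period lattice $\Lambda^*_b$ and the normal directions $(T_b N_i)^\perp$ uniformly in $b$ at infinity. The required uniformity follows from the tameness hypothesis (Definition~\ref{dfAdmSYZ}) together with the fact that the asymptotic combinatorial data of $(N_0, N_1)$ is finite, but writing down the details requires tracking several normalizations simultaneously.
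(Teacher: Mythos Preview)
Your part~(b) is fine and in fact more explicit than the paper, which simply says ``immediate from the definition''; the Gronwall/Lyapunov argument you sketch is exactly how one justifies that phrase.

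In part~(a), however, there is a real gap in the step $H:=G+CH_1$ with $C$ large. The time-$1$ flow of $CH_1$ acts on each torus fiber $L_b$ by translation by $C\,dH_1(b)$, and the distance from this vector to $(T_bN_0)^\perp+(T_bN_1)^\perp$ modulo the period lattice $\Lambda^*_b$ is \emph{periodic} in $C$, not monotone. So ``absorbing the bounded perturbation $dG$ into the separation slack by taking $C$ large'' does not work: enlarging $C$ does not enlarge the separation. Worse, the Lyapunov constant of $CH_1$ grows with $C$, so the Gronwall bound on $d(\psi^1_{G+CH_1},\psi^1_{CH_1})$ also deteriorates as $C\to\infty$.

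The paper avoids this entirely by constructing $H$ directly rather than as a perturbation of $G$. After using the group property of Lipschitz Lyapunov Hamiltonian diffeomorphisms to reduce to the case $L_0=L_1=\sigma$, it takes $H=a\sum_{i=1}^3 h(|x_i|)$ in the base coordinates of Theorem~\ref{tmBaseCoord}, with $h$ convex, equal to $x^2$ near $0$, and linear of integer slope $n$ at infinity. Since $G$ is Lipschitz it has at most linear growth, so choosing $n$ large forces $H>G$; choosing $a\notin\Z$ forces the fiber rotation to uniformly displace $\sigma$ from itself (using $\sigma\cap\crit(\Ll)=\emptyset$). This decouples the two requirements: the slope $n$ handles domination, the non-integrality of $a$ handles separation, and neither interferes with the other. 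Your more careful treatment of the conormal combinatorics is a reasonable way to extend this beyond the section case, but the mechanism for producing $H>G$ should follow the same pattern: pick the asymptotic slopes large and generic up front, rather than adding $G$ afterwards.
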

\begin{proof}
\begin{enumerate}
\item
We first point out that the claim is trivial if one of the Lagrangians is closed. Indeed, in that case, if $H_0$ is admissible, so is $aH+c$ for any $a>0$ and $c\in\mathbb{R}$.  

For non-closed Lagrangians we prove the claim in only for the case $L_0$ and $L_1$ are Lagrangian sections which is the only studied in this paper. Moreover, we restrict to the case $L_0=L_1$, the adjustment to the case where $L_0$ is the image of $L_1$ under the flow of a Lipschitz Lyapunov Hamiltonian being trivial. Hamiltonians which factor through $\Ll$ give rise rotations in the torus fibers. Let $x_1,x_2,x_3$ as in theorem \ref{tmBaseCoord}. 
Let $h:\R_+\to\R_+$ be a Lipschitz function which is linear of slope $n\in\mathbb{N}$ at infinity and equals $x^2$ near $0$. The flows of the functions $a\sum_{i=1}^3 h(|x_i|)$ displace $\sigma$ outside of a compact set by a uniform distance whenever $a\not\in\Z$. Note that this utilizes the fact that $\sigma\cap \crit(\Ll)=\emptyset$. Thus $h\circ\Ll$ is admissible. Since $H_0$ is Lipschitz with respect to the uniform structure induced by the integral affine coordinates, if we choose $n$ large enough we have $H_0<h\circ\Ll+C$ for $C$ some constant. So, we take $H=h\circ\Ll+C$. 


\item Immediate from the definition.
\end{enumerate}
\end{proof}

Let $(H,J)$ be $(L_0,L_1)$-admissible and suppose all the time-1 Hamiltonian chords of $H$ from $L_0$ to $L_1$ are non-degenerate. Denote by $\Omega(L_0,L_1)$ the space of paths from $L_0$ to $L_1$. Given Hamiltonian chords $\gamma_-,\gamma_+$ and a homotopy class $A$ of paths in $\Omega(L_0,L_1)$, let
\[
\M((H,J),\gamma_-,\gamma_+,A),
\]
be the set of solutions $u$ to Floer's equation
\[
\partial_su+J(\partial_tu-X_H)=0,
\]
such that $[u]=A$ and $\lim_{t\to\pm\infty}u_t=\gamma_\pm$. Write $E(u):=\frac12\int_{-\infty}^{\infty}|\partial_su|^2ds.$
\begin{lm}\label{lmFlTrajDiamEst}
 There exists a compact set $K=K(E)$ depending additionally on $H,J,L_0,L_1$, such that $u(\R\times[0,1])\subset K$ for any Floer solution $u$ with $E(u)<E$.
\end{lm}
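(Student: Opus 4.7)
The plan is to apply the $C^0$-estimate package for Lipschitz Floer data announced in Appendix~\ref{AppLipFloer} and developed following the techniques of \cite{Groman15}. All the required uniform geometric inputs are in place by the setup of the preceding sections: the metric $g_{J_\sigma}$ produced by Theorem~\ref{tmCanonicalMetric} is uniformly isoperimetric; any admissible $J_t$ yields a uniformly $\omega$-tame metric equivalent to $g_{J_\sigma}$, so the isoperimetric constants transfer via Remark~\ref{remObs} and Lemma~\ref{lmEquivCont}; the Hamiltonian $H$ is Lipschitz and Lyapunov; and the admissible Lagrangians $L_0,L_1$ are Lipschitz, hence uniformly isoperimetric by the preceding lemmas.

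The argument splits into two steps. First, the asymptotic Hamiltonian chords lie in a fixed compact set. By $(L_0,L_1)$-admissibility of $H$ there is a compact set $K\subset M$ and $\epsilon>0$ with $d(L_1\setminus K,\psi_H(L_0))>\epsilon$. Consequently every intersection point $\psi_H(L_0)\cap L_1$—equivalently, every endpoint of a Hamiltonian chord from $L_0$ to $L_1$—is contained in $K\cap L_1$, which is compact. Combined with the uniform Lipschitz bound on $H$ (and therefore on $|X_H|$), every such chord is contained in a fixed compact set $K_0\supset K$.

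Second, finite-energy Floer trajectories stay in a bounded neighborhood of $K_0$. The core tool is a uniform monotonicity estimate for the inhomogeneous Floer equation. Uniform isoperimetry of $(M,g_{J_\sigma})$, uniform isoperimetry of the Lagrangians, and the uniform bound on $|X_H|$ together supply constants $c,r_0>0$ such that any Floer solution crossing a ball of radius $r\leq r_0$ around an interior (resp.\ boundary) point contributes at least $cr^2$ to its energy, in the usual monotonicity fashion of \cite{Groman15}. Since the two ends of $u$ are asymptotic to chords in $K_0$, a point of $u(\R\times[0,1])$ at distance $R$ from $K_0$ forces the trajectory to traverse a chain of at least $\lfloor R/(2r_0)\rfloor$ disjoint $r_0$-balls, producing energy at least a uniform constant times $R$. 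Thus $R$ is bounded in terms of $E$, and one takes $K=K(E)$ to be a closed ball around the reference point $p$ of radius depending only on $E$ and the fixed geometric data.

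The main obstacle is ensuring that the constants in the monotonicity estimate and in the isoperimetric inequalities for $M$ and for the Lagrangians do not degenerate near infinity. This is precisely the reason for the tameness hypothesis: Theorems~\ref{tmBaseCoord} and~\ref{tmCanonicalMetric} produce a finite atlas of uniformly bi-Lipschitz coordinate charts on $M$ in which the metric, the Hamiltonian, the almost complex structure, and the admissible Lagrangians all have uniformly bounded geometry, so the local estimates underlying monotonicity extend globally with controllable constants.
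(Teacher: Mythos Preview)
Your plan is correct and invokes the right ingredients from Appendix~\ref{AppLipFloer}, but it proceeds along a different decomposition than the paper's. You run a single target--space ball--crossing argument: once the asymptotic chords are trapped in a compact $K_0$, any excursion to distance $R$ forces the Floer strip to cross $\sim R/r_0$ disjoint target balls, and uniform monotonicity (via the Gromov trick, which here yields a uniformly bounded product geometry since $\mathfrak{H}=H\,dt$ is $s$--independent and the $\partial_sG-\partial_tF$ term in~\eqref{eqGromovMetric} vanishes) gives energy $\gtrsim R$. The paper instead separates the two directions of the strip: it first uses the Lyapunov hypothesis through Lemma~\ref{lmLyapunovEst} to show that on any $s$--interval where the strip lies outside the separation set $K$, each slice $u(s,\cdot)$ carries energy at least a fixed $\delta>0$, bounding the length of such intervals by $E/\delta$; Cauchy--Schwarz then bounds the distance travelled in the $s$--direction, and only afterwards is the domain--local monotonicity estimate (Theorem~\ref{tmDiamEst}) applied to control the diameter of individual slices. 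Your route is more direct for this particular lemma and does not actually use the Lyapunov condition; the paper's two--step route, by contrast, is the template that generalises to the full domain--global estimate Theorem~\ref{tmGlobalEst} for continuation maps and polygons, where the Gromov--trick metric need not have uniformly bounded isoperimetric constants (Lemmas~\ref{lmTaming} and~\ref{lmIsopGrowth}) and the rectangle estimate Lemma~\ref{lmRectEst} becomes essential.
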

\begin{proof}
This is a particular instance of the global estimate Theorem \ref{tmGlobalEst} in the appendix below. For sake of exposition we spell out the proof for this particular case. We first rule out the possibility of a sequence $t_i\to\infty$ such that $u_{t_i}\to\infty$. Let $A=[t_0,t_1]\times [0,1]$ and suppose $u(A)\subset M\setminus K'$ where $K'$ is the set such that $d(\psi_H(L_1\setminus K'),L_0)>\epsilon>0$. By Lemma \ref{lmLyapunovEst} the Lyapunov condition produces a $\delta>0$ such that $E(u_t)\geq\delta$ for any $t\in[t_0,t_1]$. Since $E(u)=\int E(u_t)dt$ it follows that $t_1-t_0\leq E(u)/\delta$.  On the other hand, by Cauchy Schwartz we have the inequality
\[
d(u_{t_0},u_{t_1})^2\leq (t_1-t_0){E(u;[t_0,t_1]\times [0,1])}.
\]
Combining these inequalities we see that we cannot have  a sequence $t_i\to\infty$ such that $u_{t_i}\to\infty$. That is, there is a compact set $\tilde{K}(E)$ such that $u_t$ meets $\tilde{K}(E)$ for every $t\in\R$. It remains to estimate the diameter of $u_{t}$ for any $t$. Here we apply the domain local estimate \ref{tmDiamEst} which is based on the monotonicity estimate of \cite{Sikorav94} with respect to any admissible $J$ and in particular satisfies the monotonicity inequality. Namely, we cover $u_t$ by a pair of half discs to obtain for the constant $R=R(E)$ of \ref{tmDiamEst} that $u_t$ is contained in the compact set $K(E)=B_{2R}(\tilde{K}(E))$.
\end{proof}

\subsection{Regularity}
Given an admissible Floer datum introduce the following moduli spaces.
For a homology class $A\in H_2(M;\Z)$ and a time dependent admissible almost complex structure $J_t$ let
\[
\M^*(\{J_t\},A)=\{(t,u)\in(0,1)\times  C^{\infty}(S^2, M)|\overline{\partial}_{J_t} u=0,[u]=A, \mbox{u is simple}\}
\]
the moduli space of simple $J$-holomorphic spheres.
For a pair of Hamiltonian chords $\gamma_1,\gamma_2$ denote by $C(L_0,L_1,\gamma_0,\gamma_1)$ the space of smooth maps $u:(-\infty,\infty)\times[0,1]$ with boundaries in $L_0, L_1$ respectively, and asymptotics $\gamma_0$ and $\gamma_1$ respectively. For a homotopy class $A$ of maps in $C(L_0,L_1,\gamma_0,\gamma_1)$, let
\[
\M((H,J),\gamma_0,\gamma_1,A):=\{u\in C(L_0,L_1,\gamma_0,\gamma_1)|[u]=A,(du-X_{H_t})^{0,1}=0\}.
\]
A pair $(H,J)$ of time dependent admissible Hamiltonian and almost complex structures are said to be regular if the following conditions hold.
\begin{enumerate}
\item The Hamiltonian chords going from $L_0$ to $L_1$ are all non-degenerate.
\item For all $A\in H_2(M;\Z)$, the moduli space $\M^*(\{J_t\},A)$ is smooth of the expected dimension.
\item For any $\gamma_0,\gamma_1$ of index difference $1$ or $2$, the moduli spaces $\M((H,J),\gamma_0,\gamma_1,A)$ are smooth of the expected dimension.
\item There is no $(t,u)\in \M(\{J_t\},A)$ with  and $v\in \M((H,J),\gamma_0,\gamma_1,A)$ and $s\in\R$ such that $A\neq 0$ and $v(s,t)\in u(S^2)$.
\end{enumerate}
\begin{tm}\label{tmGenReg}
The space of regular admissible Floer data is of second category in the space of all admissible Floer data.
\begin{proof}
We emphasize that our regularity assumption does not involve any statement concerning disks. Thus the statement of the present theorem is proven in, e.g., \cite{HoferSalamon} albeit without
the admissibility condition. We comment on the adaptation necessary due to this additional condition. The requirement of $\Ll$-adaptedness is open and thus has no affect in this regard. It remains to discuss the condition $J_i=J_{LG}$ for $i=0,1$. First note that the generic smoothness of the smooth part of the moduli spaces of Floer trajectories is unaffected by the restriction on $J_i$. Indeed, transversality can be achieved by perturbing $J_t$ for $t$ in any compact subset of $(0,1)$. The generic smoothness of the moduli spaces $\M^*(\{J_t\},A)$ is slightly different than usual. Namely, we do not assume that $J_0,J_1$ are regular for spheres, and correspondingly require smoothness of the space $\M^*(\{J_t\},A)$ only for $t\in(0,1)$. Sphere bubbling at any interior point of a strip is prevented by the usual argument. Namely, the proof that the evaluation map
\[
ev:\M^*(\{J_t\},A)\times\M((H,J),\gamma_0,\gamma_1,A)\times S^2\times \R\times(0,1)\to M\times M\times (0,1),
\]
is transverse to the identity goes through. Sphere bubbling at the boundary is prevented by  Theorem~\ref{tmSpheresMax}. Namely, all spheres are contained in $\pi^{-1}(0)$ which by definition does not meet an admissible $L_i$. Note we only assume $J=J_{LG}$ at the endpoints, so as to allow perturbations, which is why we don't invoke Theorem~\ref{tmSpheresMax} to rule out sphere bubbling at interior points. 
\end{proof}
\end{tm}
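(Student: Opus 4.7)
The plan is to treat the four regularity conditions one at a time by standard Sard--Smale / universal moduli arguments in the spirit of Floer--Hofer--Salamon, and to isolate exactly where the admissibility constraint $J_0=J_1=J_{LG}$ and the $(\Ll,\sigma)$-adaptedness condition force a modification of the usual proof. The key conceptual point is that $(\Ll,\sigma)$-adaptedness is an open condition in the space of admissible almost complex structures, so it does not obstruct any perturbative step; and the constraint at $t=0,1$ only removes a pair of slices from our parameter space, which is harmless because sphere and strip moduli only require perturbations at interior times.

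First I would address non-degeneracy of chords. The standard argument of Floer perturbing $H$ within the space of time-dependent Lipschitz Hamiltonians produces a generic Hamiltonian for which all chords from $L_0$ to $L_1$ are non-degenerate; the admissibility of $H$ (Lipschitz, Lyapunov, properness, bounded below, uniform disjointness after time-$1$ flow) is an open condition, so a $C^\infty$-small perturbation preserves it. Next, for $\M^*(\{J_t\},A)$, I would set up the universal moduli space over the Banach manifold of admissible $J_t$ that equal $J_{LG}$ at $t=0,1$ and are free otherwise; for the simple sphere equation the vertical differential is surjective via perturbation of $J_t$ in a compact subinterval of $(0,1)$ (using the standard argument that at an injective point one can produce arbitrary perturbations of $(du)^{0,1}$ from perturbations of $J$), so the Sard--Smale theorem yields a second-category set of $\{J_t\}$ for which $\M^*(\{J_t\},A)$ is cut out transversally; the reason we only assert smoothness for $t\in(0,1)$ is precisely that we have not perturbed $J_0$ or $J_1$.

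For Floer strips, the identical universal-moduli argument applies at points in the interior of the strip with $t\in(0,1)$: perturbations of $J_t$ in a compact interval of times away from the endpoints suffice to achieve transversality at all strips meeting at least one such interior point in their image; a solution that lies entirely in the slices $t\in\{0,1\}$ cannot exist for index reasons (it would have to be constant, contradicting index $1$ or $2$). Finally, for the fourth condition, one forms the fiber-product evaluation
\[
ev\colon \M^*(\{J_t\},A)\times\M((H,J),\gamma_0,\gamma_1,A')\times S^2\times\R\times(0,1)\longrightarrow M\times M\times(0,1),
\]
and shows it is transverse to the diagonal for generic $\{J_t\}$, again using interior $t$-perturbations; a dimension count then gives emptiness of the intersection when the index count is negative.

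The one step requiring genuine care is the prevention of sphere and disc bubbling at the boundary $t=0,1$, where we are not allowed to perturb $J$. Here the LG potential does the work for us: by Theorem~\ref{tmLGSphereDiskMax}, every $J_{LG}$-holomorphic sphere lies in $\pi^{-1}(1)$, and since the $L_i$ are admissible they are uniformly disjoint from $\pi^{-1}(1)$. Hence no sphere bubble can form on or near the boundary components of a strip, and by the same theorem no disc bubble with boundary on $L_i$ exists at all. This removes the usual boundary-bubbling obstruction that otherwise forces one to have $J_0,J_1$ regular. The remaining interior bubbling is controlled by the standard argument just described. Combining the four genericity statements, each giving a countable intersection of open dense subsets, yields the claimed second-category subset of admissible Floer data.
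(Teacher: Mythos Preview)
Your proposal is correct and follows essentially the same approach as the paper: both argue that $(\Ll,\sigma)$-adaptedness is open, that transversality for strips and simple spheres is achieved by perturbing $J_t$ only for $t$ in a compact subinterval of $(0,1)$ (hence smoothness of $\M^*(\{J_t\},A)$ is only asserted for $t\in(0,1)$), that interior sphere bubbling is ruled out by the standard evaluation-map dimension count, and that boundary bubbling is excluded because by Theorem~\ref{tmLGSphereDiskMax} all $J_{LG}$-spheres lie in a fiber of $\pi$ disjoint from the admissible $L_i$. Your write-up is somewhat more explicit (you spell out non-degeneracy of chords and mention disc bubbling separately), but the substance is identical.
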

\subsection{The Floer complex}
Let $(H,J)$ be an admissible Floer datum for $L_0,L_1$. We define the Floer complex as follows. Denote by $\mathcal{P}$ the set of Hamiltonian chords going from $L_0$ to $L_1$. Our Lagrangians are graded, so denote by $\mathcal{P}^i\subset\mathcal{P}$ the chords of index $i$. Let $\Lambda_{nov}$ be the universal Novikov field over $\K$. Namely,
\[
\Lambda_{nov}=\left\{\sum_{i=0}^{\infty}a_it^\lambda_i:a_i\in \K,\lambda_i\in\R,\lim_{i\to\infty}\lambda_i=\infty\right\}.
\]
Define
\[
CF^i(L_0,L_1)=\oplus_{\gamma\in \mathcal{P}_i} \Lambda_{nov}\langle\gamma\rangle.
\]
Note that by admissibility and non-degeneracy, the set of chords is always finite.
For an element
\[
u\in  \M((H,J),\gamma_0,\gamma_1,A),
\]
define the topological energy
\[
E_{top}(u):=\int u^*\omega +\int_tH_t(\gamma_1(t))-H_t(\gamma_0(t))dt.
\]
The topological energy depends on $A$ only, so we will also write $E_{top}(A)$.
Define the Floer differential by
\[
\langle \mu^1\gamma_0,\gamma_1\rangle:=\sum_{A}t^{E_{top}(A)}\#\M(\gamma_0,\gamma_1,A)/\R
\]
\begin{tm}
For any regular admissible $(H,J)$ the Floer differential is well defined and satisfies $\mu^1\circ\mu^1=0$.
\end{tm}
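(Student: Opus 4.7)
The plan is to follow the standard Floer-theoretic argument, verifying at each step that the necessary compactness and transversality are supplied by the admissibility framework set up in the excerpt. The two things to establish are (a) well-definedness of $\mu^1$ (in particular, $\Lambda_{nov}$-convergence of the sum) and (b) the identity $\mu^1\circ\mu^1=0$.

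First I would handle well-definedness. Fix $\gamma_0\in\mathcal{P}^i$ and $\gamma_1\in\mathcal{P}^{i+1}$. For a Floer solution $u$ the standard identity gives $E(u)=E_{top}(u)$, where $E_{top}$ depends only on the homotopy class $A$. Thus for any $\Lambda>0$ only finitely many homotopy classes $A$ with $E_{top}(A)\leq\Lambda$ can host a solution with bounded diameter, provided the count $\#\M(\gamma_0,\gamma_1,A)/\R$ is finite for each $A$. To obtain the latter, apply Lemma \ref{lmFlTrajDiamEst} to confine every $u$ with $E(u)\leq\Lambda$ to a single compact set $K(\Lambda)$; there, Gromov compactness applies given uniform isoperimetry of $J$ and of $L_0,L_1$ (established in the previous subsection). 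Sphere bubbles are ruled out because $J$ is $(\Ll,\sigma)$-adapted and admissible Lagrangians are uniformly disjoint from $\pi^{-1}(1)$, so by Theorem \ref{tmLGSphereDiskMax} any sphere bubble would lie in $\pi^{-1}(1)$ and be separated from the limiting strip boundary by a positive distance; regularity condition (iv) then excludes interior bubbling off at a regular trajectory. Disk bubbles are excluded by the tautological unobstructedness of $L_0,L_1$ via Theorem \ref{tmLGSphereDiskMax}. For index difference $1$, dimension counting then forces $\M(\gamma_0,\gamma_1,A)/\R$ to be a compact zero-manifold and thus finite. Finiteness of the set of homotopy classes $A$ with energy $\leq\Lambda$ supporting a solution in $K(\Lambda)$ follows by standard arguments (e.g., the set of bounded-energy relative homotopy classes represented by curves in a fixed compact set is finite), which secures convergence in $\Lambda_{nov}$.

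Next I would verify $\mu^1\circ\mu^1=0$. Fix $\gamma_0\in\mathcal{P}^i$, $\gamma_2\in\mathcal{P}^{i+2}$ and a homotopy class $A$. Consider the moduli space $\M(\gamma_0,\gamma_2,A)/\R$, which by regularity is a smooth $1$-manifold. I would show it admits a compactification to a $1$-manifold with boundary whose only boundary strata are pairs of broken trajectories $(u_1,u_2)$ with $u_1\in\M(\gamma_0,\gamma_1,A_1)/\R$ and $u_2\in\M(\gamma_1,\gamma_2,A_2)/\R$, $A=A_1\# A_2$, and index differences $1$ each. Sphere and disk bubbling are again ruled out by Theorem \ref{tmLGSphereDiskMax} together with regularity (iv); $C^0$ control preventing escape to infinity is provided by Lemma \ref{lmFlTrajDiamEst}; and standard Floer-theoretic gluing upgrades each broken configuration to the end of the compactification. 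Summing over $A$ with fixed $E_{top}=E_1+E_2$ and weighting by $t^{E_{top}(A)}=t^{E_{top}(A_1)}t^{E_{top}(A_2)}$ converts the boundary count into the matrix coefficient of $\mu^1\circ\mu^1$, which therefore vanishes modulo $2$ (and, with the spin structure from admissibility providing coherent orientations, over $\Z$ and hence over $\Lambda_{nov}$).

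The main obstacle is the $\Lambda_{nov}$-convergence, i.e., controlling the infinitely many homotopy classes of potentially different topological energies. The key leverage is that the global $C^0$ estimate in Lemma \ref{lmFlTrajDiamEst} depends on $E$ only, so for each energy window we are reduced to Gromov compactness on a fixed compact set, where the finiteness of relative homotopy classes is standard. Once this is in place, the remaining content is the routine translation of the bubble-exclusion results of Section \ref{SecWrap} and the genericity of regular Floer data (Theorem \ref{tmGenReg}) into the usual gluing-and-boundary argument, with no new analytic input required.
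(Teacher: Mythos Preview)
Your proposal is correct and follows essentially the same route as the paper's proof, just with considerably more detail: the paper's argument invokes Lemma~\ref{lmFlTrajDiamEst} for the energy--diameter estimate and Gromov--Floer compactness, then cites Theorem~\ref{tmLGSphereDiskMax} for the absence of disk bubbles on $L_0,L_1$, and declares $\mu^1\circ\mu^1=0$ standard. Your explicit treatment of $\Lambda_{nov}$-convergence and the separation of boundary versus interior sphere bubbling (the latter via regularity condition (d)) spells out points the paper leaves implicit or defers to the proof of Theorem~\ref{tmGenReg}.
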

\begin{proof}
First note that by Lemma \ref{lmFlTrajDiamEst}, we have an energy diameter estimate and thus Gromov-Floer compactness for moduli spaces of Floer trajectories in a given homotopy class. Now, since by Theorem \ref{tmLGSphereDiskMax} and admissibility we have that $L_0,L_1$ bound no $J_0,J_1$-holomorphic discs respectively, the identity $\mu^1\circ\mu^1=0$ is standard.
\end{proof}
\subsection{Continuation maps}
Let $(H^i,J^i)$ be $(L_0,L_1)$-admissible and regular for $i=0,1$. Suppose $H^1-H^0$ is bounded from below. A family $(H^s,J^s)$ of admissible Floer data is admissible if $\partial_sH^s$ is Lipschitz and bounded from below.

If $\gamma_i$ is a Hamiltonian chord of $H^i$ for $i=0,1$, denote by
\[
\M(\{(H^s,J^s)\},\gamma_0,\gamma_1,A)
\]
the moduli space of Floer solutions, and by
\[
\M^*(\{J^s_t,A\})=\{t,s,u|\overline{\partial}_{J_{t,s}}u=0,[u]=A,\mbox{u is simple}\}.
\]
We say that the homotopy $(H^s,J^s)$ is regular if
\begin{enumerate}
\item For all $A\in H_2(M;\Z)$, the moduli space $\M^*(\{J_t^s\},A)$ is smooth of the expected dimension.
\item For any $\gamma_0,\gamma_1$ of index difference $0$ or $1$, the moduli spaces $\M((H,J),\gamma_0,\gamma_1,A)$ are smooth of the expected dimension.
\item There is no $(s,t,u)\in \M(\{J^s_t\},A)$ and $v\in \M(\{(H^s,J^s\}),\gamma_0,\gamma_1,A)$ such that $v(s,t)\in u(S^2)$.
\item There are no trivial solutions, i.e., $s$-independent solutions, to Floer's equation.
\end{enumerate}
\begin{tm}
The space of regular admissible homotopies is of second category in the space of all admissible homotopies.
\end{tm}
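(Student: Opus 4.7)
The plan is to apply the universal moduli space / Sard--Smale scheme, mirroring the proof of Theorem 4.4. Every admissibility constraint on a homotopy $(H^s, J^s)$---Lipschitz, Lyapunov, $\partial_s H^s$ bounded below, $J^s_t \equiv J_{LG}$ at $t = 0, 1$, and $(\Ll,\sigma)$-adaptedness---is either open or concerns only the boundary $t \in \{0,1\}$ and the limits $s \to \pm\infty$. Hence perturbations of $(H^s, J^s)$ compactly supported in the interior $\R \times (0,1) \times M$ remain admissible and form an abundant source of variations.

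First I would endow the space of admissible homotopies with a separable Banach manifold structure, for instance by completing smooth variations of a fixed admissible base in a Floer $C^{\epsilon}$-norm, and form, for each triple $(\gamma_0, \gamma_1, A)$, the universal moduli space
\[
\M^{univ}(\gamma_0, \gamma_1, A) = \{((H^s,J^s), u) : u \in \M(\{(H^s,J^s)\}, \gamma_0, \gamma_1, A)\}.
\]
By the global $C^0$ estimate of Lemma 4.2 (applied to the $s$-family) together with the sphere-bubble exclusion of Theorem 3.14, the underlying moduli spaces are Gromov compact modulo strip breaking, with no disc or boundary sphere bubbling to worry about.

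Next I would verify surjectivity of the joint linearization at any $((H^s, J^s), u)$. A cokernel element $\eta$ would be $L^q$-orthogonal to all admissible perturbations $(h, Y)$; but away from the asymptotic ends $s = \pm\infty$, unique continuation for Floer's equation forces $u$ to be an injective immersion on an open dense subset of the strip, and bump-supported $(h, Y)$ in the strip interior pair nontrivially with $\eta$ there. Hence $\M^{univ}$ is Banach, the projection to the space of homotopies is Fredholm of the expected index, and Sard--Smale together with a countable intersection over $(\gamma_0, \gamma_1, A)$ yields a second category set satisfying condition (ii). Conditions (i) and (iii) follow by the same machinery applied to the simple-sphere universal moduli and to the joint strip--sphere evaluation, respectively; the restriction that $J^s_t$ varies only in $t \in (0,1)$ suffices because sphere bubbles at the strip boundary are excluded by Theorem 3.14, exactly as in the proof of Theorem 4.4.

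The subtle point I expect to be the main obstacle is condition (iv), ruling out $s$-independent (``trivial'') solutions. Such a $u(s,t) = \gamma(t)$ would force $\gamma$ to be a Hamiltonian chord of $H^s$ for every $s$ and to satisfy $X_{\partial_s H^s}(\gamma(t)) \equiv 0$ along its image. Since the chord sets of $(H^0, J^0)$ and $(H^1, J^1)$ are fixed finite non-degenerate sets, only finitely many candidates $\gamma$ can produce such a family, and for each one a generic perturbation of $\partial_s H^s$ at any interior point of the image of $\gamma$ destroys it. This imposes countably many additional open-dense conditions that cut out a further second category set, whose intersection with the one from the previous step completes the proof.
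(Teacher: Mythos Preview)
Your proposal is correct and follows essentially the same route as the paper, which simply cites Hofer--Salamon and notes that the sphere-bubbling codimension count survives the extra $s$-parameter (the dimension of $\M^*(\{J^s_t\},A)$ goes up by one, but so does the constraint that a bubble at $(s,t)$ be $J_{s,t}$-holomorphic). Your treatment is in fact more thorough than the paper's: you explicitly address condition (iv) on $s$-independent solutions, which the paper's proof does not mention at all, and you spell out why interior perturbations in $(s,t)\in\R\times(0,1)$ suffice despite the boundary constraint $J^s_t|_{t=0,1}=J_{LG}$.
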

\begin{proof}
This is also proven, without the admissibility condition, in \cite{HoferSalamon}. Namely, only difference to the discussion in the proof of Theorem~\ref{tmGenReg} is that the dimension of  $\M^*(\{J_t^s\},A)$ is raised by one because of the additional $s$ dependence. This is offset by the requirement that a sphere bubble at the point $(s,t)$ has to be $J_{s,t}$ holomorphic, so that sphere bubbling remains codimension $2$.
\end{proof}

Define the map $\kappa:CF^*(H^0,J^0)\to CF^*(H^1,J^1)$ by
\begin{equation}\label{eqContinuation}
\langle \kappa\gamma_0,\gamma_1\rangle:=\sum_{A}t^{E_{top}(A)}\#\M(\{(H^s,J^s),\gamma_0,\gamma_1,A)
\end{equation}
Here $E_{top}(A)$ is defined just as before.
\begin{tm}\label{tmContinuation0}
The map $\kappa$ is well defined given $(H^s,J^s)$ and is a chain map over the Novikov field.
\end{tm}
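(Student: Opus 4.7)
The plan is to follow closely the structure of the preceding proof that $\mu^1\circ\mu^1 = 0$, the two essential new inputs being an a priori energy estimate and a diameter estimate for continuation trajectories. Given these, well-definedness of $\kappa$ reduces to regularity together with Gromov--Floer compactness of index zero moduli spaces, and the chain map identity follows from the standard boundary analysis of one-dimensional moduli spaces of continuation solutions.

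For well-definedness, start from the continuation energy identity
\[
E(u) = E_{top}(A) - \int_{-\infty}^{\infty}\int_0^1 (\partial_s H^s)(t, u(s,t))\, ds\, dt,
\]
valid for every $u \in \M(\{(H^s,J^s)\}, \gamma_0, \gamma_1, A)$. Because the homotopy is admissible, $\partial_s H^s$ is Lipschitz and bounded from below and, being the $s$-derivative of a homotopy between fixed endpoints, is effectively supported in a finite range of $s$; once $u$ is confined to a fixed compact subset of $M$, the correction term is therefore bounded by a universal constant $C$, yielding $E(u) \leq E_{top}(A) + C$. For each fixed class $A$ the regular index zero moduli space is then compact by Gromov--Floer compactness, and the formal sum
\[
\langle \kappa\gamma_0,\gamma_1\rangle = \sum_A t^{E_{top}(A)}\#\M(\{(H^s,J^s)\}, \gamma_0, \gamma_1, A)
\]
converges in $\Lambda_{nov}$ because only finitely many classes $A$ admit solutions with $E_{top}(A)$ below any given bound.

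The required diameter estimate is the $s$-dependent analog of Lemma \ref{lmFlTrajDiamEst} and splits into a global and a domain-local part, each mirroring the proof of that lemma. Globally, admissibility of each $H^s$ in the homotopy furnishes a compact $K \subset M$ and an $\epsilon > 0$ such that $d(L_1\setminus K, \psi_{H^s}(L_0)) > \epsilon$ uniformly in $s$; combined with the Lyapunov condition this forces, via Lemma \ref{lmLyapunovEst} and Cauchy--Schwarz, any time interval on which $u$ lies outside $K$ to have length controlled by $E(u)$, which is already controlled by $E_{top}(A)$. Locally on the domain, because every $J^s_t$ is $(\Ll,\sigma)$-adapted the associated metrics are all uniformly equivalent by Theorem \ref{tmCanonicalMetric} and satisfy a uniform monotonicity inequality, so Theorem \ref{tmDiamEst} bounds the diameter of each slice $u_t$ in terms of energy. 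Interior sphere bubbling along index zero components is excluded by regularity condition (3) for the homotopy, and boundary disk bubbling is excluded by Theorem \ref{tmLGSphereDiskMax}: admissibility of both $H^0$ and $H^1$ forces $J^s_0 = J^s_1 = J_{LG}$ for every $s$, and every admissible $L_i$ is tautologically unobstructed with respect to $J_{LG}$.

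The chain map identity then follows, up to signs, by the standard analysis of the boundary of one-dimensional components of $\M(\{(H^s,J^s)\}, \gamma_0, \gamma_1, A)$ with $|\gamma_1| - |\gamma_0| = 1$. Such a component is a smooth one-manifold, compact modulo strip-breaking at $s \to \pm\infty$; interior bubbling is codimension at least two by regularity, and regularity condition (4) excludes $s$-independent trajectories, so the boundary consists exactly of configurations of a Floer trajectory of $(H^0, J^0)$ followed by a continuation strip, or a continuation strip followed by a Floer trajectory of $(H^1, J^1)$. The signed count of these boundary points yields the desired identity $\mu^1_{H^1,J^1}\circ\kappa = \kappa\circ\mu^1_{H^0,J^0}$. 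I expect the main obstacle to be the global part of the diameter estimate in the $s$-dependent, non-conical setting: in place of a maximum principle we rely on the Lyapunov condition together with the uniform isoperimetry from Theorem \ref{tmCanonicalMetric}, and the delicate point is extracting uniform-in-$s$ constants along the homotopy, which in turn rests on the Lipschitz dependence of $(H^s, J^s)$ on $s$ and the pointwise admissibility of each slice.
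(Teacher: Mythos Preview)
Your proposal is correct and follows essentially the same approach as the paper: energy bound from the lower bound on $\partial_s H^s$, a $C^0$ estimate, exclusion of interior sphere bubbling by regularity and of boundary bubbling by Theorem~\ref{tmLGSphereDiskMax}, then the standard cobordism argument. The paper is simply terser, invoking Theorem~\ref{tmGlobalEst} directly for the $C^0$ estimate rather than re-deriving its global/local split.

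One small imprecision worth flagging: you assert that admissibility of the homotopy gives uniform separation $d(L_1\setminus K,\psi_{H^s}(L_0))>\epsilon$ for \emph{all} $s$. The definition of an admissible homotopy only requires each $H^s$ to be admissible (Lipschitz, Lyapunov, proper, bounded below) together with $\partial_s H^s$ Lipschitz and bounded below; the $(L_0,L_1)$-separation is imposed only at the endpoints $H^0,H^1$. This does not break your argument, because the homotopy is $s$-independent outside a compact $s$-interval, so separation holds on the strip-like ends, and the remaining compact ``thick'' region is handled by the domain-local estimate (Theorem~\ref{tmDiamEst}). This is exactly how Theorem~\ref{tmGlobalEst} is organized, which is why the paper simply cites it.
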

\begin{proof} The bound on $\partial_sH_s$ implies an a-priori estimate on the geometric energy on a  homotopy class of Floer solutions with fixed asymptotic conditions. The $C^0$ estimate is given in Theorem \ref{tmGlobalEst}. Sphere bubbling in the interior is ruled out by regularity. Disk and sphere bubbling on the boundary are ruled out by Theorem~\ref{tmLGSphereDiskMax}. The claim is now standard.
\end{proof}
The following theorem is, in light of existing literature, only a slight variation on the previous claims and is thus stated without proof.
\begin{tm}\label{tmContinuation1}
\begin{enumerate}
\item Different choices of homotopy give rise to chain homotopic definitions of $\kappa$. In particular, the map on homology is independent of the choice.
\item $\kappa$ is functorial at the homology level with respect to concatenations.
\item If $|H^0-H^1|$ is bounded, $\kappa$ induces an isomorphism on homology.
\end{enumerate}
\end{tm}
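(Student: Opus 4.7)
The plan is to follow the familiar pattern in Hamiltonian Floer cohomology: prove (a) via a homotopy of homotopies, (b) via a neck-stretching/gluing argument, and (c) by combining (a) and (b) with the identity computation for the constant homotopy. The nontrivial content in this setting is verifying that the requisite $C^0$ estimates and compactness statements, together with the absence of sphere and disc bubbling, survive in the admissible/Lipschitz category. Given the results already established (Theorem~\ref{tmLGSphereDiskMax}, the global $C^0$ estimate, and the regularity statements for admissible homotopies), these adaptations are routine.

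For part (a), given two regular admissible homotopies $(H^{s,0},J^{s,0})$ and $(H^{s,1},J^{s,1})$ from $(H^0,J^0)$ to $(H^1,J^1)$, I would pick a one-parameter family $(H^{s,\tau},J^{s,\tau})$ of admissible homotopies interpolating between them and consider the parametrized moduli space
\[
\M(\{(H^{s,\tau},J^{s,\tau})\},\gamma_0,\gamma_1,A)=\bigsqcup_{\tau\in[0,1]}\M(\{(H^{s,\tau},J^{s,\tau})\},\gamma_0,\gamma_1,A).
\]
For generic $\tau\in(0,1)$ the usual regularity argument, together with the sphere and disc bubbling exclusions of Theorem~\ref{tmLGSphereDiskMax}, yields a smooth oriented $1$-manifold in the index difference zero case whose boundary consists of broken trajectories at $\tau\in\{0,1\}$ plus strip-breaking at interior $\tau$. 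Counting the latter defines a map $K\colon CF^*\to CF^{*-1}$ and the boundary relation gives $\kappa^1-\kappa^0=\mu^1K+K\mu^1$. The $C^0$ estimate of Theorem~\ref{tmGlobalEst} applies uniformly in $\tau$ provided $\partial_sH^{s,\tau}$ is uniformly bounded below and Lipschitz, which is an open constraint on the interpolating family.

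For part (b), given admissible homotopies $(H^s_{01},J^s_{01})$ and $(H^s_{12},J^s_{12})$, consider for $R\gg 0$ the concatenated homotopy obtained by gluing their pullbacks at $s=\pm R$. The resulting family is admissible, and for $R$ sufficiently large a standard gluing argument identifies the moduli space with fiber products counting pairs of continuation trajectories matched at a Hamiltonian chord for $(H^1,J^1)$. Taking $R\to\infty$ in a one-parameter family connecting the concatenation to any chosen admissible homotopy from $(H^0,J^0)$ to $(H^2,J^2)$ and applying part (a) shows that $\kappa_{02}=\kappa_{12}\circ\kappa_{01}$ at the homology level. The main technical point is that the admissibility, Lipschitz and Lyapunov conditions are preserved under the relevant cutoff constructions and that energy bounds hold uniformly in $R$; both follow once one notes that the cutoffs can be arranged so that $\partial_sH^s$ remains Lipschitz and bounded below with constants independent of $R$.

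For part (c), if $|H^0-H^1|$ is bounded then both $H^1-H^0$ and $H^0-H^1$ are bounded below, so admissible homotopies exist in both directions and give continuation maps $\kappa_{01}$ and $\kappa_{10}$. By (b), their compositions are represented by continuation maps for concatenated homotopies from $(H^i,J^i)$ to itself. By (a), these are chain homotopic to the continuation map associated with the constant homotopy $(H^i,J^i)$, which is visibly the identity on the chain level since the only solutions of the $s$-independent Floer equation with finite energy are the stationary ones. Hence $\kappa_{01}$ and $\kappa_{10}$ are mutually inverse on homology. The step I expect to be the main obstacle is verifying uniform energy and $C^0$ estimates in the parametrized settings of (a) and (b), since the wrapping Hamiltonians are only Lipschitz at infinity and one must check that the interpolating and glued families remain in the admissible class so that Theorem~\ref{tmGlobalEst} applies with constants independent of the parameter.
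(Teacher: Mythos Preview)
Your proposal is correct and follows the standard approach; the paper itself omits the proof entirely, remarking that ``in light of existing literature'' the theorem is ``only a slight variation on the previous claims and is thus stated without proof.'' Your outline is exactly the standard argument the paper is alluding to, with the appropriate caveats about admissibility and the $C^0$ estimates already flagged.
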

\subsection{Wrapped Floer cohomology}
For a pair of admissible Lagrangians $L_1,L_2$ denote by $\mathcal{F}_{\mathcal{L}}(L_1,L_2)$ the set of $(L_0,L_1)$ admissible data. The abundance of such data is guaranteed by Lemma \ref{lmAbundance}.  Moreover, it is an easy consequence of  Lemma \ref{lmAbundance} that  $\mathcal{F}_{\mathcal{L}}(L_1,L_2)$ admits a cofinal sequence with respect to the relation
\begin{equation}\label{eqRelation}
(H_1,J_1)\leq (H_2,J_2)\iff H_2-H_1>C>-\infty
\end{equation}
for some constant $C$. Given a pair $(H_1,J_1)\leq (H_2,J_2)\in  \mathcal{F}_{\mathcal{L}}(L_1,L_2)$ Theorems \ref{tmContinuation0} and \ref{tmContinuation1} guarantee the existence of a canonical map
\[
HF^*(L_1,L_2;H_1,J_1)\to HF^*(L_1,L_2;H_2,J_2).
\] 
We can thus make the following definition.
\begin{df}
We define
\[
HW^*_{J_{LG}}(L_1,L_2):=\varinjlim_{(H,J)\in\mathcal{F}_{\mathcal{L}}(L_1,L_2)}HF^*(L_1,L_2;H,J).
\]
We shall omit $J_{LG}$ from the notation henceforth, but note that the definition possibly depends on the choice of Landau-Ginzburg potential.
\end{df}

\subsection{Composition and the unit}
  \begin{df}\label{dfAdmFlSur}
 A Floer datum $F=(\{J_z\},\mathfrak{H})$ is called admissible if
 \begin{enumerate}
 \item
 $J_z$ are all $(\Ll,\sigma)$-adapted,
 \item denoting by $U_{LG}$ an open neighborhood of $\pi_{LG}^{-1}(0)$, for any $z\in\partial\Sigma$ we have $J_z|_{U_{LG}}=J_{LG}$,
 \item $\mathfrak{H}$ takes values in admissible Hamiltonians,
 \item $\mathfrak{H}$ satisfies the positivity condition \eqref{EqMonHom} of Appendix \ref{AppLipFloer},
 \item  and the Floer data are of the form $H_idt$ on the strip-like ends for $H_i$ which are $(L_i,L_{i+1})$-admissible.
 \end{enumerate}
 \end{df}

 Consider a triple of admissible $L_1,L_2,L_3$ and correspondingly admissible and regular $(H_{12},J_{12}),(H_{23},J_{23}),$ and $ (H_{13},J_{13})$ such that $H_{13}\geq 2\max\{H_{12},H_{23}\}$. Assume further that no endpoint of an $H_{12}$ chord is the starting point of any $H_{2,3}$ chord. Denote by $D_{1,2}$ the disk with one positive puncture and two negative ones.

\begin{tm}
For a generic choice of admissible interpolating datum  $(\mathfrak{H},J)$ on the disc $D_{1,2}$, the corresponding Floer moduli spaces give rise to an operation
\[
\mu^2:CF^*(L_1,L_2;H_{12},J_{12})\otimes CF^*(L_2,L_3;H_{23},J_{23})\to CF^*(L_1,L_3;H_{13},J_{13}).
\]
The induced operation on cohomology is independent of the choice of interpolating datum, is associative and commutes with continuation maps.
\end{tm}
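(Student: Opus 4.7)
The plan is to imitate the construction of the Floer differential and continuation maps from the previous subsections, but with domain $D_{1,2}$ in place of the strip. Fix a choice of strip-like ends and an admissible interpolating Floer datum $(\mathfrak{H},J)$ on $D_{1,2}$ in the sense of Definition~\ref{dfAdmFlSur}, such that the asymptotic data on the strip-like ends agree with the chosen $(H_{12},J_{12})$, $(H_{23},J_{23})$, $(H_{13},J_{13})$. For chords $\gamma_{ij}$ and a homotopy class $A$ of maps $D_{1,2}\to M$ with the specified boundary and asymptotic conditions, let
\[
\M((\mathfrak{H},J),\gamma_{12},\gamma_{23},\gamma_{13},A)
\]
be the moduli of solutions to the inhomogeneous Cauchy--Riemann equation. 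Define $\mu^2$ by the signed weighted count of its $0$-dimensional components, weighted by $t^{E_{top}(A)}$ with $E_{top}$ as in the continuation map case (now including contributions from all three asymptotics).

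The first step is transversality. I would argue as in Theorem~\ref{tmGenReg}: admissibility is an open condition, and the requirement that $J_z=J_{LG}$ on a neighborhood of the boundary does not obstruct transversality since one can perturb $J_z$ freely on a compact subset of the interior. Sphere bubbles are $J_{LG}$-holomorphic in the limit and are contained in $\pi^{-1}(1)$ by Theorem~\ref{tmLGSphereDiskMax}, hence disjoint from admissible Lagrangians, so boundary sphere bubbling is ruled out; interior sphere bubbles are excluded by the usual evaluation map argument in codimension $2$. Disk bubbling is ruled out directly by Theorem~\ref{tmLGSphereDiskMax} applied to $L_1, L_2, L_3$.

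The main analytic obstacle, and the reason the positivity condition \eqref{EqMonHom} is built into admissibility of surface Floer data, is compactness. The condition $H_{13}\geq H_{12}\#H_{23}$ together with the monotone positivity of $\mathfrak{H}$ gives an a-priori bound on the geometric energy in terms of $E_{top}$, as is standard. The $C^0$-estimate then follows from the global estimate Theorem~\ref{tmGlobalEst}: the Lyapunov property of the admissible Hamiltonians rules out any sequence escaping to infinity along the strip-like ends (replicating the argument of Lemma~\ref{lmFlTrajDiamEst} on each end), while the domain-local monotonicity estimate Theorem~\ref{tmDiamEst} controls the diameter of the map over the compact part of $D_{1,2}$ in terms of the energy. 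Gromov--Floer compactness then applies, and the $1$-dimensional components of the moduli space have broken configurations as their boundary strata — strips glued at each puncture — which yields the chain map identity $\mu^1\mu^2 = \mu^2(\mu^1\otimes\id\pm\id\otimes\mu^1)$.

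Independence of the interpolating datum is proven by a standard parametrized moduli space argument: any two admissible choices can be connected by a path of such, which (for generic path) gives a $1$-parameter family of Floer problems whose signed count furnishes a chain homotopy between the two versions of $\mu^2$; the necessary compactness and bubbling control is identical. Compatibility with continuation maps is handled the same way, using a homotopy of Floer data on $D_{1,2}$ that interpolates between the datum pre-composed with continuation and the datum post-composed with continuation. Associativity is obtained in the usual manner from the $1$-parameter moduli space of holomorphic disks with four punctures (varying the cross-ratio of the boundary marked points): the two ends of this parameter family compactify to the two sides of the associativity identity, and codimension-one boundary bubbling is controlled by the same energy/bubbling inputs as above. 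In all of these arguments, the novelty relative to the exact setting is exclusively in the $C^0$-estimate, which is handled by the tame/Lyapunov geometry of Section~\ref{SecWrap} and the appendix; the combinatorics of gluing and orientation signs are standard.
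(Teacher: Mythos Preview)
Your proposal is correct and follows essentially the same approach as the paper's own proof, which is extremely terse: it simply observes that regularity is an obvious variation on the previous cases, invokes Theorem~\ref{tmGlobalEst} for the $C^0$-estimate, and points to the remark following it for the associativity argument over varying conformal structure. You have spelled out in more detail exactly the ingredients the paper leaves implicit; one minor imprecision is that the admissibility condition only requires $J_z=J_{LG}$ for $z\in\partial\Sigma$, not on a neighborhood of the boundary, but this does not affect the argument.
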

\begin{proof}
The definition of regularity for a  Floer datum on $D_{1,2}$ is an obvious variation on the previous case as is the proof that a generic Floer datum is regular. The $C^0$ estimate is given in Theorem \ref{tmGlobalEst} and, for relations such as associativity, in the discussion following it. 

\end{proof}
Similarly, let $D_{1,0}$ have one positive puncture. A Floer datum $(\mathfrak{H},J)$ is admissible if it satisfies the same condition as in Definition \ref{dfAdmFlSur}. We then have the following theorem whose proof requires no new ideas and is thus omitted.
\begin{tm}
For generic admissible choices we get a map $e_L:\Lambda\to HF^0(L,L;H)$.
$e_L$ commutes with continuation maps. The induced map $e_L:\Lambda\to HF^*(L,L)$ gives rise to a unit by $1_{L}:=e_L(1)$.
\end{tm}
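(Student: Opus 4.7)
The plan is to mimic the standard PSS/cap construction adapted to the admissible framework already developed. Let $D_{1,0}$ denote the disk with a single positive puncture, biholomorphic to $\mathbb{R}_{\geq 0}\times S^1 / \{0\}\times S^1$, with a cylindrical end modelled on $[0,\infty)\times[0,1]$ (after folding, once a marked boundary point is chosen to break the $S^1$ symmetry). An admissible Floer datum $(\mathfrak H,J)$ on $D_{1,0}$ consists of a domain-dependent family of admissible almost complex structures equal to $J_{LG}$ on $\partial D_{1,0}$ and of Hamiltonian-valued $1$-forms $\mathfrak H$ satisfying the positivity condition \eqref{EqMonHom} and which on the strip-like end agree with $H\,dt$ for an $(L,L)$-admissible Hamiltonian $H$. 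Near the interior of the disk one demands that $\mathfrak H$ decays so that the equation there reduces to the $J$-holomorphic equation. Such admissible data exist and form a contractible space, by the same argument producing admissible homotopies in the previous subsection.

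First I would define, for each Hamiltonian chord $\gamma$ from $L$ to itself of $H$ and each relative homotopy class $A$, the moduli space
\[
\M(\mathfrak H,J,\gamma,A)=\{u:D_{1,0}\to M\mid u(\partial D_{1,0})\subset L,\ (du-X_{\mathfrak H})^{0,1}=0,\ [u]=A,\ \lim u=\gamma\},
\]
and set
\[
\langle e_L(1),\gamma\rangle:=\sum_{A}t^{E_{top}(A)}\#\M(\mathfrak H,J,\gamma,A)/\sim,
\]
restricted to index-$0$ output $\gamma$. The $C^0$ estimate of Theorem~\ref{tmGlobalEst} applies verbatim (positivity of $\mathfrak H$ provides the $a$-priori geometric energy bound), Gromov--Floer compactness in each fixed class then holds, and the sum is finite modulo any energy truncation since only finitely many chords are produced for an admissible $H$. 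Genericity of $(\mathfrak H,J)$ gives transversality for rigid solutions by a Sard--Smale argument analogous to Theorem~\ref{tmGenReg}, with interior sphere bubbles and boundary disk bubbles ruled out, respectively, by Theorem~\ref{tmSpheresMax} and by Theorem~\ref{tmLGSphereDiskMax} together with the admissibility of $L$.

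Next I would verify that $e_L(1)$ is a cocycle and that its cohomology class is independent of choices. The boundary of the compactified one-dimensional moduli spaces contributes only breakings into a $D_{1,0}$-solution and a rigid Floer trajectory, yielding $\mu^1 e_L(1)=0$. Given two admissible Floer data on $D_{1,0}$, a one-parameter family interpolating between them defines a chain homotopy by counting rigid elements in the parametrized moduli space; the same compactness arguments apply since admissibility is closed under convex combinations. Compatibility with continuation maps is proved by gluing a continuation cylinder to the cylindrical end of $D_{1,0}$; the resulting one-parameter family of domains interpolates between ``continuation after $e_L$'' and ``$e_L$ with the target datum'', and the count of breakings gives the required chain-homotopy identity.

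The main obstacle, and the step I would treat most carefully, is the unit property: for any $\alpha\in HF^*(L,L;H_{23})$ one needs $\mu^2(e_L(1)\otimes\alpha)=\alpha$ at the cohomology level (and symmetrically on the other side). This is proved by the usual degeneration: consider a family of domains interpolating between the triangle defining $\mu^2(e_L(1),\cdot)$ and a configuration in which the input labelled by $e_L(1)$ has been shrunk to an interior marked point on a strip; the latter configuration computes the identity continuation map between admissible Floer data, and the boundary of the one-dimensional parametrized moduli space yields, after accounting for sphere and disk bubble-free compactness (again by Theorems~\ref{tmSpheresMax} and~\ref{tmLGSphereDiskMax}), the relation $\mu^2(e_L(1),\alpha)=\kappa(\alpha)$ where $\kappa$ is a continuation map; passing to the direct limit defining $HW^*$ gives the unit axiom. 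The verification that no sphere bubbles escape along this family, that the interpolating Floer data can be chosen admissible throughout, and that transversality can be achieved simultaneously at the endpoints and in the interior of the family, is where all the preparations of \S\ref{SecWrap} and the appendix are used.
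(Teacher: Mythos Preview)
Your proposal is correct and is precisely the standard argument the paper has in mind: the paper omits the proof entirely, stating that it ``requires no new ideas,'' after having set up the domain $D_{1,0}$ with one positive boundary puncture and admissible Floer data as in Definition~\ref{dfAdmFlSur}. Your sketch fills in exactly this standard construction, invoking the paper's compactness (Theorem~\ref{tmGlobalEst}), regularity (Theorem~\ref{tmGenReg}), and bubble exclusion (Theorems~\ref{tmSpheresMax} and~\ref{tmLGSphereDiskMax}) at the appropriate points.

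Two minor quibbles: there is no translation symmetry on $D_{1,0}$, so the quotient ``$/\!\sim$'' in your definition of $\langle e_L(1),\gamma\rangle$ should be dropped; and your description of $D_{1,0}$ as a quotient of a half-cylinder is unnecessarily roundabout---it is simply the disk with one boundary puncture equipped with a strip-like end, with boundary mapping to $L$.
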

The set of admissible Lagrangians with $Hom$ spaces given by wrapped Floer cohomology thus defines a category $\mathcal{WH}_{\Ll}$ which we refer to as \textit{the wrapped Donaldson-Fukaya category.}

\subsection{The PSS homomorphism}
Let $L$ be an admissible Lagrangian and let $f:L\to\R$ be a proper Morse function. Let $g$ be a metric on $L$ such that $(f,g)$ is Morse-Smale. Let $(H,J)$ be an $(L,L)$-admissible Floer datum. Denote by $(CM^*(f,g),d_M;\Lambda)$ the Morse complex. We define a chain map
\[
f_{PSS}:CM^*(f,g)\to CF^*(H,J),
\]
as follows. Let $D_{0,1}$ be the disk with one positive strip-like end. Let $F$ be an admissible Floer datum on $D_{0,1}$ with $H,J$ on the strip-like end. On $D_{0,1}$ there is a distinguished marked point obtained as follows. First note that $D_{0,1}$ can be compactified to a disc $\overline{D}_{0,1}$ with a marked point $q$ so that the cylindrical end is defined on a punctured neighborhood of $q$. The cylindrical end at the puncture induces a tangent direction $v$ at $q$ by considering the tangent to the ray $(0,\infty)\times\{1/2\}.$ Call a biholomorphism of $\overline{D}_{0,1}$ with the upper hemisphere in the Riemann sphere \emph{admissible} if $q$ is mapped to $0$ and and $v$ is mapped  to the imaginary axis. Different admissible biholomorphism are related by scaling which does not affect the point at $\infty$. Thus there is a unique point on $\partial D_{0,1}$ which is mapped to $\infty$ under any admissible biholomorphism. Denote this point by $p_d$. Given a critical point $p\in CM^*(f,g)$ and a chord $x\in CF^*(h,J)$ let $\mathcal{M}(f,g,H,J,x,p)$ the space of configurations consisting of pairs $(u,\gamma)$ where $u$ is a Floer solution on $D_{0,1}$ with the Floer datum $F$ and $\gamma:(-\infty,0]\to L$ is a half gradient line for $(f,g)$. For  a configuration $(u,\gamma)$ define the energy by $E_{top}(u,\gamma):=E_{top}(u)$. The virtual dimension of $\mathcal{M}(f,g,H,J,x,p)$ is the difference $i_{Morse}(p)-i_{CZ}(x)$. Thus, for generic admissible choices, this is a zero dimensional smooth oriented manifold. Define the degree $0$ map
\[
\langle f_{PSS}(p),x\rangle:=\sum_{(u,\gamma)\in\M(f,g,H,J,x,p)}t^{E_{top}(u)}.
\]
The following theorem is standard if one takes into account that there are no $J$-holomorphic disks on an admissible Lagrangian and is thus stated without proof.
\begin{tm}
$f_{PSS}$ is a chain map. The cohomology level map commutes with the continuation maps in both Floer and Morse cohomology. In particular we have a canonical map
\[
f_{PSS}:H^*(L;\Lambda)\to HW^*_{J_{L_G}}(L,L).
\]
Moreover, $f_{PSS}$ is a unital algebra homomorphism.
\end{tm}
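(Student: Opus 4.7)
The plan is to verify the four claims (chain map, equivariance under continuation, unitality, and multiplicativity) by the standard PSS cobordism arguments, using the admissibility framework of the paper to rule out bubbling. The key input is that by Theorem~\ref{tmLGSphereDiskMax} an admissible Lagrangian bounds no $J_{LG}$-holomorphic disks, and all $J_{LG}$-holomorphic spheres lie in $\pi_{LG}^{-1}(1)$, which is uniformly disjoint from $L$. Combined with the global $C^0$ estimates of Theorem~\ref{tmGlobalEst} and the uniform isoperimetric property of admissible Lagrangians, Gromov--Floer compactness applies to each moduli space in a fixed homotopy class, and the moduli counts are weighted by the topological energy, yielding well-defined Novikov coefficients.

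To prove $f_{PSS}$ is a chain map, I would study the Gromov--Floer compactification of the one-dimensional components of $\M(f,g,H,J,x,p)$ with $i_{Morse}(p)-i_{CZ}(x)=1$. For generic admissible data, the codimension-$1$ boundary consists exactly of two types of broken configurations: a Floer trajectory from $x$ to some $x'$ concatenated with a PSS configuration ending at $x'$, and a PSS configuration beginning at some $p'$ concatenated with an unstable Morse trajectory from $p$ to $p'$. Interior sphere bubbling is precluded by the regularity statement analogous to Theorem~\ref{tmGenReg} and by Theorem~\ref{tmLGSphereDiskMax}, which also rules out boundary disk bubbling; sequences escaping to infinity are ruled out by Theorem~\ref{tmGlobalEst}. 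Counting boundary points with Novikov weights $t^{E_{top}}$ yields $f_{PSS}\circ d_M = d_F\circ f_{PSS}$. Commutation with continuation maps follows by a one-parameter family version: build an admissible homotopy of Floer data on $D_{0,1}$ interpolating between $(H^0,J^0)$ and $(H^1,J^1)$ and inspect the boundary of the resulting one-dimensional parametric PSS moduli space to obtain a chain homotopy between $\kappa\circ f^0_{PSS}$ and $f^1_{PSS}$. Equivariance on the Morse side is the well-known fact obtained by a cobordism on the PSS moduli associated to a homotopy of Morse data.

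For the algebra structure, consider the disk with two interior PSS marked points, each carrying a half-infinite negative gradient trajectory for a Morse-Smale pair on $L$, and one positive Floer puncture equipped with admissible Floer data satisfying the pair-of-pants positivity condition in the sense of Definition~\ref{dfAdmFlSur}. The one-dimensional moduli spaces of such configurations, compared to the analogous Morse pair-of-pants moduli space on $L$, provide a chain homotopy establishing $f_{PSS}(p_1\cdot_M p_2)=f_{PSS}(p_1)\cdot_F f_{PSS}(p_2)$ on cohomology, where $\cdot_M$ and $\cdot_F$ are the Morse and Floer products respectively. Unitality is the special case of zero Morse inputs: the PSS moduli with one positive Floer puncture and no Morse half-trajectory (or equivalently with one half-trajectory absorbed into the fundamental class) is connected, via a homotopy of the admissible interpolating datum on $D_{0,1}$, to the moduli space defining the Floer unit $e_L$, giving $f_{PSS}(1)=1_L$.

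The main obstacle is not conceptual but technical: one must verify that all the families of moduli spaces above can be made regular while respecting admissibility, i.e.\ the conditions $J_z|_{\partial}=J_{LG}$, $\Ll$-adaptedness of $J$, properness, Lipschitz/Lyapunov bounds, and the monotonicity condition on $\mathfrak{H}$. This amounts to establishing generic-regularity statements parallel to Theorem~\ref{tmGenReg} for each domain type, which goes through by the same argument once one observes that the admissibility constraints are open and that, by Theorem~\ref{tmLGSphereDiskMax}, no $J_t$-holomorphic sphere can meet $L$ and no $J_{LG}$-holomorphic disk with boundary on $L$ exists, so sphere/disk bubbling remains of codimension at least two and never interferes with the boundary analysis.
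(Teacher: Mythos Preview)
Your proposal is correct and follows exactly the standard PSS cobordism argument that the paper has in mind; indeed, the paper explicitly states this theorem without proof, noting only that it ``is standard if one takes into account that there are no $J$-holomorphic disks on an admissible Lagrangian.'' Your identification of the key inputs---Theorem~\ref{tmLGSphereDiskMax} to rule out disk and boundary sphere bubbling, Theorem~\ref{tmGlobalEst} for $C^0$ estimates, and the regularity framework of Theorem~\ref{tmGenReg}---is precisely what is needed to make the standard argument go through in this setting.
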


\begin{tm}
If $L$ is a compact admissible Lagrangian, $f_{PSS}$ is an isomorphism of $\Lambda$-algebras.
\end{tm}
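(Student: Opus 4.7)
The plan is to combine the classical argument that a $C^2$-small Hamiltonian realizes the Morse–Floer identification with cofinality of such Hamiltonians inside $\mathcal{F}_{\mathcal L}(L,L)$. Because $L$ is compact it sits inside a Weinstein tubular neighborhood $U$ symplectomorphic to a neighborhood of the zero section in $T^*L$. Fix a Morse function $f$ and a metric $g$ on $L$ such that $(f,g)$ is Morse–Smale, and extend $\epsilon f$ to a compactly supported function on $U$. The admissibility conditions of Definition \ref{dfAdmFloer} (properness, boundedness below, Lipschitz, Lyapunov, and displacement at infinity) constrain only the behavior outside $U$, so I can construct a cofinal sequence $\{H_s\}_{s\in\mathbb{N}}$ with $H_{s+1}\geq H_s$ such that each $H_s|_U$ equals $\epsilon f$ while $H_s$ grows suitably at infinity.

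Next, for $\epsilon$ small enough and $J$ agreeing with the Sasaki-type metric on $U$, Floer's original argument identifies the time-$1$ chords of $H_s$ from $L$ to $L$ with $\crit(f)$ (all trapped inside $U$), and all index-$1$ Floer strips with gradient flow lines of $f$. The $(L,L)$-admissibility of $H_s$ rules out chords at infinity, and an energy/$C^0$ argument using Theorem~\ref{tmGlobalEst} together with Theorem~\ref{tmLGSphereDiskMax} confines strips to $U$. This yields a canonical chain-level identification
\[
CF^*(L,L;H_s,J)\;\cong\;CM^*(f,g;\Lambda_{nov}).
\]
Choosing interpolating families $(H^r,J^r)$ between $H_s$ and $H_{s+1}$ that remain $C^2$-small on $U$, the same analysis shows the continuation map is the identity on $CM^*$. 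Therefore
\[
HW^*(L,L)\;=\;\varinjlim_s HF^*(L,L;H_s)\;\cong\;HM^*(f,g;\Lambda_{nov})\;\cong\;H^*(L;\Lambda).
\]

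It remains to check that $f_{PSS}$ intertwines the Morse cohomology on the source with the identification above on the target. At stage $s$, the moduli space $\mathcal{M}(f,g,H_s,J,x,p)$ consists of pairs $(u,\gamma)$ with $u$ a Floer solution on $D_{0,1}$ and $\gamma$ a half gradient line. A standard stretching/degeneration argument with $\epsilon\to 0$, using confinement of $u$ in $U$ and the fact that the Floer strip degenerates to a constant map at $\psi_{H_s}^{-1}(p)\in L$, shows that these moduli spaces are in bijection with single half gradient lines flowing into $p$. Hence $f_{PSS}$ induces the identity from $HM^*(f,g;\Lambda_{nov})$ to itself under the above identification and is, in particular, an isomorphism. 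Unitality and multiplicativity have already been established, so this is an isomorphism of $\Lambda$-algebras.

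The principal technical obstacle is the confinement argument of the second paragraph: ruling out Floer strips and chords that wander outside $U$ before returning to $L$. Chords are controlled by choosing $\epsilon$ small so that $\psi_{H_s}^t(L)$ stays close to $L$ for $t\in[0,1]$, while Floer strips require a joint monotonicity/energy estimate in the Weinstein chart, patched with the global $C^0$-estimate Theorem~\ref{tmGlobalEst} at the boundary of $U$. The argument is by now standard in the exact setting; the only novelty here is verifying compatibility with the Lipschitz/Lyapunov framework for the non-exact semi-toric geometry, which is handled by the analysis in Appendix~\ref{AppLipFloer}.
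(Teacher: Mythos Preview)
Your approach is correct and closely parallel to the paper's, though the two are organized differently. The paper first argues that for compact $L$ the continuation maps in the direct limit are all isomorphisms: since Floer homology depends only on the slope at infinity, one may choose consecutive Hamiltonians to agree on arbitrarily large compacta, whence the target-local estimate forces continuation trajectories of any fixed energy to be trivial, so the continuation map is the identity. It then constructs an explicit inverse to $f_{PSS}$ using compactly supported Hamiltonians (for which the classical PSS inverse exists), and invokes the same continuation-is-identity argument to pass from compactly supported to admissible Hamiltonians.

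You instead fix a cofinal family that equals $\epsilon f$ on a Weinstein neighborhood and invoke Floer's $C^2$-small identification $CF^*\cong CM^*$ directly at each stage, then check that PSS becomes the identity under this identification. This is a bit more hands-on: it requires the full Floer argument identifying strips with gradient lines, and hence the confinement step you flag. The paper's route sidesteps that step entirely, since it never needs to match Floer strips to Morse trajectories one-to-one; it only needs the softer statement that continuation maps (and the PSS/inverse-PSS compositions) are the identity modulo any given energy cutoff. Your version yields the same conclusion and is perfectly valid, but the paper's organization is slightly more economical in that it avoids the delicate ``strips stay in $U$'' analysis.
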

\begin{proof}
First we prove that for a compact Lagrangian, the continuation maps occurring in the definition of $HW^*_{J_{L_G}}(L,L)$ are all isomorphisms. For this note that the Floer homology $HF^*_{J_{L_G}}(L,L)$ only depends on the slope at infinity. In defining the continuation maps between different slopes we can choose Hamiltonians which coincide on arbitrarily large subsets.  By compactness, changing the slope at infinity does not introduce any new chords. Thus the continuation map is arbitrarily close to the identity and thus is the identity. 

Since $L$ is compact, we can find compactly supported Hamiltonians $H$ which such that the time $1$ flow of $L$ intersects $L$ transversely. Using such compactly supported Hamiltonians we can define the inverse to the PSS homomorphism. The continuation maps from compactly supported Hamiltonians to those which have positive slope at infinity is the an isomorphism by the same argument as in the previous paragraph. The claim follows.
\end{proof}
\subsection{The closed string sector}
We define the set of \emph{closed-string admissible} Hamiltonians $\mathcal{H}_{\mathcal{L},closed}$ to be those $H$ which are admissible as per Definition \ref{dfAdmFloer} and in addition satisfy that there exists an $\epsilon>0$ and a compact set $K$ such that 
\[
d(p,\psi_H(p))>\epsilon
\]
whenever $p\not\in K$. The distance here is taken with respect to the metric induced by the fixed almost complex structure $J_{LG}$.  

We denote by $\mathcal{F}_{\mathcal{L},closed}$ the set consisting of pairs $(H,J)$ where $H\in \mathcal{H}_{\mathcal{L},closed}$ has Morse-Bott non-degenerate flow and $J$ is an almost complex structure which is metrically equivalent to $J_{LG}$ the $(H,J)$ and is regular for the definition of Floer cohomology.  Any element of $\mathcal{F}_{\mathcal{L},closed}$ is dissipative in the sense of \cite{Groman15}. Thus given an admissible Hamiltonian $H$, we define the Floer homology $HF^*(H)$ in the usual way. We omit $J$ from the notation since for different choices we get isomorphic Floer homologies. 
The set $\mathcal{F}_{\mathcal{L},closed}$ is a bidirected system with respect to the relation $\leq$ described in \eqref{eqRelation}. Given a pair $(H_1,J_1)\leq (H_2,J_2)\in \mathcal{F}_{\mathcal{L},closed}$ there is a canonical continuation map $HF^*(H_1)\to HF^*(H_2)$. 

We thus define
\[
SH^*(M;\mathcal{L}):=\varinjlim_{H\in\mathcal{F}_{\mathcal{L},closed}}{HF}^*(H)
\]

The set $\mathcal{H}_{\mathcal{L}}$ is a monoidal indexing set in the sense of \cite{Groman15}. This means that for any $H_1,H_2\in \mathcal{H}_{\mathcal{L}}$ we can find an $H_3\in \mathcal{H}_{\mathcal{L}}$ so that for each $x\in M$ we have $H_3\geq 2\max\{H_1(x),H_2(x)\}$.  When this holds for a triple $H_1,H_2,H_3$ there is a canonical operation $HF^*(H_1)\otimes HF^*(H_2)\to HF^*(H_3)$ which commutes with the continuation maps. Thus we get an algebra structure on $SH^*(M;\mathcal{L})$ over the Novikov field. Using standard constructions $SH^*(M;\mathcal{L})$  is in fact a unital $BV$ algebra over the Novikov field. 

On the other hand, taking the inverse limit of the system $\mathcal{F}_{\mathcal{L},closed}$, it is shown in \cite{Groman15} that there is a natural isomorphism
\[
H^*(M;\Lambda)\to\varprojlim_{H\in\mathcal{F}_{\mathcal{L},closed}}HF^*(H)
\]
induced by the PSS map. Composing with the natural map from the inverse to the direct limit we obtain 
a unital map
\[
H^*(M;\Lambda)\to SH^*(M;\mathcal{L}).
\]

\subsection{The closed open map}
For any admissible Lagrangian $L$, there is a unital map of algebras
\[
\mathcal{CO}:SH^*(M;\Ll)\to HW^*(L,L;\Ll),
\]
arising as follows. Let $(H_1,J_1)$ be admissible and let $(H_2,J_2)$ be $(L,L)$-admissible. Then we define an operation
\[
\mathcal{CO}:CF^*(H_1,J_1)\to CF^*(L,L;H_2,J_2),
\]
by choosing an admissible interpolating form on $T^1_1$ with $H_1$ on the interior puncture and $H_2$ on the boundary puncture.
\begin{tm}
For generic choice of interpolating data this gives rise to degree $0$ chain map. At the cohomology level it commutes with the continuation maps and the product $\mu^2$, and is unital.
\end{tm}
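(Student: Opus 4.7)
The plan is to follow the standard TQFT template for closed--open maps, with the work concentrated on adapting each step to the Lipschitz/admissible framework of the present paper. All the analytical input has by now been assembled: genericity of interpolating data to secure transversality (as in the proof of Theorem~\ref{tmGenReg}), the $C^0$ estimate from Theorem~\ref{tmGlobalEst} in the appendix, and the crucial fact from Theorem~\ref{tmLGSphereDiskMax} that along admissible $L$ there are no non-constant $J_{LG}$-holomorphic discs and that every $J$-holomorphic sphere is confined to $\pi_{LG}^{-1}(1)$, hence disjoint from $L$ and from the relevant Hamiltonian orbits.

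First, I verify that $\mathcal{CO}$ is a degree $0$ chain map. The underlying surface $T^1_1$ (disc with one interior and one boundary puncture) carries an admissible interpolating form with $H_1$ at the interior puncture and $H_2$ at the boundary puncture, satisfying the positivity condition from Definition~\ref{dfAdmFlSur}. For generic choices, the zero-dimensional moduli spaces are finite (by the global estimate plus the absence of sphere/disc bubbling at the boundary), and compactification of the one-dimensional components has boundary consisting precisely of Floer breakings at the two ends; the standard count identifies these with $\partial \circ \mathcal{CO} + \mathcal{CO}\circ\partial$, yielding the chain map property. The degree is zero by the index formula for $T^1_1$.

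Second, compatibility with continuation maps follows by interpolating between the Floer data on each puncture via an admissible one-parameter family of surface-dependent data and analyzing the boundary of the resulting parameterized moduli space. As in Theorem~\ref{tmGenReg}, sphere bubbling remains codimension two because of the confinement to $\pi_{LG}^{-1}(1)$, and disc bubbling is absent. Compatibility with $\mu^2$ is the TQFT relation obtained from a disc with one interior puncture and two boundary punctures: one degenerates it in two different ways (pinching off a pair of pants on the closed side versus pinching off a disc with three boundary punctures on the open side), yielding on the one side $\mathcal{CO}(a\cdot b)$ and on the other $\mu^2(\mathcal{CO}(a),\mathcal{CO}(b))$, modulo the chain homotopy coming from the moduli space over the interpolating interval. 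All compactness issues are handled uniformly by the global $C^0$ estimate of Theorem~\ref{tmGlobalEst} and the absence of boundary bubbling.

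Finally, unitality is proven by choosing the interpolating datum on $T^1_1$ to be a gluing of the cap defining the unit in $SH^*(M;\Ll)$ with the cap defining the unit $e_L$; the corresponding moduli space deformation retracts onto the disc defining $e_L$ so that $\mathcal{CO}(1_{SH}) = 1_L$. The main obstacle throughout is not the algebraic structure, which is formal once transversality and compactness hold, but rather establishing the uniform $C^0$ estimate on solutions of the inhomogeneous equation on $T^1_1$ and its relatives; however this has already been reduced to the admissibility hypotheses via Theorem~\ref{tmGlobalEst} and the Lyapunov/Lipschitz conditions, so no new analytic input is required here.
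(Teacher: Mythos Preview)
The paper does not give a proof of this theorem at all; in the same spirit as the preceding theorem about the unit (which is explicitly ``stated without proof'' since its ``proof requires no new ideas''), the closed--open map theorem is asserted as standard once the admissible framework and the $C^0$ estimates are in place. Your sketch is therefore more detailed than the paper itself, and its overall structure is correct: chain-map from breaking at the two ends of $T^1_1$, commutation with continuation via parametrized families, and unitality via gluing the closed cap onto $T^1_1$ to obtain the open cap $D_{1,0}$, with compactness handled throughout by Theorem~\ref{tmGlobalEst} and bubbling excluded by Theorem~\ref{tmLGSphereDiskMax}.

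One small inaccuracy: for the algebra-homomorphism property $\mathcal{CO}(a\cdot b)=\mu^2(\mathcal{CO}(a),\mathcal{CO}(b))$ the relevant master surface is the disc with \emph{two interior} punctures and \emph{one boundary} puncture (two closed inputs, one open output), not one interior and two boundary punctures as you wrote. Its two codimension-one degenerations are: (i) a closed pair of pants bubbling off at the two interior punctures, giving $\mathcal{CO}(a\cdot b)$; and (ii) a copy of $T^1_1$ bubbling off at one interior puncture, leaving a disc with one interior and two boundary punctures which then feeds into $\mu^2$, giving $\mu^2(\mathcal{CO}(a),\mathcal{CO}(b))$. With that correction your argument is the standard one.
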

Thus the induced map $\CO:SH^*(M\Ll)\to HW^*(L,L;\Ll)$ is a unital $\Lambda$-algebra morphism.
\subsection{Proof of Theorem~\ref{tmc}}
\begin{proof}
The precise definition of good behavior at infinity is given in Definition \ref{dfAdmSYZ}. The set $\mathcal{H}_\Ll$ is the set of Hamiltonians $H:M\to \R$ which can be squeezed between a pair of functions of the form $h\circ\cL$ where $h$ is proper, bounded below, and  Lipschitz with respect to any of the equivalent metrics on the base $B$ described in Theorem \ref{tmBaseCoord}. The construction of $\Pi$ is given in Appendix \ref{AppA}. Admissible Lagrangians are given in Definition \ref{dfAdmLag}. Admissibility of Floer data is described in Definition of \ref{dfAdmFloer}. The construction of the wrapped Donaldson-Fukaya category has been carried out in \S\ref{SecWrapped}.
\end{proof}

\section{The Floer cohomology of a Lagrangian section}\label{SecComp}
Let $\sigma:B\to M$ be an admissible Lagrangian section of $\mathcal{L}$. Our aim in this section is to compute $HW^*(\sigma,\sigma)$. Let $\theta_n:M\to S^1$ be a global angle coordinate  representing the 1-form given in Theorem \ref{tmBasicTop}\ref{tmBasicTop3}. Assume without loss of generality that $\theta_n=Const$ along $\sigma$. For any chord $\gamma:[0,1]\to M$ starting and ending on $\sigma$, let $|\gamma|=\int_{\gamma}d\theta_n$.
\begin{lm}
$|\cdot|$ induces on $HW^*(\sigma,\sigma)$ the structure of a $\Z$-graded algebra.
\end{lm}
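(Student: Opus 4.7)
The plan is to exhibit $|\cdot|$ as the $\Z$-grading induced by integration of the closed $1$-form $d\theta$, which represents a generator of $H^1(M;\Z)$ by Theorem~\ref{tmBasicTop}(\ref{tmBasicTop3}). The normalization that $\theta$ be constant on $\sigma$ is implemented first: since $\sigma$ is homeomorphic to the contractible base $B\cong\R^3$, the $S^1$-valued restriction $\theta\circ\sigma$ lifts to an $\R$-valued function $\tilde f$, and replacing $\theta$ by $\theta-F\circ\Ll\pmod 1$ for any smooth $F:B\to\R$ extending $\tilde f$ yields a representative of the same cohomology class with the desired property. All subsequent statements then reduce to instances of Stokes' theorem applied to $u^*d\theta$ for Floer-type solutions $u$ with boundary on $\sigma$.

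I would verify integrality of $|\gamma|$ next. Viewing $\theta$ as an $S^1$-valued function with $d\theta$ a generator of $H^1(M;\Z)$, the fact that $\theta|_\sigma$ is constant ensures that $\theta\circ\gamma:[0,1]\to S^1$ is a loop for any Hamiltonian chord $\gamma$ from $\sigma$ to $\sigma$. Hence $|\gamma|=\int_\gamma d\theta$ is the winding number of this loop, which is an integer. This already produces a $\Z$-indexed decomposition of the Floer cochain groups $CF^*(\sigma,\sigma;H,J)$ for any admissible Floer datum.

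Compatibility with the algebraic structure then follows from a single computation. Let $u:\Sigma\to M$ be a solution to the appropriate (possibly domain-dependent) Floer equation, where $\Sigma$ is a Riemann surface with boundary mapping to $\sigma$ and with strip-like ends asymptotic to chords $\gamma_i$. Since $d\theta$ is closed, so is $u^*d\theta$; truncating $\Sigma$ to a compact subsurface $\Sigma_R$ by cutting the ends at parameter $R$ and applying Stokes, the boundary contributions along arcs on $\sigma$ vanish because $d\theta|_\sigma=0$, while each end contributes $\pm|\gamma_i|$ in the limit $R\to\infty$, with sign determined by the orientation of the puncture. The resulting linear relation among the $|\gamma_i|$ specializes to $|\gamma_-|=|\gamma_+|$ for the Floer differential and for continuation maps, and to $|\gamma_0|=|\gamma_1|+|\gamma_2|$ for the product $\mu^2$. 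These relations hold at the chain level, so $|\cdot|$ descends to a $\Z$-grading on $HW^*(\sigma,\sigma)$ compatible with multiplication. No genuine obstacle arises; the only subtlety is confirming convergence of the truncated boundary integrals as $R\to\infty$, which is immediate from the exponential decay of Floer solutions at non-degenerate chord asymptotes.
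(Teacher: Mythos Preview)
Your proof is correct and follows essentially the same approach as the paper: integrality of $|\gamma|$ via the winding number of $\theta\circ\gamma$, and compatibility with differential, continuation, and product via the vanishing of $d\theta$ along $\sigma$ combined with Stokes' theorem on Floer solutions. The paper compresses the latter into the phrase ``for topological reasons'' whereas you spell out the Stokes argument and the normalization of $\theta$ explicitly, but the content is the same.
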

\begin{proof}
We have $d\theta_n\equiv0$ along $\sigma$. So, $|\gamma|\in\Z$. In fact, $|\gamma|$ is the number of times $\theta_n\circ\gamma$ winds around $S^1$.
In particular, for any admissible $H$, $|\cdot|$ defines a grading on $CF^*(\sigma,\sigma,H)$. For topological reasons, given any surface asymptotic to chords on $\sigma$, the total grading on the inputs must match that on the outputs. In particular, the differential, continuation maps and product respect the grading.
\end{proof}
We denote by $HW^{0,0}(\sigma,\sigma)\subset HW^0(\sigma,\sigma)$ the subalgebra generated by the elements $x$ such that $|x|=0$, i.e., the contractible elements. We similarly define the sub-algebras $HW^{0,\geq 0}(\sigma,\sigma)$ and $HW^{0,\leq 0}(\sigma,\sigma)$. In the following, let $G$ be the $n-1$-torus acting on $M$ and preserving $\Omega$.

Theorem~\ref{tmRoughform} in the introduction is a consequence of the following lemma. We abbreviate $\Lambda:=\Lambda_{nov}$.
\begin{lm}\label{lmRoughform}
We have
\begin{enumerate}
\item $HW^{0,0}(\sigma,\sigma)=\Lambda[H_1(G;\Z)]$.
\item There is an $x\in HW^0(\sigma,\sigma)$ ($y\in HW^0(\sigma,\sigma))$ with $|x|=1$ ($|y|=-1$) such that $HW^{0,\geq 0}\simeq HW^{0,0}(\sigma,\sigma)\otimes \Lambda[x]$ ($HW^{0,\leq 0}\simeq HW^{0,0}(\sigma,\sigma)\otimes \Lambda[y]$).
\end{enumerate}
\end{lm}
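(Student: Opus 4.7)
The plan is to compute $HW^*(\sigma,\sigma)$ using a $G$-equivariant cofinal family of Hamiltonians and exploit the $\T^2$-symmetry to control both the differential and the product. First, using the uniform structure of Theorem~\ref{tmBaseCoord}, I would build a cofinal family of admissible Hamiltonians $H_k = F_k \circ \Ll + \epsilon_k P_k$, where $F_k : B \to \R$ is strictly convex and Lipschitz with slope tending to infinity, and $P_k$ is a small Morse perturbation on torus fibers chosen invariant under an auxiliary $S^1 \subset G$. The chords of $F_k \circ \Ll$ from $\sigma$ to $\sigma$ are in bijection with pairs $(b,v)$ for $v \in \Lambda^*_b \cong H_1(L_b;\Z) = \Z^3$ and $dF_k(b) = v$; they come in $G$-orbits over each such $b$. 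After perturbation, each orbit contributes chords indexed by $\crit(P_k)$ restricted to the orbit, with the Morse minimum giving the unique Maslov-index-zero chord in each homotopy class. The extra grading satisfies $|\gamma_v| = v_3$, so the $(0,0)$-sector is spanned by chords $\gamma_v$ with $v \in H_1(G;\Z)$.

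Next I would show the Floer differential vanishes on the $(0,0)$-sector. Since $\Ll$, the LG potential $(\pi_{LG}, J_{LG})$, and $F_k \circ \Ll$ are $G$-invariant, and $P_k$ is $S^1$-equivariant, any hypothetical Floer strip connecting distinct chords $\gamma_a, \gamma_b$ with $a \neq b \in H_1(G;\Z)$ would generate an $S^1$-family of strips, contradicting the transverse $0$-dimensionality required for differential counts. Sphere and disc bubbling on $\sigma$ are ruled out by Theorem~\ref{tmLGSphereDiskMax}. Thus the $(0,0)$-complex has trivial differential and cohomology $\bigoplus_{v \in H_1(G;\Z)} \Lambda \langle \gamma_v \rangle$. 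For the product, I would construct explicit $S^1$-equivariant pair-of-pants solutions realizing the concatenation of $\gamma_a$ and $\gamma_b$ into $\gamma_{a+b}$. The same equivariance rules out higher-order corrections, yielding $\mu^2(\gamma_a, \gamma_b) = t^{c(a,b)} \gamma_{a+b}$ for a $2$-cocycle $c$ coming from symplectic areas; this cocycle is trivialized by rescaling $\gamma_v \mapsto t^{q(v)} \gamma_v$ for a suitable quadratic $q$, proving (a).

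For (b), set $x := [\gamma_{(0,0,1)}]$ and $y := [\gamma_{(0,0,-1)}]$, the minimal positive- and negative-winding chords; these are closed of Maslov index $0$, with $|x|=1$ and $|y|=-1$. The same equivariance argument gives $\mu^2(x^k, \gamma_a) = \gamma_{(a,k)}$ modulo Novikov units for $k \geq 0$ and $a \in H_1(G;\Z)$, so every degree-$0$ chord with $|\cdot| = k > 0$ is of the form $x^k \cdot u^a$, producing $HW^{0,\geq 0}(\sigma,\sigma) \simeq HW^{0,0}(\sigma,\sigma) \otimes \Lambda[x]$; the negative-winding case is symmetric.

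The hard part is the $S^1$-equivariant transversality argument near $\crit(\Ll)$, where $G$ has isotropy. I would resolve this by noting that the $(0,0)$-sector chords lie in fibers where $dF_k$ takes values in $H_1(G;\Z) \oplus \{0\}$, which by an appropriate choice of $F_k$ are uniformly away from $\crit(\Ll)$; there the $G$-action is free and equivariant Floer-theoretic transversality is standard. A related subtlety is the absence of disc-instanton corrections to $\mu^2$ in this sector, again controlled by equivariance. The more subtle wall-crossing corrections that determine the relation $xy = g$ in Theorem~\ref{tmLaurent0} survive only through products mixing positive and negative winding and so land in the $|\cdot|=0$ sector without contradiction; their analysis is the content of Section~\ref{SecComp}.
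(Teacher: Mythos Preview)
Your approach via $S^1$-equivariance is genuinely different from the paper's, and it has a real gap at the point you flag as ``the hard part''. First, an $S^1$-invariant perturbation $P_k$ on a torus fiber cannot be Morse: if the $S^1$ acts freely on $L_b$, any invariant function descends to $L_b/S^1\cong\T^{n-1}$ and its pullback has $S^1$-orbits of critical points. So your chords are at best Morse--Bott, and the passage from ``strips come in $S^1$-families'' to ``the count vanishes'' requires equivariant transversality for a Morse--Bott setup. This is not standard: one typically needs either to pass to the symplectic quotient (here the reduced Lagrangian and the behaviour of $\sigma$ under reduction are not addressed) or to invoke a virtual/Kuranishi framework. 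Your claim that freeness of the $G$-action away from $\crit(\Ll)$ makes this routine is optimistic.

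Second, even granting vanishing of the differential, your product computation is underspecified. $S^1$-equivariance in a single circle factor does not exclude correction terms of the form $\mu^2(\gamma_a,\gamma_b)=\gamma_{a+b}+\sum_{a'\neq a+b} c_{a'}\gamma_{a'}$ with $a'$ differing from $a+b$ in the directions not controlled by your chosen $S^1$; nor does it exclude nontrivial instanton corrections to $\mu^2(x^k,\gamma_a)$ in part (b). The whole content of the lemma is that such corrections are absent on the $\geq 0$ side, and your symmetry argument does not establish this.

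The paper bypasses both issues by a confinement argument rather than a symmetry argument. It chooses $H\in\mathcal{H}_{\mathcal{B}+}$, trades $CF^*(\sigma,\sigma;H,J)$ for Lagrangian intersection Floer $CF^*(\sigma,\psi_H(\sigma);0,J')$ via the naturality isomorphism, and then takes $J'$ so that $\pi$ is $J'_t$-holomorphic for all $t$. The maximum principle for $\re\pi$ forces every holomorphic polygon with corners in $U_+=\{\re\pi>0\}$ to stay in $U_+$. Since $U_+$ is simply connected and contains no singular fibers, action--angle coordinates embed it into $T^*G\times T^*S^1$, and the computation of $HW^{0,\geq 0}$ (differential \emph{and} product) reduces to the wrapped Floer cohomology of a cotangent fibre, where \eqref{eqPosHW0}--\eqref{eqPosHW2} are standard. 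Regularity of this special $J'$ is handled by a limiting argument over $\Lambda^E$. This is what buys the clean ring structure without any equivariant transversality input.
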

Before proving the Lemma~\ref{lmRoughform} we show how it implies Theorem~\ref{tmRoughform}.
\begin{proof}[Proof of Theorem \ref{tmRoughform}]
Since $HW^0(\sigma,\sigma)$ is a graded algebra with respect to $|\cdot|$, we have that there is a surjective map
\[
f:HW^{0,\leq 0}(\sigma,\sigma)\otimes_{HW^{0,0}(\sigma,\sigma)}HW^{0,\geq 0}(\sigma,\sigma)\to HW^0(\sigma,\sigma).
\]
Let $g:=xy\in HW^{0,0}(\sigma,\sigma)$, then $\ker f$ is generated as an ideal by $x\otimes y-g$. Indeed, by Lemma~\ref{lmRoughform}, $x$ and $y$ algebraically generate $HW^{0,\geq0}$ and $HW^{0,\leq 0}$ respectively over $HW^{0,0}$. Thus, since $HW^0$ is graded, we must have $\ker f\subset HW^{0,0}[x\otimes y]$. Moreover, we have $f(x\otimes y-g)=0$. Any element of $a\in HW^{0,0}[x\otimes y]$can be written as the sum of an element $b$ in the ideal generated by $x\otimes y-g$ and an element $c\in HW^{0,0}$. This holds in particular for $a\in\ker f$ but this time we can take $c\in HW^{0,0}\cap\ker f$. But $f$ is tautologically an isomorphism when restricted to $HW^{0,0}\subset HW^{0,\leq 0}\otimes_{HW^{0,0}}HW^{0,\geq 0}$. So, $c=0$. Part \ref{tmRoughformb} of the Theorem \ref{tmRoughform} now follows from Lemma \ref{lmRoughform}. Part~\ref{tmRoughformc} will follow in the course of the proof of Lemma \ref{lmRoughform} by the fact that we compute the Floer homology using Hamiltonians with chords of Maslov degree $0$ only.
\end{proof}

Before proceeding to the proof of Lemma~\ref{lmRoughform} we introduce the family of $\sigma$-admissible Hamiltonians we will be using. Denote by $H_{F_i}$ the components of the moment map $\mu$. Let $H_B=x_n$ as in Theorem \ref{tmBaseCoord}. We assume the discriminant locus is contained in the level set $\{x_n=0\}$. Denote by $\mathcal{H}_{\mathcal{B}}$ the set of Hamiltonians of the form
\[
H=\sum_if_i\circ H_{F_i}+g\circ H_B,
\]
where $f_i$ and $g$ are proper, convex, bounded below and linear near infinity with non-integer slope. We denote by $\mathcal{H}_{\mathcal{B}\pm}\subset \mathcal{H}_{\mathcal{B}}$ the subset consisting of those for which the unique minimum of $g$ is obtained at some $t>0 (t<0)$.
Note also that the sets $\mathcal{H}_{\mathcal{B}\pm}$ are each cofinal in the set of all $\Ll$-admissible Hamiltonians. Moreover, there is a cofinal sequence which is $\sigma$-admissible. We assume for simplicity that $\pi_{LG}$ is of the form $\pi=x_n+i\theta_n$ where $\theta_n:M\to S^1$ is the $G$-invariant function as in Lemma \ref{tmGoodCoord}. Let $U_{\pm}:=\{\pm\re\pi>0\}$. 

\begin{lm}\label{lmExplicitFormUplus}
Fix an $\epsilon>0$ and let $U_{\pm,\epsilon}:=\{\pm\re\pi\geq\epsilon\}$.
There are proper Lipschitz functions $f_{\pm,\epsilon}:\R^{n-1}\to\R$ so that $U_{+,\epsilon}$ is integral affine isomorphic to the region 
\[
V_{f_+,\epsilon}:=\{(x_1,\dots,x_n)|x_n\geq f_+(x_1,\dots,x_{n-1})\}
\]
with an analogous statement holding for $U_-$. In particular, $\mathcal{L}^{-1}(U_{\pm,\epsilon})$ is symplectomorphic to $V_{f_{\pm,\epsilon}}\times \mathbb{T}^n\subset T^*\mathbb{T}^n$. 
\end{lm}
\begin{proof}
This follows from Theorem \ref{dfAdmSYZ}. Namely, we can pick as fundamental domain $V$ the complement of product of $\Delta$ with the negative half line. Then $U_{+,\epsilon}\subset V\setminus B_{\epsilon}(\Delta)$ where the ball is taken with respect to the coordinates $x_1,\dots,x_n$ as in Theorem \ref{dfAdmSYZ}.  In particular Theorem \ref{dfAdmSYZ} says that on $U_{+,\epsilon}$ we have a bi-Lipschitz map between the coordinates $x_1,\dots,x_n$ and the action coordinates $I_1,\dots I_n$. Moreover, we may take $x_i=I_i$ for $1\leq 1\leq n-1$ since these are global integral affine functions. It follows that on $V$ we can write $x_n=f(I_1,\dots I_n)$ with $\frac {\partial{x_n}}{\partial I_n}>0$, and in fact the derivative is uniformly bounded away from $0$. Thus along the hypersurface $f(I_1,\dots, I_n)=\epsilon$ we can find a Lipschitz function $f_{+,\epsilon}$ so that $I_n=f_{+,\epsilon}(I_1,\dots ,I_{n-1})$. The claim follows.
\end{proof}

The following is an immediate consequence and is used repeatedly throughout the section. 
\begin{cy}\label{cyExplicitFormUplus}
There is a constant $C>0$ such that for any real number $R$ there exists a $c$ such that for any point $p$ in the region $\{x_n>c\}\cap \{\sum_{i=1}^{n-1}x_i^2\leq R\}$  the ball $B_R(p)\subset M$ is symplectomorphic to the ball $B_{CR}(x)\subset T^*\mathbb{T}^n$ for any point $x$. Here the metric on $T^*\mathbb{T}^n$ is the standard one induced from the flat metric on $\mathbb{T}^n$.
\end{cy}

\begin{proof}[Proof of Lemma \ref{lmRoughform}]
We show the claim for $HW^{0,\geq 0}$, the other half being the same. Pick a cofinal sequence of Hamiltonians $H_j\in \mathcal{H}_{B+}$. We take $g_j$ such that near $H_B=0$ we have that $g'_j$ is sufficiently small so that under the Hamiltonian flow $\psi_j$ of $H_j$, the Lagrangian $\psi_j(\sigma)$ stays out of a fixed uniform neighborhood of $\pi^{-1}(0)$ and thus remains admissible. Let $J_j$ and $J'_j$ be admissible time dependent almost complex structures. Denote by $\psi^H_t$ the time $t$ flow of the Hamiltonian $H$. For any $j$ let $A_j=CF^*(\sigma,\sigma;H_j,J_j)$ and let $B_j=CF^*(\sigma,\psi^{H_j}_1(\sigma),0,J'_j)$. Then there is a quasi-isomorphism, in fact, a canonical isomorphism of complexes, $f_j:A_j\to B_j$. Indeed, let $\tilde{J}_{j}$ be the time dependent almost complex structure defined by $\tilde{J}_{j,t}=\psi_t^{H_j,*}J_j$.  There is a tautological isomorphism between $A_j$ and the complex $C_j= CF^*(\sigma,\psi^{H_j}_1(\sigma),0,J_{j,t})$. The desired quasi-isomorphism is then obtained by composing with the continuation map for going from $\tilde{J}_{j}$ to $J'_j$. The latter continuation map may appear to be ill defined at first sight since, strictly speaking,  $\tilde{J}_{j,t}$ is not admissible since $\pi$ is not necessarily $\tilde{J}_{j,1}$-holomorphic. However, $\pi\circ\psi_1^{H_j}$ is $\tilde{J}_{j,1}$-holomorphic. Moreover, using the hamiltonian isotopy there is a path $(\pi_s,J_s)$ of Landau-Ginzburg potentials for which $L$ is admissible and which connects both Landau-Ginzburg potentials. This is sufficient for defining continuation maps. By the same token, the maps $f_j$ commute up to homotopy with the continuation maps
\[
\kappa_{ij}:CF^*(\sigma,\sigma,H_i)\to CF^*(\sigma,\sigma,H_j).
\]
Finally, the $f_i$ intertwine the product up to homotopy, where the product on the right is defined by counting $J'_j$-holomorphic triangles. At last observe that since all the chords are concentrated in degree $0$, continuation maps are isomorphisms and independent of any choices already at the chain level.

We now pick an almost complex structure $J$ such that $\pi$ is $J_t$ holomorphic \textit{for all $t\in[0,1]$}. Assume at first that such a $J_t$ can be chosen with sufficient genericity for the definition of the Floer differential and product. Since $\pi$ is $J$-holomorphic, we have that all the holomorphic polygons satisfy a maximum principle with respect to $\re\pi$. On the other hand, we have that all the relevant intersection points are contained in the $\Ll$-invariant region $U_+:=\{\re\pi>0\}$. Thus all the relevant Floer solutions are contained in the region $U_+$. Since $U_+$ is simply connected, and there are no singularities of $\mathcal{L}$ in $U_+$, we can use action angle coordinates to embed $U_+$ into $T^*G\times T^*S^1$ in such a way that the functions $H_{F_i}$ become the standard action coordinates on $T^*G$. Write this as $\phi:U_+\to T^*G\times T^*S^1$. Let $V=\phi(U_+)$. The function $H_B\circ\phi^{-1}$ extends from $V$ to a function $H_{n}$ on all of $T^*G\times T^*S^1$ satisfying $\partial_{n}H_{n}<0$ on the complement of $V$. Indeed, we have that $\partial_{n}H_{n}<0$ on the $\partial V$. Similarly, $\sigma$ extends to a section $\sigma'$ of the standard torus fibration on $T^*G\times T^*S^1$. Finally $\pi, J$ extend so that $\pi$ is $J$-holomorphic. On $T^*G\times T^*S^1$ we have a grading , still denoted by $|\cdot|$, given by the winding around the last factor. This intertwines under $\phi$ with the grading $|\cdot|$. The sequence of function $H'_j$ thus computes $CF^{0,\geq 0}(\sigma',\sigma')$. Moreover, all the relevant solutions are contained in $V$. Thus we have set up a graded isomorphism of $\Lambda$-algebras $HW^{0,\geq 0}(\sigma,\sigma)=HW^{0,\geq 0}(\sigma',\sigma')$. But $\sigma'$ is the cotangent fiber in the cotangent bundle of $G$.  By invariance of the wrapped Floer cohomology under Hamiltonian isotopy which is generated by linear at infinity Hamiltonian isotopy, we find that
\[
HW^{0,0}(\sigma',\sigma')=\Lambda[H_1(G;\Z)],
\]
and for an appropriate generator $x$,
\begin{equation}\label{eqPosHW0}
HW^{0,\geq0}(\sigma',\sigma')=HW^{0,0}(\sigma',\sigma')\otimes\Lambda[x].
\end{equation}
One way to prove this is to rely on a well known computation of the wrapped Floer cohomology of the cotangent fiber $\phi$ in $T^*S^1$ according to which
\begin{equation}\label{eqPosHW1}
HW^0(\phi)=\Lambda[x,x^{-1}],
\end{equation}
and
\begin{equation}\label{eqPosHW2}
HW^{0,\geq 0}(\phi)=\Lambda[x].
\end{equation}
The claim \eqref{eqPosHW0} follows from \eqref{eqPosHW1} and \eqref{eqPosHW2} by the Kunneth formula.

It remains to remove the assumption of regularity of $J$. For this note that for any fixed $E$ and $\delta>0$ and fixed $j_1,\dots,j_n$ there is an $\epsilon>0$ such that if $\|J'-J\|<\epsilon$ in $C^k$ for some sufficiently large $k$, then the image of any $J'$-holomorphic polygon with boundaries in $\psi_j(\sigma)$ is contained in an $\epsilon$ neighborhood of a $J$-holomorphic polygon with the same boundary conditions. Using this, the argument above works for Floer homology $HF^*(\cdot;\Lambda^E)$ over the ring $\Lambda^E=\Lambda_{\geq 0}/\Lambda_{\geq E}$. Moreover, the isomorphisms commute with the natural isomorphisms $HF^*(\cdot;\Lambda^{E'})\to HF^*(\cdot;\Lambda^E)$ for $E'>E$. Thus we get the isomorphism claim for $HF^*(\cdot,\Lambda)=\varprojlim HF^*(\cdot,\Lambda^E)\otimes\Lambda$.
The claim follows.
\end{proof}

\subsection{Symplectic cohomology}
Our next goal is to work out the Laurent polynomial $g$. We will use the following
\begin{tm}\label{tmFloerInt}
Let $\sigma:\R^n\to M$ be an admissible Lagrangian section of $\mathcal{L}$. Then the map
\[
\mathcal{CO}:SH^0(M;\mathcal{L})\to HW^0(\sigma,\sigma;\mathcal{L}),
\]
is an isomorphism of rings.
\end{tm}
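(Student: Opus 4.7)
The plan is to mimic the confinement-plus-reduction-to-cotangent-bundle argument used in the proof of Lemma \ref{lmRoughform}, and then invoke the known fact that for $T^*G\times T^*S^1$ the closed--open map into the cotangent fiber is a ring isomorphism (the Abbondandolo--Schwarz / Viterbo theorem, together with the K\"unneth formula). First I would observe that $\mathcal{CO}$ respects the winding grading $|\cdot|$, since any interpolating cylinder between a one-periodic orbit of $H$ and a Hamiltonian chord with endpoints on $\sigma$ realizes a surface whose boundary lies in $\sigma$, on which $d\theta_3\equiv 0$; hence the winding of the chord equals the winding of the orbit. Consequently $\mathcal{CO}$ splits as the sum of its restrictions to the graded pieces $SH^{0,k}\to HW^{0,k}(\sigma,\sigma)$ and it suffices to check each of these is an isomorphism.

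Next I would compute both sides separately on the ``positive'' and ``negative'' halves. Pick a cofinal sequence $H_j^+\in\mathcal{H}_{\mathcal{B}+}$ and, exactly as in Lemma \ref{lmRoughform}, choose an admissible almost complex structure $J$ with $\pi$-holomorphic for \emph{all} $t\in[0,1]$. Then the standard maximum principle applied to $\re\pi$ (using the $J$-holomorphicity of $\pi$ and the asymptotic form of $H_j^+$) forces every one-periodic orbit of $H_j^+$, every Floer cylinder between such orbits, every Floer strip with boundary on $\sigma$, and, crucially, every ``keyhole'' cylinder computing $\mathcal{CO}$ or product triangles computing the ring structure, to lie in the $\Ll$-invariant region $U_+=\{\re\pi>0\}$. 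The same argument with $\mathcal{H}_{\mathcal{B}-}$ confines everything to $U_-$.

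With this confinement in hand I would extend the setup on $U_\pm$ to $T^*G\times T^*S^1$ as in Lemma \ref{lmRoughform}: use action--angle coordinates to embed $U_\pm$ equivariantly into the cotangent bundle, extend $\sigma$, $\pi$, $J$ and the Hamiltonians $H_j^\pm$ in such a way that the extended Hamiltonian has no new one-periodic orbits outside the image of $U_\pm$ and admits no new Floer solutions entering the complement of $U_\pm$ (done exactly by the sign condition on $\partial_n H_n$ used before). Under this extension the full package computing $\mathcal{CO}$ on the relevant graded piece identifies with the corresponding package on $T^*G\times T^*S^1$ for the cotangent fiber $\sigma'$. Invoking $\mathcal{CO}_{T^*G\times T^*S^1}\colon SH^0\xrightarrow{\sim} HW^0(\sigma',\sigma')=\Lambda[H_1(G;\Z)][x^{\pm 1}]$, we obtain that the restrictions $\mathcal{CO}\colon SH^{0,\geq 0}(M;\Ll)\to HW^{0,\geq 0}(\sigma,\sigma)$ and $\mathcal{CO}\colon SH^{0,\leq 0}(M;\Ll)\to HW^{0,\leq 0}(\sigma,\sigma)$ are ring isomorphisms, and that they agree on the degree-zero subring $SH^{0,0}(M;\Ll)\to \Lambda[H_1(G;\Z)]$.

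Finally I would assemble the two halves. Unitality and multiplicativity of $\mathcal{CO}$ give elements $X,Y\in SH^0(M;\Ll)$ with $\mathcal{CO}(X)=x$, $\mathcal{CO}(Y)=y$, and $\mathcal{CO}(XY)=g$. Because $HW^0(\sigma,\sigma)=HW^{0,0}[x,y]/(xy-g)$ by Theorem \ref{tmRoughform}, the positive and negative halves already exhaust $HW^0$, so $\mathcal{CO}$ is surjective. For injectivity one uses that each graded piece $SH^{0,k}$ injects into the corresponding half (positive if $k\geq 0$, negative if $k\leq 0$), on which $\mathcal{CO}$ is an isomorphism by the previous step; concretely, if $\mathcal{CO}(\alpha)=0$ with $|\alpha|=k\geq 0$, then $\alpha$ lies in the image of $SH^{0,\geq 0}$ computed with $\mathcal{H}_{\mathcal{B}+}$, on which $\mathcal{CO}$ is injective, so $\alpha=0$, and symmetrically for $k\leq 0$. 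The main obstacle I anticipate is the bookkeeping in the extension step: one has to ensure that the extended almost complex structure on $T^*G\times T^*S^1$ can be chosen generically so that the full triangle-moduli defining the ring structure, not merely the cylinders, remains confined to $U_\pm$ under a small regularizing perturbation, and that the grading by $|\cdot|$ really does match the winding grading of chords on the cotangent-fiber side so that the known cotangent-bundle computation can be imported graded piece by graded piece.
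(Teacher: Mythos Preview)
Your approach contains a genuine gap in the confinement step. In Lemma \ref{lmRoughform} the maximum principle for $\re\pi$ is applied only \emph{after} replacing $CF^*(\sigma,\sigma;H_j,J_j)$ by the intersection complex $CF^*(\sigma,\psi^{H_j}_1(\sigma),0,J'_j)$; the point is that once the Hamiltonian is zero, the Floer strips are honestly $J$-holomorphic and $\pi\circ u$ is holomorphic. You cannot run the same argument for Floer cylinders, pair-of-pants, or the $\mathcal{CO}$ keyhole curves: these carry an interior puncture with the Hamiltonian $H_j^+=\sum f_i\circ H_{F_i}+g\circ H_B$, and the vector field of $g\circ H_B$ is \emph{not} tangent to the fibers of $\pi$ (it moves the angle $\theta$). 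Hence $\pi\circ u$ is not holomorphic and $\re\pi$ has no reason to satisfy a maximum principle. The paper does establish confinement for certain closed-string trajectories, but by different mechanisms: for non-contractible orbits via Hamiltonians vanishing on a strip near the wall (Lemma \ref{lmSH0geq0}), and for contractible orbits via the moving-to-infinity plus valuation argument (Lemmas \ref{lmValPresSH0}, \ref{lmNoWallModE}). Neither is the blanket maximum-principle argument you invoke, and neither immediately gives confinement for the $\mathcal{CO}$ solutions themselves.

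The paper's own proof is in fact much shorter and avoids confining $\mathcal{CO}$ trajectories altogether. It relies on Lemma \ref{lmFloerInt}: all generators of $CF^0$ on both sides lie on the same Lagrangian tori in $\mathcal{P}_2$, giving a natural bijection between chords and (Morse--Bott) periodic orbits, and the differential vanishes on $CF^{0,\geq 0}$ for $H\in\mathcal{H}_{\mathcal{B}+}$. One then argues that with respect to these bases the leading-order term of $\mathcal{CO}$ is the identity (the lowest-energy $\mathcal{CO}$ solutions are localized near each periodic torus, where the picture reduces to a cotangent neighborhood). Since both differentials vanish and there are no generators in degree $-1$, this chain-level isomorphism descends to homology. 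If you want to salvage your route, you would need an independent argument confining the $\mathcal{CO}$ curves to $U_\pm$; one option is to adapt the target-local, domain-global estimate of Lemma \ref{lmTLDG} rather than appeal to a maximum principle.
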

To prove Theorem \ref{tmFloerInt} and to compute $SH^0$ we will again use the set of Hamiltonians $\mathcal{H}_{\mathcal{B}}$ as above. By definition, $H=\sum_if_i\circ H_{F_i}+g\circ H_B$. We first work out what are the contractible orbits of $H$. For this recall the cohomology $H^1(M;\Z)$ is generated by the $1$-form $d\theta_n$. Now, $d\theta_n(X_{H_B})\geq 0$ with equality only at the points where $X_{H_B}=0$. On the other hand $d\theta_n(X_{H_{F_i}})=0$.  Therefore, given that the functions $H_{F_i}, H_B$  commute with one another, the set of contractible $1$-periodic orbits of $H$ is a union $\mathcal{P}_1\cup\mathcal{P}_2$ where $\mathcal{P}_1=\mathcal{P}_1(H)$ is the set of periodic orbits of $H_F$ within the critical locus of $H_B$ and $\mathcal{P}_2=\mathcal{P}_2(H)$ is the set of periodic orbits in the fiber over the unique minimum of $g$. The non-contractible periodic orbits will be analyzed later.

The proof of Theorem \ref{tmFloerInt} will rely on the following lemma.
\begin{lm}\label{lmFloerInt}
\begin{enumerate}
\item
The periodic orbits contributing to $CF^0$ are all in $\mathcal{P}_2$. 
\item
For any $H\in \mathcal{H}_{B_+}$ The Floer differential on $CF^{0,\geq 0}(M;H)$ vanishes.
\end{enumerate}

\end{lm}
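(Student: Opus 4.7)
The plan for Part (1) is to compute the Conley-Zehnder indices of orbits in $\mathcal{P}_1$ and verify they are all non-zero. Such orbits lie on $\crit(\Ll)$; since $\Delta$ is contained in the plane $\{x_3 = 0\}$, the function $H_B$ restricts to $0$ on $\crit(\Ll)$ with vanishing differential, so the flow along these orbits is generated purely by $\sum_i f_i'(H_{F_i}) X_{H_{F_i}}$. Using the focus-focus local normal form of Lemma \ref{lmEdgeLocalForm} at an edge point, and the vertex model of Example \ref{exNormalVertex}, the linearized flow of $X_H$ in the four transverse directions splits into symplectic planes on which the Hessian contribution of $g \circ H_B$ has an elliptic-hyperbolic character coming from the $q_1 = x_1 y_1 + x_2 y_2$ and $q_2 = x_1 y_2 - x_2 y_1$ factors. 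A direct computation then shows that the resulting Conley-Zehnder index of such orbits is nonzero, so they do not contribute to $CF^0$.

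For Part (2) I follow the strategy used in the proof of Lemma \ref{lmRoughform}. Observe that for $H \in \mathcal{H}_{\mathcal{B}+}$ the unique minimum of $g$ lies at some $t_0 > 0$, so all orbits in $\mathcal{P}_2$ sit on the regular fiber $\Ll^{-1}(b_0)$ with $H_B(b_0) = t_0$, which is contained in the $\Ll$-invariant open set $U_+ := \{\Re\pi > 0\}$. First I would choose a time-dependent admissible almost complex structure $J_t$ for which $\pi$ is $J_t$-holomorphic throughout $t \in [0,1]$; this is arranged by a slight modification of the $(\Ll,\sigma)$-tame LG construction from Appendix \ref{AppA}. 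The maximum principle for $\Re \pi$ applied to Floer cylinders then confines every trajectory between orbits counted by $CF^{0,\geq 0}$ to $U_+$.

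Since $U_+$ is simply connected and disjoint from $\crit(\Ll)$, global action-angle coordinates provide a symplectic embedding $\phi: U_+ \hookrightarrow T^*G \times T^*S^1 \cong T^*\T^3$ intertwining $H$ with a split-convex Hamiltonian $\widetilde H = \sum \widetilde f_i(p_i) + \widetilde g(p_3)$ of the same shape. I would then extend $\widetilde H$ to all of $T^*\T^3$ so that no new $1$-periodic orbits appear and the extended $J$ continues to make $\pi$ pseudo-holomorphic on the relevant region; this yields an injection of complexes $CF^{0,\geq 0}(M;H) \hookrightarrow CF^*(T^*\T^3; \widetilde H)$ identifying $CF^{0,\geq 0}(M;H)$ with its image. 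On $T^*\T^3$ the Hamiltonian $\widetilde H$ is Morse-Bott of split convex type, and a standard cascade analysis shows that each homotopy class of orbit with nonnegative winding in the $S^1$ factor contributes exactly one degree-$0$ generator, leaving no room for a nontrivial Floer differential.

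The main obstacle will be reconciling the requirement that $J_t$ make $\pi$ $J_t$-holomorphic at every time $t$ with the transversality conditions needed for the Floer differential. As in the proof of Lemma \ref{lmRoughform}, the workaround is to establish the vanishing first over the truncated Novikov ring $\Lambda^E := \Lambda_{\geq 0}/\Lambda_{\geq E}$ by $C^k$-approximating the ideal $J$ by a regular $J'$ close enough that all Floer solutions of energy less than $E$ remain trapped near $U_+$, and then to take the inverse limit as $E \to \infty$.
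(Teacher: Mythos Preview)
Your approach to Part~(1) is essentially the paper's: it is the content of Lemma~\ref{lmDegCont}, and your sketch of the index computation via the local normal form is the right idea.

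For Part~(2), however, there is a genuine gap in the maximum-principle step. In the Lagrangian setting of Lemma~\ref{lmRoughform} the proof first passes from $CF^*(\sigma,\sigma;H,J)$ to $CF^*(\sigma,\psi_H(\sigma);0,J')$ via the naturality isomorphism, so that the relevant curves are honestly $J'$-holomorphic and $\pi\circ u$ is holomorphic. In the closed-string setting no such conversion is available: the Floer equation retains the inhomogeneous term $X_H\,dt$. Even if $\pi$ is $J_t$-holomorphic for every $t$, projecting gives
\[
\bar\partial(\pi\circ u)=d\pi\bigl((X_H\,dt)^{0,1}\bigr),
\]
and $d\pi(X_H)$ does not vanish, because $X_H$ contains the summand $g'(H_B)X_{H_B}$ and the flow of $H_B$ rotates $\arg\pi$ (in the Gross model $H_B=-\ln|\pi|$, so $X_{H_B}$ is angular). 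Hence $\Re(\pi\circ u)$ is not subharmonic and your confinement to $U_+$ is not justified; the subsequent embedding into $T^*\T^3$ then has nothing to stand on.

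The paper avoids this precisely by \emph{not} relying on a global maximum principle. For $CF^{0,0}$ it moves the minimum of $g$ to $s\to\infty$ and uses the target-local, domain-global energy estimate (Lemma~\ref{lmTLDG}) to confine bounded-energy trajectories to $U_+$, combined with the observation that the continuation maps in $s$ are valuation preserving (Lemmas~\ref{lmValPresSH0} and~\ref{lmNoWallModE}). For $CF^{0,>0}$ it flattens $g$ on an interval (the function $g_\epsilon$), so that in the barrier region the $H_B$-contribution to $X_H$ genuinely vanishes and $\pi\circ u$ \emph{is} holomorphic there; this yields Lemma~\ref{lmSH0geq0}. Your $\Lambda^E$-approximation remark addresses regularity but does not repair the missing confinement argument.
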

We prove Theorem~\ref{tmFloerInt} given Lemma~\ref{lmFloerInt}.
\begin{proof}[Proof of Theorem~\ref{tmFloerInt}]
First note that there is a bijection between the periodic tori in $\mathcal{P}_2$ and the Hamiltonian chords in the computation of the Lagrangian Floer homology of $\sigma$ where each chord maps to the periodic torus on which it lies. Consider the bases for $CF^0(L,L;H)$ and $CF^0(H)$ given by the chords and the generator of $0$th homology of the corresponding critical tori. Then the leading order term in the closed open map with respect to these bases is the identity. To see this note that by energy considerations as in Lemma \ref{lmTLDG} the lowest energy contributions to the closed open map are contained in a small neighborhood of the periodic tori. Such a neighborhood is symplectomorphic to a neighborhood inside the cotangent bundle of a torus. There, by exactness, the only Floer solutions contributing to the closed open map are the local ones. Moreover the closed open map is an isomorphism by the generation criterion.

It follows that at the chain level, $\mathcal{CO}$ is an isomorphism of $\Lambda$-vector spaces. Since the differential in both complexes is $0$, and there are no generators in degree $-1$, it follows that this induces an isomorphism on homology.
\end{proof}
The rest of the section is devoted to the proof of Lemma~\ref{lmFloerInt}.

\begin{lm}\label{lmDegCont}
Fix an $H\in\mathcal{H}_\mathcal{B}$. The non-trivial orbits of $H$ which are in $\mathcal{P}_1(H)$ have Conley-Zehnder index $\geq n-1$. The fixed points of $H$ which are in $\mathcal{P}_1(H)$ have Conley-Zehnder  index $\geq 2$.
\end{lm}
\begin{proof}

We first consider the fixed points. For them, the Conley-Zhender index equals the Morse index of $H$. We may choose the functions $f_i$ so that their critical points occur away from the fixed points. Moreover,  this choice can be made so that $H=\sum f_i(H_{F_i})+g(H_B)$ has a non-degenerate critical point at each fixed point. Moreover, since $H_B$ is independent of $\{H_{F_i}\}$ away from the critical points, we may assume its critical points contain those of $\crit{\Ll}$ (and in fact coincide with them). There are local coordinates where $H_B$ is the real part of the function $z_1\cdot\dots\cdot z_n$. So, in case $n\geq 3$ we have in particular that the Hessian of $H_B$ vanishes at each fixed point.  Therefore, the Morse index of the fixed points is determined by the $H_{F_i}$. In equivariant normal  coordinates these are the function $|z_0|^2-|z_i|^2$ for $i=1,\dots, n-1$. Any function of the functions $|z_i|^2=x_i^2+y_i^2$ has even index. Since the fixed points are not minima, the claim follows for $n\geq 3$. When $n=2$, the quadratic part of $H$ is in local coordinates $a(|z_1|^2-|z_2|^2) +b\re(z_1z_2)$ for some constants $a,b$. One readily computes that each eigenvalue of the hessian of the latter function has even multiplicity for any choice of $a,b$.  

We now consider the contribution of the non trivial periodic orbits. Let $\alpha$ be a periodic orbit in an edge $e$. Let $\psi_t$ denote the flow of $H_F$ and $\phi_t$ the flow of $H_B$. Since the horizontal and vertical flows commute, the Conley Zehnder index of $\alpha$ is, after shift by $n$, the Robin-Salamon index of the concatenation of $d\psi_t$ and $d\phi_t d\psi_1$. The Robin-Salamon index of the second part is zero since it has no crossings. Thus we need to compute $i_{RS}(d\psi_t)$.
One can homotope the linearization $d\psi_t$ of the flow to a concatenation of paths of matrices $A_tB_1$ and $B_t$ where $B_t$ is in the representation of $G$ and $A$ is a matrix of the form
\[
\left(
  \begin{array}{ccc}
    1 & 0 & 0 \\
    at & 1 & 0 \\
    0 & 0 & \id_{2n-2} \\
  \end{array}
\right)
\]
We have that $i_{RS}(B)=0$ since $B_t$ is unitary of fixed complex determinant. Thus
\[
i_{RS}(\alpha)=i_{RS}(AB_1)=-\frac12
\]
The orbits are transversally non-degenerate (because of the presence of $H_B$). So, after adding a time dependent perturbation we get $i_{CZ}(\alpha)\in\{0,-1\}$.  After adding $n$, the claim follows.
\end{proof}
 We now turn to prove the vanishing of the differential.

\begin{lm}\label{tmSH00}
The differential vanishes on $CF^{0,0}(H;J)$.
\end{lm}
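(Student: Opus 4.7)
The plan is to mimic the reduction to the cotangent bundle of a torus used in the proof of Lemma~\ref{lmRoughform}, but now in the closed-string setting. By Lemma~\ref{lmDegCont}, the non-trivial orbits in $\mathcal{P}_1$ and the fixed points all carry Conley--Zehnder (or Morse) index at least $n-1=2$, so every generator of $CF^{0,0}(H;J)$ must sit in $\mathcal{P}_2$, i.e.\ over the unique minimum $b_0$ of $g$. By our choice of $H\in\mathcal{H}_{\mathcal{B}+}$ and the construction of $(\pi_{LG},J_{LG})$ in Appendix~\ref{AppA}, the fiber over $b_0$ lies in the open $T^2$-invariant region $U_+:=\{\re\pi>0\}$, which is simply connected and contains no critical points of $\Ll$.

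Next I will deform the admissible $J$ to a family $\tilde{J}_t$ for which $\pi=\pi_{LG}$ is $\tilde{J}_t$-holomorphic for all $t\in[0,1]$, rather than only at $t\in\{0,1\}$. Such $\tilde{J}_t$ need not be generic, so the complex is not a priori well-defined with $\tilde{J}$ directly; however, exactly as at the end of the proof of Lemma~\ref{lmRoughform}, for any fixed energy cap $E>0$ a sufficiently small $C^k$-perturbation $J'$ of $\tilde{J}$ is close enough that Gromov--Floer compactness confines every $J'$-trajectory of energy $<E$ to an arbitrarily small neighborhood of a $\tilde{J}$-trajectory. It therefore suffices to prove vanishing of the differential on $CF^{0,0}(\cdot;\Lambda^E)$ for $\tilde{J}$ and then pass to the limit $E\to\infty$ using $HF^*(\cdot;\Lambda)=\varprojlim HF^*(\cdot;\Lambda^E)\otimes\Lambda$. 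For $\tilde{J}$, the Hamiltonian vector fields of $H_{F_1}$, $H_{F_2}$, and $H_B$ are all tangent to the level sets of $\re\pi$ in $U_+$ (because $\pi$ is $T^2$-invariant and $H_B$ factors through $|1-\pi|$ by the construction in Appendix~\ref{AppA}), so $\re\pi\circ u$ is subharmonic for every Floer trajectory $u$; combined with Theorem~\ref{tmSpheresMax} this yields a maximum principle which confines every Floer trajectory between generators of $CF^{0,0}$ to $U_+$.

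Once trajectories are confined to $U_+$, I transfer the situation to the cotangent bundle exactly as in the proof of Lemma~\ref{lmRoughform}: action-angle coordinates together with a lift of $\theta_3$ embed $U_+$ symplectomorphically as an open subset $\phi:U_+\hookrightarrow V\subset T^*G\times T^*S^1$ intertwining $H_{F_i}$ with the standard action coordinates on $T^*G$; the functions $H_B\circ\phi^{-1}$ and $\pi\circ\phi^{-1}$ extend to $T^*G\times T^*S^1$ so that $\pi$ stays $\tilde{J}$-holomorphic and no additional generators of $CF^{0,0}$ are created inside $V$. The resulting complex is then tautologically identified with $CF^{0,0}(H;\tilde{J})$. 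Its differential vanishes: for a split linear-at-infinity Hamiltonian on $T^*T^n$ of the form $\sum f_i(H_{F_i})+g(H_B)$, the winding-zero, Conley--Zehnder-zero orbits form after a split Morse--Bott perturbation a free module of rank $|H_1(G;\mathbb{Z})|$, and this already equals the dimension of $SH^{0,0}(T^*T^n)=\Lambda[H_1(G;\mathbb{Z})]$ computed via the Viterbo/Abbondandolo--Schwarz isomorphism, so there is no room for a non-trivial differential.

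The main obstacle is the deformation of $J$ enforcing $\pi$-holomorphicity at all times while keeping enough regularity to define the Floer-theoretic operations; this is handled exactly by the $\Lambda^E$-truncation argument from Lemma~\ref{lmRoughform}, now applied to the closed-string sector. A secondary technical point is checking that the Hamiltonian flows of $H_{F_i}$ and $g(H_B)$ genuinely preserve the level sets of $\re\pi$ in $U_+$, which reduces to the explicit expression of $H_B$ as a function of $|1-\pi|$ produced in Appendix~\ref{AppA}.
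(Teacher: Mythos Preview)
Your approach diverges from the paper's, and the divergence contains a genuine gap.

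The gap is in your confinement step. You assert that ``the Hamiltonian vector fields of $H_{F_1}$, $H_{F_2}$, and $H_B$ are all tangent to the level sets of $\re\pi$'' and that therefore $\re\pi\circ u$ is subharmonic. Neither part holds. First, Appendix~\ref{AppA} does \emph{not} give $H_B$ as a function of $|1-\pi|$; what it gives is $|\pi|=h\circ\Ll$, so that $|\pi|$ is a function of the base coordinates, while $\arg\pi$ depends on the global angle $\theta$. In action--angle coordinates $X_{H_B}$ is essentially $\partial_{\theta_3}$, and $\re\pi=|\pi|\cos\theta$ is not constant along this flow. Second, even if $d(\re\pi)(X_H)=0$ held, that is not enough: for the perturbed equation $\partial_su+J_t(\partial_tu-X_H)=0$ with $\pi$ $J_t$-holomorphic, one computes $\overline{\partial}(\pi\circ u)=\tfrac{i}{2}\,d\pi(X_H)\,d\bar z$, so subharmonicity of $\re(\pi\circ u)$ involves derivatives of $d\pi(X_H)$, not merely its real part vanishing. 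In the open-string proof of Lemma~\ref{lmRoughform} this issue disappears because the Hamiltonian is pushed onto the boundary Lagrangian and one is left with genuinely $J$-holomorphic strips; in the closed-string setting there is no boundary to absorb $H$, and the paper therefore does \emph{not} attempt a maximum principle for $CF^{0,0}$.

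The paper's proof instead uses two ingredients you did not invoke: Lemma~\ref{lmValPresSH0}, which shows that shifting the location of the minimum of $g$ along the $H_B$-axis is a valuation-preserving isomorphism of the contractible complex (contractible orbits have zero flux under such a shift), and Lemma~\ref{lmNoWallModE}, which uses the target-local domain-global estimate of Lemma~\ref{lmTLDG} to show that once the minimum is far enough from $\crit(\Ll)$, every Floer trajectory of energy $\leq E$ is trapped in $U_+$. Combining these: for any $E$ the differential vanishes modulo $E$ after shifting, hence (by the valuation-preserving isomorphism) it vanishes modulo $E$ before shifting, hence it vanishes outright. Your $\Lambda^E$-truncation idea is in the right spirit, but the mechanism for confinement is an energy--diameter estimate, not a maximum principle.

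A minor further point: your final dimension count (``no room for a non-trivial differential'') is not quite the right formulation, since there are degree-$1$ generators (the Morse--Bott perturbation of each periodic torus produces $H^*(T^n)$). What actually makes the differential vanish once you are inside $U_+$ is that $\omega|_{U_+}$ is exact and distinct periodic tori lie in distinct homotopy classes in $U_+$, so the only Floer trajectories are the local Morse--Bott ones on each torus; this is exactly how Lemma~\ref{lmNoWallModE} concludes.
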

Lemma~\ref{tmSH00} will follow from the next two lemmas.
Fix a collection of non-integer slopes $a=(a_0,a_1,\dots,a_{n-1})$. Let $g$ and $f_i$ be convex functions with slope at infinity equal respectively to $a_0$ and $a_i$. Suppose the minimum of $g$ occurs at $s=0$.  For each $s\in\R$ let $g_s(t):=g(t-s).$ Let $H_{a,s}:=g_s\circ H_B+\sum f_i\circ H_{F_i}$. The periodic orbits come in families, each family being a torus. Moreover, for $s>0$ these tori are labeled by their homology classes in $H_1(U_+;\Z)$, where $U_+=\{\re\pi>0\}$. Let $\gamma$ be the non-contractible periodic orbit of $H_B$ which generates $H_1(U_+;\Z)$ and winds positively with respect to $\theta_n$. Then we can label the periodic orbits by $H_1(G;\Z)\oplus \Z\langle\gamma\rangle$. Note that the splitting is not canonical since  $H_B$ can altered by adding a Lipschitz function of the coordinates on the fiber $F$. Denote by $\mathcal{P}_{0,a,s}$ the set of tori of contractible periodic orbits of $H_{a,s}$. For $s,s'>0$ we identify $\mathcal{P}_{0,a,s}=\mathcal{P}_{0,a,s'}$ in the obvious way.
\begin{lm}\label{lmValPresSH0}
For any $s_0,s_1>0$ the natural continuation map
\[
f_{s_0,s_1}:CF^{*,0}(H_{a,s_0})\to CF^{*,0}(H_{a,s_1})
\]
is a valuation preserving isomorphism of complexes.
\end{lm}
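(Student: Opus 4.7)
The plan is to reduce the claim to a computation in the cotangent bundle of the torus $T^n = G \times S^1$ via the symplectic embedding $\phi: U_+ \hookrightarrow T^*T^n$ coming from action--angle coordinates on $U_+ = \{\re \pi_{LG} > 0\}$. For $s > 0$, all contractible periodic orbits of $H_{a,s}$ in the sector $|\cdot| = 0$ lie on the level $\{H_B = s\} \subset U_+$; under $\phi$ the Hamiltonian takes the form $\tilde H_s(p) = g(p_0 - s) + \sum_i f_i(p_i)$ (with $p_0$ dual to the angle $\theta_n$ generating $\gamma$), and the contractible orbits with $|\cdot|=0$ correspond to classes $k = (0, k_1, \ldots, k_{n-1}) \in H_1(T^n;\Z)$ located at $p_k^s = (s, \bar p_1(k_1), \ldots, \bar p_{n-1}(k_{n-1}))$ with $f_i'(\bar p_i) = k_i$. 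A direct computation yields
\[
A_{\tilde H_s}(\gamma_k^s) = -\langle k, p_k^s\rangle + \tilde H_s(p_k^s) = -\sum_{i \geq 1} k_i \bar p_i(k_i) + g(0) + \sum_i f_i(\bar p_i(k_i)),
\]
which is manifestly independent of $s$ since $k_0 = 0$ kills the $p_0$-dependence. Hence corresponding generators have equal action, and the topological energy of any continuation trajectory from $\gamma_k^{s_0}$ to $\gamma_{k'}^{s_1}$ equals $A(k) - A(k')$.

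Next I construct the continuation homotopy by taking a monotone path $s_\tau$ from $s_0$ to $s_1$ staying positive, together with a family $J_\tau$ of $(\Ll, \sigma)$-adapted almost complex structures chosen so that $\pi_{LG}$ is $J_\tau$-holomorphic on $U_+$. The interpolation $\partial_\tau H_{a,s_\tau} = -\dot s_\tau\, g'(H_B - s_\tau)$ is bounded (since $g$ is linear at infinity), so the homotopy is admissible. A maximum-principle argument of the type used in the proof of Lemma~\ref{lmRoughform} confines every continuation trajectory with asymptotes in $U_+$ to $U_+$, reducing the counting to Floer theory on $\phi(U_+) \subset T^*T^n$. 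There the diagonal contribution is realized by the explicit Floer cylinder $u(\tau, t) = (p(\tau), q_0 + kt)$ with $p_0(\tau)$ solving $\dot p_0 = -g'(p_0 - s_\tau)$ and interpolating $p_k^{s_0}$ and $p_k^{s_1}$; this cylinder has topological energy zero, Fredholm index zero after a small Morse perturbation breaking the residual $T^n$-symmetry, and signed count $\pm 1$, yielding the identity leading term of $f_{s_0, s_1}$ on each generator $\gamma_k^{s_0}$.

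The main obstacle is showing that the off-diagonal contributions ($k \neq k'$) all occur with strictly positive Novikov exponent, i.e.\ that any potentially contributing trajectory has $E_{\mathrm{top}} = A(k) - A(k') > 0$. I expect this to follow by combining the energy identity $E_{\mathrm{geom}} = E_{\mathrm{top}} + \int \partial_\tau H_{a,s_\tau}\,d\tau\,dt$ with the convexity of $g$ and the $f_i$, together with a choice of monotone path $s_\tau$ that controls the correction term along any candidate off-diagonal trajectory. Once this positivity is established, $f_{s_0, s_1}$ becomes triangular with unit diagonal with respect to the action filtration on generators, which suffices for it to be a valuation preserving isomorphism of complexes.
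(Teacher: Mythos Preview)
Your action computation (step~1) agrees with the paper's: corresponding contractible orbits have equal action because moving a contractible orbit has zero flux. The serious gap is in step~3. The maximum-principle argument you invoke from Lemma~\ref{lmRoughform} does \emph{not} carry over to continuation trajectories. In that lemma the Hamiltonian was first removed via the naturality isomorphism $A_j\simeq B_j$, after which the curves were genuinely $J$-holomorphic and $\re\pi$ was subharmonic. Here the homotopy is $s$-dependent, so you cannot remove the Hamiltonian; the projection $v=\pi\circ u$ satisfies $\bar\partial v=-\tfrac12\,g'(H_B\!\circ u - s_\tau)\,d\theta(X_{H_B})\,v$, a first-order perturbation with $u$-dependent coefficient, for which no maximum principle for $\re v$ is available. (Compare Lemma~\ref{lmSH0geq0}, where the barrier works only because $g_\epsilon$ is made to vanish on a collar so that $d\pi(X_H)=0$ there.) Without confinement to $U_+$, your reduction to $T^*T^n$ collapses, and your step~5 becomes essential rather than auxiliary. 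But step~5 is only a hope: there is no reason $A(k)-A(k')>0$ for $k\neq k'$, and convexity of $g,f_i$ alone does not control the non-monotone correction $\int\partial_\tau H\,d\tau\,dt$ along an unknown trajectory. Your argument also tacitly assumes $s_0,s_1>0$, whereas the lemma is stated for arbitrary $s_0,s_1$.

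The paper's proof replaces the failed confinement by a compactness argument: any continuation trajectory other than the trivial one (i.e.\ one connecting $\gamma$ to $\iota(\gamma')\neq\iota(\gamma)$, or one in a nontrivial homotopy class) must reach the singular locus of $\Ll$, since in $U_\pm$ distinct orbits lie in distinct homotopy classes and $\omega$ is exact there. By Gromov compactness (monotonicity) such a trajectory has geometric energy $\geq\epsilon$ for some $\epsilon>0$ independent of $|s_0-s_1|$. The possible action increase from non-monotonicity is $O(|s_0-s_1|)$, so for $|s_0-s_1|$ small enough the continuation map is $\iota+O(t^{\epsilon})$, hence a valuation-preserving isomorphism; the general case follows by composing small steps. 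This is both simpler and uniform in the sign of $s_0,s_1$.
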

\begin{rem}
Note the continuation from $H_{a,s_0}$ to $H_{a,s_1}$ is \textit{not} strictly monotone. That is, we do not have the pointwise relation $H_{a,s_0}(x)\leq H_{a,s_1}(x)$ for all $x\in M,$ as opposed to the relation \eqref{eqRelation} which does hold. We emphasize that in this paper we are concerned with computing Floer theory over the Novikov \emph{field}, which is why the relation \eqref{eqRelation} is the relevant one. However, the proof of Lemma \ref{tmSH00} requires to momentarily study Floer homology over the Novikov ring. In general, continuation maps are defined over the Novikov ring only if there is strict monotonicity. Thus the claim of Lemma \ref{lmValPresSH0} is rather non-trivial.  
\end{rem}
\begin{proof}
Each family of periodic orbits is contained in a Lagrangian torus. In particular, the action
\[
\mathcal{A}_{H}(\gamma):=-\int_D\omega-\int_tH\circ\gamma
\]
is independent of the choice of periodic orbit within the family. We claim first the identification $\iota:\mathcal{P}_{0,a,s_0}=\mathcal{P}_{0,a,s_1}$ preserves the action. Indeed, let $\gamma\in \mathcal{P}_{0,a,s_0}$. Then $H_{a,s_0}|_\gamma$ is constant and preserved by $\iota$. Let $D$ be a disc filling $\gamma$. To obtain a disc filling $\iota(\gamma)$ we can use a homotopy which has $0$ flux since $\gamma$ is a contractible orbit. Thus
\[
\mathcal{A}_{H_{a,s_1}}(\iota\gamma)=\mathcal{A}_{H_{a,s_0}}(\gamma).
\]
On the other hand, when $|s_0-s_1|$ is small, we claim the continuation map of \eqref{eqContinuation} can be written as $\iota+O(t^\epsilon)$\footnote{Here, $O(t^{\epsilon})$ denotes an element of the Novikov ring whose valuation is at least $\epsilon$.} for some $\epsilon>0$. To see this, let $\rho:\R\to [s_0,s_1]$ be a monotone increasing function whose derivative is compactly supported.  Consider a homotopy $s\mapsto H_s:=H_{a,\rho(s)}$. This is a non-monotone homotopy.   However, the derivative $|\partial_s H_s|$ can be estimated by $\sim  |s_0-s_1|$. From this we deduce that for any solution $u$ contributing to the continuation map we have $E^{top}(u)>-|O(s_0-s_1)|$. We use this now to show that for each $s>0$ there is a $\delta>0$ such that if $s_0,s_1>s$ and $|s_0-s_1|<\epsilon$ there is no continuation trajectory with negative topological energy. First observe that $\omega$ is exact in the region $U_+=\{x_n>0\}$ and in fact $U_+$ is symplectomorphic to an open subset in $T^*T^n$. Fixing such a symplectomorphism we can thus take $\delta>0$ to be the minimal action difference of the periodic tori of $H_{a,s}$ with the action defined using the Liouville form on $T^*T^n$. Note it makes no difference which $s$ we choose by the flux consideration above. Moreover, for fixed $a$ there is a finite number of such tori, so $\delta$ can indeed be taken to be positive. Thus if we take $|s_0-s_1|<\delta$, a continuation trajectory with negative topological energy cannot be contained in $U_+$. By Gromov compactness, for fixed $s_0,s_1$ there is a minimum, still denoted by $\delta$, so that any continuation trajectory from $H_{a,s_0}$ to $H_{a,s_1}$ which reaches outside of $U_+$ must have geometric energy more than $\delta$. Moreover, by Corollary \ref{cyExplicitFormUplus}, this delta can be taken to be the same for all $s_0,s_1>s$. 

To complete the argument, note that $E^{geo} \leq E^{top}+O(s_0-s_1)$. So for $|s_0-s_1|<\delta$ there are no negative energy solutions. Moreover, the contribution from $0$ energy solutions is exactly the map $\iota$.  It follows that $|s_0-s_1|<\delta$ the continuation map is $\iota+O(t^\epsilon)$. In particular valuation preserving and invertible. For $|s_0-s_1|$ arbitrary we can subdivide the interval $[s_0,s_1]$ into segments of length $<\epsilon$ and write a continuation map which is a composition of a large but finite number of valuation preserving isomorphisms. 

\end{proof}

\begin{lm}\label{lmNoWallModE}
For any $E$ there is an $s_0$ such that for any $s>s_0$, the differential in $CF^*(H_{a,s})$ vanishes modulo $E.$ Namely, $\val(d\gamma)\leq \val(\gamma)-E.$
\end{lm}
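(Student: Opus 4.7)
The plan is to perturb $H_{a,s}$ by a perfect Morse--Bott perturbation on each torus of contractible orbits, show that for $s\gg 0$ the Floer cylinders of topological energy $\leq E$ are confined to the $\Ll$-regular region, and use a rank-counting argument in a local $T^*T^3$-model to force the chain-level differential to vanish there; cylinders that escape the regular region must cross $\crit(\Ll)$ at Lipschitz distance $\gtrsim s$ and are suppressed modulo $t^E$ by Sikorav monotonicity.

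First I add to $H_{a,s}$ a $C^\infty$-small Morse perturbation whose restriction to each torus $T_\alpha\cong T^3$ of contractible $1$-periodic orbits is a perfect Morse function---for instance $\sum_{i=1}^3\epsilon\cos\theta_i$ in suitable fibre coordinates---making all orbits non-degenerate and the intra-torus Morse differential vanish at the chain level.

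Second, Lemma~\ref{lmFlTrajDiamEst} gives a diameter bound $R_E$ on Floer cylinders of topological energy $\leq E$, measured in the Lipschitz metric of Theorem~\ref{tmCanonicalMetric}. All $T_\alpha$ sit in the slice $\{x_3=s\}$ while $\crit(\Ll)\subset\{x_3=0\}$, so for $s>s_0(E)$ any such cylinder remains in the regular region $M\setminus\crit(\Ll)$. There I take $J$ compatible with action-angle coordinates, identifying a tubular neighborhood of $\bigcup_\alpha T_\alpha$ symplectically with an open subset of $T^*T^3$ on which $H_{a,s}$ depends only on the action coordinates. In this model a straightforward rank count forces chain-level $d=0$: for each Maslov degree $k$, the space $CF^k$ and the corresponding graded piece of $SH^*(T^*T^3)$ both have dimension $\binom{3}{k}$ times the number of present windings, using the well-known identification $SH^*(T^*T^3)\cong H^*(T^3)\otimes\Lambda[H_1(T^3;\Z)]$ truncated at the relevant windings; hence $d$ has rank zero on each $CF^k$. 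Any cylinder not confined to the regular region must cross $\crit(\Ll)$, and Sikorav's monotonicity gives $E_{top}\geq c\cdot\mathrm{dist}(\{x_3=s\},\crit(\Ll))^2\gtrsim s^2$; for $s_0$ large enough such contributions are suppressed modulo $t^E$, yielding the claim.

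The main obstacle I anticipate is rigorously reducing the low-energy Floer cascade moduli spaces in $M$ to those of the $T^*T^3$-model: one must choose $J$ and the Morse perturbation coherently over the tubular neighborhood and verify that the rank equality forces chain-level, not merely homology-level, vanishing. This relies on the perfect Morse function eliminating all intra-torus differentials at the chain level, after which $\dim CF^k=\dim SH^k$ at each degree forces $d=0$ in the model, so the argument reduces entirely to geometric confinement of cylinders via the diameter and monotonicity estimates.
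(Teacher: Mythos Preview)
Your confinement step is the right idea and matches the paper: for $s$ large, Floer trajectories of energy at most $E$ emanating from the periodic tori at $H_B=s$ cannot reach the wall $\{H_B=0\}$. Two cautions here. First, Lemma~\ref{lmFlTrajDiamEst} as stated produces a compact set depending on the Hamiltonian, so you must argue that the bound is uniform in $s$; the paper does this by invoking the target-local domain-global estimate (Lemma~\ref{lmTLDG}) after observing that the data $(J,H_{a,s})$ are $c$-admissible on $U_+:=H_B^{-1}(0,\infty)$ for a constant $c$ independent of $s$. Second, your Sikorav estimate $E_{top}\gtrsim s^2$ is incorrect: monotonicity applied ball-by-ball along a path of length $s$ gives a bound that is linear in $s$, not quadratic. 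This step is in any case redundant once the diameter bound is established.

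Where your argument genuinely diverges from the paper is after confinement. The paper observes directly that in $U_+$ distinct periodic tori represent distinct free homotopy classes (since $U_+$ is a $T^3$-bundle over a simply connected base), so any confined trajectory must start and end on the same torus; and since $\omega|_{U_+}$ is exact, such trajectories are trivial. Standard Morse--Bott considerations finish. Your rank-count detour through $T^*T^3$ is considerably heavier and, as you yourself flag, has a real gap: to compare $\dim CF^k$ with $\dim SH^k(T^*T^3)$ you must extend the Floer data from the image of $U_+$ to all of $T^*T^3$ without introducing new orbits or trajectories, and then argue that the extended $T^*T^3$-moduli spaces coincide with the confined $M$-moduli spaces. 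You have not done this, and once you try you will essentially recover the paper's extension argument used elsewhere (proof of Lemma~\ref{lmRoughform}). The direct homotopy-class-plus-exactness argument bypasses all of this; I would replace your rank count with it.
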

\begin{proof}
First observe that there is a constant $c$ such that, in the terminology of definition \ref{dfCadmFl} bellow, for any fixed admissible $J$, the Floer datum $(J,H_{a,s})$ is $c$-admissible on $U_+:=H_B^{-1}(0,\infty)$ and $s$ is large enough. It follows by the target-local, domain-global estimate of Lemma \ref{lmTLDG} that for large enough $s_0$, any Floer trajectory of energy $E$ is contained in  $U_+$. But distinct periodic orbits represent distinct homotopy classes in $U_+$. Thus, the only Floer trajectories are those starting and ending from the same periodic orbit. The restriction of $\omega$ to $H_B^{-1}(0,\infty)$ is exact. So the only Floer trajectories starting from and ending on the same periodic orbit are the trivial ones. The claim now follows by standard Morse-Bott considerations.
\end{proof}
\begin{proof}[Proof of Lemma~\ref{tmSH00}]
This is an immediate consequence of the previous two lemmas.
\end{proof}
The vanishing of the differential on $CF^{0,>0}$ is easier.
\begin{lm}\label{lmSH0geq0}
For $H\in\mathcal{H}_{\mathcal{B}+}$ ($H\in\mathcal{H}_{\mathcal{B}-}$)  the differential vanishes on $CF^{0,>0}$ ($CF^{0,<0}$).
\end{lm}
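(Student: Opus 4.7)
The proof parallels the Lagrangian computation of $HW^{0,\geq 0}(\sigma,\sigma)$ carried out in Lemma \ref{lmRoughform}, now in the closed-string setting. Write $H = g \circ H_B + \sum_i f_i \circ H_{F_i} \in \mathcal{H}_{\mathcal{B}+}$ and let $t_0 > 0$ be the unique minimum of $g$. In any local action-angle chart over a smooth fiber of $\Ll$ at $\{H_B = s,\ H_{F_i} = s_i\}$, the $1$-periodic orbits of $H$ wind about $(\theta_1,\theta_2,\theta_n)$ by $(f_1'(s_1), f_2'(s_2), g'(s))$. Convexity of $g$ therefore forces every orbit with positive $\theta_n$-winding to lie in $V_+ := \{H_B > t_0\}$, a $G$-invariant neighborhood of the divisor disjoint from $\crit(\Ll)$.

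Following the approach in the proof of Lemma \ref{lmRoughform}, I would choose $J$ so that $\pi_{LG}$ is $J_t$-holomorphic for all $t$. Since $|\pi_{LG}|$ is a strictly monotone function of $H_B$ and $H$ Poisson-commutes with $H_B$, the max principle for $\pi_{LG}$-holomorphic Floer cylinders forces any trajectory whose two asymptotes lie in $V_+$ to remain in the invariant region $U_+ := \{H_B > t_0\}$. The set $U_+$ lies in the smooth locus of $\Ll$ and, via global action-angle coordinates, is symplectically identified with an open subset $V \subset T^*(G \times S^1)$; under this identification $H$ extends to a $(G \times S^1)$-invariant Hamiltonian $H' = g(I_3) + \sum_i f_i(I_i)$ on $T^*(G \times S^1)$, and every positive-winding Floer cylinder of $H$ corresponds to one of $H'$ asymptotic to orbits in $V$.

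In the cotangent bundle $T^*(G \times S^1)$, the positive-winding orbits of $H'$ form Morse--Bott tori $T_{\mathbf{k}}$, each diffeomorphic to $G \times S^1 \cong \T^3$, indexed by $\mathbf{k} = (k_1, k_2, k) \in \Z^2 \times \Z_{>0}$ at the levels determined by $f_i'(I_i) = k_i$ and $g'(I_3) = k$. Since winding is preserved by the differential and labels the $T_{\mathbf{k}}$ uniquely, no cylinder joins distinct tori. Choosing a Morse function on $\T^3$ with a unique minimum, the Morse--Bott reduction identifies the degree-$0$ contribution of each $T_{\mathbf{k}}$ with that minimum, and the Floer differential on these generators vanishes by the standard cotangent-bundle calculation. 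Hence $d = 0$ on $CF^{0,>0}(M;H)$; the argument for $\mathcal{H}_{\mathcal{B}-}$ and $CF^{0,<0}$ is symmetric.

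The main obstacle is that a $J$ making $\pi_{LG}$ strictly $J_t$-holomorphic is in general not regular, so to remove this assumption I would apply the same filtered-Novikov approximation used at the end of the proof of Lemma \ref{lmRoughform}: for any fixed $E > 0$, a sufficiently $C^k$-close generic admissible $J'$ keeps all Floer cylinders of energy at most $E$ close to those for the idealized $J$, yielding vanishing of the differential modulo $\Lambda_{\geq E}$; passing to the inverse limit over $E$ then concludes the proof.
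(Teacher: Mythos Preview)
Your overall strategy—confine Floer cylinders with positive winding to the region $U_+$, embed into $T^*(G\times S^1)$, and compute there—is the right shape, but the confinement step as written has a genuine gap. In the Lagrangian argument of Lemma~\ref{lmRoughform} the Hamiltonian is absorbed into the boundary condition, so the polygons are honestly $J$-holomorphic and $\pi\circ u$ is holomorphic. In the closed-string case you cannot turn $H$ off: for a Floer cylinder one has
\[
2\,\overline{\partial}(\pi\circ u)=i\,d\pi(X_H)=i\,g'(H_B)\,d\pi(X_{H_B}),
\]
and $d\pi(X_{H_B})$ is nonzero because the flow of $H_B$ rotates $\arg\pi$. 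Thus $\pi\circ u$ is \emph{not} holomorphic, and neither $|\pi|\circ u$ nor $H_B\circ u$ satisfies a maximum (or minimum) principle from the hypotheses you state; the facts that $|\pi|$ is monotone in $H_B$ and that $\{H,H_B\}=0$ are not enough.

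The paper sidesteps this by modifying the Hamiltonian rather than the almost complex structure: one pre-composes $g_s$ with a convex function $g_\epsilon$ that vanishes on $[0,\epsilon]$, producing an $H_B$-interval on which the $H_B$-dependence of $H$ is flat. On that interval $X_H$ is purely the $G$-action, which genuinely preserves $\pi$, so $\pi\circ u$ \emph{is} holomorphic there and the region acts as an honest barrier separating the positive-winding orbits from $\crit(\Ll)$. Vanishing of the differential for the modified Hamiltonian then follows, and the paper transfers this back to the original $H$ by a continuation map together with a rank argument (same slopes at infinity $\Rightarrow$ isomorphism on homology; equal chain-level dimensions $\Rightarrow$ $d=0$), rather than by your filtered-Novikov approximation. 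Your filtered approximation in the last paragraph would still be needed to justify taking $J_t=J_{LG}$ for all $t$ in the barrier region, but it does not repair the missing barrier itself.
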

\begin{proof}
For $\epsilon>0$ let $g_\epsilon:\R_+\to\R_+$ be a proper convex function which is equal to $0$ on $[0,\epsilon]$ and to $\id$ near infinity. Write
\[
H_{a,\epsilon,s}:=g_{\epsilon}\circ g_s\circ H_B+\sum f_i\circ H_{F_i}.
\]
The function $H_{a,\epsilon,s}$ is Morse-Bott non-degenerate for non-contractible periodic orbits. For any Floer trajectory $u$, the projection $\pi\circ u$ is holomorphic for the region $\{\re\pi\in[0,\epsilon]\}$. Thus no solution with asymptotes in $\{\re\pi>0\}$ can intersect it. So all Floer solutions are contained in $\{\re\pi>0\}$. It follows that the differential on $CF^*(H_{a,\epsilon,s})$ vanishes. To conclude we need to show that the continuation map $CF^*(H_{a,\epsilon,s})\to CF^*(H_{a,s})$ is an isomorphism of complexes. For this note that the continuation map induces an isomorphism on homology, since the asymptotics at infinity of both Hamiltonians is the same. Since both chain complexes have the same dimension as vector spaces over the Novikov ring, the vanishing of the differential on one complex thus implies it for the other complex.
\end{proof}
\begin{proof}[Proof of Lemma~\ref{lmFloerInt}]
The first part is Lemma~\ref{lmDegCont}. The second part is Lemmas~\ref{tmSH00} and~\ref{lmSH0geq0}.
\end{proof}

As a consequence of the preceding discussion, we have that for Hamiltonians in $H\in\mathcal{H}_{\mathcal{B}_+}$ with slope at infinity given by $a$, $CF^0(H)=HF^0(H)$ has a basis naturally labeled by a subset of $H_1(U_+;\Z)$ which we identify as above with $H_1(G;\Z)\oplus \Z\langle\gamma\rangle$. The following theorem claims that the continuation maps respect this labelling up to multiplication by a diagonal matrix with values in $\Lambda_{nov}$. Denote by $H_1^{\pm}(U_{\pm};\Z)$ the elements on which $\theta_3$ is non-negative and non-positive respectively.
\begin{tm}\label{lmChainSH}
There is a natural isomorphism
\[
SH^{0,0}(M;\Ll)=\Lambda_{nov}[H_1(G;\Z)],
\]
and an isomorphism
\[
SH^{0,\pm}(M;\Ll)\simeq\Lambda_{nov}[H_1^{\pm}(U_{\pm};\Z)]\simeq SH^{0,0}(M;\Ll)\otimes\Lambda_{nov}[u^{\pm\gamma}]
\]
which is natural up to scaling of the basis elements by scalars  in $\Lambda_{nov}$. Moreover, there is a cofinal collection of Hamiltonians $\mathcal{H}'_{\mathcal{B_\pm}}\subset\mathcal{H}_{\mathcal{B_\pm}}$ such that for any $\alpha\neq 0\in\Z\langle\gamma\rangle,\beta\in H_1(G;\Z)$, and $H\in\mathcal{H}'_{\mathcal{B_\pm}}$ with sufficiently large slopes (depending on $\alpha,\beta$ only) the class in $SH^{0,\pm}$ corresponding to $(\alpha,\beta)$ is given by an element $c\delta\in CF^*(H)$ where $c\in\Lambda_{nov}$ and $\gamma$ is a periodic orbit of $H$ representing $(\alpha,\beta)$ in $U_{\pm}$.
\end{tm}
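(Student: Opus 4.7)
The plan is to combine the chain-level computations of Lemmas \ref{tmSH00}, \ref{lmSH0geq0}, and \ref{lmValPresSH0} with the action-angle embedding of $U_\pm$ into $T^*\T^n$ to identify both the module and ring structure of $SH^{0,\cdot}(M;\Ll)$.

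First I would pin down the module description. For $H=H_{a,s}\in\mathcal{H}_{\mathcal{B}_+}$ with $s$ large, every contractible Morse-Bott torus of periodic orbits lies over the unique minimum of $g$ and is intrinsically labeled by the homology class $[\gamma]\in H_1(G;\Z)$ that it realizes in $M$: one torus per lattice point $(f_1',f_2')\in\Z^2$ in the range of the slopes. By Lemma \ref{lmValPresSH0} the continuation maps in $s$ are valuation preserving and respect this labeling, and by Lemma \ref{tmSH00} the differential vanishes. Passing to the direct limit over $s$ and then over $a$ gives $SH^{0,0}(M;\Ll)=\bigoplus_{\beta\in H_1(G;\Z)}\Lambda_{nov}\langle\beta\rangle$ as a $\Lambda_{nov}$-module, with each $\beta$ represented by the orbit torus carrying that homology class. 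The same argument in $\mathcal{H}_{\mathcal{B}_\pm}$, now using Lemma \ref{lmSH0geq0}, produces $SH^{0,\pm}(M;\Ll)=\bigoplus_{\alpha\in H_1^\pm(U_\pm;\Z)}\Lambda_{nov}\langle\alpha\rangle$, since orbits with $|\cdot|=\pm k$ appear precisely at the $H_B$-levels where $g'=\pm k$, and those lie inside $U_\pm$.

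Next I would install the ring structure. Because $H_1$-classes of asymptotes are additive under the pair-of-pants, the product is $H_1$-graded, so $[\beta_1]\cdot[\beta_2]=c_{\beta_1,\beta_2}[\beta_1+\beta_2]$ for some $c_{\beta_1,\beta_2}\in\Lambda_{nov}$. To evaluate $c$ I would choose $H_{a,s}\in\mathcal{H}_{\mathcal{B}_+}$ with $s\to\infty$ so that all asymptotes sit deep inside $U_+$; a target-local, domain-global energy estimate in the spirit of Lemma \ref{lmNoWallModE} then confines every Floer pair-of-pants of bounded geometric energy to $U_+$. Inside $U_+$, action-angle coordinates identify the relevant region with an open subset of the exact Liouville manifold $T^*\T^n$, in which the pair-of-pants product for Hamiltonians of the form $\sum f_i\circ I_i$ is precisely the group algebra product on $\Lambda_{nov}[H_1(\T^n;\Z)]$ (exactness rules out Novikov corrections from disc or sphere bubbling). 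Letting $s\to\infty$ within each Novikov truncation forces $c_{\beta_1,\beta_2}=1$ to arbitrary order, so $SH^{0,0}(M;\Ll)=\Lambda_{nov}[H_1(G;\Z)]$ as a ring, canonically. The parallel computation in $U_\pm$ produces $SH^{0,\pm}(M;\Ll)\cong\Lambda_{nov}[H_1^\pm(U_\pm;\Z)]$; any choice of splitting $H_1(U_\pm;\Z)\cong H_1(G;\Z)\oplus\Z\gamma$ yields the factorization $SH^{0,0}(M;\Ll)\otimes\Lambda_{nov}[u^{\pm\gamma}]$, and different splittings differ by multiplication of basis vectors by units in $\Lambda_{nov}^\times$, which is exactly the naturality-up-to-scaling qualification.

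For the final chain-level assertion I would define $\mathcal{H}'_{\mathcal{B}_\pm}$ to consist of those $H=\sum f_i\circ H_{F_i}+g\circ H_B$ whose slope functions $f_i',g'$ take each prescribed integer value at a single level only, so that for a given $(\alpha,\beta)$ with $\alpha\neq 0$ there is a unique $T^3$ Morse-Bott torus of periodic orbits in $U_\pm$ carrying the class $(\alpha,\beta)\in H_1(G;\Z)\oplus\Z\gamma$; after a small Morse-Bott perturbation, exactly one orbit $\delta$ in that torus realizes the degree-$0$ generator, and the coefficient $c\in\Lambda_{nov}$ absorbs the Morse-Bott reduction together with any higher-energy Novikov corrections from Floer trajectories ending on the same torus. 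The main technical obstacle I anticipate is the uniformity of the target-local confinement to $U_\pm$: since the inhomogeneous Floer equation has no literal maximum principle, one must combine the $(\Ll,\sigma)$-adaptedness of the admissible almost complex structure and the Landau-Ginzburg trick of Theorem \ref{tmLGSphereDiskMax} with the monotonicity/isoperimetric machinery of Appendix \ref{AppLipFloer} to ensure that the energy required to cross the wall $\pi^{-1}(i\R)$ remains bounded below by a positive constant uniformly as $s\to\infty$; only with that in hand does the identification with the $T^*\T^n$ computation become honest on the nose rather than merely at leading order.
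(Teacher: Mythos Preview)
Your proposal is correct and follows the same core mechanism as the paper: the $H_1$-labeling of orbit tori, vanishing of the differential via Lemmas~\ref{tmSH00} and~\ref{lmSH0geq0}, and confinement of trajectories to $U_\pm$ by an energy--diameter argument in the spirit of Lemma~\ref{lmNoWallModE}. The paper's proof is considerably terser: it only checks that continuation maps (both in $s$ and in slope $a$) respect the $H_1(G;\Z)$-labeling on the limit $\widetilde{CF}^{0,0}(H)=\varinjlim_s CF^{0,0}(H_s)$, and for the non-contractible part simply points back to the barrier argument of Lemma~\ref{lmSH0geq0}. In particular the paper does \emph{not} verify the ring structure here; that is already available from Theorem~\ref{tmFloerInt} combined with Lemma~\ref{lmRoughform}. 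Your independent verification of the product via the action-angle embedding into $T^*\T^n$ and confinement of pair-of-pants solutions is essentially a closed-string reprise of the open-string argument in the proof of Lemma~\ref{lmRoughform}; it is valid but not logically needed at this point. Your identification of the uniform wall-crossing energy bound as the main technical point is exactly right and is what the paper is invoking when it says ``otherwise we would have a continuation trajectory of bounded energy reaching the critical locus of $\Ll$ and connecting orbits that are arbitrarily far from $\crit\Ll$.''
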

\begin{proof}
Relying on Lemma \ref{lmValPresSH0}, consider for a Hamiltonian $H$ of slope $a$ the valued vector space
\[
\widetilde{CF}^{0,0}(H):=\varinjlim_{s\to\infty}CF^{0,0}(H_s).
\]
Then $\widetilde{CF}^{0,0}(H)$ has a natural basis labeled by elements of $H_1(G;\Z)$. For a pair $H_1\leq H_2$ the induced continuation map
\[
\widetilde{CF}^{0,0}(H_1)\to\widetilde{CF}^{0,0}(H_2)
\]
respects this labeling by the argument of \ref{lmNoWallModE} Namely, otherwise we would have a continuation trajectory of bounded energy reaching the critical locus of $\Ll$ and connecting orbits that arbitrarily far from the $\crit\Ll$. The first natural isomorphism proclaimed by the lemma is induced from the maps $CF^{0,0}(H)\to \widetilde{CF}^{0,0}(H)$.

For the rest of the claim consider Hamiltonians as in the proof of Lemma \ref{lmSH0geq0} and argue that there are no Floer trajectories connecting non-contractible periodic orbits and reaching the critical locus.


\end{proof}

\subsection{The Laurent polynomial}\label{subsecLaurent}
In the rest of this section we prove Theorem \ref{tmLaurent0}.

Recall the definition of the universal Novikov ring
\[
\Lambda_{nov}:=\left\{\sum_{i=0}^{\infty}a_it^{ r_i}:a_i\in \C, r_i\in\R,\lim_{i\to\infty} r_i=\infty\right\}.
\]
It carries a valuation defined by
\[
\val\left(\sum_{i=0}^{\infty}a_it^{ z_i}\right)=\min_{\{i:a_i\neq 0\}} r_i.
\]
A function $f:\R^n\to\Lambda_{nov}$ is said to be monomial if $f(x_1,\dots x_n)=ct^{\sum u_ix_i}$ for some $c\in\Lambda_{nov},u_i\in\Z$. We write this monomial as $c_uz^u$. For a precompact open set $U\subset\R^n$   define the valuation  on the space of monomials by
\[
\val_U\left(cz^u\right)=\inf_{x\in U}\left(\val{c}+\sum u_ix_i\right).
\]
This is the same as the logarithm of the $\sup$-norm with respect to the norm $e^{-\val}$.
A sum of monomials
\[
\sum_{u\in\Z^n}c_uz^u,
\]
is said to converge on $U$ if $\val_U{c_uz^u}\to\infty$. Note that a sum of monomials which converges on $U$ defines a function $U\to\Lambda_{nov}$. A function $f$ is said to be analytic on an open set $U$ if it is locally a convergent sum of monomials. The key property of analytic functions for us is
\begin{lm}
A formal sum of monomials converging on an open set $U$ and which vanishes identically as a function on an open set $V\subset U$ is the zero sum. In particular, the local expansion of an analytic function is unique. Moreover, the coefficients of the expansion are fixed on every component of the domain of definition.
\end{lm}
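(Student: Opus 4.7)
The plan is to reduce the claim to a generic-point argument in the real vector space $\R^n$, exploiting the fact that an open subset of $\R^n$ cannot be covered by countably many hyperplanes. First I would expand every $c_u \in \Lambda_{nov}$ as $c_u = \sum_{i \ge 0} a_{u,i} t^{r_{u,i}}$ with $a_{u,i} \in \C$ and $r_{u,i} \to \infty$; after normalization we may assume the $r_{u,i}$ are pairwise distinct for each fixed $u$ (otherwise combine terms). The value of the monomial $c_u z^u$ at a point $x \in U$ is, by definition of the substitution, the Novikov element $c_u \cdot t^{\langle u, x \rangle}$, where $\langle u, x \rangle = \sum u_j x_j$. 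Convergence of the sum on $U$ means $\val_U(c_u z^u) \to \infty$, and since $\val_U$ is an infimum over $x \in U$ it follows that for every $x \in U$, and every threshold $R \in \R$, there are only finitely many pairs $(u,i)$ with $r_{u,i} + \langle u, x \rangle \le R$. Hence the double series
\[
f(x) \;=\; \sum_{u,i} a_{u,i}\, t^{\,r_{u,i} + \langle u, x \rangle}
\]
is a well-defined element of $\Lambda_{nov}$ at every $x \in U$, and in particular at every $x \in V$.

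Next I would run the generic-point argument. For any two distinct pairs $(u,i) \ne (u',i')$ with $a_{u,i}, a_{u',i'} \ne 0$, the set of $x \in \R^n$ where the exponents coincide,
\[
r_{u,i} + \langle u, x \rangle \;=\; r_{u',i'} + \langle u', x \rangle,
\]
is either empty (when $u = u'$, by the normalization above) or an affine hyperplane (when $u \ne u'$, since $u - u' \ne 0$). There are only countably many such pairs, so the bad locus in $\R^n$ is a countable union of hyperplanes, which has zero Lebesgue measure. Since $V$ is open in $\R^n$, it has positive measure, and we may choose $x_0 \in V$ at which all the real numbers $r_{u,i} + \langle u, x_0 \rangle$ (for $(u,i)$ with $a_{u,i} \ne 0$) are pairwise distinct.

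At such an $x_0$, the expression $f(x_0) = \sum_{u,i} a_{u,i}\, t^{r_{u,i} + \langle u, x_0 \rangle}$ is a sum of terms with pairwise distinct real exponents, so the vanishing of $f(x_0)$ in $\Lambda_{nov}$ forces $a_{u,i} = 0$ for every $(u,i)$. Consequently $c_u = 0$ for every $u$, proving the first assertion.

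The remaining assertions are formal consequences. Uniqueness of the local expansion follows by applying the result to the difference of two expansions representing the same function. For the final statement, if an analytic function $f$ admits expansions $\sum c_u z^u$ and $\sum c'_u z^u$ on overlapping open subsets of a connected component of its domain of definition, then on the overlap $\sum (c_u - c'_u) z^u$ vanishes as a function, hence $c_u = c'_u$ by the uniqueness just proved; covering the component by a chain of such overlaps and invoking connectedness yields the result. The only genuinely delicate point is the generic-point selection, which is handled by the measure-theoretic observation that a countable union of hyperplanes cannot exhaust a nonempty open subset of $\R^n$.
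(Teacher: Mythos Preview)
Your proof is correct and follows essentially the same generic-point strategy as the paper: both select a point of $V$ at which the contributing terms are separated by their $t$-exponents, so that vanishing forces each coefficient to be zero. The paper phrases this tersely as ``the leading term with respect to $\val_V$ is a polynomial, and at a generic point a single monomial,'' while you make the same idea fully explicit by expanding each $c_u$ into its Novikov series and using that a countable union of affine hyperplanes cannot cover the open set $V$; the underlying argument is the same.
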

\begin{proof}
 Indeed, the leading term with respect to $\val_V$ is a polynomial. By choosing a generic point in $V$ and restricting to a smaller subset, the leading term is a monomial for which the claim is obvious.
\end{proof}

So far we have endowed subsets of $\mathbb{R}^n$ with the structure of a ringed space. For $U\subset\mathbb{R}^n$ let us denote the structure sheaf by $\mathcal{O}_{U}$. It is clear that this construction is functorial with respect to integral affine isomorphisms and thus can be stated as an assignment of a sheaf $\mathcal{O}_U$ to abstract integral affine manifolds $U$. In this formulation, after picking a basepoint $b\in U$, elements $u$ of the integral lattice in $T^*_bV$ give rise to local monomials $z^{u}$ which on picking integral affine coordinates centered at $u$ are given by $(x_1,\dots,x_n)\mapsto t^{\sum u_ix_i}$. 

By definition, the analytic functions of $V$ are the global sections of $\mathcal{O}_V$. In particular,  for any simply connected open set $V\subset B_{reg}$ the integral affine structure induces a ringed structure on $V$. Let $\mathcal{A}_V$ be the space of analytic functions on $V$. For any $b\in V$ we have an identification of the period lattice in $T^*V$ with $H_1(L_b;\mathbb{Z})$. We thus get a collection of monomial functions described explicitly as follows. For any $b\in V$ and any $\alpha\in H_1(L_b;\Z)$ let $f_{b,\alpha,V}$ be the affine linear function whose value at $x\in V$ is $\int_I\omega$ where $I$ is the cylinder traced by moving a representative of $\alpha$ along any path connecting $b$ and $x$. Denote the corresponding monomial by $ z_{b,\alpha,V}:=t^{f_{b,\alpha,V}}\in\mathcal{A}_V$. Note that $z_{b+c,\alpha,V}=t^{\langle\alpha,c\rangle}z_{b,\alpha,V}$.
In the following write $V_\pm\subset B_{reg}$ for the subsets $\Ll(U_\pm)$ where as before $U_+=\{H_B>0\}\subset M$ and $U_-=\{H_B<0\}\subset M$ respectively.

We note that for any base point $b$, the rings $\mathcal{A}_V$ are certain completions of the group algebra over the Novikov field of $H_1(L_b;\mathbb{Z})$. Picking a basis for  $H_1(L_b;\mathbb{Z})$ identifies them with a certain completion of the ring of Laurent polynomials over the Novikov field. 

In the following, for a Lagrangian $L$ we denote by $1_L$ the unit of $HW^*(L,L)$ and by $\mathcal{CO}_L$ the closed open map $SH^*(M)\to HW^*(L,L)$. For each $s\in SH^0(M)$ denote by $h_{\pm}(s)$ the unique (a-priori non-continuous) function $\R^n\to\Lambda_{nov}$ satisfying
\begin{equation}\label{eqCOunit}
\mathcal{CO}_{L_b}(s)=h_{\pm}(s)(b)1_{L_b},\quad\forall b\in V_{\pm}.
\end{equation}
Clearly, the map $s\mapsto h_{\pm}(s)$ defines a $\Lambda_{nov}$ algebra homomorphisms from $SH^0(M)$ to the functions $\R^3\to\Lambda_{nov}$.
\begin{tm}\label{tmCOAn}
For any $s$ the functions $h_{\pm}(s)$ are analytic. That is, $h_{\pm}\in \mathcal{A}_{V_\pm}.$
\end{tm}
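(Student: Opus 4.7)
The plan is to compute $h_{\pm}(s)(b)$ at the chain level as a weighted count of Floer discs, and then recognize the resulting sum as a locally convergent series in the monomials $z_{b_0,\alpha,V}$. Fix a sign, say $+$. By Theorem \ref{lmChainSH}, $s$ admits a chain representative $s_H \in CF^0(H)$ for some $H \in \mathcal{H}_{\mathcal{B}_+}$ whose contributing orbits lie in $U_+$. Fix $b_0 \in V_+$ and a small simply connected open neighborhood $V \subset V_+$. For each $b \in V$ the Lagrangian $L_b$ is compact, so by the PSS isomorphism the unit $1_{L_b}$ is represented by the fundamental Morse cycle on $L_b$. Choose a smooth family of Floer data on the disc with one interior positive puncture, parametrized by $b \in V$ (using that $\Ll^{-1}(V)$ has a preferred trivialization by action-angle coordinates). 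Then $h_+(s)(b)$ is the coefficient of the fundamental cycle of $L_b$ in a count of rigid Floer solutions whose interior asymptote lies in $s_H$ and whose boundary lies on $L_b$.

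Next I would organize this count by relative homotopy class. Parallel transport in the fibration over $V$ furnishes a canonical identification $\pi_2(M,L_b) \cong \pi_2(M,L_{b_0})$, and the latter fits in
\[
0 \to \pi_2(M) \to \pi_2(M,L_{b_0}) \to H_1(L_{b_0};\Z) \to 0.
\]
For $A$ with boundary class $\alpha = \partial A$, Stokes' theorem applied to the cylinder swept by $\alpha$ as one moves from $b_0$ to $b$ gives
\[
E_{top}(A,b) \;=\; E_{top}(A,b_0) + f_{b_0,\alpha,V}(b),
\]
modulo Hamiltonian boundary terms that are constant in $b$. Grouping first by $\alpha$ rewrites the count as
\[
h_+(s)(b) \;=\; \sum_{\alpha \in H_1(L_{b_0};\Z)} c_\alpha \, z_{b_0,\alpha,V}(b),
\qquad
c_\alpha = \sum_{\partial A = \alpha} n_A \, t^{E_{top}(A,b_0)},
\]
where $n_A \in \Z$ is the signed count of rigid Floer solutions in class $A$.

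Convergence in $\mathcal{A}_V$ will follow from Gromov–Floer compactness: the energy–diameter estimate of Lemma \ref{lmFlTrajDiamEst} and Theorem \ref{tmGlobalEst} confine the contributing discs to a uniformly compact region, while Theorem \ref{tmLGSphereDiskMax} together with tautological unobstructedness of $L_b$ exclude disc and sphere bubbling. Consequently only finitely many classes $A$ contribute below any fixed energy threshold, so $\val\, c_\alpha \to \infty$ as $\val_V z_{b_0,\alpha,V} \to \infty$. Independence of the chain-level choices (in particular the Morse representative of $1_{L_b}$ and the interpolating Floer datum) is a standard continuation argument: different choices change the disc count by a chain-homotopy whose image pairs trivially with the fundamental class of $L_b$.

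The main obstacle will be verifying that the counts $n_A$ are actually independent of $b \in V$, so that the formula above has constant coefficients rather than merely locally constant ones. This reduces to a one-parameter wall-crossing argument on the moduli spaces over $V$: non-compactness in a generic one-parameter family can come only from disc or sphere bubbling, which is excluded by $(\pi,J)$-tautological unobstructedness of $L_b$ holding uniformly for $b \in V_+$ (since the whole family $\{L_b\}_{b\in V_+}$ is tautologically unobstructed by construction). Once this is established, $h_+(s)|_V$ is displayed as a convergent sum of monomials, hence analytic; since $b_0 \in V_+$ was arbitrary, $h_+(s) \in \mathcal{A}_{V_+}$, and the argument for $V_-$ is identical.
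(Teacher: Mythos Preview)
Your approach is essentially the paper's: compute $\mathcal{CO}_{L_b}(s)$ at the chain level, sort contributions by relative homotopy class, observe that the topological energy is affine in $b$ (producing the monomials $z_{b_0,\alpha,V}$), and show the coefficients are locally constant. The paper works in a single-chord Floer model for $HW^0(L_b,L_b)$ (arranging $CF^0(L_b,L_b;H_b)=\Lambda\langle x_e\rangle$ and writing $h_+(s)=t^{-\int H(\gamma)}g_e^{-1}g_{CO}$) where you use the Morse/PSS model; this is cosmetic. One notational slip: your discs carry an interior puncture asymptotic to $\gamma$, so they live in $\pi(M,L_b;\gamma)$, a torsor over $\pi_2(M,L_b)$, not in $\pi_2(M,L_b)$ itself.

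The one point where your argument and the paper's genuinely differ is the local constancy of $n_A$. Your sentence ``non-compactness in a generic one-parameter family can come only from disc or sphere bubbling'' is not correct as stated: there is also Floer breaking at the interior puncture and breaking on the $L_b$ side. These happen not to contribute---the former because the differential on $CF^{0,\geq0}$ vanishes (Lemma~\ref{lmFloerInt}), the latter because the resulting chain homotopy would live in cohomological degree $-1$ on $L_b$, which is zero---but you should say so. The paper avoids the parametrized moduli space altogether: since $h_+(s)(b)$ is an invariant depending only on $s$, and for a generic $b\in V_+$ the value $\omega(A)$ determines the class $A$, the integer $n_A$ is read off directly as a coefficient of the invariant series. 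Both routes work; the paper's trades the cobordism bookkeeping for a genericity observation.
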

\begin{proof}
Let $H\in \mathcal{H}_{\mathcal{L}}$ such that $s$ is represented by an element of $HF^0(H)$, still denoted by $s$, and assume for simplicity that $s$ is represented by a single periodic orbit $\gamma$  of $H$. The more general case follows by linearity. Fix some $b\in B$.  Choose an $H_b\geq H$ which has non-degenerate chords for $L_b$. For simplicity assume $CF^0(L,L;H_b)$ is generated by a single chord $x_e$ which is a critical point of $H_b$. This can be achieved by appropriate choice of $H_b$. Then the unit of $HF^*(L_b,L_b;H_b)$, is represented uniquely by the chain
\[
\sum_{u_e\in\mathcal{M}_e} t^{u^*_e\omega+ H_{b,t}(x_e)}x_e,
\]
where $\M_e$ is the moduli space of Floer solutions on $D_{1,0}$ with boundary on $L$ and asymptotic output on $x_e$. 
For a relative homotopy class $A\in\pi_2(M,L)$ let
\[
\M_e(A):=\{u\in\M_e|[u]=A\}.
\]
We can then rewrite the expression for the unit as
\begin{gather}
\notag 1_{L_b}=t^{H_b(x_e)}\sum_{A\in\pi_2(M,L)}\#\M_e(A)t^{\omega(A)}x_e\\
=:t^{{H_b}(x_e)} g_{e,H_b}x_e.\notag
\end{gather}

On the other hand, the closed open map is defined by closed open trajectories $u_{co}$ weighted by
\[
t^{E_{top}(u_{co})}=t^{-\int_0^1 H_t(\gamma(t))+ H_{b}(x_e)+u^*_{co}\omega}.
\]

Denote by $\pi(M,L;\gamma_0)$ the homotopy classes of once punctured discs with boundary on $L$ and puncture asymptotic to $\gamma_0$.  For any $A\in\pi(M,L,\gamma_0)$ denote by $\M_{CO}(A;H,H_b)$ the moduli space of closed open trajectories $u$ such that $[u]=A$. We omit $H$ and $H_b$ from the notation when there is no ambiguity. Let
\[
 g_{CO}(\gamma):=\sum_{A\in\pi(M,L;\gamma_0)}\#\M_{CO}(A)t^{\omega(A)}x_e.
\]
Then
\[
\mathcal{CO}(\gamma;H,H_b)=t^{-\int_0^1H_t(\gamma(t))dt+H_b(x_e)} g_{CO}(\gamma)x_e
\]
Thus,
\[
h_+(s)=t^{-\int_0^1H_t(\gamma(t))} g_{e,H_b}^{-1} g_{CO}(\gamma).
\]
Note that the numbers $ g_e, g_{CO}$ each depend on numerous choices. However, the ratio weighted appropriately is well defined independently of any choices (aside from the fixing of Landau-Ginzburg potential and the class $s\in SH^0(M;\Ll)$). Indeed, it can be expressed in terms of Floer theoretic invariants which depend only possibly on $J_{LG}$. Define an equivalence relation on $\pi(M,L_b;\gamma_0)$ as follows. For $b'\in V_+$ identify $\pi(M,L_b;\gamma_0)=\pi(M,L_{b'};\gamma_0)$ by concatenating cylinders contained in $V_+$. Then  $A\sim B$ iff $\partial A=\partial B$ and $\omega(A)=\omega(B)$ for all $b'\in V_+$. Then we can write
\begin{equation}\label{eqDefn_A}
 g_{e,H_b}^{-1} g_{CO,H,H_b}(\gamma)=\sum_{A\in\pi(M,L_b,\gamma_0)/\sim}n_{A,b}t^{\omega(A)}.
\end{equation}
We need to show that this expansion holds on a fixed neighborhood of $b$. That is, there is a neighborhood of $b$ independent of $A$ on which $n_{A,b}$ is independent of $b$. Assume the $(H,J)$ chosen for the Hamiltonian Floer homology is regular. For a fixed generic choice of Floer data on $D_{1,1}$ defining $\mathcal{CO}$ and coinciding with $(H,J)$ near the interior puncture, there is a neighborhood of $U_A$ of $b$ such that all the moduli spaces going into the computation of $n_A$ are smooth. In particular $n_A$ is constant over that neighborhood. On the other hand, the numbers $n_A$ are independent of any of these choices. To see this, let $n_A(F)$ correspond to the choice of Floer datum $F$. After slightly perturbing $b\in U_+$ so that $n_A(F_1)$ and $n_A(F_2)$ remain well defined, we may assume that $A$ is the unique class with value $\omega(A)$. So $n_A$ is the coefficient of $t^{\omega(A)+\mathcal{A}_H([\gamma:u_0])}$ in $h_{\pm}(s)$ which is independent of any choices. Since $n_A$ is well defined and locally constant, it is constant. We have thus presented $h_\pm(s)$ locally as a convergent sum of monomials.

\end{proof}
\begin{rem}\label{remIntNA}
Note that the numbers $n_A$ defined in equation \eqref{eqDefn_A} are in fact integers. For this observe first that the moduli spaces for the unit and for the open closed maps have fundamental cycles defined over the integers. Second the leading term in $g_{e,H_b}$ has $1$ as its coefficient, corresponding to the trivial disk mapping to $x_e$.  Thus the inverse of $g_{e,H_b}$ is a Taylor series with integer coefficients.
\end{rem}

Our aim now is to analyze the behavior of the invariants $n_A$ occurring in the analytic expansion of $h_\pm(s)$ as introduced in equation \eqref{eqDefn_A}. 

\begin{lm}\label{lmSame}
Let $x\in SH^{0,+}$ correspond under the isomorphism of Lemma \ref{lmChainSH} to an element $[\gamma]\in H_1(U_+;\Z)$.  Fix some base point $b_+\in V_+$. Then
\[
h_+(x)=t^{E}z_{[\gamma],b_+,V_+},
\]
For some real number $E$ depending on $x$. A corresponding claim holds for $SH^{0,-}$, $V_-$ and $h_-$.
\end{lm}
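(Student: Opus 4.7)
The plan is to exhibit $h_+(x)$ as a single monomial by arranging all relevant Floer data so that only one equivalence class under $\sim$ contributes to the analytic expansion \eqref{eqDefn_A}. First, using Theorem \ref{lmChainSH}, I would pick $H\in\mathcal{H}'_{\mathcal{B}_+}$ with slopes large enough that $x$ is represented at the chain level by $c\delta$, where $\delta$ is a periodic orbit of $H$ contained in $U_+$ of homology class $[\gamma]$. Correspondingly, I would choose $H_b$ for $L_b$ from an analogous cofinal family so that the chord $x_e$ generating $HF^0(L_b,L_b;H_b)$ lies in $U_+$, and select the interpolating Floer data defining the unit $1_{L_b}$ and the closed-open map so that the almost complex structure remains $(\pi_{LG},J_{LG})$-holomorphic in a fixed neighborhood of the wall $\{\re\pi=0\}$ and the Hamiltonians are constant there.

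Second, since both the asymptotic orbit $\delta$ and the asymptotic chord $x_e$ lie in $\{\re\pi>0\}$, the same maximum-principle argument used in the proof of Lemma \ref{lmSH0geq0} shows that every Floer trajectory, closed-open half-strip, and disk contributing to $g_{e,H_b}$ or $g_{CO}(\delta)$ has image contained in $U_+$. Once everything is confined to $U_+$, which is simply connected and free of critical points of $\Ll$, I can use the global action-angle coordinates furnished by the proof of Lemma \ref{lmRoughform} to embed $U_+$ symplectically into an open subset of $T^*G\times T^*S^1$, where $\omega$ is exact. Exactness implies that if $A,B\in\pi(M,L_b;\delta)$ are two contributing configurations with $\partial A=\partial B\in H_1(L_b;\Z)$, then $\omega(A)=\omega(B)$. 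Moreover, the boundary class of any configuration contained in $U_+$ equals the image of $[\gamma]$ under $H_1(U_+;\Z)\to H_1(L_b;\Z)$, since $U_+$ deformation retracts onto $G\times S^1$ and the asymptotic class is fixed.

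Third, combining these facts, all contributing classes lie in a single equivalence class under $\sim$, so
\[
h_+(x)(b)=N\cdot t^{\,f_{b,[\gamma],V_+}+E_0}
\]
for some $N\in\Z$ and $E_0\in\R$ independent of $b\in V_+$. That $|N|=1$ follows from the fact that $h_+$ is a unital $\Lambda_{nov}$-algebra homomorphism and that under the isomorphism of Theorem \ref{lmChainSH} the classes $x_{[\gamma]}$ are invertible in $SH^{0,+}$: their images must be invertible analytic functions on $V_+$, hence pure monomials with unit coefficient up to sign. Writing $f_{b,[\gamma],V_+}=\langle[\gamma],b-b_+\rangle+f_{b_+,[\gamma],V_+}$ then produces the desired identity $h_+(x)=t^E z_{[\gamma],b_+,V_+}$ for $E=E_0+f_{b_+,[\gamma],V_+}$. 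The case of $SH^{0,-}$ is symmetric, working with $\mathcal{H}'_{\mathcal{B}_-}$ and $U_-$.

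The main obstacle I anticipate is the simultaneous confinement step: the interpolating data for the closed-open map, the unit, and the differential all enter at once, and I need a coherent choice that preserves $\pi_{LG}$-holomorphicity near $\{\re\pi=0\}$ while retaining transversality for every moduli space that appears. This is essentially the same sort of careful multi-surface adaptation of the Lemma \ref{lmSH0geq0} argument, but it requires patching a maximum principle across half-strips and once-punctured disks with mixed boundary conditions rather than just cylinders.
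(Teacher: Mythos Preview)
Your confinement strategy is genuinely different from the paper's. The paper does \emph{not} run a maximum principle for the closed-open configurations; instead it moves the base point $b$ to infinity inside $V_+$ along a ray with fixed $(H_{F_1},H_{F_2})$, observes that the geometric energy bound $\omega(B)$ stays constant under this motion, and then invokes the target-local, domain-global estimate (Lemma~\ref{lmTLDG}) to trap $u_{co}$ in a large ball $\Ll^{-1}(B_R(b))$ that never sees $\crit(\Ll)$. This sidesteps exactly the obstacle you flag at the end: no special holomorphicity of the interpolating data near the wall is required, only uniform Lipschitz/Lyapunov bounds, so regularity is achieved by the generic perturbations already in place. Your maximum-principle route in the style of Lemma~\ref{lmSH0geq0} is plausible, but you would have to keep $\pi$ strictly $J_z$-holomorphic on the preimage of a strip for \emph{every} $z$ in $D_{1,0}$ and $D_{1,1}$ while still achieving transversality; the paper's energy-diameter argument avoids this.

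There is a genuine gap in your determination of the coefficient. You assert that the basis classes $x_{[\gamma]}$ are invertible in $SH^{0,+}$, but by Theorem~\ref{lmChainSH} one has $SH^{0,+}\simeq SH^{0,0}\otimes\Lambda_{nov}[u^{\gamma}]$, a polynomial (not Laurent) extension, so a non-contractible generator is \emph{not} invertible there, and the conclusion that $h_+(x)$ must be a monomial with unit coefficient does not follow from algebra alone. The paper obtains the coefficient by a different device: once the solution is trapped in $\Ll^{-1}(B_R(b))$, it embeds this ball into $T^*\T^n$ via action-angle coordinates, extends the Floer data, and uses that in $SH^0(T^*\T^n)$ over $\Z$ the element $x$ \emph{is} invertible (Laurent polynomials on a torus). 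If you want to salvage your argument, the fix is the same: after confinement to $U_+$ and embedding in $T^*G\times T^*S^1$, invoke invertibility in $SH^0$ of the cotangent bundle rather than in $SH^{0,+}(M)$.
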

\begin{proof}
We continue with the notation of the proof of Theorem \ref{tmCOAn}.  Fix a class $A\in\pi(M,L_b,\gamma)$. $A$ determines a choice of class in $\pi(M,L_{b'}(\gamma))$ for each $b'\in V_+$ which we still denote by $A$. For simplicity take $x$ to be a non-contractible class, the contractible case requires some easy adjustment which we leave for the reader when we invoke Theorem \ref{lmChainSH}. Assume we have chosen a Hamiltonian $H\in \mathcal{H}'_{\mathcal{B}_+}$ as introduced in Lemma \ref{lmChainSH} and such that $\gamma$ occurs on $L_b$. Associated with $A$ is a coefficient $n_A$ in the expansion of $h_+(x)$ which is obtained by a sum of weighted configurations containing among other things a closed-open trajectory $u_{co}$ with $\gamma$ on the interior puncture and with boundary on $L_{b}$. The topological energy of $u_{co}$ is given by
\[
E_{top}(u_{co})=-\int_0^1 H_t(\gamma(t))+\int_0^1 H_{b,t}(x_e)+u^*_{co}\omega.
\]
Moreover, if we take $H_{b,t}>H_{t}$ we can choose non-negative interpolating data on $D_{1,1}$ so that we have
\[
\int_{D_{1,1}}\|du_{co}^{0,1}\|^2\leq E_{top}(u_{co}).
\]
We can further choose $|H_{b,t}-H_{t}|$ to be arbitrarily small. So the geometric energy is essentially controlled by the  term $u^*_{co}\omega$. Since $\partial (u^*_{co})\subset L_{b}$, we can deform $[u_{co}]$ into a connected sum of the loop $\gamma$ with some element $B\in\pi(M,L_{b})$ without changing the integral of $\omega$. Thus the geometric energy is essentially controlled by $\omega(B)$. If we apply a continuation map to another Hamiltonian $H'\in \mathcal{H}_{\mathcal{B_+}}
$ then by Lemma \ref{lmChainSH} $x$ is represented by $t^E\gamma'$ where $E$ is some real number and $\gamma'$ is a periodic orbit occurring on $L_{b'}$ for some other $b'\in V_+$ and such that $\gamma'$ corresponds to $\gamma$ under the identification of $L_b\sim L_{b'}$ by a path in $V_+$. The coefficient corresponding to $n_A$ can thus be represented as a count of configurations containing an open closed trajectory $u'_{co}$ occurring on $L_{b'}$ and homotopic to the concatenation of $\gamma'$ and a class $B'$. The class $B'$ is obtained from $B$ by concatenating the cylinder obtained by transporting $\gamma$ to $\gamma'$ along a path in $V_+$.  We can move $b\to\infty$ inside $V_+$ while keeping the coordinates $(H_{F_1},H_{F_2})$ constant. For such paths, the term $\omega(B')$ which controls the geometric energy remains constant. Finally, for any $R$, and for $b$ such that $d(b,W)>2R$  we may choose $H$ to so that it satisfies a bound of the form $d(\psi^H_1(x),x)>\epsilon$ for all $x\in \Ll^{-1}(B_R(b)\setminus B_{R/2}(b))$ while keeping $H$ uniformly Lipschitz and Lyapunov and without introducing new periodic orbits. For $R$ large enough, this guarantees by Lemma \ref{lmTLDG} that
\begin{equation}\label{eqDaumUCOEst}
u'_{co}\subset \Ll^{-1}(B_R(b)).
\end{equation}
We deduce from this, first, that $B$ must be constant. In particular,
\[
h_+(\alpha)=c z_{\gamma,b,V_+}.
\]
Second, we use this to show that $c=t^{-\int_0^1H_t(\gamma(t))}$. For this, embed $\Ll^{-1}(B_R(b))$ into $T^*G$ using action angle coordinates, extend the Floer data to all of $T^*G$ as in the proof of Lemma \ref{lmRoughform} while preserving all geometric bounds on the Floer data and without introducing new orbits. By \eqref{eqDaumUCOEst} it now suffices to show the corresponding claim about $c$ for $T^*G$. For this observe that the $\mathcal{CO}:SH^0(T^*G)\to HW^0(G)$, where $G$ is embedded as the $0$ section, an unital algebra map and that working  with $\Z$ coefficients (which we can, since the zero section is exact) $x$ is invertible in $SH^0(T^*G)$ over $\Z$.
\end{proof}

We now consider $y\in SH^{0,<0}$ and study the expansion of $h_+(y)$. We fix an $H\in\mathcal{H}_{\mathcal{B_-}}$ with large enough slope so that $y$ is  represented by the periodic orbit $\gamma$ for some $[\gamma_0]\in H_1(U_-;\Z)$ occurring at the fiber over some $b\in V_-$. Let $W=B_{reg}\setminus (V_+\cup V_-)$. Observe that  the map $f_{\mu}:B\to \mathfrak{g}^*$ restricts to an open and dense embedding of $W$ into  $\mathfrak{g}^*$. For each $\alpha\in\Delta^*$ let $W_\alpha$ be the corresponding component of $W\setminus\Delta$. Write $V_\alpha:=V_-\cup W_{\alpha}\cup V_+$.  Fix a point $b_-\in V_+$. Each $\alpha$ singles out a distinguished element $A_\alpha$ of $\pi(M,L_b;\gamma)$ by the requirement that it can be represented by a cylinder contained in $U_\alpha:=\Ll^{-1}(V_\alpha)$. Note that $\pi(M,L_b;\gamma)$ is a torsor over $\pi_2(M,L_b)$. Each $\alpha$ also singles out a semi-group $C_\alpha\subset\pi_2(M,L_b)$ consisting of elements $D$ for which, under the identification $\pi(M,L_{b'})=\pi_2(M,L_b)$ for any $b,b'\in V_\alpha$, we have that, if $D$ is non-trivial, $f_{b,\partial D,V_\alpha}(w)\to\infty$ as the distance of the point $w$ from the closed subset $\partial W_\alpha\subset W$ goes to $\infty$. Here we endow $W$ with a metric by identifying it with $\mathfrak{g}^*$ using the map $f_{\mu}$ and choosing any inner product on $\mathfrak{g}^*$. Note that $C_\alpha$ is invariant under the action of $\pi_2(M)$ on $\pi_2(M,L_b)$.
\begin{lm}\label{lmPositivCone}
Write $h_+(y)=\sum_{A\in \pi(M,L_b;\gamma_0)}c_At^{\omega(A)}z_{b,\partial A,V_+}.$
If $c_A\neq 0$ then
\[
A\in \cap_{\alpha\in\Delta^*}A_\alpha+C_\alpha.
\]
\end{lm}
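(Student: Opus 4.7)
The plan is to obtain Lemma~\ref{lmPositivCone} from the uniqueness of analytic expansions, after extending $h_+(y)$ to an analytic function on all of $V_\alpha$ for each $\alpha$. The essential observation is that one can move through the connected set $\Pi$ of Landau-Ginzburg potentials: for each $\alpha \in \Delta^*$ I would choose $(\pi_\alpha, J_\alpha) \in \Pi$ whose ``positive region'' $\{\re \pi_\alpha > 0\}$ is precisely $V_\alpha$, which is possible because $W_\alpha \subset B_{reg}\setminus\Delta$ and the family of LG potentials is rich enough to rotate the wall onto the opposite side of $W_\alpha$. With respect to $(\pi_\alpha, J_\alpha)$, the fibers $L_{b'}$ for $b' \in V_\alpha$ become tautologically unobstructed, so the argument of Theorem~\ref{tmCOAn} applies verbatim and produces an analytic function $\tilde h^{(\alpha)}(y)$ on $V_\alpha$, defined by the same relation $\mathcal{CO}_{L_{b'}}(y) = \tilde h^{(\alpha)}(y)(b')\cdot 1_{L_{b'}}$.

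The second step is to identify $\tilde h^{(\alpha)}(y)$ with $h_+(y)$ on the overlap $V_+ \subset V_\alpha$. This holds because the scalar $\mathcal{CO}_{L_{b'}}(y)/1_{L_{b'}} \in \Lambda_{nov}$ is intrinsic to $L_{b'}$ and the Hamiltonian Floer datum on it, independent of the auxiliary choice of LG potential (both potentials make $L_{b'}$ admissible for $b' \in V_+$). Then the uniqueness lemma for convergent monomial expansions, applied on any open subset of $V_+$, forces the coefficients in the $V_\alpha$ expansion of $\tilde h^{(\alpha)}(y)$ at a basepoint $b \in V_+$ to coincide with those of the $V_+$ expansion of $h_+(y)$; indeed, the affine functions $f_{b, \partial A, V_+}$ and $f_{b, \partial A, V_\alpha}$ agree on $V_+$ since $V_\alpha$ carries no extra monodromy (the skeleton $\Delta$ is not crossed inside $V_\alpha$).

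The third step extracts the cone condition from convergence. For each $A \in \pi(M, L_b;\gamma_0)$, write $A = A_\alpha + D$ with $D \in \pi(M, L_b)$, giving
\[
\omega(A) + f_{b, \partial A, V_\alpha}(x) = \bigl(\omega(A_\alpha) + f_{b, \partial A_\alpha, V_\alpha}(x)\bigr) + \bigl(\omega(D) + f_{b, \partial D, V_\alpha}(x)\bigr).
\]
The first summand is bounded on compact subsets of $V_\alpha$. Suppose $c_A \neq 0$ for some $A$ with $D \notin C_\alpha$: then by definition there is a sequence $w_n \to \infty$ in $W_\alpha$ along which $f_{b, \partial D, V_\alpha}(w_n)$ does not tend to $+\infty$. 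Since distinct pairs $(\partial A, \omega(A))$ contribute linearly independent monomials (any collision of monomials merely sums the integer coefficients, by Remark~\ref{remIntNA}), such a term survives, and its valuation is bounded above along $w_n$. Iterating this with multiples $kD$ coming from genuine Floer configurations, or more simply examining the behavior of the whole sub-series attached to the ray $\R_{\geq 0}\cdot D$, produces a sub-series whose term valuations do not diverge at $w_n$, contradicting the convergence of $\tilde h^{(\alpha)}(y)$ at $w_n$. Hence $D \in C_\alpha$, i.e., $A \in A_\alpha + C_\alpha$; taking the intersection over $\alpha$ finishes the proof.

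The step I expect to be the main obstacle is the first one: constructing, within $\Pi$, an LG potential $(\pi_\alpha, J_\alpha)$ whose positive region is exactly $V_\alpha$ and whose associated $J_\alpha$ remains $(\Ll,\sigma)$-adapted and geometrically bounded. The connectedness of $\Pi$ established in Appendix~\ref{AppA} ought to make this feasible—the relevant deformations of $\pi$ amount to reparametrizing by an element of $\mathrm{Aut}(\mathbb{P}^1)$ away from $\crit(\Ll)$—but one must verify that crossing the wall $W_\alpha$ by a one-parameter family of LG potentials does not introduce disc bubbling of Maslov zero type on $\sigma$, which is what ultimately justifies identifying $\tilde h^{(\alpha)}(y)$ with the analytic continuation of $h_+(y)$ rather than a wall-crossing modification thereof. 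The other steps, including the Gromov-compactness argument producing the analytic expansion and the uniqueness argument, are direct variations on the proof of Theorem~\ref{tmCOAn}.
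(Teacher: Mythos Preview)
Your approach is genuinely different from the paper's, but it contains a real gap and a more fundamental inconsistency.

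\textbf{The convergence argument (Step 3) does not yield the cone condition.} Analyticity of $\tilde h^{(\alpha)}(y)$ on $V_\alpha$ means that at every point $w$ the valuations of the non-zero terms tend to $+\infty$. A single term $c_A t^{\omega(A)} z_{b,\partial A,V_\alpha}$ with $D=A-A_\alpha\notin C_\alpha$ has finite valuation at each $w$; this is perfectly compatible with convergence. Your appeal to ``multiples $kD$'' or to ``the sub-series attached to the ray $\R_{\geq 0}\cdot D$'' is unjustified: nothing guarantees that $A_\alpha+kD$ appears with non-zero coefficient for infinitely many $k$. Convergence constrains the asymptotic density of the support, not the location of any individual term.

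\textbf{Step 2 is internally inconsistent with the rest of the argument.} Suppose the identifications you want really held: $\tilde h^{(\alpha)}(y)=h_+(y)$ on $V_+$ and, by the same reasoning, $\tilde h^{(\alpha)}(y)=h_-(y)$ on $V_-$. By Lemma~\ref{lmSame}, $h_-(y)$ is a single monomial on $V_-$; its unique analytic continuation to $V_\alpha$ is that same monomial, so $h_+(y)$ would be a single monomial on $V_+$ --- the monomial $z_{b,\partial A_\alpha,V_+}$ up to a scalar. Running this for two distinct $\alpha$'s gives two distinct monomials (they differ exactly by the monodromy), a contradiction. The resolution is that the closed-open map \emph{does} depend on the LG potential: along a path in $\Pi$ from $(\pi_{LG},J_{LG})$ to $(\pi_\alpha,J_\alpha)$ the wall sweeps across $L_{b'}$, Maslov-zero disk bubbles appear in the one-parameter moduli space, and the closed-open counts jump. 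This wall-crossing is precisely what the lemma is measuring, so you cannot bypass it by rotating the potential.

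\textbf{What the paper does instead.} The paper keeps the LG potential fixed and argues exactly as in Lemma~\ref{lmSame}, but now moving $b$ to infinity inside $V_+$ \emph{parallel to the wall $W_\alpha$} (constant $(H_{F_1},H_{F_2})$-distance to $W$). Decomposing $[u_{co}]$ as a cylinder through $W_\alpha$ concatenated with $B\in\pi(M,L_b)$, the geometric energy is controlled by $\omega(B)$, which is constant along this motion. The target-local, domain-global estimate (Lemma~\ref{lmTLDG}) then forces $\omega(B')\to\infty$ as $b$ escapes along $W_\alpha$ for any non-trivial $B$, which is exactly the definition of $B\in C_\alpha$. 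No change of LG potential, no analytic-continuation argument, is needed.
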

\begin{proof}
The argument is very similar to that of Lemma \ref{lmSame}. This time, for each $b\in V_+$ near the wall $W_\alpha$, we choose an $H$ with representative $\gamma$ of $x$ which lies on a Lagrangian $L_{b_-}$ for some $b-\in V_-$ such that $b_-$ and $b$ have the same $(H_{F_1},H_{F_2})$ coordinates and $d(b_-,W)$ is uniformly bounded.  For  any closed open trajectory $u_{co}$, we can then deform the class  $[u_{co}]$ to the connected sum of a cylinder $C$ obtained by transporting $\gamma$ from $b_-$ to $b$ and some class $B\in\pi_2(M,L)$. As before, for fixed $A$, the class $B$ is fixed as we move $b$ around in $V_+$. If we keep $b$ a fixed distance to the wall $W$, the topological energy will be controlled  by $\omega(B)$.  Thus $B$ must have the property that $\omega(B)\to\infty$ as $d(b,\partial W)\to\infty$ or otherwise be trivial. The claim now follows as in Lemma \ref{lmSame}.
\end{proof}
\begin{lm}\label{lmGlobalConst}
For any $\alpha$ identify the classes $\pi(M,L_b,\gamma_0)$ on both sides of the wall by transport through $V_\alpha$. Then, under this identification, the coefficient $c_{A_\alpha}$ defined in Lemma \ref{lmPositivCone} is globally constant on $V_+\cup V_-$.
\end{lm}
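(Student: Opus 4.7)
The plan is to mimic the localization strategy of Lemma~\ref{lmSame}, using a target-local, domain-global diameter estimate to confine the closed-open configurations contributing to $c_{A_\alpha}$ to the invariant region $U_\alpha := \Ll^{-1}(V_\alpha)$, and then to carry out the computation in a toric model in which the answer is manifestly independent of which side of the wall we are on.

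First I would fix a Hamiltonian $H \in \mathcal{H}_{\mathcal{B}_-}$ representing $y$ by a single orbit $\gamma_0$ on a fiber $L_{b_-}$ with $b_-\in V_-$ close to the wall segment $\partial W_\alpha$. Pick $b \in V_+$ with the same $(H_{F_1},H_{F_2})$-coordinates as $b_-$, and an intersection Hamiltonian $H_b \ge H$ with $|H_b-H|$ uniformly small. Exactly as in the proof of Lemma~\ref{lmSame}, one can arrange that $H$ displaces $L_b$ by a uniform amount outside any prescribed $(H_{F_1},H_{F_2})$-tube around $\gamma_0$, while remaining uniformly Lipschitz and Lyapunov and without introducing new chords or periodic orbits.

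Second, combining Lemma~\ref{lmTLDG} with the positivity constraint of Lemma~\ref{lmPositivCone}, for any $E>0$ one may choose the data so that any closed-open trajectory of geometric energy at most $\omega(A_\alpha)+E$ representing a class in $A_\alpha + \pi_2(M)$ is entirely contained in $\Ll^{-1}(B_R(b_-)) \cap U_\alpha$ for some $R=R(E)$. Classes outside $A_\alpha + C_\alpha$ are excluded by Lemma~\ref{lmPositivCone}; classes whose $C_\alpha$-component is non-trivial carry too much area by the definition of $C_\alpha$; the surviving contributions are forced into $U_\alpha$.

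Third, inside $U_\alpha$ the fibration is regular and $V_\alpha$ can be chosen to be simply connected, so action–angle coordinates provide an embedding $\phi: U_\alpha \hookrightarrow T^*G \times T^*S^1$ sending $\sigma$ to the zero section, as in the proofs of Lemmas~\ref{lmRoughform} and~\ref{lmSame}. Extending the Floer data across this embedding in the standard way, without introducing new generators inside the image, the coefficient $c_{A_\alpha}$ equals the analogous closed-open coefficient for the cotangent fiber of $T^*G \times T^*S^1$, which by exactness is a fixed scalar depending only on the action of $\gamma_0$. Running the same argument starting from $b'\in V_-$ yields the same value, which by Lemma~\ref{lmSame} coincides with the unique coefficient of $h_-(y)$. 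Combined with the already-established local constancy of $c_{A_\alpha}$ on each of $V_\pm$ from the analyticity of $h_\pm(y)$, this gives global constancy on $V_+\cup V_-$ under the identification through $V_\alpha$.

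The hard part will be the target-local, domain-global estimate: applying Lemma~\ref{lmTLDG} requires Floer data strictly admissible outside a thin slab of $V_\alpha$ around $\gamma_0$, yet preserving the asymptotic chord structure near $\gamma_0$ and avoiding the creation of spurious chords or orbits. The careful tuning of $H$, $H_b$, and the interpolating datum on $D_{1,1}$ needed simultaneously to confine configurations to $U_\alpha$, to exclude new generators, and to remain compatible with the LG potential across $W_\alpha$ is the main technical obstacle, and generalizes the corresponding step in Lemma~\ref{lmSame} from a tube around a single fiber to a slab containing the wall piece $W_\alpha$.
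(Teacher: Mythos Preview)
Your proposal is sound in outline but takes a considerably longer route than the paper's three-line argument. The paper works directly on the wall: for $b\in W_\alpha$ one chooses $H$ so that the orbit $\gamma$ is close enough to $L_b$ that the topological energy of the class $A_\alpha$ lies strictly below the area of the smallest $J$-holomorphic disk with boundary on $L_b$. At that energy scale disk bubbling cannot occur, so the coefficient $c_{A_\alpha}$ is well defined even for $b\in W_\alpha$ and is locally constant on a neighborhood of the wall; combined with the local constancy on $V_\pm$ already furnished by analyticity, this gives global constancy on the connected set $V_\alpha$.

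Your strategy instead avoids the wall entirely, confining the $A_\alpha$-contributions to $U_\alpha$ via Lemma~\ref{lmTLDG} and transplanting to the exact model $T^*G\times T^*S^1$, where a single monomial is forced. This is a legitimate alternative and amounts to rerunning Lemma~\ref{lmSame} on the larger simply connected domain $V_\alpha$ rather than on $V_\pm$ separately. Both arguments ultimately rest on the same observation---that $A_\alpha$ is the minimal-energy class near $W_\alpha$---but the paper exploits it to suppress bubbling directly, which is shorter and sidesteps the embedding and extension steps you flag as the ``main technical obstacle''. One small slip: in your third paragraph the Lagrangian $L_b$ appears in the action--angle model as a torus fiber (a translate of the zero section), not as a cotangent fiber.
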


\begin{proof}

For any $b\in W_\alpha$, we may choose $H$ so that $\gamma$ is so close to $L_b$ that the topological energy associated with the $B$ corresponding to $A_\alpha$ is less than the energy of the smallest disk with boundary on $b$. Thus the coefficient $c_{A_\alpha}$ is well defined on $W_\alpha$ and is locally constant on a neighborhood thereof. Since $c_{A_\alpha}$ is also locally constant on the complement of $W_\alpha$, it follows that it is globally constant on $V_\alpha$.
\end{proof}

\begin{cy}\label{cyNAAlpha1}
 For any $\alpha\in \Delta^*$ we have $c_{A_\alpha}=t^{E}$ for some number $E$ independent of $\alpha$.
\end{cy}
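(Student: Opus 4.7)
The plan is to combine Lemma \ref{lmGlobalConst}, which already established that $c_{A_\alpha}$ extends to a constant on $V_+\cup V_-$ under the wall-crossing identification through $V_\alpha$, with the second half of Lemma \ref{lmSame}, which describes $h_-(y)$ for $y\in SH^{0,-}$ as a single monomial. Since the constant value of $c_{A_\alpha}$ can be read off from either side of the wall, I will compute it on the $V_-$ side, where the expression collapses to one term.

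Concretely, since $y\in SH^{0,<0}$ corresponds under Theorem \ref{lmChainSH} to a class $[\gamma_0]\in H_1(U_-;\Z)$, the $V_-$ analogue of Lemma \ref{lmSame} gives
\[
h_-(y) = t^E z_{[\gamma_0], b_-, V_-},
\]
for a real number $E$ depending only on $y$. Re-expressing this monomial in the format of Lemma \ref{lmPositivCone} with base point $b_-\in V_-$, the unique nonzero coefficient in the formal sum comes from the class $A'_-\in\pi(M,L_{b_-};\gamma_0)$ represented by a cylinder contained in $U_-$, and matching monomials shows that this coefficient is a pure power of $t$.

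To conclude, I will verify that for every $\alpha\in\Delta^*$, transport through $V_\alpha$ identifies the class $A_\alpha$ on the $V_+$ side with exactly this class $A'_-$ on the $V_-$ side. This is because the cylinder representing $A_\alpha$ is already contained in $U_\alpha\supset U_-$, so concatenating with the transport cylinder through $V_\alpha$ produces a cylinder that lies in $U_-$, i.e., the distinguished class $A'_-$, independently of $\alpha$. Combined with Lemma \ref{lmGlobalConst} this forces $c_{A_\alpha}$ to equal the same pure $t$-power $t^E$ for every $\alpha$.

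The main subtlety to handle carefully is the bookkeeping of $\omega$-area: I must ensure that $\omega(A_\alpha)$ and $\omega(A'_-)$ transform compatibly under the identification through $V_\alpha$, so that what reappears on the $V_-$ side as the coefficient of $z_{b_-,\partial A'_-,V_-}$ after extracting the $t^{\omega(\cdot)}$ factor is genuinely $c_{A_\alpha}$ and nothing is inadvertently absorbed into the exponent. Everything else is essentially a direct consequence of Lemmas \ref{lmSame}, \ref{lmPositivCone} and \ref{lmGlobalConst}.
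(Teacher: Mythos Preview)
Your approach is essentially the paper's own: the paper's proof reads in full ``Combine Lemmas \ref{lmSame} and \ref{lmGlobalConst}'', and your proposal is a correct fleshing-out of exactly that combination. One small imprecision: the concatenated cylinder lies in $U_\alpha$, not literally in $U_-$; the point is that $V_\alpha$ is simply connected, so any cylinder in $U_\alpha$ with both ends over $V_-$ is homotopic rel boundary to one in $U_-$, which is what identifies the transported $A_\alpha$ with the single class appearing in $h_-(y)$.
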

\begin{proof}
Combine Lemmas \ref{lmSame} and \ref{lmGlobalConst}
\end{proof}

We proceed compute the intersection
\[
\cap_{\alpha\in\Delta^*}(A_\alpha+C_\alpha^+).
\]
We have a map $\partial: \pi_2(M,L)\to H_1(M;\Z)$ defined by taking the boundary. We have that the image of $\partial$ is $\Gamma\subset H_1(M;\Z)$. For the torus $G$ we have the isomorphism $i:\Gamma\to \mathfrak{g}_\Z$. Denote by  $\mathfrak{g}_\alpha\subset\mathfrak{g}$ the cone generated over $\N\cup\{0\}$ by the vectors from $\alpha$ to its adjacent vertices.
\begin{lm}\label{lmIntCones}
There is an $A\in \pi(M,L_b;\gamma_0)$ such that
\[
\cap_{\alpha\in\Delta^*}(A_\alpha+C_\alpha)=A+(i\circ\partial)^{-1}\Delta^*.
\]
\end{lm}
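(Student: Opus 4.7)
The plan is to reduce the statement to a purely combinatorial identity in the lattice $\mathfrak{g}_\Z$. First I would rewrite $C_\alpha$ explicitly in terms of $i\circ\partial$. For $D\in\pi(M,L_b)$, we have $\partial D\in\Gamma$, and the function $f_{b,\partial D,V_\alpha}$ is affine linear: since the action coordinate associated to $\partial D\in\Gamma$ is $\langle i(\partial D),\mu\rangle$, one has
\[
f_{b,\partial D,V_\alpha}(w)=\langle i(\partial D),\,f_\mu(w)-f_\mu(b)\rangle.
\]
The defining growth condition in $W_\alpha$ therefore translates to $\langle i(\partial D),y\rangle\to\infty$ as $y\to\infty$ in the face $F_\alpha=f_\mu(W_\alpha)$, and Lemma~\ref{lmDualGraph} identifies the set of such lattice elements with the cone $\mathfrak{g}_\alpha$ generated over $\N\cup\{0\}$ by the edges of $\Delta^*$ emanating from $\alpha$. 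Hence $C_\alpha=(i\circ\partial)^{-1}(\mathfrak{g}_\alpha)$.

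Next I would analyze how the reference classes $A_\alpha$ compare within the torsor $\pi(M,L_b;\gamma_0)$. The difference $A_\alpha-A_\beta\in\pi(M,L_b)$ is represented topologically by the cylinder in $L_b$ between the two parallel transports of $\gamma_0$ through $V_\alpha$ and $V_\beta$; its boundary in $H_1(L_b)$ is the monodromy difference, which by the very construction of $\Delta^*$ (namely $v_{\alpha\beta}=(\rho(\gamma_{\alpha\beta})-\id)g_0$ and additivity along a spanning tree) equals $v_{\beta\alpha}\in\Gamma$, mapping under $i$ to $\alpha-\beta\in\mathfrak{g}_\Z$. Taking $\alpha_0$ to be the root used when embedding $\Delta^*\hookrightarrow\mathfrak{g}$ and setting $A:=A_{\alpha_0}$, we conclude $(i\circ\partial)(A_\alpha-A)=\alpha$ for every $\alpha\in\Delta^*$. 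Consequently
\[
A_\alpha+C_\alpha=\{D\in\pi(M,L_b;\gamma_0):(i\circ\partial)(D-A)\in\alpha+\mathfrak{g}_\alpha\},
\]
and intersecting over $\alpha$ reduces the lemma to the combinatorial identity
\[
\bigcap_{\alpha\in\Delta^*}(\alpha+\mathfrak{g}_\alpha)=\Delta^*\qquad\text{in }\mathfrak{g}_\Z.
\]

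The inclusion $\supset$ is elementary: for any $\beta,\alpha\in\Delta^*$, either $\alpha$ lies in the interior of the triangulation, in which case its adjacent vertices span $\mathfrak{g}$ as a convex cone and $\mathfrak{g}_\alpha=\mathfrak{g}$, or $\alpha$ lies on the boundary of the Newton polygon $N$ obtained as the convex hull of $\Delta^*$, in which case $\mathfrak{g}_\alpha$ is the inward normal cone of $N$ at $\alpha$ (Lemma~\ref{lmDualGraph}) and therefore contains $N-\alpha\ni\beta-\alpha$. For the reverse inclusion, the real intersection $\bigcap_\alpha(\alpha+\mathfrak{g}_\alpha)$ in $\mathfrak{g}$ equals $N$, because the inward cones at boundary vertices cut out exactly the half-plane inequalities defining $N$ while interior vertices impose no constraint; and since $\Delta^*$ is a unimodular triangulation (every triangle has area $1/2$, as shown in the lemma preceding Corollary~\ref{corMuSmooth}), the only lattice points of $N$ are the vertices $\Delta^*$ themselves. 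Combining these observations yields
\[
\bigcap_{\alpha}(A_\alpha+C_\alpha)=\{D:(i\circ\partial)(D-A)\in\Delta^*\}=A+(i\circ\partial)^{-1}(\Delta^*),
\]
as required.

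The main subtlety I anticipate is the monodromy calculation in step two: one must verify carefully that the boundary class of the cylinder representing $A_\alpha-A_\beta$ on $L_b$ really is the dual-edge vector $v_{\beta\alpha}\in\Gamma$ used to embed $\Delta^*\subset\mathfrak{g}$. This requires unwinding the identification between the global angle function $\theta_3$, the generator $g_0\in\Lambda^*_{b_0}/\Gamma$, and the loops in $L_b$ obtained by parallel transporting $\gamma_0$ through $V_\alpha$ versus $V_\beta$; everything else is either a direct application of earlier results or elementary convex-geometric bookkeeping.
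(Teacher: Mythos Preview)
Your proof is correct and follows essentially the same route as the paper: the paper isolates your first two steps as a separate lemma (establishing $A_\alpha+C_\alpha=A+(i\circ\partial)^{-1}(\alpha+\mathfrak{g}_\alpha)$ via Lemma~\ref{lmDualGraph} and the monodromy description of $A_\alpha-A_\beta$), and then proves the combinatorial identity by restricting to the vertices $\Delta^*_0$ with strictly convex cone. Your version of the combinatorial step, passing through the real intersection $N=\mathrm{conv}(\Delta^*)$ and then invoking unimodularity of the triangulation to conclude $N\cap\mathfrak{g}_\Z=\Delta^*$, is a slightly more explicit variant of the paper's one-line ``by convexity'' argument.
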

In other words, the support of $h_+(y)$ is a shift of $\Delta^*$ under obvious identifications.

First we prove
\begin{lm}
We have $A_\alpha+C_\alpha=A+(i\circ\partial)^{-1}(\alpha+\mathfrak{g}_\alpha)$.
\end{lm}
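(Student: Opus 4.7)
My plan is to split the identity into two independent pieces: first, identify $C_\alpha$ with the preimage $(i\circ\partial)^{-1}(\mathfrak{g}_\alpha)$; second, compute the boundary class of $A_\alpha-A_\beta$ to locate $A_\alpha$ modulo $\ker(i\circ\partial)$. Both ingredients come from Lemma~\ref{lmDualGraph}.

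For the first piece, I would unpack the function $f_{b,\partial D,V_\alpha}$ defining $C_\alpha$. Since $V_\alpha\subset B_{reg}$ is simply connected, this function is, by construction, affine linear in integral affine coordinates, with differential equal to $\partial D$ viewed as an element of $\Lambda^*_b\subset T^*_bB$. The requirement that $f_{b,\partial D,V_\alpha}(w)\to\infty$ for $w\in W_\alpha$ running to infinity is therefore exactly the condition that the integral functional $i(\partial D)\in\Gamma\cong\mathfrak{g}_\Z$ lie in the dual cone of the face $F_\alpha$; by the explicit description given in Lemma~\ref{lmDualGraph}, this dual cone coincides with $\mathfrak{g}_\alpha$. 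Thus $C_\alpha=(i\circ\partial)^{-1}(\mathfrak{g}_\alpha)$ as subsemigroups of $\pi(M,L_b)$.

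For the second piece, I would compute $\partial(A_\alpha-A_\beta)\in H_1(L_b;\Z)$ for every pair $\alpha,\beta\in\Delta^*$. A representative of $A_\alpha$ is a cylinder in $U_\alpha$ parallel-transporting $\gamma_0\subset L_{b_-}$ (for some $b_-\in V_-$) to a loop on $L_b$; analogously for $A_\beta$ in $U_\beta$. Concatenating one with the reverse of the other produces an annulus representing $A_\alpha-A_\beta\in\pi(M,L_b)$, and its boundary class on $L_b$ equals the difference of the two parallel transports of $[\gamma_0]$, i.e.\ the monodromy of $[\gamma_0]$ around the loop that traverses $F_\alpha$ upward and $F_\beta$ downward. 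Lemma~\ref{lmDualGraph} evaluates this monodromy as $g_0\mapsto g_0+(\alpha-\beta)$, and so $i(\partial(A_\alpha-A_\beta))=\alpha-\beta$.

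To combine the two pieces, fix any $\tilde A\in\pi(M,L_b;\gamma_0)$ and set $\tilde v_\alpha:=A_\alpha-\tilde A\in\pi(M,L_b)$. The previous paragraph implies $i(\partial\tilde v_\alpha)-\alpha$ is independent of $\alpha$; write its value as $-c\in\mathfrak{g}_\Z$. Using the surjectivity of $\partial\colon\pi(M,L_b)\to\Gamma$, choose $u\in\pi(M,L_b)$ with $i(\partial u)=c$ and set $A:=\tilde A-u$. Then $v_\alpha:=A_\alpha-A$ satisfies $i(\partial v_\alpha)=\alpha$ for every $\alpha\in\Delta^*$, and putting everything together,
\[
A_\alpha+C_\alpha=A+v_\alpha+(i\circ\partial)^{-1}(\mathfrak{g}_\alpha)=A+(i\circ\partial)^{-1}(\alpha+\mathfrak{g}_\alpha),
\]
as claimed. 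I expect the only mild obstacle to be sign and orientation bookkeeping when aligning the loop produced by $A_\alpha-A_\beta$ with the orientation convention used in Lemma~\ref{lmDualGraph}; once this is settled both steps are formal.
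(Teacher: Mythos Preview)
Your proposal is correct and follows essentially the same route as the paper: both identify $C_\alpha$ with $(i\circ\partial)^{-1}(\mathfrak{g}_\alpha)$ by unwinding the flux function $f_{b,\partial D,V_\alpha}$ and invoking Lemma~\ref{lmDualGraph}, and both compute $i\circ\partial(A_\alpha-A_\beta)=\alpha-\beta$ via the monodromy description in that same lemma. The paper's write-up is terser (it simply asserts $i\circ\partial C_\alpha=\mathfrak{g}_\alpha$ and then appeals to invariance of $C_\alpha$ under $\ker\partial=\pi_2(M)$ to get the preimage statement), whereas you spell out the construction of $A$ explicitly, but the logical content is the same.
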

\begin{proof}
Unwinding definitions, the function $f_{b,\partial D,V_\alpha}(w)$ is, up to a constant, just the evaluation of $f_\mu(w)$ on  $i\circ\partial D$.
So, by Lemma \ref{lmDualGraph} we have $i\circ\partial C_\alpha=\mathfrak{g}_\alpha$. Moreover, Lemma \ref{lmDualGraph} implies that for an appropriate $A$ independent of $\alpha$, we have  $i\circ(\partial A_\alpha-\partial A)=\alpha$. To see this note that $A_\alpha$ and $A_\beta$ are related by the monodromy. The claim of the lemma now follows by the invariance of $C_\alpha$ under the action of $\ker\partial =\pi_2(M)$.
\end{proof}

\begin{proof}[Proof of Lemma \ref{lmIntCones}]
Let $\Delta^*_0$ be the vertices $\alpha$ for which the cone emanating from $\alpha$ is strictly convex. Then by convexity we have
\[
\Delta^*=\cap_{\alpha\in\Delta^*_0}(\alpha+C_\alpha).
\]
Intersecting further by the cones which are non strictly convex has no effect. Indeed for interior vertices the cone is the entire plane. For vertices in the interior of the edges, the cone is a half plane bordered by the given edge.
\end{proof}

Before proceeding to the proof we recap what we have established so far. 
\begin{itemize}
\item We have constructed, using the closed open map, a pair of injective homomorphisms $h_{\pm}$ from the unknown algebra $SH^0(M;\mathcal{L})$ to the algebras $\mathcal{A}_{V_{\pm}}$. The latter algebras are known. They are each isomorphic to some completion of the algebra of Laurent polynomials over the Novikov ring. A choice of isomorphism is induced by picking a basis for $H_1(L_{b_{\pm}};\mathbb{Z})$ for $b_{\pm}\in V_{\pm}$ any basepoint.
\item While we have not directly computed the homomorphisms $h_{\pm}$ on all of $SH^0(M;\mathcal{L})$ we have computed their restrictions to $SH^{0,\pm}$ respectively and found these to be the inclusion of the polynomial algebra into the Laurent polynomials. In particular, given generators $x,y$ for $SH^{0,+}$ and $SH^{0,-}$ respectively over $SH^{0,0}$, the key to finding the Laurent polynomial $xy$ is to compute either $h_-(x)$ or $h_+(y)$. Indeed,  we will be able to determine $xy$ from the equation $h_+(xy)= h_+(x)h_+(y). $
\item
Within $H_1(L_b;\mathbb{Z})$ there is a distinguished rank $2$ submodule corresponding to elements which are contractible in $M$. This submodule is canonically identified with the integral lattice in $\mathfrak{g}$. Under this identification, we find that the newton polygon of the polynomial $h_-(x)$ is, up to a shift, given by the dual graph $\Delta^*$.  To understand this note that the vertices of $\Delta^*$ are in bijection with components of the wall $W$. Moreover, for each component $W_\alpha$ of the wall we obtain an identification 
\[
i_{\alpha}:H_1(L_{b_-};\mathbb{Z})=H_1(L_{b_+};\mathbb{Z})
\]
by parallel transport across $W_\alpha$. This induces for each $\alpha$ a separate identification between the algebras $\mathcal{A}_{V_{+}}$ and $\mathcal{A}_{V_{-}}$ and in particular, a separate way to compare $h_+(x)$ and $h_-(x)$. From this we deduce in the proof below the claim concerning the Newton polygon:  On the one hand, for each component of the wall we get a non-zero monomial in the expansion of $h_-(x)$ by constancy of the leading term across each wall. See Lemmas \ref{lmSame} and \ref{lmGlobalConst} for the precise argument. On the other hand  Lemma \ref{lmIntCones} establishes that there can be no other monomials in the expansion. Namely, for any element $\gamma\in H_1(L_{b};\mathbb{Z})$ which  is contractible in $M$ and does not lie on  the given polygon, we can make the distance of $b$ to the discriminant locus arbitrarily large while the flux of $\gamma$ is kept constant. But by energy diameter considerations there can be no such contribution to the closed open map.

\end{itemize}

\begin{proof}[Proof of Theorem~\ref{tmLaurent0}]
Pick any generator $x$ of $SH^{0,+}$ over $SH^{0,0}$. Using a Hamiltonian $H_+\in\mathcal{H}_{+}$, there is a periodic orbit $\gamma$ which generates $H_1(M;\Z)$ such that $x$ is represented by $t^{E_+}\langle\gamma\rangle$ for some constant $E_+$. 
By Lemma \ref{lmSame}, there is a $b_+\in V_+$ for which without loss of generality
\[
h_+(x)=t^{E_+} z_{b_+,[\gamma],V_+}.
\]
Pick $y$ as follows. Consider $\Delta^*$ with a choice of a distinguished vertex $\alpha_0$ corresponding to a component $W_{\alpha_0}$ of $W$. Pick a $b_-\in V_-$ and a Hamiltonian $H_-$ generating a periodic orbit $\delta$ on $b_-$ such that $[\delta]=-[\gamma]$ as classes in $H_1(\Ll^{-1}(V_{W_{\alpha_0}});\Z)$. Let $y$ be the corresponding element in $SH^-$ such that
\[
h_-(y)=t^{E_-} z_{b_-,[\delta],V_-}.
\]
By the last three Lemmas,
\[
h_+(y)=t^{E_-}\sum_{\alpha\in\Delta^*}\sum_{A\in\pi(M,L_{b_+},\delta)|[\partial A]=\alpha+[\delta]\in H_1(L_{b_+};\Z)}n_At^{\omega(A)},
\]
where the $n_A$ are introduced in equation \eqref{eqDefn_A}, and are integers. See remark \ref{remIntNA}. We can write $A=A_{W_{\alpha}}\# B_A$ for some $B_A\in H_2(M;\Z)$. Then $\omega(B_A)\geq0$ by the proof of Lemma \ref{lmGlobalConst}. Moreover, We have
\[
t^{\omega(A)}=t^{\omega(B_A)} z_{b_-,[\delta],V_{\alpha}},
\]
and,
\[
 z_{b_-,[\delta],V_{\alpha}}=t^{f(\alpha)} z_{b_-,[\delta]+\alpha,V_{\alpha_0}},
\]
where $f(\alpha)=\langle \alpha,v_{b,\alpha}\rangle$. Here, $v_{b,\alpha}\in \mathfrak{g}^*$ is any vector from the projection of $b_-$  onto $\mathfrak{g}^*$ to the edge $e$ of $\Delta$ annihilated by $\alpha$.

Thus we can rewrite
\[
h_+(y)=t^{E_-}\sum_{\alpha\in\Delta^*}t^{f(\alpha)}\sum_{B\in H_2(M;\Z)|\omega(B)\geq0}n_{A_{W_\alpha}\#B}t^{\omega(B)} z_{b_-,[\delta]+\alpha,V_{\alpha_0}}.
\]
After appropriately rescaling $x$ and $y$, we get
\[
h_+(y)=\sum_{\alpha\in\Delta^*}t^{f(\alpha)}\sum_{B\in H_2(M;\Z)|\omega(B)\geq0}n_{A_{W_\alpha}\#B}t^{\omega(B)} z_{b_+,\alpha,V_{\alpha_0}}.
\]
By Lemma \ref{lmSame} we can find generators $u_1^{\pm},u_2^{\pm}$ of $SH^{0,0}$ such that $h_+(u_i)= z_{b_+,\gamma_i,V_+}$ for $\gamma_i$ generating the kernel of $H_1(L_b;\Z)\to H_1(L_b;M)$. Thus, we have
\[
h_+(xy)=\sum_{\alpha\in\Delta^*}t^{f(\alpha)}\sum_{B\in H_2(M;\Z)|\omega(B)\geq0}n_{A_{W_\alpha}\#B}t^{\omega(B)}h_+(u^{\alpha}).
\]
Note that by Corollary \ref{cyNAAlpha1} the coefficients $n_{A_{W_\alpha}}$ are equal to $1$.
Note that $f(\alpha)$ is well defined up multiplying by a function of the form $t^{\langle\alpha,v\rangle}$ for an arbitrary vector $v\in\mathfrak{g}^*$. This ambiguity can be absorbed by a rescaling of $u_i^{\pm}$.
Since $h_+$ is injective, the claim follows.
\end{proof}
\subsection{Worked example: the focus-focus singularity}\label{SubSecWorkedExample}

We illustrate the proof of Theorem \ref{tmLaurent0} in the case of $n=2$ with a single critical point of focus-focus type. This case has been treated by Pascaleff
\cite{Pasc1} using a different method.

\includegraphics{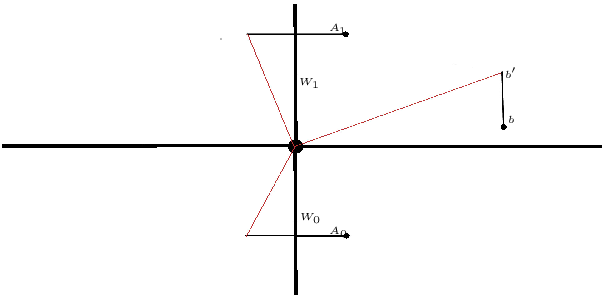}\label{FigWall}

In this case, $B=\R^2$ as depicted in Figure\ref{FigWall}. There is a single critical point. The wall consists of two components $W_0$ and $W_{1}$ emanating from the critical value $b_0$ and transverse to the horizontal lines. Note that the walls are not necessarily affine straight lines. The horizontal axis is monodromy invariant and the lines parallel to it are affine. In fact the horizontal lines are characterized as those along which the flux of the homology classes of $L$ that are contractible in $M$ are constant. The flux of a non-contractible generator locally defines a transverse affine coordinate, but it is not globally defined due to the monodromy around $b_0$.

According to Theorems \ref{lmRoughform} and \ref{tmFloerInt} we have the following description of $SH^0(M)$. The part generated by contractible orbits, $SH^{0,0}$, is the ring of Laurent polynomials over $\Lambda$ with a basis of monomials $u^i,i\in\Z$.  We then have generators $x,y$ corresponding to the two generators of the homology of $M$. For elementary topological reasons it follows that $SH^0=\Lambda[x,y,u,u^{-1}]/xy-g(u)$ where $g$ is some Laurent polynomial. Our goal is to show that, for appropriately chosen generators, we have
\[
g=1+u.
\]
We will deduce this relation from the homomorphisms
\[
h_{\pm}:SH^0(M;\mathcal{L})\to\mathcal{A}_{V_\pm},
\]
defined by equation \eqref{eqCOunit}. The Floer homology is defined with respect to a $J$ for which $b$ is not on a wall. With this choice, the closed open map is defined without choosing a bounding cochain.

On each side of the wall we have the affine coordinates $(b_1,b_2)$ and the corresponding generators $(z_1,z_2)=(t^{b_1},t^{b_2})$ of $\mathcal{A}_{V_\pm}$. The coordinate $b_2$ is well defined globally. We choose $b_1$ so that it is continuous across the upper wall $W_1$. The affine continuation of $b_1$ across the lower wall gives the coordinate $b_1+b_2$ corresponding to the wall crossing transformation $z_1\mapsto z_1z_2$. Pick a non-contractible periodic orbit $\gamma$ whose flux on $V_+$ measures the coordinate $z_1$ up to a constant and denote the corresponding element of $SH^{0,+}$ by $x$. Suppose $x$ is supported on a fiber $L_b$ and that $x$ via the closed open map is evaluated on a nearby fiber $L_{b'}$. Note that the open closed trajectory can be deformed to a connected sum of the cylinder  transporting the homology class of $\gamma$ from $b$ to $b'$ and some disc with boundary on $L_b'$. We represent the cylinder by a line from $b$ to $b'$ and the disc by a line from $b'$ to the origin. Now observe that the symplectic energy is roughly the sum of $d(b,b')$ and the flux of the disc. Therefore, keeping the distance between $b$ and $b'$ fixed while moving $b'$ horizontally to infinity the energy stays constant. Thus, by the energy-diameter estimate, Lemma \ref{lmTLDG}, there can only be the solution corresponding to a segment from $b$ and $b'$. Such a solution necessarily exists by comparison to the case of $T\T^{2*}$. Thus up to a constant, the element $x$ evaluates on $V_+$ as the coordinate $z_1$. Similarly $y$ can be taken to evaluate as the coordinate $z_1^{-1}$ on $V_-$.

Now evaluate $x$ on a $b'\in V_-$. Because of wall crossing, we expect there to be additional closed open solutions. This time, if we wish to keep $d(b,b')$ fixed, we can only move vertically. Consider $b,b'$ in the upper half plane. In the notation of the previous section, $A_1$ in figure \ref{FigWall} corresponds to the segment from $b$ to $b'$. By moving vertically upwards and applying the same energy-diameter argument, we see that the only possible contributions to $x$ at $b$ come from gluing a non-negative multiple of a disk to $A_1$. Let $C_1\subset\pi(M,L_{b'})$ be the cone of discs on which $\omega$ is non-negative. The preceding discussion can be summarized by the statement that the wall $W_1$ imposes the constraint that $x$ act by a polynomial corresponding to an element of $A_1+C_1$. In the coordinates $z_1,z_2$ this is the same as the polynomials generated by $z_1z_2^i,i\geq 0$. Repeating the same reasoning for the lower half plane we find that it must be in $A_0+C_0=A_0-C_1$. Since $A_0$ crosses the negative $y$ axis, there is a monodromy correction. The monomial corresponding to it is $z_1z_2$. Thus we obtain the additional constraint that $x$ evaluates to a polynomial generated by $z_1z_2^{1-i},i\leq 0$. Combining the two conditions, we find that $x$ on $V_-$ is generated by $z_1,z_1z_2$. Moreover, by moving $b,b'$ to $\infty$ (resp. $-\infty$) in the vertical direction, we see that $A_1$ (resp. $A_0$) has a unique local solution, since an arbitrarily large neighborhood is symplectomorphic to an arbitrarily large neighborhood of the zero section in $T^*\T^2$.  Thus = $xy=1+z_2$ on $V_-$. Let $u$ be given by the periodic orbit measuring the flux $z_2$. Then $u$ evaluates to $z_2$ on $V_-$ since $\gamma$ can be taken to occur in $V_-$. We thus obtain the desired relation $xy=1+u$.

\appendix
\section{Tame LG potentials}\label{AppA}
The aim of this appendix is to prove Theorem \ref{tmExLGPot}
\begin{df}
A Landau-Ginzburg function is a function of the form $\pi=e^{H+i\theta}:M\to \C$ together with a diffeomorphism  $\psi_{LG}$ from a  neighborhood $U_0$ of $\crit(\Ll)$ to a  neighborhood $U_1$ of the skeleton in $X_M$ from Theorem \ref{tmNormalTCY} such that
\begin{enumerate}
\item $H=h\circ\Ll$ and is independent of the functions $H_{F_i}$ away from $\Ll^{-1}(\Delta)\subset B$.
\item $\theta:M\to S^1$ is invariant under the action of $G$ and satisfies that $d\theta$ generates $H^1(M;\Z)$ and that $\theta(\Ll^{-1}(\Delta))=Const$.
\item We have $\pi_M|_{U_0}=\pi_X\circ\psi$ and such that $\psi^*J_X$ is tamed by $\omega$.
\end{enumerate}
\end{df}
\begin{lm}\label{lmTameComp}
Let $V$ be a symplectic vector space and let $W\subset V$ be a symplectic hyperplane. Then the set of all complex structures on $V$ which are tamed by $\omega$ and which preserve $W$ is contractible.
\end{lm}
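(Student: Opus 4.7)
The plan is to adapt the Cayley transform argument already used in Lemma \ref{lmEquivCont}, the key observation being that ``preserves $W$'' is a linear condition on endomorphisms, so intersecting the convex parameter space from that lemma with a linear subspace still yields a convex, hence contractible, set.

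First I would check that the space in question is nonempty, which uses that $W$ is a symplectic subspace. Here ``symplectic hyperplane'' must mean a symplectic subspace of complex codimension one, since an odd-dimensional subspace of a symplectic vector space can never be symplectic. Then $V=W\oplus W^\omega$ splits as a symplectic direct sum with $\dim W^\omega=2$, and any direct sum $J_0:=J_W\oplus J_{W^\omega}$ of compatible complex structures on the two summands is a compatible (hence $\omega$-tame) complex structure on $V$ that preserves $W$.

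Fixing such a base point $J_0$, the assignment $J\mapsto \psi(J_0 J^{-1})$ is, as in the proof of Lemma \ref{lmEquivCont}, a diffeomorphism from the space $\mathcal{T}(V)$ of all $\omega$-tame complex structures on $V$ onto an open convex subset $\mathcal{C}$ of the vector space of endomorphisms of $V$ anti-commuting with $J_0$. I would then show that under this diffeomorphism, the locus of $J$'s preserving $W$ corresponds to $\mathcal{C}\cap L$, where $L$ is the linear subspace of endomorphisms preserving $W$. This intersection is convex and nonempty (it contains the image $0$ of $J_0$), hence contractible, which is the conclusion of the lemma.

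The content of the claim is that ``preserves $W$'' is intertwined by each step of the Cayley construction, and this reduces to three elementary checks: since $J_0$ preserves $W$, a tame $J$ preserves $W$ if and only if $Z:=J_0 J^{-1}$ does; when $Z$ preserves $W$, the operator $I+Z$ restricts to an automorphism of $W$, so $(I+Z)^{-1}$ and therefore $\psi(Z)=(I-Z)(I+Z)^{-1}$ also preserve $W$; the converse direction is immediate from $\psi$ being an involution. The main point to keep track of is just that the linear algebra passes cleanly through the Cayley transform; once that is pinned down, convexity of $\mathcal{C}\cap L$ yields contractibility for free.
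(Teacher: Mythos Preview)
Your proof is correct and takes a genuinely different route from the paper's.

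The paper argues directly with the splitting $V = W \oplus W^\omega$: fixing $x \neq 0$ in $W^\omega$, a $W$-preserving complex structure $J$ is determined by its restriction to $W$, its restriction to $W^\omega$, and the $W$-component $y_v$ of $Jx$. The paper writes out an explicit inequality characterizing tameness in these terms and observes that it is preserved under the scaling $y_v \mapsto t y_v$ for $t \in [0,1]$. This gives a deformation retraction onto the split structures, reducing to contractibility of the tame structures on $W$ and on $W^\omega$ separately.

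Your approach is more conceptual and cleaner: you note that ``preserves $W$'' is a linear condition stable under the algebraic operations (composition, inversion, Cayley transform) entering the McDuff--Salamon parametrization already invoked in Lemma~\ref{lmEquivCont}, so the constrained space is the intersection of a convex set with a linear subspace. One small imprecision: for \emph{tame} (as opposed to compatible) structures the Cayley image need not lie in the $J_0$-antilinear endomorphisms. This is harmless, since your argument only uses that the image is convex in some ambient vector space, and intersecting with the linear subspace $L$ of $W$-preserving endomorphisms preserves convexity regardless.

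What the paper's approach buys is an explicit description of the retraction and of how tameness interacts with the off-diagonal term $y_v$; this is in the spirit of the hands-on constructions that follow (e.g.\ Lemma~\ref{lmAdmLGEx}), though the inequality itself is not reused. Your approach buys brevity and makes transparent that the result is a formal consequence of the convexity already established in Lemma~\ref{lmEquivCont}.
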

\begin{proof}
Let $J$ preserve $W$ and let $x\neq0\in W^\omega$. Then $Jx$ splits as $Jx=y_h+y_v$ where $y_h\in W^\omega$ and $y_v\in W$. We claim that $J$ is tamed by $\omega$ if and only if for all $\theta\in[0,2\pi]$ we have
\begin{gather}\label{eqTameComp}
\omega (w,\cos\theta y_v-\sin\theta Jy_v)^2<4\omega(w,Jw)\omega(x,y_h), \\
\omega(w,Jw)>0,\notag\\
\omega(x,y_h)>0,\notag
\end{gather}
for all $w\in W$. To see this, note that tameness is equivalent to the inequality
\begin{equation}\label{eqTameCont11}
\omega(y'_v,w)<\omega(w,Jw)+\omega(x',y'_h)
\end{equation}
for all $x'\in W^\omega$ and $w\in W$. Note that the inequality implies that the right hand side is always positive. Therefore, the left hand side may also be assumed positive by replacing $x'$ with $-x'$ if necessary. Note that $y'_h,y'_v$ are functions of $x'$. Fixing $x'=x$, considering the degree of homogeneity  on each side of \eqref{eqTameCont11} as a function of $w$ we obtain the equivalent inequalities in
\eqref{eqTameComp} with $\theta=0$. Allowing $x'$ to vary is equivalent to taking $x'=\cos\theta x+\sin\theta y_h$for $\theta\in[0,2\pi]$ which gives \eqref{eqTameComp} by a straightforward computation.
Thus, fixing an $x\neq0$, an almost complex structure as required is determined by a choice of tame almost complex structures on $W,W^\omega$ respectively, and adding a mixed term determined by the projection $y_v$ of $Jx$ to $W$ in such a way that \eqref{eqTameComp} is satisfied. The latter is invariant under $y_v\mapsto ty_v$ for $t\in[0,1]$. The condition is thus contractible.
\end{proof}
\begin{lm}
For any Landau Ginzburg function $\pi$ there is an $\omega$-tame almost complex structure $J$ such that $\pi$ is $J$-holomorphic.
\end{lm}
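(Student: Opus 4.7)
My plan is to construct $J$ globally by gluing the specified local choice $\psi_{LG}^*J_X$ near $\crit(\Ll)$ to arbitrary choices elsewhere, using contractibility of the pointwise space of admissible complex structures.

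First I would set $J_0 := \psi_{LG}^* J_X$ on the neighborhood $U_0$ of $\crit(\Ll)$ provided by the definition of a Landau-Ginzburg function. By hypothesis $J_0$ is $\omega$-tame, and since $\pi_M = \pi_X \circ \psi_{LG}$ on $U_0$ with $\pi_X$ being $J_X$-holomorphic on $X_M$, the function $\pi$ is $J_0$-holomorphic on $U_0$.

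Next I would verify that $\pi$ is a submersion away from $\crit(\Ll)$ with symplectic fibers. Near $\crit(\Ll)$, $J_0$-invariance of $\ker d\pi$ combined with $\omega$-tameness of $J_0$ implies that the fibers of $\pi$ are symplectic there. Outside $\crit(\Ll)$, using the explicit form $\pi = He^{i\theta}$ with $H=h\circ\Ll$ pulled back from the base and $\theta$ a $G$-invariant angle, one checks that $dH \wedge d\theta \neq 0$, so $V_x := \ker d_x\pi$ is a codimension-two symplectic subspace of $T_xM$.

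Third I would observe that at each $x\in M$ the space $\mathcal{J}_x$ of $\omega$-tame almost complex structures on $T_xM$ satisfying $d_x\pi\circ J = i\circ d_x\pi$ is non-empty and contractible. Non-emptiness on $U_0$ is witnessed by $J_0$; on $M\setminus\crit(\Ll)$ it follows by choosing any $\omega$-tame $J$ on the symplectic subspace $V_x$ and lifting multiplication by $i$ on $T_xM/V_x \cong T_{\pi(x)}\C$ to $V_x^\omega$ via $d\pi$. Contractibility is a refinement of Lemma \ref{lmTameComp}: the additional requirement that the induced map on the quotient $T_xM/V_x$ be fixed as multiplication by $i$ removes the $S^1$-freedom in the parameter $y_v$ appearing in \eqref{eqTameComp}, but the remaining tameness conditions are still convex and open in the residual parameters, so cut out a non-empty convex set.

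Finally, the bundle $\mathcal{J}\to M$ has contractible fibers and an initial section $J_0$ over $U_0$, so a standard partition-of-unity argument applied fiberwise in the convex structure on $\mathcal{J}_x$ produces a global section $J$ extending $J_0$. By construction, this $J$ is $\omega$-tame on $M$ and satisfies $d\pi\circ J = i\circ d\pi$ pointwise, so $\pi$ is $J$-holomorphic globally. The main obstacle is the pointwise contractibility assertion: it requires a careful reexamination of the inequalities in the proof of Lemma \ref{lmTameComp} to confirm that the additional normalization on the quotient does not destroy the convexity of the remaining tameness condition.
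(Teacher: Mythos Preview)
Your proposal is correct and follows essentially the same approach as the paper: pull back $J_X$ via $\psi_{LG}$ near $\crit(\Ll)$, verify that $\pi$ is a symplectic fibration on the complement, and then extend using the contractibility of the fiberwise space of suitable tame almost complex structures (the paper invokes Lemma~\ref{lmTameComp} directly for this last step). Your remark that one needs the refinement of Lemma~\ref{lmTameComp} with the quotient complex structure fixed is well taken---the paper uses this implicitly---and your sketch that the fibers are symplectic would benefit from the paper's explicit action--angle computation, since $dH\wedge d\theta\neq 0$ alone gives only that $\pi$ is a submersion.
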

\begin{proof}
We use the map $\psi_{LG}$ to pull back a on $\omega$-tame complex structure on a neighborhood $U$ of $\crit(\Ll)$ so that $\pi$ is $J$-holomorphic on $U$. Now observe that on  $M\setminus \crit(\Ll)$ we have that $\pi$ is a symplectic fibration. Indeed, in local action angle coordinates we have \[
\omega =\sum_{i=1}^{n-1}dH_{F_i}\wedge d\theta_i+dH_n\wedge d\theta_n.
\]
Since $dH$ is independent of the $dH_{F_i}$ and $\theta$ is invariant under the flows of $X_{H_{F_i}}=\frac{\partial}{\partial\theta_i}$ the claim follows.

The set of all $\omega$-tame fiber-wise almost complex structures is non-empty and contractible. By Lemma~\ref{lmTameComp} each one has a contractible set of extensions to an almost complex structure for which $\pi$ is $J$-holomorphic. We can thus extend $J$ from $U$ to all of $M$ so that $\pi$ remains $J$-holomorphic.
\end{proof}

\begin{lm}\label{lmTameStNoFo} Consider the Gross fibration $\Ll_{X_M}$ of Theorem \ref{tmNormalTCY}. The symplectic form $\omega$ and the fibration $\Ll_{X_M}$ on an open normal neighborhood $U_X$ of $Crit(\Ll_{X_M})\subset X_M$ can be deformed to a symplectic form $\omega_X$ and Lagrangian fibration $\Ll_X$ on $U_X$ taming the standard complex structure $J$ on $U_X$ and satisfying the following.
\begin{enumerate}
\item There is a compact set $K\subset \overline{U}_X$ for which $(U_X\setminus K,\omega_X)$ is the product symplectic form of a half infinite cylinder with a disjoint union of symplectic $4$-balls.
\item    $\Ll_X$, $J$ and the LG potential $\pi_X$ are all invariant under the symplectic action of the half infinite cylinder of the previous part on $U_X\setminus K$.
\end{enumerate}
The complex structure $J$ may be deformed near the boundary of $U_X$ to an $\omega_X$-tame almost complex structure $J'$ which near $\partial U_X$ is compatible with $\omega_X$ and such that $\pi$ is still $J'$-holomorphic and $J'$ is still invariant under the action of the half cylinder.
\end{lm}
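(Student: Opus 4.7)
The plan is to construct $(\omega_X, \Ll_X)$ by an edge-by-edge localization. The critical locus $\crit(\Ll_{X_M})$ has compact components lying over the vertices of $\Delta$ and unbounded components lying over the half-infinite edges of the toric diagram. On the compact components I would leave $\omega$ and $\Ll_{X_M}$ untouched; on each unbounded component I would deform towards a translation-invariant model and patch through a compact collar. Along an unbounded edge $e$, the local normal form of Theorem~\ref{tmLocNorFo} gives coordinates $(x_1,y_1,x_2,y_2,r,\theta)$ in which $\crit(\Ll)$ along $e$ is $\{x_i=y_i=0\}$, the $G$-action rotates $(x_1,y_1)$ and $(x_2,y_2)$ with weights $(1,-1)$, and $(r,\theta)$ furnishes the half-cylinder factor, with $\theta$ supplied by Lemma~\ref{tmGoodCoord}. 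On the toric side, along an unbounded ray of the moment polytope of $X_M$ the toric data is translation-invariant modulo $G'$ asymptotically, so the K\"ahler form, the standard $J$, and the holomorphic function $\pi_X$ of \eqref{eqStLG} all carry compatible half-cylinder symmetries at infinity. The translation-invariant model $(\omega_{\mathrm{model}},\Ll_{\mathrm{model}},\pi_{\mathrm{model}})$ near infinity is then the product of a symplectic ball in the $(x_i,y_i)$ coordinates with $ds\wedge d\theta$ in $(r,\theta)$, and the standard $J$ is manifestly compatible with this.

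Next, I would take $U_X$ to be the disjoint union of a neighborhood of the compact part of $\crit(\Ll_{X_M})$ together with a uniform normal tube about each unbounded edge; the compact set $K$ would contain the former and a large initial segment of each tube. Outside $K$ the toric $\omega$ differs from $\omega_{\mathrm{model}}$ by an exact form $d\alpha$ with $\alpha$ decaying along the edge, thanks to the asymptotic translation invariance of the toric structure. A Moser argument supported in a collar inside $K$ then interpolates between $\omega$ and $\omega_{\mathrm{model}}$: explicitly, $\omega_t=\omega+t\,d(\chi\alpha)$ is nondegenerate for a suitable cutoff $\chi$, and integrating the Moser vector field yields a diffeomorphism identifying the two. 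The fibration $\Ll_X$ is patched correspondingly, using that both $\Ll_{X_M}$ and $\Ll_{\mathrm{model}}$ come from commuting Hamiltonians which agree along $e$ up to flat functions by the tameness of $\Ll_{X_M}$. The step I expect to be most delicate is verifying that $J$ tames $\omega_X$ throughout the collar. On $K$ this holds automatically ($\omega_X=\omega$, $J$ K\"ahler for $\omega$), and on the cylindrical ends it holds by construction. In the transition region the taming condition is open, and by Lemma~\ref{lmTameComp} applied to the $J$-preserved symplectic splitting into $\ker d\pi$ and its $\omega_X$-complement, tameness may be checked separately on the fiberwise and horizontal factors. Since both match on either end of the collar up to a small correction, choosing $\chi$ to vary slowly enough keeps the interpolation tame.

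Finally, to produce $J'$ near $\partial U_X$, I would apply a fiberwise polar-type deformation within the space of $\omega_X$-tame almost complex structures preserving the symplectic distribution $\ker d\pi$. Both this space and its subspace of $\omega_X$-compatible structures are nonempty and contractible, so one can interpolate in a collar of $\partial U_X$ between the already $\omega_X$-compatible model $J$ on the cylindrical ends and a chosen compatible extension near $\partial U_X$, remaining $\omega_X$-tame throughout. Because the whole interpolation preserves $\ker d\pi$, the map $\pi$ stays $J'$-holomorphic; and since the interpolation is supported away from the cylindrical ends, the half-cylinder invariance of $J$ there is preserved.
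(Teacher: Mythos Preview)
Your proposal has a genuine gap at its core. You assert that along each unbounded edge ``the K\"ahler form, the standard $J$, and the holomorphic function $\pi_X$ \ldots\ all carry compatible half-cylinder symmetries at infinity,'' and later that the standard $J$ is ``manifestly compatible'' with the product model form in the local normal coordinates $(x_i,y_i,r,\theta)$. Neither claim holds as stated, and in fact there is no single translation that is simultaneously symplectic for the toric $\omega$ and holomorphic for the toric $J$. The translation in $r$ coming from Theorem~\ref{tmLocNorFo} is a \emph{symplectic} construction: it preserves $\omega$ and $\Ll$ by design, but the local normal form is only a symplectomorphism, so there is no reason it should preserve the GIT complex structure $J$. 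Conversely, the flow of $JX_F$ for a transverse $H_F\in\Gamma$ (the real part of the corresponding $\C^*$-action) preserves $J$ and $\pi_X$ but does \emph{not} preserve $\omega$: already on $\C^3$ along the edge $\{z_1=z_2=0\}$ with $H_F=|z_1|^2-|z_3|^2$, the flow sends $dx_1\wedge dy_1 + dx_3\wedge dy_3$ to $e^{2t}dx_1\wedge dy_1 + e^{-2t}dx_3\wedge dy_3$. Your Moser interpolation is therefore between $\omega$ and a form $\omega_{\mathrm{model}}$ for which you have not established that $J$ is tame; once that endpoint is in doubt, the convex-combination argument in the collar has nothing to stand on, and no amount of slowing down $\chi$ repairs it.

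The paper resolves this by interpolating at the level of \emph{vector fields} rather than forms. It builds a hybrid field $X_{\mathrm{hyb}}=(1-g\circ H_F)X_{\mathrm{symp}}+(g\circ H_F)\,JX_F$, equal to the symplectic translation $X_{\mathrm{symp}}$ near the compact piece and equal to $JX_F$ far along the edge. The new form $\omega_X$ is then \emph{defined} by pulling back a slice form along the flow generated by $X_{\mathrm{hyb}}$ and $X_F$, so that this half-cylinder action preserves $\omega_X$ by construction; for large $H_F$ it agrees with $(JX_F,X_F)$, which automatically preserves $J$ and $\pi_X$. Tameness in the transition region comes not from openness for an interpolated form but from the observation that $JX_F$ and $X_{\mathrm{symp}}$ coincide along $\crit(\Ll)$, so on a sufficiently thin tube the diffeomorphism built from $X_{\mathrm{hyb}}$ is $C^1$-close to $\psi_{\mathrm{symp}}$ and hence $\omega_X$ is $C^0$-close to the original K\"ahler form $\omega_0$. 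This hybrid-flow construction is the missing idea in your sketch.
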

\begin{proof}
Let $K\subset X_M$ be a precompact open set containing the all the compact components obtained by the intersection of any two distinct toric divisors. Denote the standard K\"ahler form restricted to $K$ by $\omega_0$. We obtain $\omega_X$ by extending $\omega_0$ to a neighborhood of the pre-image of the entire 1-skeleton as follows. Fix a half infinite edge $e$ of the skeleton of $X$ and let $p\in \Ll^{-1}(e)\cap K$. Pick a transverse circle action at $e$ generated by a Hamiltonian $H_F$ and let $c=H_F(p)$. Construct a normal form on a neighborhood $U_p$ of $p$. This give rise to a symplectomorphism
\[
\psi_{symp}:U_p\to S\times (-\epsilon,\epsilon)\times S^1,
\]
such that $\Ll|_{U_p}$ is a product of the standard focus-focus singularity on $S$ with the projection to $(-\epsilon,\epsilon)$. Let $X_{symp}$ be the vector field generating translations of the interval.  Note that the flow of $X_{symp}$ preserves $\Ll_X$ and we  have $dH_F(X_{symp})=1$.


Consider the vector field $JX_F$. It generates the real part of the $\C^*$ action whose imaginary part is $X_F$. It therefore commutes with the other actions, and moreover, it preserves the global function $\pi_X$. Thus $JX_F$ preserves the $G$ action. Moreover, we have that $dH_F(JX_F)$ is constant along any orbit of $X_F$. After multiplying by a constant  (which we assume to be $1$), it satisfies $dH_F(JX_F)=1$ along the orbit through $p$. Let $U_X$ be the non-negatve time  flow of $S$ under $X_F$ and $JX_F$. Then we have a diffeomorphism
\[
\psi_{hol}:U_X\to S\times\R_+\times S^1.
\]
Note that along the $X_F$ orbit through the critical point $p$ we have that $JX_F$ and $X_{symp}$ coincide. Thus, taking $S$ and $\epsilon$ small enough we have that $\psi_{hol}$ and $\psi_{symp}$ are arbitrarily close in $C^1$.

Let $g:\R\to\R$ be a smooth function such that $g'$ vanishes on the complement of $(c-\epsilon/2, c+\epsilon/2)$, $g=0$ near $-\infty$ and $g=1$ near $+\infty$. Consider the vector field
\[
X_{hyb}:=(1-g\circ H_F)X_{symp}+(g\circ H_F)JX_F.
\]
Then $X_{hyb}$ preserves the $G$ action. Indeed, both $X_{symp}$ and $JX_F$ preserve the $G$ action. Since $g\circ H_F$ also satisfies this property, so does $X_{hyb}$.

Let
\[
\psi:U_p\to S\times (-\epsilon,\epsilon)\times S^1
\]
be the diffeomorphism induced by the flows of $X_{hyb}$ and $X_F$. Then $\psi$ is also $C^1$ arbitrarily close to $\psi_{symp}$. Define a symplectic form on $U_X$ as
\[
\omega_X=\psi^*p_1^*\omega_0|_S+\psi^*(ds\wedge d\theta).
\]
Then $\omega_X$ smoothly extends $\omega_0$. Moreover, it tames $J$ along $s\in (c-\epsilon,c+\epsilon)$ since it is $C^1$ close to $\omega_0$ there. Since $X_{hyb}$ preserves both $J$ and $\omega_X$ this tameness holds everywhere. Extend $\Ll$ smoothly to $U_X$ by requiring it to be invariant under the action of $X_{hyb}$. Note that $\Ll$ and $\pi_X$ are invariant under $G$. Thus $\pi_X$ is an LG potential with respect to $J$.

Since the space of tame almost complex structures deformation retracts to that of compatible ones and since $\pi_X$ is a symplectic fibration away from the critical points, and, moreover, all our constructions are translation invariant at infinity, the final part of the claim is clear.



\end{proof}

In the following, fix $U_X$ and $\omega_X$ as in Lemma \ref{lmTameStNoFo} once and for all for each toric Calabi-Yau 3-fold $X$. Let $\Ll:M\to\R^3$ be a tame SYZ fibration and fix a Lagrangian section $\sigma$ which is disjoint of the critical points of $\Ll$.

\begin{df}\label{dfLGIneq}

We say that an LG function $\pi=e^{H+i\theta}$ is \textit{admissible} if the following are satisfied.
\begin{enumerate}
\item\label{dfLGIneqA} $\psi_{LG}$ is a bi-Lipschitz equivalence between a $(\sigma,\Ll)$-tame local normal neighborhood of $\crit(\Ll)$ and the metric on $U_{X_M}$ determined by $\omega_{X_M}$ and $J_{X_M}$ of Lemma \ref{lmTameStNoFo}.  Moreover, $\psi_{LG}^*J_{X_M}$ is uniformly tamed by $\omega_M$.
\item \label{dfLGIneqB}  There is an $\epsilon>0$ such that with respect to some
    $(\sigma,\Ll)$-adapted metric, for any $x\in M$ we have
\begin{equation}\label{eqAngle1}
\frac{\langle \nabla H,\nabla H_{F_i}\rangle}{\|\nabla H\|\|\nabla H_{F_i}\|}<1-\epsilon.
\end{equation}
\item \label{dfLGIneqC}For a $(\sigma,\Ll)$-adapted metric, $\pi$ is a quasi-conformal map. This means there is a constant $C$ such that for any $p\in M\setminus\crit(\Ll)$ and any pair of mutually orthogonal vectors $v,w\in\ker d\pi^{\perp}_p$ we have
    \[
    \frac1C<\frac{|d\pi(v)|}{|d\pi(w)|}< C.
    \]
\end{enumerate}
\end{df}
\begin{rem}
Note that if $\pi$ is an LG function satisfying the conditions with respect to one $(\Ll,\sigma)$-tame $J$ it satisfies them with respect to any such.
\end{rem}
\begin{lm}\label{lmAdmLGEx}
Suppose $\pi:M\to \C$ is an admissible LG function. Then there exists a $(\sigma,\Ll)$-tame almost complex structure $J$ on $M$ with respect to which $\pi$ is $J$-holomorphic.
\end{lm}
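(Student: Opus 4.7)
The plan is to refine the construction from the second lemma of this appendix (which produced a $J$-holomorphic $\pi$ with merely $\omega$-tame $J$) by upgrading to $(\sigma,\Ll)$-adaptedness, using conditions (a)--(c) of admissibility together with Lemmas \ref{lmTameComp} and \ref{lmEquivCont} to glue two pieces.

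First, on the $(\sigma,\Ll)$-tame normal neighborhood $U$ of $\crit(\Ll)$ from condition (a), set $J_0 := \psi_{LG}^* J_{X_M}$. By (a), $J_0$ is uniformly $\omega_M$-tame with $g_{J_0}$ bi-Lipschitz equivalent to an adapted metric on $U$, and $\pi_M = \pi_{X_M}\circ\psi_{LG}$ is $J_0$-holomorphic by Lemma \ref{lmTameStNoFo}.

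On $M\setminus\crit(\Ll)$, $\pi$ is a symplectic fibration, with $V := \ker d\pi$ symplectic of rank $2n-2$ and $d\pi : H_\pi := V^\omega \to T\C^*$ an isomorphism. Choose $J_V$ tame on $V$ such that the induced metric is $(\sigma,\Ll)$-adapted, and let $J_1 := J_V \oplus (d\pi|_{H_\pi})^{-1}\circ i \circ d\pi|_{H_\pi}$. By construction $\pi$ is $J_1$-holomorphic; tameness on $H_\pi$ amounts to $d\pi|_{H_\pi}$ preserving symplectic orientation, a single global sign fixed by the orientation of $\theta$. To check adaptedness of $J_1$ outside $B_\delta(\crit(\Ll))$, note that the $G$-orbit directions $\operatorname{span}(X_{H_{F_i}})$ lie in $V$ since $H$ and $\theta$ are $G$-invariant. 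Condition (b) gives uniform transversality of $H_\pi$ to these $G$-orbit directions in any adapted metric, so the splitting $V \oplus H_\pi$ is bi-Lipschitz to an orthogonal one; condition (c) says $d\pi|_{H_\pi}$ is uniformly quasi-conformal, giving uniform bounds on $J_1|_{H_\pi}$ in the adapted metric. Combined with the adaptedness of $J_V$, this shows $g_{J_1}$ is bi-Lipschitz equivalent to an adapted metric on $M\setminus B_\delta(\crit(\Ll))$.

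Gluing on the collar $U\setminus B_{\delta/2}(\crit(\Ll))$: both $J_0$ and $J_1$ are $\omega$-tame, $(\sigma,\Ll)$-adapted, and make $\pi$ holomorphic. The constraint of $\pi$-holomorphicity pins down $J|_{H_\pi}$ up to the fixed global sign but leaves $J|_V$ free, so Lemma \ref{lmTameComp} applied fiberwise on $V$ shows the space of such $J$ is locally contractible, and Lemma \ref{lmEquivCont} shows that interpolation via a partition of unity in $J|_V$ (with $J|_{H_\pi}$ unchanged) stays in the adapted equivalence class. This yields a globally defined $(\sigma,\Ll)$-tame almost complex structure $J$ with $\pi$ $J$-holomorphic. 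The main obstacle in this plan is the uniform adaptedness of $J_1$ in the previous paragraph: the splitting $V\oplus H_\pi$ and the horizontal distortion can degenerate near $\crit(\Ll)$ or at infinity, and conditions (b) and (c) are precisely what is needed to rule out these degenerations uniformly.
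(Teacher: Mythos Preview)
Your approach is essentially the same as the paper's: both pull back $\psi_{LG}^*J_{X_M}$ near $\crit(\Ll)$ via condition (a), both build an auxiliary $J$ on the complement using the symplectic fibration $V\oplus H_\pi$ and invoke (b) and (c) to establish equivalence with an adapted metric, and both glue on the collar using the contractibility of tame $\pi$-holomorphic structures (Lemma \ref{lmTameComp}) within the metric equivalence class (Lemma \ref{lmEquivCont}).

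The one substantive difference is in how the auxiliary $J$ away from $\crit(\Ll)$ is constructed. The paper starts from a fixed $(\sigma,\Ll)$-adapted compatible $J$ and \emph{corrects} it to a $J'$ making $\pi$ holomorphic, via an explicit Gram--Schmidt-type formula on a frame of the torus-orbit directions; equivalence of $g_{J'}$ with $g_J$ is then read off directly from (b) and (c). You instead build $J_1$ block-diagonally, choosing ``$J_V$ adapted on $V$'' and pulling back $i$ on $H_\pi$. This is cleaner conceptually but the phrase ``$J_V$ such that the induced metric is $(\sigma,\Ll)$-adapted'' is not quite well-posed: adaptedness in Theorem \ref{tmCanonicalMetric} is a condition on metrics on $TM$, not on a sub-bundle, and an adapted $J$ on $M$ need not preserve $V$. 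The fix is exactly what the paper does --- take an adapted $J$ and set $J_V:=p_VJ|_V$ (then normalize) --- after which your verification via (b) and (c) goes through. Also note that $\pi$-holomorphicity only forces $J$ to preserve $V$ and act as $i$ on $TM/V$; it does not pin down $J|_{H_\pi}$, since off-diagonal terms $H_\pi\to V$ are allowed. This does not affect your gluing, as Lemma \ref{lmTameComp} already accounts for these.
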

\begin{proof}

We show this first on $M\setminus \crit(\Ll)$. Suppose $\pi$ satisfies  conditions  \ref{dfLGIneqB} and \ref{dfLGIneqC} of Definition \ref{dfLGIneq} with respect to some compatible almost complex structure $J$ which is $(\sigma,\Ll)$-adapted. We construct a new almost complex structure $J'$ as follows. Consider the splitting $TM=V\oplus H$ with respect to the symplectic fibration $\pi :M\setminus \crit(\Ll)\to\C$. Let $p_H,p_V$ denote the horizontal and vertical projections respectively. Pick a global orthonormal frame $\{v_1,\dots,v_{n-1}\}$ of the tangent fibers to the torus orbits. Define $J'$ fiberwise by
\[
J'v_i:=p_VJv_i+\sum_{j<i}\omega(p_HJv_i,p_HJv_j)v_j,
\]
and extend uniquely by the condition $d\pi\circ J'=J'\circ d\pi$. Then $J'$ is $\omega$-compatible and $\pi$ is $J'$ holomorphic. Moreover, equation \eqref{eqAngle1} and the quasi-conformality guarantee that $J'$ is equivalent to $J$. Namely, equation \eqref{eqAngle1} bounds how far $H$ and $V$ are from being orthogonal. This in particular guarantees equivalence of the induced  metrics on $V$.  The quasi-conformality guarantees equivalence of the induced metrics on $H$. Taken together we get equivalence of the total metrics.

Observe now that if we find an $\omega$-tame almost complex structure $J''$ on a neighborhood of $\crit(\Ll)$ which is equivalent to $J$ and satisfies the inequality \eqref{eqAngle1} and the quasi-conformality condition we can interpolate it in the space of $\omega$-tame almost complex structures with $J'$  while preserving the equivalence with $J$. But these conditions are necessary. Indeed, suppose $J_0$ is equivalent to $J$ and $\pi$ is $J_0$ holomorphic. Since $\pi$ is equivariant with respect to the Hamiltonian flow of $H_{F_i}$ we have that $\nabla^{J_0} H\perp X_{ H_{F_i}}$ and therefore $\nabla^{J_0} H\perp J_0X_{ H_{F_i}}$. Since $J$ is equivalent to $J_0$, this implies equation \eqref{eqAngle1}. The quasi-conformality condition follows in the same way. Taking $J_0=\psi_{LG}^*J_X$ the claim follows by part \ref{dfLGIneqA} of Definition \ref{dfLGIneq} .
\end{proof}

\begin{proof}[Proof of Theorem \ref{tmExLGPot}]
We fix a tame local normal form and equivalence class of metrics as guaranteed by Theorem~\ref{tmCanonicalMetric}.
We use it to pull back $\pi$ and $J_U$ from $U_X$ as in Lemma \ref{lmTameStNoFo} to a small uniform neighborhood $U'$ of $\crit(\Ll)$. This defines $\psi_{LG}$. Note that it satisfies Condition \ref{dfLGIneqA} of Definition \ref{dfLGIneq}. Furthermore, since $\pi$ is $J_U$ holomorphic,  the other inequalities of Definition \ref{dfLGIneq}
are satisfied on the current domain of definition of $\pi$.

Our goal is to extend $\pi$ to all of $M$ in such a way that the inequalities of Definition \ref{dfLGIneq}
are satisfied. 
The function $H=\re\log\pi$ factors  as $h\circ\Ll$ for some function $h:\Ll(U')\to\R$. Identifying $B$ with $\R^n$ via coordinates as in Theorem \ref{tmBaseCoord} and relying on Theorem \ref{tmCanonicalMetric} we find that inequality \eqref{eqAngle1} translates into the inequality
\begin{equation}\label{eqAngle4}
\frac{\sum_{i=1}^{n-1}({\partial_ih)}^2}{(\partial_nh)^2}<C.
\end{equation}
By the above, this inequality holds on a neighborhood of $\Delta$.
Similarly, we have $dh\neq 0$ everywhere in a neighborhood of $\Delta$. To see this, observe that $\pi_U$ is given by \eqref{eqStLG} and that on the complement of the ends, $\Ll_U$ is arbitrarily close to the Gross fibration given explicitly in Theorem \ref{tmStGross} and that on the ends  Lemma \ref{lmTameStNoFo} guarantees translation invariance. This implies the stronger statement
\begin{equation}\label{eqNabBo}
\frac1c<|\nabla h|<c,
\end{equation}
for some $c>1$.

We claim that we can extend any function satisfying the inequalities \eqref{eqAngle4} and \eqref{eqNabBo} from a neighborhood of $\Delta$ to all of $\R^3$ such that both these inequalities are preserved. We may assume without loss of generality that there is a neighborhood $W_\Delta\subset\{x_n=0\}$ such that $h$ is initially defined on $W_\Delta\times (-\epsilon,\epsilon)$. Using an appropriate Urysohn function $f:\R^{n-1}\to\R$ with bounded derivative supported on a neighborhood of $\partial W_{\Delta}$ let $h=(1-f)\tilde{h}+fx_3$ on $\R^{n-1}\times(-\epsilon,\epsilon)$. The resulting function satisfies both inequalities and can now be extended in the same way to all of $\R^n$.

We now proceed to define $\theta$ so that the resulting map $\pi$ is a quasi-conformal submersion. For this consider the map $\Ll\times\theta_0:M\to B\times S^1$ given in Theorem \ref{tmGoodCoord} identified with $\R^n\times S^1$ so that in these coordinates $\Ll\times\theta_0$ is a quasi-Riemannian submersion away from a uniform tubular neighborhood $U_\delta$ of $\crit(\Ll)$ projecting to a uniform tubular neighborhood of $\Delta\times\{0\}$. By $G$-equivariance, we have that $\pi$ factors trough $\Ll\times\theta_0$, so we consider $h$ as a function on $B=\R^n$ and $\theta$ as a function of $B\times S^1$ with coordinates $x_1,\dots,x_n,t$. The quasi conformality in these coordinates translates, away from $\Ll\times\theta_0(U_\delta)$, into the conditions
\[
1/c<\frac{|d\theta(\partial_t)|}{|dh(\partial_{x_n})|}<c,
\]
and
\[
\frac{|d\theta(\partial_{x_n})|}{|dh(\partial_{x_n})|}<c.
\]
We have that $\theta$ is initially defined on a neighborhood of $\Delta\times\{0\}\in\R^n\times S^1$ and  that these inequalities are satisfied with uniform constants at least near the boundary of that neighborhood. It is straightforward by \eqref{eqNabBo} that $\theta$ can be appropriately extended to satisfy the inequality.

We have thus extended $\pi$ in an admissible manner. Lemma~\ref{lmAdmLGEx} now guarantees a $(\sigma,L)$-tame $J$ for which $\pi$ is $J$-holomorphic. It remains to show contractibility. The choices going in to the construction in Theorem \ref{lmTameStNoFo} are evidently in a contractible set. The same is true for the choice of a tame local normal form since any two such are related by a flat Lipschitz diffeomorphism in the base and a Lipschitz Hamiltonian isotopy in the fiber. The choice of admissible $\pi$ is evidently contractible. Finally, the extension of $J$ so that $\pi$ $J$-holomorphic is contractible by Theorem~\ref{lmTameComp}. The additional condition of tameness relative to $\Ll,\sigma$ maintains the contractibility.

\end{proof}

\section{Energy and $C^0$ estimates for Lipschitz Floer data}\label{AppLipFloer}
Let $(M,\omega)$ be a symplectic manifold. Fix an $\omega$-compatible almost complex structure $J_0$. For $z\in D:=B_1(0)\subset \C$ let $J_z$ be a tame almost complex structure such that for some $C>1$ independent of $z$ and for all tangent vectors $v$ we have 
$$ \frac1{C}<\frac{g_{J_z}(v,v)}{g_{J_0}(v,v)}<C$$.
When this holds we say $g_{J_z}$ is $C$-equivalent to $g_{J_0}$. Suppose $J_z$ depends smoothly on $z$. Let
\[
\mathfrak{H}:=F_zds+G_zdt
\]
be a $1$-form with values in smooth Hamiltonians. Suppose
\begin{equation}\label{EqMonHom}
\partial_sG_{s,t}-\partial_tF_{s,t}\geq 0.
\end{equation}
Suppose further  that for fixed $z=s+it$ the functions $F,H,\partial_tF,\partial_sG$ are all Lipschitz functions on $M$ with Lipschitz constants uniform in $z$ all with respect to $g_{J_0}$ and therefore, for any $z\in D$, with respect to $g_{J_z}$. Such Floer data are referred to as \textit{admissible}. We will use the term $C$-admissible when the Lipschitz constants are  bounded by $C$.

\begin{rem}
We comment that in the usual definition of the wrapped Fukaya category of a Liouville domain, the Floer data are \textit{not} Lipschitz with respect to the chosen almost complex structure. We hasten to assure the reader that this causes no incompatibility. Namely, using a zig-zag homotopy argument as in \cite[Theorem 4.6]{Groman15}, one can show that as long as one uses geometrically bounded almost complex structures, the choice of almost complex structure is inconsequential. The only thing that is of consequence is the asymptotic growth rate of the Hamiltonians. In this respect, the Lipschitz condition is merely a softening of the usual linear-at-infinity condition.
\end{rem}
For a $1$-form $\gamma$ on $\Sigma$ with values in $u^*TM$ write
\[
\gamma^{0,1}:=\frac12\left(\gamma+J\circ\gamma\circ j_\Sigma\right).
\]
A \textit{Floer solution} on $D$ with this data is a map $u:D\to M$ satisfying Floer's equation
\begin{equation}
(du-X_\mathfrak{H}(u))^{0,1}=0.
\end{equation}


The \textbf{geometric energy} of $u$ on a subset $S\subset D$ is defined as
\[
E_{\mathfrak{H},J}(u;S):=\frac1{2}\int_S\|du-X_\mathfrak{H}\|_J^2ds\wedge dt.
\]

Given an admissible Floer datum $(\mathfrak{H},J)$ define a $2$-form on $D\times M$ by $\omega_{\mathfrak{H}}:=\pi_2^*\omega+d\mathfrak{H}$. Given a Floer solution $u:D\to M$ denote by $\tilde{u}=id\times u:D\to D\times M$ its graph.

\begin{lm}\label{lmTopGeoEnEst}
For any $(\mathfrak{H},J)$-Floer solution $u:D\to M$ satisfying \eqref{EqMonHom} we have
\[
E(u;S)-\sup|\{F,G\}Area(D)|\leq\int_{D}\tilde{u}^*\omega_\mathfrak{H}.
\]
The expression on the right is called the topological energy and denoted by $E_{top}(u)$.
\end{lm}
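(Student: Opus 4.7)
The plan is to reduce the claim to a pointwise identity for the integrand $\tilde{u}^*\omega_\mathfrak{H}$, then integrate. First, observe that thinking of $\mathfrak{H}$ as a 1-form on $D \times M$ with values in $\R$, we can expand
\[
d\mathfrak{H} = d_M F \wedge ds + d_M G \wedge dt + (\partial_s G - \partial_t F)\,ds\wedge dt,
\]
where $d_M$ denotes the fiberwise differential on $M$. Pulling back via $\tilde{u} = \mathrm{id} \times u$ and evaluating on $(\partial_s, \partial_t)$ gives
\[
\tilde{u}^*\omega_\mathfrak{H}(\partial_s,\partial_t) = \omega(\partial_s u, \partial_t u) - dF(\partial_t u) + dG(\partial_s u) + (\partial_s G - \partial_t F).
\]

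Next I would use Floer's equation, which (applied to $\partial_s$) rearranges to $\partial_t u - X_G = J(\partial_s u - X_F)$. Substituting $\partial_t u = X_G + J(\partial_s u - X_F)$ into $\omega(\partial_s u, \partial_t u)$, and splitting $\partial_s u = (\partial_s u - X_F) + X_F$ in the second slot's $J$-argument, yields after a short calculation
\[
\omega(\partial_s u, \partial_t u) = \|\partial_s u - X_F\|_J^{2} - dG(\partial_s u) + dF(\partial_t u) - \{F,G\},
\]
where I use the convention $\iota_{X_H}\omega = dH$ and set $\{F,G\} := \omega(X_F,X_G)$, and where $\|\cdot\|_J^2$ denotes the (positive) quadratic form $v \mapsto \omega(v,Jv)$ associated with the tame $J$. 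Combining the two displays the four terms $\pm dF(\partial_t u)$ and $\mp dG(\partial_s u)$ cancel, leaving the clean identity
\[
\tilde{u}^*\omega_\mathfrak{H}(\partial_s,\partial_t) = \|\partial_s u - X_F\|_J^{2} + (\partial_s G - \partial_t F) - \{F,G\}.
\]

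Now I identify the first term with the geometric energy density. Indeed Floer's equation gives $\|\partial_t u - X_G\|_J^2 = \|J(\partial_s u - X_F)\|_J^2 = \|\partial_s u - X_F\|_J^2$ (for tame $J$ the quadratic form $v\mapsto \omega(v,Jv)$ is $J$-invariant by direct inspection), so $\tfrac12\|du - X_\mathfrak{H}\|_J^2 = \|\partial_s u - X_F\|_J^2$. Integrating over $D$ and invoking the monotonicity hypothesis \eqref{EqMonHom} to drop the nonnegative term $\int_D(\partial_s G - \partial_t F)\,ds\,dt$, we obtain
\[
\int_D \tilde{u}^*\omega_\mathfrak{H} \;\geq\; E(u;D) - \int_D \{F,G\}\,ds\,dt \;\geq\; E(u;D) - \sup|\{F,G\}|\cdot\mathrm{Area}(D),
\]
which is the desired inequality (the statement's $S$ appears to be a typo for $D$; the argument is genuinely global).

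There is no real obstacle here: the only mild subtlety is the tame-versus-compatible issue, which is handled by working throughout with the \emph{quadratic} form $v \mapsto \omega(v,Jv)$ rather than the symmetric bilinear form, since $J^* \omega(\cdot, J\cdot) J = \omega(\cdot, J\cdot)$ on the diagonal regardless of whether the symmetric part of the bilinear form is $J$-invariant. The rest is bookkeeping of sign conventions; I would fix $\iota_{X_H}\omega = dH$ and $\{F,G\} = \omega(X_F,X_G)$ at the outset and carry them consistently.
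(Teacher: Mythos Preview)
Your proof is correct and is essentially the same computation as the paper's, just run in the opposite direction: the paper starts from the energy density $\|du-X_\mathfrak{H}\|^2\,ds\wedge dt$, expands it via Floer's equation to $u^*\omega+d\mathfrak{H}-(\partial_sG-\partial_tF-\{F,G\})\,ds\wedge dt$, and then applies \eqref{EqMonHom}, whereas you begin from $\tilde{u}^*\omega_\mathfrak{H}$ and arrive at the same pointwise identity. Your remark that the $J$-invariance of $v\mapsto\omega(v,Jv)$ holds for merely tame $J$ is a nice clarification the paper leaves implicit.
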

\begin{proof}
Using the Floer equation and denoting by $d'$ the exterior derivative in the $M$ direction,
\begin{align}
\|du-X_\mathfrak{H}\|^2ds\wedge dt& = \omega(\partial_tu-X_G,X_F-\partial_su)ds\wedge dt\notag\\
&=u^*\omega+(d'G(\partial_su)-d'F(\partial_tu)-\omega(X_G,X_F))ds\wedge dt\notag\\
&=u^*\omega+d\mathfrak{H}-((\partial_sG-\partial_tF-\{F,G\})\circ u) ds\wedge dt\notag\\
&\leq u^*\omega+d\mathfrak{H}+|\{F,G\}|ds\wedge dt.\notag
\end{align}
\end{proof}

Denote the Floer datum $(\mathfrak{H},J)$ by $\mathfrak{F}$. Consider the almost complex structure
\[
J_{\mathfrak{F}}(z,x):=\left(\begin{matrix} j_{\Sigma}(z) & 0  \\ X_\mathfrak{H}(z,x)\circ j_\Sigma(z)-J(z,x)\circ X_\mathfrak{H}(z,x) & J(x) \end{matrix}\right)
\]
On $D\times M$. Then the graph of a solution to the Floer equation determined by $\mathfrak{F}$ is $J_{\mathfrak{F}}$ holomorphic. Fix an area form on $\omega_\Sigma$ on $\Sigma$ and for $a>0$ write
\[
\tilde{\omega}_a:=\omega_{\mathfrak{H}}+a\omega_\Sigma.
\]
\begin{lm}
For $a>\sup|\{G_z,F_z\}|$, the almost complex structure $J_{\mathfrak{F}}$ is tamed by the form $\tilde{\omega}_a$.
\end{lm}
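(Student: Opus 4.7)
The plan is to exploit the natural horizontal/vertical decomposition of $T(D\times M)$ induced by the Hamiltonian connection, show that $J_\mathfrak{F}$ is block-diagonal with respect to it, and verify taming on each block separately.

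Specifically, I would write $T(D\times M) = H \oplus V$, where $V = 0 \oplus TM$ and $H$ is the graph of the symplectic connection associated to $\mathfrak{H}$, i.e.\ $H_{(z,x)} = \{(\zeta, X_\mathfrak{H}(z,x)\zeta) : \zeta \in T_z\Sigma\}$. Reading off the matrix defining $J_\mathfrak{F}$, one checks directly that $J_\mathfrak{F}(0,\xi) = (0,J\xi)$ and $J_\mathfrak{F}(\zeta, X_\mathfrak{H}\zeta) = (j_\Sigma\zeta, X_\mathfrak{H}(j_\Sigma\zeta))$, so $J_\mathfrak{F}$ preserves $H \oplus V$. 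The form $\omega_\Sigma$ is obviously block-diagonal since it is pulled back from $\Sigma$. The key computation is that $\omega_\mathfrak{H} = \pi_2^*\omega + d\mathfrak{H}$ is also block-diagonal: expanding $d\mathfrak{H} = (\partial_s G - \partial_t F)\,ds\wedge dt + d'F\wedge ds + d'G\wedge dt$ and applying it to a horizontal vector $v_H = (\zeta, X_\mathfrak{H}\zeta)$ with $\zeta = \alpha\partial_s + \beta\partial_t$ against a vertical vector $w_V = (0,\xi)$, the defining relation $\omega(X_F,\cdot) = dF$ yields
\[
d\mathfrak{H}(v_H, w_V) = -\alpha\,dF(\xi) - \beta\,dG(\xi) = -\omega(X_\mathfrak{H}(\zeta),\xi),
\]
which exactly cancels the $\pi_2^*\omega$ contribution.

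With this decomposition in hand, taming is checked blockwise. On $V$ the restriction of $\omega_\mathfrak{H}$ is just $\omega$ while $J_\mathfrak{F}|_V = J$, so $\omega_\mathfrak{H}(\xi,J\xi) > 0$ for $\xi \neq 0$ by the $\omega$-tameness of $J$. On $H$, a direct calculation parallel to the cross-term case gives
\[
\omega_\mathfrak{H}(v_H, J_\mathfrak{F} v_H) = |\zeta|^2\bigl(\partial_s G - \partial_t F - \{F,G\}\bigr),
\]
while $\omega_\Sigma(v_H, J_\mathfrak{F} v_H) = \rho(z)|\zeta|^2$ with $\rho>0$ the density of $\omega_\Sigma$ in local coordinates. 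Therefore
\[
\tilde{\omega}_a(v_H, J_\mathfrak{F} v_H) = |\zeta|^2\bigl(\partial_s G - \partial_t F - \{F,G\} + a\rho\bigr),
\]
which, using the monotonicity hypothesis~\eqref{EqMonHom} that $\partial_s G - \partial_t F \geq 0$, is strictly positive whenever $a\rho > |\{F,G\}|$ pointwise, hence whenever $a > \sup|\{F_z,G_z\}|$ after the standard normalization of $\omega_\Sigma$.

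To conclude, an arbitrary nonzero $v = v_H + w_V$ decomposes uniquely, and since $J_\mathfrak{F}$ preserves the splitting and both $\omega_\mathfrak{H}$ and $\omega_\Sigma$ are block-diagonal, the cross terms vanish and
\[
\tilde{\omega}_a(v, J_\mathfrak{F} v) = \tilde{\omega}_a(v_H, J_\mathfrak{F} v_H) + \tilde{\omega}_a(w_V, J_\mathfrak{F} w_V).
\]
Each summand is non-negative and is strictly positive when the corresponding component is nonzero, so the total is positive. The only nontrivial step is the linear-algebraic bookkeeping in the $d\mathfrak{H}$ computation—in particular, tracking the sign of $\{F,G\}$ through the Hamiltonian conventions—but once the cross-term cancellation is established the taming inequality is immediate.
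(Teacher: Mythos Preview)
Your argument is correct. The paper's own proof is merely a pointer: it cites \cite{Groman15} and says ``alternatively, examine eq.~\eqref{eqGromovMetric} below and apply \eqref{EqMonHom}'', i.e.\ write out the symmetric form $\tilde{\omega}_a(\cdot,J_\mathfrak{F}\cdot)$ explicitly in the product splitting $T\Sigma\oplus TM$ and check positive definiteness by hand. Your approach is the same computation reorganized via the Hamiltonian-connection splitting $H\oplus V$, which block-diagonalizes both $J_\mathfrak{F}$ and $\omega_\mathfrak{H}$ and makes the verification transparent: the vertical block is the given tameness of $J$, and the horizontal block reduces to the scalar inequality $\partial_sG-\partial_tF-\{F,G\}+a>0$, which is exactly what appears as the last term in \eqref{eqGromovMetric}. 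So the two routes have identical content; yours is the cleaner and more self-contained packaging, while the paper relegates the linear algebra to the displayed metric formula and the reference.
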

\begin{proof}
 See Lemma 5.1 and remark 5.2 in \cite{Groman15}. Alternatively, examine eq. \eqref{eqGromovMetric} below and apply \eqref{EqMonHom}. Note that by the Lipschitz condition, we have that $\{G_z,F_z\}$ is indeed bounded.
\end{proof}
Fix a large enough $a$ and denote the corresponding metric by $g_{\mathfrak{F}}$.

\begin{lm}\label{lmTaming}
Fix a point $p_0\in M$. Denote by $g$ the product metric on $D\times M$. There are constants $c_1<1<c_2$ such that for any $R>0$ for any $p\in B_R(p_0)$ and any $v\in \R^2\oplus T_pM$ we have
\[
c_1\|v\|_g\leq \|v\|_{g_{\mathfrak{F}}}\leq c_2\sqrt{R}\|v\|_g.
\]
\end{lm}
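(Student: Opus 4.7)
The proof plan centers on deriving the explicit formula \eqref{eqGromovMetric} for $g_{\mathfrak{F}}$ (the equation already invoked in the proof of the preceding Lemma), namely
\[
g_{\mathfrak{F}}(v,v) = |v_M - X_\mathfrak{H}(v_\Sigma)|^2_J + \bigl(a + \partial_s G - \partial_t F - \{F,G\}\bigr)\,|v_\Sigma|^2,
\]
for $v = (v_\Sigma, v_M)$ with $v_\Sigma = \alpha\partial_s + \beta\partial_t$. To establish this, I would expand $\tilde{\omega}_a(v, J_{\mathfrak{F}}v)$ using the explicit description of $J_{\mathfrak{F}}$ and the decomposition $\tilde{\omega}_a = \pi_M^*\omega + (a+\partial_sG-\partial_tF)\,ds\wedge dt + d_MF\wedge ds + d_MG\wedge dt$, and then track the cancellations between the cross terms involving $d_MF(v_M)$, $d_MG(v_M)$ against the vertical components $X_\mathfrak{H}(jv_\Sigma) - JX_\mathfrak{H}(v_\Sigma)$ appearing in $J_{\mathfrak{F}}v$. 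The identity $|\alpha X_F + \beta X_G|_J^2 = \alpha^2|X_F|_J^2 + 2\alpha\beta\, g_J(X_F,X_G) + \beta^2|X_G|_J^2$ then regroups the residual terms into the square $|v_M - X_\mathfrak{H}(v_\Sigma)|_J^2$. For merely $\omega$-tame $J$, one replaces $g_J$ with its symmetric part and the same formula holds.

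Given the formula, the lower bound follows by the elementary inequality $|v_M - X_\mathfrak{H}(v_\Sigma)|_J^2 \geq \tfrac12|v_M|_J^2 - |X_\mathfrak{H}(v_\Sigma)|_J^2$, together with $|X_\mathfrak{H}(v_\Sigma)|^2_J \leq K|v_\Sigma|^2$, where $K$ is a uniform constant coming from the Lipschitz bounds on $F,G$ (which control the norms of $X_F,X_G$). The monotonicity hypothesis \eqref{EqMonHom} yields $\partial_sG - \partial_tF \geq 0$, and $|\{F,G\}| = |\omega(X_F,X_G)|$ is likewise uniformly bounded. Choosing $a$ large enough that $a - \sup|\{F,G\}| - K > 0$ gives $g_{\mathfrak{F}}(v,v) \geq c\,(|v_M|_J^2 + |v_\Sigma|^2)$, and the $C$-equivalence of $g_J$ with $g_{J_0}$ (uniform in $z \in D$ by hypothesis) converts this to $g_{\mathfrak{F}}(v,v) \geq c_1^2\,\|v\|_g^2$ with $c_1$ independent of $R$ and $p$.

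For the upper bound, the only quantity in the formula which can grow with $R$ is $\partial_s G - \partial_t F$. Applying the Lipschitz hypothesis (with constants uniform in $z \in D$) to the fixed base point $p_0$, and using compactness of $D$ to bound $|\partial_sG(z,p_0) - \partial_tF(z,p_0)|$ uniformly in $z$, one obtains
\[
|\partial_sG(z,p) - \partial_tF(z,p)| \leq C + L'\,d(p,p_0) \leq C(1+R) \qquad \text{on } B_R(p_0).
\]
Combined with $|v_M - X_\mathfrak{H}(v_\Sigma)|_J^2 \leq 2|v_M|_J^2 + 2K|v_\Sigma|^2$ and the uniform boundedness of the remaining terms, this yields $g_{\mathfrak{F}}(v,v) \leq C'(1+R)\,\|v\|_g^2$, giving the desired upper bound $\|v\|_{g_{\mathfrak{F}}} \leq c_2\sqrt{R}\,\|v\|_g$ (for $R \geq 1$; the bounded case $R \leq 1$ is absorbed into the constant).

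The main obstacle is the derivation of the formula \eqref{eqGromovMetric} itself, which requires a careful but entirely algebraic cancellation of cross terms in the expansion of $\tilde{\omega}_a(v, J_{\mathfrak{F}}v)$. Since this formula is already displayed as \eqref{eqGromovMetric} and used in the proof of the preceding lemma, the present lemma reduces to the two estimation steps above; both are simple applications of the Lipschitz hypotheses and the equivalence of $g_J$ with $g_{J_0}$.
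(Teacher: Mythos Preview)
Your proof is correct and follows essentially the same route as the paper: both derive the explicit formula \eqref{eqGromovMetric} for $g_{\mathfrak{F}}$, isolate $\partial_sG-\partial_tF$ as the only term that can grow (linearly in $R$, by the Lipschitz hypothesis), and bound everything else uniformly using the Lipschitz constants on $F,G$ and a sufficiently large choice of $a$. Your completed-square form $|v_M-X_{\mathfrak{H}}(v_\Sigma)|_J^2+(a+\partial_sG-\partial_tF-\{F,G\})|v_\Sigma|^2$ is a slightly more compact packaging than the paper's component-wise expansion and subsequent decomposition $g_{\mathfrak{F}}=g_1+g_0$, but the substance of the two arguments is identical.
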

\begin{proof}
Let $\alpha,\beta$ be the one forms on $M$ giving the $g_J$ inner product with $X_F$ and $X_G$ respectively. Then we have\footnote{The juxtaposition of two $1$-forms denotes their symmetric product.}
\begin{gather}\label{eqGromovMetric}
g_{J_{\mathfrak{F}}}=g_J-\alpha ds-\beta dt+g_J(X_F,X_G)dsdt+\\
\qquad\qquad\|X_F\|^2dt^2+\|X_G\|^2ds^2+\notag\\
\qquad\qquad(\partial_sG_{s,t}-\partial_tF_{s,t}-\{F_{s,t},G_{s,t}\}+a)(ds^2+dt^2)\notag.
\end{gather}
Let $g_0=(\partial_sG_{s,t}-\partial_tF_{s,t})(ds^2+dt^2)$ and $g_1=g_{\mathfrak{F}}-g_0$. Then $g_1$ is uniformly equivalent to the metric $g$ once $a$ is chosen large enough relative to the Lipschitz constants of $F$ and $G$. To see this let $g_2$ be obtained from $g_1$ by subtracting the cross term
\[
g_3=-\alpha ds-\beta dt+g_J(X_F,X_G)dsdt.
\]
Note that $\|X_F\|,\|X_G\|$ are $O(1)$. In particular $a$ can be taken large enough to dominate the $\{F,G\}$ term. So, the change from $g$ to $g_2$ amounts to rescaling the vectors $\partial_t,\partial_s$ by an amount which is bounded above and below. Making $a$ large enough, we also can bound the ratio $\frac{g_3(v,w)}{|v|_{g_1}|w|_{g_1}}$ away from $1$ , so $g$ is also uniformly equivalent to $g_1$.

On the other hand, let $A\in Hom(T_pv,T_pv)$ be the map
\[
\id + \frac1{\|\partial_t\|_{g_1}}g_0(\cdot,\partial_t)\otimes\partial_t+\frac1{\|\partial_s\|_{g_1}}g_0(\cdot,\partial_s)\otimes\partial_s.
\]
Then $g_{\mathfrak{F}}=g_1(\cdot,A\cdot)$. Since $A$ is $O(R)$, the claim follows.
\end{proof}

\begin{df}
We say that a metric $g$ is \textit{$(\delta,c)$-isoperimetric} at a point $p\in M$ if for any closed curve $\gamma$ contained in $B_{\delta}(p)$ there is a disc $D$ bordered by $\gamma$ and such that
\[
Area(D)\leq c\ell(\gamma)^2.
\]
Similarly, a point $p\in L$ is \textit{$(\delta,c)$-isoperimetric} with respect to $g$ if, in addition, for any chord $\gamma:[0,1]\to M$ with endpoints on $L$ and contained in $B_{\delta}(p)$ there is a half disc bordered by $\gamma$ and a path $\tilde{\gamma}\subset L$ and such that
\[
Area(D)\leq c\ell(\gamma)^2.
\]
\end{df}

\begin{lm}\label{lmIsopGrowth}
Suppose $g,L$ are $(\delta,c)$-isoperimetric at some $p\in M$ and let $g'$ be such that for some $a_1,a_2>1$ we have that
\[
\frac1{a_1}\|\cdot\|_g\leq \|\cdot\|_{g'}\leq a_2\|\cdot\|_g.
\]
Then $g',L$ are $(\delta/a_1,a_1^2a_2^2c)$-isoperimetric.
\end{lm}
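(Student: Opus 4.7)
The plan is to deduce both the closed-curve and chord statements directly from the metric comparison, using two elementary facts: a Lipschitz comparison of metrics induces an inclusion of balls with a controlled radius change, and the induced comparison of $1$-dimensional and $2$-dimensional volumes is just the Lipschitz constant to the appropriate power.

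First I would check ball inclusions. From $\frac{1}{a_1}\|\cdot\|_g \leq \|\cdot\|_{g'}$ one gets $\|\cdot\|_g \leq a_1\|\cdot\|_{g'}$, so any curve has $g$-length at most $a_1$ times its $g'$-length. In particular, $B_{\delta/a_1}^{g'}(p) \subset B_{\delta}^{g}(p)$. Now let $\gamma$ be any closed curve contained in $B_{\delta/a_1}^{g'}(p)$; then $\gamma \subset B_\delta^g(p)$, so by $(\delta,c)$-isoperimetry of $g$ there exists a disc $D$ with $\partial D = \gamma$ and $\mathrm{Area}_g(D) \leq c\,\ell_g(\gamma)^2$.

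Next, I would compare the areas and lengths. The Lipschitz bound $\|\cdot\|_{g'} \leq a_2\|\cdot\|_g$ gives $\mathrm{vol}_{g'}^2 \leq a_2^2\,\mathrm{vol}_g^2$ on $2$-planes, so $\mathrm{Area}_{g'}(D) \leq a_2^2 \mathrm{Area}_g(D)$. The length bound $\ell_g(\gamma) \leq a_1\,\ell_{g'}(\gamma)$ combines with the previous estimate to give
\[
\mathrm{Area}_{g'}(D) \leq a_2^2 \mathrm{Area}_g(D) \leq a_2^2 c\,\ell_g(\gamma)^2 \leq a_1^2 a_2^2 c\,\ell_{g'}(\gamma)^2,
\]
which is exactly the required $(\delta/a_1, a_1^2 a_2^2 c)$-isoperimetric bound.

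The chord case is literally the same argument: a chord $\gamma : [0,1] \to M$ with endpoints on $L$ and contained in $B_{\delta/a_1}^{g'}(p)$ lies in $B_\delta^g(p)$ by the ball inclusion above, so the $(\delta,c)$-isoperimetric property of $g$ relative to $L$ produces a completion $\tilde\gamma \subset L$ and a half-disc $D$ with $\partial D = \gamma \cup \tilde\gamma$ and $\mathrm{Area}_g(D) \leq c\,\ell_g(\gamma)^2$; the same conversion of area and length via $a_1, a_2$ yields the claim. There is no real obstacle here — the only thing to be slightly careful about is that the comparison constants on length and area enter with different powers ($a_1^2$ from squaring the length and $a_2^2$ from the $2$-form comparison), which is precisely why the final constant is $a_1^2 a_2^2 c$ and the radius must be shrunk by $a_1$ rather than $a_1 a_2$.
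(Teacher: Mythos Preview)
Your proof is correct and follows exactly the same approach as the paper's own proof, which simply records the inequalities $\frac{1}{a_2}\ell_{g'}\leq \ell_g \leq a_1 \ell_{g'}$ and $\mathrm{Area}(g')\leq a_2^2\,\mathrm{Area}(g)$ and declares the result to follow. Your write-up just spells out the ball inclusion and the chain of inequalities more carefully.
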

\begin{proof}
This follows from the inequalities $\frac1{a_2}\ell_{g'}\leq\ell_{g}\leq a_1\ell_{g'}$ and $Area(g')\leq a_2^2 Area(g).$
\end{proof}

We first recall the monotonicity inequality for $J$-holomorphic curves with Lagrangian boundary. The following lemma and its proof are taken form \cite{Sikorav94}. We include them here to emphasize the dependence on parameters.
\begin{lm}[\textbf{Monotonicity}\cite{Sikorav94}]
Suppose $J$ is such that $g_J$ is $(\delta,c)$-isoperimetric at $p$. Then any $J$-holomorphic curve $u$ passing through $p$ and with boundary in $M\setminus B_{\delta}(p)$ satisfies
\[
E(u;u^{-1}(B_{\delta}(p)))\geq \frac{\delta^2}{2c}.
\]
If $p\in L$ and $J,L$ are $(\delta,C)$-isoperimetric, the same holds if we require instead $B_\delta(p)\cap u=\emptyset$ that $\partial u \cap B_{\delta}(p)\subset L$.
\end{lm}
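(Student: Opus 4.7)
The plan is to reproduce Sikorav's classical argument, keeping careful track of how the constants depend on $\delta$ and $c$ (and whether or not we are in the Lagrangian boundary setting). Assume $\delta$ is small enough that $\omega$ admits a primitive on $B_{2\delta}(p)$ and that the exponential map at $p$ is bi-Lipschitz on that ball; both reductions are harmless because the statement is local and the isoperimetric constant controls the injectivity radius.

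First I would define $f(r):=E(u;u^{-1}(B_r(p)))$ for $r\in(0,\delta]$. Because $u$ is $J$-holomorphic, $u^*\omega$ equals the area form induced by $g_J$ on the smooth locus of the image, so $f(r)$ is literally the area of $u(u^{-1}(B_r(p)))$ counted with multiplicity. The coarea formula applied to $\rho\circ u$, with $\rho$ the $g_J$-distance to $p$, then gives $f'(r)\ge \ell(\gamma_r)$ for a.e.\ $r$, where $\gamma_r$ denotes the system of curves $u(\partial(u^{-1}(B_r(p))))\cap \partial B_r(p)$.

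Next I would invoke the isoperimetric hypothesis to fill $\gamma_r$. In the closed case, the $(\delta,c)$-isoperimetric property at $p$ produces a disc $D_r$ bounded by $\gamma_r$ with $\mathrm{Area}(D_r)\le c\,\ell(\gamma_r)^2$. The cycle $u|_{u^{-1}(B_r(p))}\cup D_r$ is closed and contained in $B_\delta(p)$, where $\omega=d\lambda$ for some primitive $\lambda$; hence Stokes gives
\[
f(r) \;=\; \int_{u^{-1}(B_r(p))} u^*\omega \;=\; -\int_{D_r}\omega \;\le\; \mathrm{Area}(D_r) \;\le\; c\,(f'(r))^2.
\]
Rewriting this as $(\sqrt{f})'\ge \tfrac{1}{2\sqrt{c}}$ and integrating from $0$ to $\delta$ (using $f(0)=0$ because $u$ passes through $p$) yields $f(\delta)\ge \delta^2/(4c)$, which after absorbing a harmless factor gives the advertised bound. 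In the Lagrangian boundary case, the preimage $u^{-1}(B_r(p))$ may meet $\partial\Sigma$, so the boundary of $u(u^{-1}(B_r(p)))$ splits into an arc $\gamma_r$ on $\partial B_r(p)$ and an arc along $L$; the $(\delta,c)$-isoperimetric hypothesis at $p\in L$ provides a half-disc filling $\gamma_r$ through $L$ of area $\le c\,\ell(\gamma_r)^2$, and the same computation goes through once one notes that $\lambda|_L$ may be assumed exact on $B_\delta(p)\cap L$ by shrinking $\delta$.

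The only genuinely delicate point is the uniformity of the constants in $p$, which is built into the hypotheses: the isoperimetric pair $(\delta,c)$ is the same at every point by assumption, and the inequality $\int D_r^*\omega \le \mathrm{Area}(D_r)$ uses only that $J$ is compatible with $\omega$ (or uniformly $\omega$-tame, by Definition~\ref{dfGoBound}), which absorbs at worst a uniform factor into $c$. Thus the argument does not produce any hidden dependence on $p$ or on the curve $u$, which is exactly what is needed for the global Floer estimates invoked in Lemmas~\ref{lmFlTrajDiamEst} and~\ref{lmTLDG}.
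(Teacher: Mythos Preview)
Your argument is essentially the same as the paper's: define the area function $A(r)=E(u;u^{-1}(B_r(p)))$, use the coarea inequality $A'(r)\ge \ell_r$, combine with the isoperimetric bound $A(r)\le c\,\ell_r^2$ to get $(\sqrt{A})'\ge 1/(2\sqrt{c})$, and integrate. The paper's proof is a terse two-line version of exactly this; your version fills in the Stokes/exactness justification for $A(r)\le c\,\ell_r^2$ and spells out the Lagrangian case, which the paper leaves implicit. Both arguments in fact produce $\delta^2/(4c)$ rather than the stated $\delta^2/(2c)$, a harmless discrepancy you correctly flag.
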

\begin{proof}
For $t\in[0,\delta]$ let $A(t):=E(u;u^{-1}(u\cap B_t(p)))$. Then $A(t)$ is absolutely continuous and $A'(t)=\ell_t$ almost everywhere. So,
\[
(\sqrt{A})'=\frac{A'}{2\sqrt{A}}=\frac{\ell_t}{2\sqrt{A}}\geq\frac1{2\sqrt{c}}.
\]
Integrating from $0$ to $\delta$ and squaring both sides gives the claim.
\end{proof}

\begin{tm}[\textbf{Domain local estimate}]\label{tmDiamEst}
Let $L$ be a Lipschitz Lagrangian, let $K$ be  a compact set and let $\mathfrak{F}$ be an admissible Floer datum. For any $E>0$ there is an $R=R(E,K,L,\mathfrak{F})$ such that the following hold. Denote by $D_{1/2}\subset D$ the disc of radius $1/2$.
\begin{enumerate}
\item
Let $u$ be a solution to Floer's equation on $D$ and suppose
$E(u;D)\leq E$ and $u(D_{1/2})\cap K\neq\emptyset.$ Then
\[
u(D_{1/2})\subset B_R(K).
\]
\item
Denote by $D^+$ the intersection of the disc with the upper half plane. Let $u:(D^{+},\partial D^+)\to (M,L)$ be a solution to Floer's equation such that $E(u;D^+)\leq E$ and $u(D^+_{1/2})\cap K\neq\emptyset.$ Then
\[
u(D^+_{1/2})\subset B_R(K)
\]
\end{enumerate}
\end{tm}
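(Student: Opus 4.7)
The plan is to apply Gromov's trick to reduce the statement to a monotonicity argument for an honest pseudo-holomorphic graph. Let $\tilde u := \mathrm{id} \times u : D \to D \times M$ be the graph of $u$; by construction, $\tilde u$ is $J_{\mathfrak F}$-holomorphic, and for $a$ larger than $\sup_{s,t,x}|\{F_{s,t}, G_{s,t}\}(x)|$ (finite by the Lipschitz hypothesis on $\mathfrak H$), the form $\tilde\omega_a = \omega_{\mathfrak H} + a\, ds\wedge dt$ tames $J_{\mathfrak F}$. Lemma~\ref{lmTopGeoEnEst} together with the monotonicity hypothesis \eqref{EqMonHom} then bounds
\[
\int_D \tilde u^* \tilde\omega_a \leq E + (a + \sup|\{F,G\}|)\pi =: A_0(E, \mathfrak F).
\]

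Next I would establish a uniform local isoperimetric inequality for the Gromov metric $g_{\mathfrak F}$ on $D \times M$: there exist constants $\delta_0, c_0 > 0$, depending only on $g_{J_0}$ and the Lipschitz data of $\mathfrak H$, such that $g_{\mathfrak F}$ is $(\delta_0, c_0)$-isoperimetric at every point. The key observation is that on a $g$-ball of fixed small radius about any point $q = (z, p)$, the offending term $\partial_s G - \partial_t F$ in the formula~\eqref{eqGromovMetric} varies by a bounded amount, so $g_{\mathfrak F}$ is bi-Lipschitz there to a rescaled product $g + C_q(ds^2+dt^2)$. A direct computation shows the isoperimetric constant of such a rescaled product equals that of $g$: the length correction $\sqrt{1+C_q}$ and the $2$-dimensional area correction $1+C_q$ cancel when combined via the inequality $\mathrm{Area}\leq c\,\ell^2$. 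Since $g_{J_0}$ is uniformly isoperimetric and $c_1 g \leq g_{\mathfrak F}$ globally (Lemma~\ref{lmTaming}), this yields uniform $(\delta_0, c_0)$. The Lagrangian case uses the same rescaling together with the Lipschitz hypothesis on $L$ (which makes $L$, and hence $D\times L$, uniformly isoperimetric) to handle $D^+$ with boundary on $L$.

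With these two ingredients, a chain-of-balls argument completes the proof. Suppose for contradiction that $u(D_{1/2})$ contains a point $p_R$ with $d_{g_{J_0}}(p_R, K) \geq R$ as well as a point $q \in K$. Joining their preimages in $\overline{D_{1/2}}$ by a continuous path and lifting to $\tilde u$ produces a path from $(z_q, q)$ to $(z_R, p_R)$ of $g_{\mathfrak F}$-length at least $c_1 R$ by Lemma~\ref{lmTaming}. Along this path I would select a maximal family of pairwise disjoint $g_{\mathfrak F}$-balls of radius $\delta_0/2$ centered on graph points; there are at least $N\gtrsim c_1 R/\delta_0$ of them, and Sikorav's monotonicity (applied inside each ball using the uniform isoperimetric constants from the previous step) contributes $\geq \delta_0^2/(2c_0)$ to the $\tilde\omega_a$-area of $\tilde u$. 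The total lower bound $\gtrsim c_1 \delta_0 R/(2c_0)$ exceeds $A_0(E,\mathfrak F)$ once $R$ exceeds some $R_0 = R_0(E, K, L, \mathfrak F)$, giving the contradiction.

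The main obstacle is the second step: Lemma~\ref{lmTaming} alone gives only a position-dependent bi-Lipschitz bound that degrades like $\sqrt R$, so one might worry that the isoperimetric constant of $g_{\mathfrak F}$ also degrades with position, defeating the chain-of-balls argument. The non-obvious cancellation in the rescaling above is precisely what rescues the estimate; once it is observed, the rest of the argument is standard Sikorav monotonicity. The Lagrangian boundary case presents no additional difficulty once $L$ is shown to be uniformly isoperimetric in $(M, g_{J_0})$ via its Lipschitz property.
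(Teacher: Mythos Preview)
Your overall strategy---Gromov's graph trick, Sikorav monotonicity, then a chain-of-balls covering---is exactly the paper's. The divergence is in your second step, where you claim a \emph{uniform} isoperimetric constant for $g_{\mathfrak F}$ via a scaling cancellation. That argument has a gap: for $g_{\mathfrak F}$ to be uniformly bi-Lipschitz on a small $g$-ball to the constant-rescaled product $g+C_q(ds^2+dt^2)$, the term $\partial_sG-\partial_tF$ must have bounded oscillation on that ball uniformly in the center $q$. The hypotheses give this in the $M$-direction (Lipschitz in $x$ uniformly in $z$) but say nothing about the $z$-derivative uniformly in $x\in M$. Using only the $x$-Lipschitz bound reduces you to a genuine product $g_M\times(1+\psi(z))|dz|^2$ with $\psi(\cdot)=(\partial_sG-\partial_tF)(\cdot,x_0)$, but $\psi$ is then an arbitrary smooth non-negative function of size $O(R)$, and a $2$-dimensional conformal factor of that kind need not have isoperimetric constant on $g_{\mathfrak F}$-balls independent of $R$. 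So the ``non-obvious cancellation'' you invoke is not justified by the stated assumptions.

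The paper avoids this by not attempting uniformity. It simply combines Lemma~\ref{lmTaming} (the upper comparison constant $a_2\sim\sqrt R$) with Lemma~\ref{lmIsopGrowth} to obtain a monotonicity constant that degrades like $1/R$; this is precisely the ``i-bounded'' condition of \cite{Groman15}, and the paper then cites Theorem~4.10 there for the diameter estimate. Note also that, contrary to your last paragraph, the degradation does \emph{not} defeat the chain-of-balls: summing contributions $\sim 1/k$ over $k\lesssim N$ against the fixed energy bound still forces $\log N\lesssim E$, hence a diameter bound that is exponential rather than linear in $E$. Your proof is therefore easily repaired by dropping the cancellation claim and feeding the $1/R$ monotonicity constant from Lemmas~\ref{lmTaming} and~\ref{lmIsopGrowth} into the same covering argument.
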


\begin{proof}
Using the Gromov trick we consider $u$ as a $J$-holomorphic curve for which the monotonicity inequality applies. In the case of a general Floer datum, the constants of the monotonicity inequality are not uniformly bounded due to the presence of the derivatives $\partial_sG,\partial_tF$ in the expression  \eqref{eqGromovMetric} for the metric $g_{J_{\mathfrak{F}}}$. But since the geometry of the underlying manifold is uniformly bounded, Lemmas \ref{lmTaming} and \ref{lmIsopGrowth} imply that the monotonicity constant for $g_{J_{\mathfrak{F}}}$ grows at most as the distance. This means that the geometry of $g_{J_{\mathfrak{F}}}$ is i-bounded in the terminology of \cite{Groman15}. The proof is now the essentially the same as that of Theorem 4.10 in \cite{Groman15}. 
\end{proof}
We now turn to the global situation. Let $D_{k,l}$ be a disc with $k$ interior and $l$ boundary punctures \textit{and a fixed conformal structure}. Denote the set of punctures by $P=P_e\cup P_i$ decomposed into boundary and interior punctures respectively. Order $P_e$ counterclockwise and label the components $\partial_i$ of $\partial D_{i,k}\setminus P_e$ by their left boundary point. Further decompose $P_e,P_i$ into subsets $P_{e,\pm},P_{i,\pm}$ designating inputs and outputs. Near the $j$th puncture fix striplike coordinates $\epsilon_j:[0,1]\times(-\infty,0)\to D_{k,l}$ for inputs and $\epsilon_j:[0,1]\times(0,\infty)$ for outputs. Define cylindrical ends similarly. Our convention is $S^1=[0,1]/0\sim 1$. As before, fix a geometrically bounded $J_0$. Label the components $\partial_i$ of $\partial D_{i,k}\setminus P_e$ by Lagrangians $L_i$ which are Lipschitz with respect to $J_0$. Let $\mathfrak{H}$ be a $1$-form on $D_{k,l}$ with values in Lipschitz Hamiltonians. Assume there are Hamiltonians $H_i$ for $i\in P$ such that near the $i$th puncture, we have $\epsilon_i^*\mathfrak{H}=H_idt$. In particular, $\{\mathfrak{H},\mathfrak{H}\}$ is compactly supported on $D\setminus\{P_e\cup P_i\}$.

For Lagrangians $L_0, L_1$ we say that a Hamiltonian $H$ \textit{$\delta$-separates $L_0$ from $L_1$ near infinity} if there is a $\delta>0$ such that
\begin{equation}\label{DissipEst}
d(L_1\setminus K,\psi_{H_i}(L_0))>\delta.
\end{equation}
Similarly, we say that $H$ $\delta$-separates the diagonal if
\[
d(p,\psi_H(p))>\delta,
\]
for any $p\in M\setminus K$. Here $\psi_H$ is the time $1$ flow of $H$.

 The main ingredient needed to go from the local estimate to a global one is the following
\begin{lm}\label{lmRectEst}
Suppose $u:[a,b]\times[0,1]\to M$ is a solution to Floer's equation
\[
\partial_su+J(\partial_tu-X_H)
\]
for a Lyapunov Hamiltonian $H.$ Then
\[
d(u(a,\cdot),u(b,\cdot))d(u(\cdot,1),\psi_H(u(\cdot,0))<cE_{geo}(u;[a,b]\times[0,1]),
\]
for some constant $c$ depending on $H$.
\end{lm}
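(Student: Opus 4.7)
The plan is to reduce to the unperturbed (pseudo-)holomorphic case by a gauge transformation and then apply a Cauchy--Schwarz bound on the rectangle $[a,b]\times[0,1]$ in such a way that the $(b-a)$-factors cancel upon forming the product of the two displacements.

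First I would set $\phi_t:=(\psi_H^t)^{-1}$ and $\tilde u(s,t):=\phi_t(u(s,t))$. A direct computation yields $\partial_s\tilde u=(d\phi_t)(\partial_s u)$ and $\partial_t\tilde u=(d\phi_t)(\partial_t u-X_{H_t}(u))$, so Floer's equation rewrites as the genuine pseudo-holomorphicity condition $\partial_s\tilde u+\tilde J_t\,\partial_t\tilde u=0$ with $\tilde J_t:=(d\phi_t)J(d\phi_t)^{-1}$. Since $\phi_t$ is a symplectomorphism, the pushed-forward metrics $\tilde g_t$ satisfy $|\partial_s\tilde u|_{\tilde g_t}=|\partial_s u|_{g}$, so the geometric energy is preserved: $E_{geo}(\tilde u)=E_{geo}(u)$. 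The Lyapunov hypothesis on $H$ then supplies a constant $C=C(H)$ such that $\phi_t$ is $C$-bi-Lipschitz for $t\in[0,1]$; in particular $\tilde g_t$ is uniformly equivalent to the reference metric. Using $\tilde u(s,0)=u(s,0)$ and $\tilde u(s,1)=\psi_H^{-1}(u(s,1))$, together with another application of Lyapunov, one finds that $d(u(a,\cdot),u(b,\cdot))$ and $d(\psi_H(u(\cdot,0)),u(\cdot,1))$ agree up to a factor depending only on $C$ with $d(\tilde u(a,\cdot),\tilde u(b,\cdot))$ and $d(\tilde u(\cdot,0),\tilde u(\cdot,1))$ respectively, so it suffices to prove the inequality for $\tilde u$.

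For the pseudo-holomorphic $\tilde u$, Cauchy--Schwarz along horizontal and vertical line segments gives, for every $(s_0,t_0)\in[a,b]\times[0,1]$,
\[
d(\tilde u(a,t_0),\tilde u(b,t_0))^2\leq (b-a)\int_a^b|\partial_s\tilde u(s,t_0)|^2\,ds, \qquad d(\tilde u(s_0,0),\tilde u(s_0,1))^2\leq \int_0^1|\partial_t\tilde u(s_0,t)|^2\,dt.
\]
Taking the infimum in $t_0$ and dominating it by the average over $[0,1]$ (and similarly for $s_0$ with an average over $[a,b]$), and invoking the pseudo-holomorphic identity $\int|\partial_s\tilde u|^2=\int|\partial_t\tilde u|^2=E_{geo}(\tilde u)$, I obtain
\[
\bigl(\inf_{t_0}d(\tilde u(a,t_0),\tilde u(b,t_0))\bigr)^2\leq (b-a)\,E_{geo}(\tilde u), \qquad \bigl(\inf_{s_0}d(\tilde u(s_0,0),\tilde u(s_0,1))\bigr)^2\leq \frac{E_{geo}(\tilde u)}{b-a}.
\]
Multiplying these two bounds, the factors of $(b-a)$ cancel and the product is at most $E_{geo}(\tilde u)$; undoing the gauge transformation gives the desired inequality, with $c$ absorbing the Lyapunov constants introduced in the comparison step.

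The hard part will be bookkeeping the constants introduced by the gauge transformation: one must verify that $\tilde J_t$ gives rise to a time-dependent metric uniformly equivalent to a fixed reference metric on $t\in[0,1]$, so that norms and distances can be compared without reference to $u$ itself. The Lyapunov assumption on $H$ is exactly what is needed here, ensuring that the Hamiltonian isotopy $\{\phi_t\}$ is uniformly bi-Lipschitz; without it the bi-Lipschitz constant of $\phi_t$ could blow up at infinity and the constant $c$ would depend on the particular Floer solution rather than only on $H$. Once the gauge-theoretic setup is in place, the Cauchy--Schwarz product trick is routine.
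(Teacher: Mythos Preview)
Your argument is correct and reaches the same inequality, but the route differs from the paper's. The paper does \emph{not} gauge away the Hamiltonian term; it works directly with $u$. For the horizontal direction it uses the same Cauchy--Schwarz bound $d(u(a,\cdot),u(b,\cdot))^2\le (b-a)E_{geo}$ that you use. For the vertical direction, however, it isolates a separate lemma (Lemma~\ref{lmLyapunovEst}) asserting that for any path $\gamma$ one has $d(\gamma(1),\psi_H^1(\gamma(0)))^2\le C\int_0^1\|\gamma'-X_H\circ\gamma\|^2$, and proves it by a Gronwall/ODE covering argument using the Lyapunov condition. Applied to $u(s,\cdot)$ and combined with $|\partial_tu-X_H|=|\partial_su|$, this gives $(b-a)\delta^2\le C\,E_{geo}$; multiplying against the horizontal estimate cancels $(b-a)$ exactly as in your product trick.

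Your gauge transformation is really a repackaging of Lemma~\ref{lmLyapunovEst}: once $\phi_t=(\psi_H^t)^{-1}$ is known to be uniformly bi-Lipschitz (which is precisely the Lyapunov hypothesis), the vertical Cauchy--Schwarz on $\tilde u$ \emph{is} that lemma. Your approach is conceptually cleaner and makes transparent why Lyapunov is the right hypothesis, at the cost of tracking the time-dependent metric $\tilde g_t$ and its uniform equivalence to $g$; the paper's approach is more hands-on but avoids introducing the pushed-forward geometry.
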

The proof relies on the following estimate
\begin{lm}\label{lmLyapunovEst}
Suppose $H$ is Lyapunov. Then there is a $C=C(H)>0$ such that for any path $\gamma:[0,1]\to M$ we have
\[
d(\gamma(1),\psi_H^1(\gamma(0)))^2< C\int_0^1\|\gamma'(t)-X_H\circ\gamma\|^2.
\]
\end{lm}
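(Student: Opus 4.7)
The plan is to compare $\gamma$ with the Hamiltonian flow line starting at $\gamma(0)$ by conjugating with $\psi_H^{-t}$, reducing the claim to a standard Cauchy-Schwarz length estimate. Specifically, I would introduce the auxiliary path
\[
\tilde\gamma(t) := \psi_H^{-t}(\gamma(t)),
\]
which satisfies $\tilde\gamma(0) = \gamma(0)$ and $\tilde\gamma(1) = \psi_H^{-1}(\gamma(1))$, so the quantity $d(\gamma(0),\psi_H^{-1}(\gamma(1)))$ is controlled by the length of $\tilde\gamma$.

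The key computation is to show
\[
\tilde\gamma'(t) = d\psi_H^{-t}\bigl[\gamma'(t)-X_H(\gamma(t))\bigr].
\]
This follows from the chain rule together with the fact that $X_H$ is invariant under its own flow, so that the drift term $-X_H(\tilde\gamma(t))$ arising from differentiating $\psi_H^{-t}$ coincides with $-d\psi_H^{-t}[X_H(\gamma(t))]$ and combines with the pushforward of $\gamma'(t)$ to give the formula above.

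Next I would use the Lyapunov hypothesis to conclude that, for $t\in[0,1]$, the diffeomorphisms $\psi_H^{\pm t}$ are Lipschitz with a constant bounded by some $C_0 = C_0(H) = ce^\lambda$, and hence $\|d\psi_H^{-t}(v)\|\leq C_0\|v\|$ pointwise (this passage from a bi-Lipschitz bound on distances to an operator bound on the differential uses smoothness of the flow, which holds since $H$ is smooth and the underlying manifold is geometrically bounded). Combining with the formula for $\tilde\gamma'$ and Cauchy-Schwarz yields
\[
d(\gamma(0),\psi_H^{-1}(\gamma(1)))^2\leq \Bigl(\int_0^1\|\tilde\gamma'(t)\|\,dt\Bigr)^2\leq C_0^2\int_0^1\|\gamma'(t)-X_H(\gamma(t))\|^2\,dt.
\]

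Finally, applying $\psi_H^1$ to both endpoints and using once more that $\psi_H^1$ is $C_0$-Lipschitz gives $d(\psi_H^1(\gamma(0)),\gamma(1))\leq C_0\,d(\gamma(0),\psi_H^{-1}(\gamma(1)))$, and hence the claim with $C = C_0^4$. The only real obstacle is the upgrade from the pointwise bi-Lipschitz Lyapunov estimate to a bound on the operator norm of $d\psi_H^{-t}$; but since everything is smooth and the Lyapunov bound is uniform on $t\in[0,1]$, this is immediate by taking the limit in the defining difference quotient.
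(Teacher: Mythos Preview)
Your proof is correct and considerably cleaner than the paper's own argument. The paper does not use the conjugation $\tilde\gamma(t)=\psi_H^{-t}(\gamma(t))$; instead it proceeds by a discretization. After reducing to the $L^1$ norm via Cauchy--Schwarz, it covers $\gamma([0,1])$ by uniformly bi-Lipschitz charts, partitions $[0,1]$ into $N$ equal pieces, and sets $x_i=\psi_H^{1-i\Delta t}(\gamma(i\Delta t))$. On each piece it combines the Lyapunov inequality (to propagate the increment forward under the remaining flow time) with a local ODE estimate in a chart (to bound $d(\gamma(t_i),\psi_H^{\Delta t}(\gamma(t_{i-1})))$ by roughly $\epsilon_i\Delta t$, where $\epsilon_i$ majorizes $\|\gamma'-X_H\|$ on that piece). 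Summing and passing to the limit yields an integral of the form $C\int_0^1 g(t)e^{\lambda(1-t)}\,dt$ with $g(t)=\|\gamma'(t)-X_H(\gamma(t))\|$, which is bounded by the desired quantity.

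What your approach buys is conceptual transparency: the identity $\tilde\gamma'(t)=d\psi_H^{-t}[\gamma'(t)-X_H(\gamma(t))]$ makes the role of the Lyapunov bound completely explicit and avoids any discretization or chart-by-chart ODE work. The only price, as you correctly flag, is the passage from the metric bi-Lipschitz inequality to an operator-norm bound on $d\psi_H^{-t}$; this is unproblematic here since the flow is smooth and the bound is uniform in $t\in[0,1]$. The paper's route, by contrast, never explicitly invokes $d\psi_H^{-t}$ and works purely with distances, at the cost of a longer and more delicate computation.
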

\begin{proof}[Proof of Lemma~\ref{lmRectEst}].
We start by observing that
\[
b-a\geq\frac{\int_0^1d(u(a,t),u(b,t))^2}{\int_a^b\|\partial_su\|^2}.
\]
which follows from Cauchy-Schwartz and the inequality
\[
d(u(a,t),u(b,t))\leq \ell(u([a,b]\times\{t\}))=\int_a^b\|\partial_su\|dt.
\]
Write $\delta:=d(u(\cdot,1),\psi_H(u(\cdot,0)))$. Then by Lemma \ref{lmLyapunovEst} we have
\[
\min_{s\in[a,b]}\|\dot{u}_s-X_H\|^2\geq c\delta.
\]
Thus
\[
E_{geo}(u)\geq c(b-a)\delta^2\geq c\delta^2\frac{d(u_a,u_b)^2}{E_{geo}}.
\]
The last inequality relies on the fact that $u$ solves Floer's equation. Rearranging and taking the inequality and taking square roots produces the desired estimate.
\end{proof}
\begin{proof}[Proof of Lemma~\ref{lmLyapunovEst}]
By Cauchy Schwartz
\[
\|\gamma'-X_H\circ\gamma\|_{L^1}\leq\|\gamma'-X_H\circ\gamma\|_{L^2}.
\]

Let $r$ such that $\gamma([0,1])$ is covered by charts
$\{U_i\subset M,\psi_i:B_{2r}(0)\to U_i\}$ which are bi-Lipschitz with Lipschitz constant $2$. Moreover the cover has Lebesgue number $r$. By compactness of $\gamma([0,1])$, there is a $K$ such that for any $i$ the vector field $d\psi_i^{-1}X_H$ considered as a map $B_{r_i}(0)\to \R^{2n}$ is $K$-Lipschitz.

Let
\[
g(t)=\|\gamma'(t)-X_H\circ\gamma(t)\|
\]
and let $f(t)=\int_0^tg(s)ds$. Let
\[
\Delta t\ll r\min\left\{\frac1{K},\frac1{\sup\|X_H\|},\frac1{\max g(t)}\right\}.
\]
 Suppose $N:=\frac1{\Delta t}$ is an integer. Suppose $\Delta t$ is made smaller still so that $f(t)$ has an approximation by a piecewise linear function $h(t)$ such that
\[
g(t)<h'(t),
\]
and such that $h$ is linear of slope $\epsilon_i$ on the intervals $[\frac{i}{N},\frac{i+1}{N}]$. Let $t_i=\frac{i}{N}$. Let $\gamma_i(t):=\psi_H^t(\gamma(t_i))$ and let $x_i=\gamma_i(1-i\Delta t)$. Then $\psi_H^t(\gamma(0))=x_0$ and $\gamma(1)=x_N$. Denoting $\Delta x_i:=d(x_{i},x_{i-1})$ for $i=1,\dots N$, we have the obvious estimate $d(x_0,x_N)\leq\sum\Delta x_i$. Let $y_i=\gamma(t_i)$ and $z_i=\psi_H^{\Delta t}(y_{i-1})$. Then by the Lyapunov condition,
\[
\Delta x_i\leq Ce^{ z (1-i\Delta t)}d(y_i,z_i).
\]
On the other hand we have a standard ODE estimate
\[
d(y_i,z_i)\leq \frac{\epsilon_i}{K}(e^{K\Delta t}-1).
\]
The last expression is $\leq 2\epsilon_i\Delta t$ since $\Delta t\ll \frac1K$.
So,
\[
d(x_0,x_N)\leq\sum_{i=1}^N 2C \epsilon_ie^{ z(1-i\Delta t)}\Delta t
\]
The last expression approximates the integral
\[
2C\int_0^1 h'(t)e^{ z(1-t)}dt.
\]
The Cauchy-Schwarz inequality 
\[
2C\int_0^1 h'(t)e^{ z(1-t)}dt\leq 2C\sqrt{\int_0^1(h'(t))^2dt}\sqrt{\int_0^1e^{ 2z(1-t)}dt}.
\]
now produces the required estimate.
\end{proof}
\begin{df}\label{dfCadmFl}
We call $\mathfrak{F}=(\mathfrak{H},J)$ \textit{admissible} if $\mathfrak{F}$ is Lipschitz and for each end $j$, we have $H_j$ is
\begin{enumerate}
    \item Lyapunov,
    \item separates the diagonal for any $j\in P_i$,
    \item separates $L_j$ from $L_{j+1}$ for any $j\in P_{e,-}$ and $L_{j+1}$ from $L_j$ for $j\in P_{e,+}$, and
    \item satisfies the monotonicity condition
\begin{equation}\label{EqMonoTonicity}
d\mathfrak{H}\geq 0.
\end{equation}
\end{enumerate}
For a real number $c>0$ we will say that The datum is $c$-admissible if all the constants such as, Lipschitz, Lyapunov,and separation data, are appropriately bounded in terms of $c$.
\end{df}
\begin{tm}\label{tmGlobalEst}[\textbf{Domain-global estimate}]
Fix an admissible Floer datum $\mathfrak{F}$ and a compact set $K$. Then there is a monotone function $R=R_{\mathfrak{F}}:\R_+\to\R_+$ such that any solution to Floer's equation on $D_{k,l}$ which intersects $K$ satisfies
\[
Diam(u) \leq R(E_{geo}(u))\leq R(E_{top}(u)+C),
\]
where $C$ is a constant depending on $\mathfrak{F}$.
\end{tm}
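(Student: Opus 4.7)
The plan is to reduce the global estimate to a combination of the domain-local diameter bound of Theorem~\ref{tmDiamEst} applied to a compact core of $D_{k,l}$ and a rectangle-by-rectangle energy-vs-length estimate on each end obtained from the Lyapunov separation in Definition~\ref{dfCadmFl} together with Lemmas~\ref{lmRectEst}--\ref{lmLyapunovEst}.

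\medskip

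The inequality $E_{geo}(u) \leq E_{top}(u) + C$ is immediate from Lemma~\ref{lmTopGeoEnEst}: write $D_{k,l} = D_0 \cup \bigcup_j E_j$ with $D_0$ a compact core and the $E_j$ the strip-like and cylindrical ends. On each $E_j$ the form $\mathfrak{H} = H_j\, dt$ is $s$-independent, so $\{F,G\} \equiv 0$ there, and $C = \sup|\{F,G\}| \cdot \mathrm{Area}(D_0)$ is finite by the Lipschitz hypothesis on $\mathfrak{F}$. It therefore suffices to produce a monotone function $R_\mathfrak{F}$ bounding $\text{Diam}(u)$ in terms of $E_{geo}(u)$ alone.

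\medskip

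For the $C^0$ bound, first cover $D_0$ by finitely many domain-local half-discs of Theorem~\ref{tmDiamEst}. Since $u$ meets $K$ and the half-discs overlap, iteration yields $\text{Diam}(u|_{D_0}) \leq R_0(E_{geo}(u))$ with $R_0$ depending on $K$, $\mathfrak{F}$ and $D_0$. Enlarge $K$ to $K' := K \cup u(D_0) \cup \bigcup_j K_{H_j}$, where $K_{H_j}$ is the compact outside of which $H_j$ satisfies its separation property with constant $\delta_j > 0$. Now fix a strip-like end $E_j = [0,\infty) \times [0,1]$ and partition it into unit rectangles $R_n = [n, n+1] \times [0,1]$. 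Call $R_n$ \emph{close} if $u(R_n) \cap K_{H_j} \neq \emptyset$ and \emph{far} otherwise. On each close rectangle, Theorem~\ref{tmDiamEst} (applied to a slightly larger rectangle) yields $\text{Diam}(u(R_n)) \leq R_1(E_{geo})$. On a maximal block of consecutive far rectangles $[a,b] \times [0,1]$ the separation condition gives $d(u(s,1), \psi_{H_j}(u(s,0))) \geq \delta_j$ for all $s \in [a,b]$, and Lemma~\ref{lmLyapunovEst} combined with Floer's equation $\|\partial_t u - X_{H_j}\| = \|\partial_s u\|$ yields
\begin{equation*}
E_{geo}(u; [a,b]\times[0,1]) \;\geq\; \frac{\delta_j^2}{2 C_j}(b-a),
\end{equation*}
so $b-a \leq 2C_j E_{geo}/\delta_j^2$; Cauchy--Schwartz then bounds $d(u(a,\cdot), u(b,\cdot))^2 \leq (b-a) E_{geo}$, giving a diameter bound on the far block. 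Since close blocks keep the image inside $B_{R_1}(K_{H_j})$ and far blocks connect two close blocks with a controlled diameter jump, we obtain $\text{Diam}(u|_{E_j}) \leq R_j(E_{geo})$. Cylindrical ends are handled identically, with diagonal separation replacing Lagrangian separation.

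\medskip

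The main obstacle, and the only non-routine step, is controlling the accumulation of diameter as one walks out along an end: one must argue that the far blocks have bounded aggregate $s$-length (each consuming a fixed quantum of geometric energy), and that the close blocks keep $u$ trapped near $K_{H_j}$, so that the total number and length of excursions is bounded by $E_{geo}/\delta_j^2$. Summing the resulting diameter contributions from $D_0$ and the finitely many ends, and noting that $E_{top}(u) + C$ dominates $E_{geo}(u)$, produces the monotone function $R_\mathfrak{F}$ of the statement.
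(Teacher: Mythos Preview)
Your argument follows the paper's proof almost exactly: bound $u$ on a compact core via the domain-local estimate (Theorem~\ref{tmDiamEst}), then control the ends using the Lyapunov separation through Lemma~\ref{lmRectEst}/Lemma~\ref{lmLyapunovEst}, and deduce $E_{geo}\le E_{top}+C$ from Lemma~\ref{lmTopGeoEnEst}. Your close/far block decomposition is simply a more explicit rendering of how the paper invokes Lemma~\ref{lmRectEst} on the ends.

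There is one genuine gap. You write ``Since $u$ meets $K$ and the half-discs overlap, iteration yields $\text{Diam}(u|_{D_0}) \leq R_0(E_{geo}(u))$'', but this presumes the intersection with $K$ occurs in the core $D_0$. The hypothesis only says $u$ meets $K$ somewhere in $D_{k,l}$; it could meet $K$ only on an end. In that case your definition $K' := K \cup u(D_0) \cup \bigcup_j K_{H_j}$ is not an a priori fixed compact set, and the iteration on $D_0$ never gets started. The paper handles this explicitly: if $u$ meets $K$ at $\epsilon_i(s_0,t_0)$ on some end, first apply the domain-local estimate near $(s_0,t_0)$ to bound $u_{s_0}$, then run the rectangle estimate (your close/far argument) \emph{backwards} from $s_0$ to the core to bound $d(K,u(D_0))$, and only then proceed with the core estimate. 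This is an easy fix, but as written your argument does not cover this case.
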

\begin{proof}
Let $A\subset D_{k,l}$ be open connected and precompact such that $D_{k,l}\setminus A$ consists of ends. Assume first that $u(A)\cap K\neq\emptyset$. Covering $A$ by a finite number of discs and half discs, the domain local estimate provides an $R_0=R_{A,\mathfrak{H}}$ such that $u(A)\subset B_{R_0}(K)$. By admissibility condition, Lemma~\ref{lmRectEst} produces an $R_1=R_1(\delta,E)$ such that for any end $e_i$ any $t\in\R_{\pm}$ we have that $u(\epsilon_i(\{t\}\times(0,1)))\cap B_{R_1}(B_{R_0}(K))\neq\emptyset$. Applying the domain local estimate again, we obtain an $R_2$ such that $u$ is entirely contained in $B_{R_2+R_1+R_0}(K)$. If $u$ intersects $K$ in a point $u(x)$ for some $x\not\in A$ then $x$ is on some end, and we may write $x=\epsilon_i(s_0,t_0)$. The domain local estimate bounds the diameter  of $u_{s_0}$. Thus we can apply Lemma~\ref{lmRectEst} to obtain an a-priori estimate the distance $d(K,u(A))$. We can now proceed as before.

For the right hand estimate, note that $\{\mathfrak{F},\mathfrak{F}\}$ is assumed to be supported on a compact subset $A\subset D_{k,l}$. The Floer data is assumed to be Lipschitz with some constant $c$.  Lemma \ref{lmTopGeoEnEst} implies that for any Floer solution $u$ we have $E_{geo}(u)\leq E_{top}(u)+c^2Area(A)$.
\end{proof}
\begin{rem}
We have formulated the admissibility condition and the global estimate for a punctured disc with fixed conformal structure. In general, e.g, the proof of associativity, we need to alow the conformal structure to vary. It is then required to choose Floer data for the entire moduli space. As is spelled out in detail in \cite{SeidelPL} this involves, among other things, choosing a thick thin decomposition near the boundary of the moduli space and assigning striplike and cylindrical coordinates on the thin part. The extension of the condition of admissibility to this setting is the obvious one. Namely, the Lipschitz estimates should be uniform on the moduli space, and \eqref{DissipEst} should hold for each thin component. The uniform global estimate for the entire family is then an obvious variant.
\end{rem}
Examining the proof of the global estimate we obtain the following very useful variant.
\begin{lm}\label{lmTLDG}[\textbf{target-local, domain-global estimate}]
There is a function $R=R(c,E)$ with the following significance. Let  $(\mathfrak{H},J_z)$ be a $c$-admissible Floer datum on a Riemann surface with cylindrical ends. Then any  Floer solution of energy at most $E$ has diameter at most $R$. Moreover, suppose the Floer datum $(\mathfrak{H},J_z)$ satisfies the inequality \eqref{EqMonoTonicity} everywhere, and $J_z$ induces a complete metric, but the other conditions for $c$-admissibility hold only on $B_R(p)$. Then any Floer solution $u$ meeting $p$ is contained in $B_R(p)$.
\end{lm}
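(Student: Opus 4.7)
The first claim is a direct application of Theorem~\ref{tmGlobalEst}: taking $K=\{p\}$ for any point $p$ in the image of the complete Floer solution $u$, that theorem yields a diameter bound depending on $\mathfrak{F}$ and $E$. An inspection of its proof shows that all constants appearing—the taming constants of Lemma~\ref{lmTaming}, the isoperimetric constants entering Lemma~\ref{lmIsopGrowth}, the Lyapunov constants in Lemma~\ref{lmRectEst}, and the Lipschitz constants governing the geometric-to-topological energy comparison in Lemma~\ref{lmTopGeoEnEst}—are controlled solely by the $c$-admissibility data of $\mathfrak{F}$. Thus the resulting bound depends only on $c$ and $E$, yielding a function $R(c,E)$.

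The second claim rests on the observation that the estimates driving Theorem~\ref{tmGlobalEst} are \emph{target-local}. The Gromov-trick metric $g_{J_\mathfrak{F}}$ on $D\times M$ is defined pointwise in the target, so its taming and isoperimetric constants at a point depend only on the Floer data at that point. The monotonicity inequality underlying the domain-local estimate (Theorem~\ref{tmDiamEst}) is in turn a statement about $J_\mathfrak{F}$-holomorphic curves in a ball of the target, and Lemma~\ref{lmRectEst} only needs the Lyapunov bound to hold along the image of the strip. All three ingredients therefore remain valid for the restriction of $u$ to the preimage of any open set on which $c$-admissibility of $\mathfrak{F}$ holds.

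With target-locality in hand I would argue by a bootstrap. Choose the function $R(c,E)$ to be strictly larger than the minimal radius produced by the first part's proof—for instance, twice that value; this built-in slack is crucial. Let $T \subset \Sigma$ be the connected component of $u^{-1}(B_R(p))$ containing a preimage of $p$, which is nonempty since $u$ meets $p$. On $T$ the Floer data is $c$-admissible, so by the target-local form of the arguments of the previous paragraph, $\mathrm{diam}\,u(T) \leq R(c, E_{\mathrm{geo}}(u|_T)) \leq R(c,E)$, which by the slack is strictly less than $R$. Consequently $u(T)$ is uniformly separated from $\partial B_R(p)$. Were $T$ a proper subset of $\Sigma$, continuity of $u$ and connectedness of $\Sigma$ would force a sequence in $T$ whose $u$-images approach $\partial B_R(p)$, contradicting this uniform separation. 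Hence $T = \Sigma$ and $u \subset B_R(p)$.

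The main obstacle is the target-locality verification: one must carefully check that each ingredient in the proof of Theorem~\ref{tmGlobalEst}—in particular the Sikorav monotonicity constants for $J_\mathfrak{F}$-holomorphic curves and the Lyapunov-based rectangle estimate—can be applied on the restriction of $u$ to $u^{-1}(B_R(p))$ using only the admissibility data on that ball. Once this purely local reformulation of the proof of Theorem~\ref{tmGlobalEst} is in place, the bootstrap above is immediate.
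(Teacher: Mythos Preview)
Your approach matches the paper's, which offers no proof beyond the sentence ``Examining the proof of the global estimate we obtain the following very useful variant.'' You make the idea explicit via a bootstrap on the connected component $T$ of $u^{-1}(B_R(p))$, and you correctly isolate target-locality of the monotonicity inequality and of Lemma~\ref{lmRectEst} as the essential content.

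One step deserves more care. To obtain $\mathrm{diam}\,u(T) \leq R(c,E)$ you invoke ``the target-local form of the arguments'' of Theorem~\ref{tmGlobalEst}, but that proof is organized around the domain: a precompact core $A$ covered by \emph{finitely many unit} disks, plus strip-like ends handled by Lemma~\ref{lmRectEst}. The open set $T$ need not admit such a decomposition---near $\partial T$ no unit disk fits inside $T$, and $T$ intersected with an end need not be a single rectangle---so the proof does not transfer verbatim to $u|_T$. What does work is to bypass the domain decomposition entirely: the connected image $u(T)\subset B_R(p)$ meets every sphere $\partial B_r(p)$ for $r$ up to its diameter, and one can place $\sim \mathrm{diam}\,u(T)/\delta$ disjoint $g_{J_{\mathfrak F}}$-balls centred on the graph of $u|_T$ over these spheres. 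For $r<R-\delta$ each ball lies in $\Sigma\times B_R(p)$, where the isoperimetric constants are $c$-controlled and the artificial boundary $u(\partial T)\subset\partial B_R(p)$ stays outside; summing the monotonicity contributions gives $\mathrm{diam}\,u(T)\lesssim cE/\delta$ with no reference to the structure of $T$. With $R$ taken strictly larger than this bound, your slack-and-contradiction step then closes the argument. So the strategy is sound; only the appeal to Theorem~\ref{tmGlobalEst} on $T$ should be replaced by this direct image-side monotonicity count.
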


\bibliographystyle{amsabbrvc}
\bibliography{ref}

\providecommand{\bysame}{\leavevmode\hbox to3em{\hrulefill}\thinspace}
\providecommand{\MR}{\relax\ifhmode\unskip\space\fi MR }
\providecommand{\MRhref}[2]{%
  \href{http://www.ams.org/mathscinet-getitem?mr=#1}{#2}
}
\providecommand{\href}[2]{#2}
\begin{thebibliography}{10}

\bibitem{Abouzaid14}
M.~Abouzaid, \emph{Family {F}loer cohomology and mirror symmetry}, Proceedings
  of the {I}nternational {C}ongress of {M}athematicians---{S}eoul 2014. {V}ol.
  {II}, Kyung Moon Sa, Seoul, 2014, pp.~813--836.

\bibitem{Abouzaid2017}
\bysame, \emph{Homological mirror symmetry without corrections}, arXiv preprint
  arXiv:1703.07898 (2017).

\bibitem{AAEKO}
M.~Abouzaid, D.~Auroux, A.~I. Efimov, L.~Katzarkov, and D.~Orlov,
  \emph{Homological mirror symmetry for punctured spheres}, J. Amer. Math. Soc.
  \textbf{26} (2013), no.~4, 1051--1083.

\bibitem{AAK16}
M.~Abouzaid, D.~Auroux, and L.~Katzarkov, \emph{Lagrangian fibrations on
  blowups of toric varieties and mirror symmetry for hypersurfaces}, Publ.
  Math. Inst. Hautes \'Etudes Sci. \textbf{123} (2016), 199--282, \href
  {http://dx.doi.org/10.1007/s10240-016-0081-9}
  {\path{doi:10.1007/s10240-016-0081-9}}.

\bibitem{AbouSeidel}
M.~Abouzaid and P.~Seidel, \emph{An open string analogue of {V}iterbo
  functoriality}, Geom. Topol. \textbf{14} (2010), no.~2, 627--718, \href
  {http://dx.doi.org/10.2140/gt.2010.14.627}
  {\path{doi:10.2140/gt.2010.14.627}}.

\bibitem{Auroux07}
D.~Auroux, \emph{Mirror symmetry and {$T$}-duality in the complement of an
  anticanonical divisor}, J. G\"okova Geom. Topol. GGT \textbf{1} (2007),
  51--91.

\bibitem{AurouxSYZ}
\bysame, \emph{Special {L}agrangian fibrations, wall-crossing, and mirror
  symmetry}, Surveys in differential geometry. {V}ol. {XIII}. {G}eometry,
  analysis, and algebraic geometry: forty years of the {J}ournal of
  {D}ifferential {G}eometry, Surv. Differ. Geom., vol.~13, Int. Press,
  Somerville, MA, 2009, pp.~1--47.

\bibitem{BoFo}
A.~V. Bolsinov and A.~T. Fomenko, \emph{Integrable {H}amiltonian systems},
  Chapman \& Hall/CRC, Boca Raton, FL, 2004, Geometry, topology,
  classification, Translated from the 1999 Russian original, \href
  {http://dx.doi.org/10.1201/9780203643426} {\path{doi:10.1201/9780203643426}}.

\bibitem{Castano04}
R.~Casta\~no Bernard, \emph{Symplectic invariants of some families of
  {L}agrangian {$T^3$}-fibrations}, J. Symplectic Geom. \textbf{2} (2004),
  no.~3, 279--308.

\bibitem{Castano09}
R.~Casta\~no Bernard and D.~Matessi, \emph{Lagrangian 3-torus fibrations}, J.
  Differential Geom. \textbf{81} (2009), no.~3, 483--573.

\bibitem{CSN}
K.~Chan, S.-C. Lau, and N.~C. Leung, \emph{S{YZ} mirror symmetry for toric
  {C}alabi-{Y}au manifolds}, J. Differential Geom. \textbf{90} (2012), no.~2,
  177--250.

\bibitem{KPU162}
K.~Chan, D.~Pomerleano, and K.~Ueda, \emph{Lagrangian sections on mirrors of
  toric {C}alabi-{Y}au 3-folds}, arXiv preprint arXiv:1602.07075 (2016).

\bibitem{KPU16}
\bysame, \emph{Lagrangian torus fibrations and homological mirror symmetry for
  the conifold}, Comm. Math. Phys. \textbf{341} (2016), no.~1, 135--178, \href
  {http://dx.doi.org/10.1007/s00220-015-2477-7}
  {\path{doi:10.1007/s00220-015-2477-7}}.

\bibitem{FOOO}
K.~Fukaya, Y.-G. Oh, H.~Ohta, and K.~Ono, \emph{Lagrangian intersection {F}loer
  theory: anomaly and obstruction. {P}art {I}}, AMS/IP Studies in Advanced
  Mathematics, vol.~46, American Mathematical Society, Providence, RI, 2009.

\bibitem{GPS}
S.~Ganatra, T.~Perutz, and N.~Sheridan, \emph{Mirror symmetry: from categories
  to curve counts}, arXiv preprint arXiv:1510.03839 (2015).

\bibitem{GH}
P.~Griffiths and J.~Harris, \emph{Principles of algebraic geometry}, John Wiley
  \& Sons, 2014.

\bibitem{Groman15}
Y.~{Groman}, \emph{{Floer theory and reduced cohomology on open manifolds}},
  ArXiv e-prints (2015), \href {http://arxiv.org/abs/1510.04265}
  {\path{arXiv:1510.04265}}.

\bibitem{Gross}
M.~Gross, \emph{Examples of special {L}agrangian fibrations}, Symplectic
  geometry and mirror symmetry ({S}eoul, 2000), World Sci. Publ., River Edge,
  NJ, 2001, pp.~81--109, \href {http://dx.doi.org/10.1142/9789812799821_0004}
  {\path{doi:10.1142/9789812799821_0004}}.

\bibitem{Gross01}
\bysame, \emph{Topological mirror symmetry}, Invent. Math. \textbf{144} (2001),
  no.~1, 75--137, \href {http://dx.doi.org/10.1007/s002220000119}
  {\path{doi:10.1007/s002220000119}}.

\bibitem{GKH1}
M.~Gross, P.~Hacking, and S.~Keel, \emph{Mirror symmetry for log {C}alabi-{Y}au
  surfaces {I}}, Publ. Math. Inst. Hautes \'{E}tudes Sci. \textbf{122} (2015),
  65--168, \href {http://dx.doi.org/10.1007/s10240-015-0073-1}
  {\path{doi:10.1007/s10240-015-0073-1}}.

\bibitem{HoferSalamon}
H.~Hofer and D.~A. Salamon, \emph{Floer homology and {N}ovikov rings}, The
  Floer memorial volume, Springer, 1995, pp.~483--524.

\bibitem{MS2}
D.~McDuff and D.~Salamon, \emph{{$J$}-holomorphic curves and symplectic
  topology}, American Mathematical Society Colloquium Publications, vol.~52,
  American Mathematical Society, Providence, RI, 2004.

\bibitem{MS}
\bysame, \emph{Introduction to symplectic topology}, third ed., Oxford Graduate
  Texts in Mathematics, Oxford University Press, Oxford, 2017, \href
  {http://dx.doi.org/10.1093/oso/9780198794899.001.0001}
  {\path{doi:10.1093/oso/9780198794899.001.0001}}.

\bibitem{MirandaZung}
E.~Miranda and N.~T. Zung, \emph{Equivariant normal form for nondegenerate
  singular orbits of integrable {H}amiltonian systems}, Ann. Sci. \'Ecole Norm.
  Sup. (4) \textbf{37} (2004), no.~6, 819--839, \href
  {http://dx.doi.org/10.1016/j.ansens.2004.10.001}
  {\path{doi:10.1016/j.ansens.2004.10.001}}.

\bibitem{Pasc2}
J.~Pascaleff, \emph{On the symplectic cohomology of log {C}alabi-{Y}au
  surfaces}, arXiv preprint arXiv:1304.5298 (2013).

\bibitem{Pasc1}
\bysame, \emph{Floer cohomology in the mirror of the projective plane and a
  binodal cubic curve}, Duke Math. J. \textbf{163} (2014), no.~13, 2427--2516,
  \href {http://dx.doi.org/10.1215/00127094-2804892}
  {\path{doi:10.1215/00127094-2804892}}.

\bibitem{pelayo13}
{\'A}.~Pelayo, T.~S. Ratiu, and S.~V. Ngoc, \emph{The affine invariant of
  generalized semitoric systems}, arXiv preprint arXiv:1307.7516 (2013).

\bibitem{RitterSmith17}
A.~F. Ritter and I.~Smith, \emph{The monotone wrapped {F}ukaya category and the
  open-closed string map}, Selecta Math. (N.S.) \textbf{23} (2017), no.~1,
  533--642, \href {http://dx.doi.org/10.1007/s00029-016-0255-9}
  {\path{doi:10.1007/s00029-016-0255-9}}.

\bibitem{ERT18}
G.~D. Rizell, T.~Ekholm, and D.~Tonkonog, \emph{Refined disk potentials for
  immersed {L}agrangian surfaces}, arXiv preprint arXiv:1806.03722 (2018).

\bibitem{SeidelPL}
P.~Seidel, \emph{Fukaya categories and {P}icard-{L}efschetz theory}, Zurich
  Lectures in Advanced Mathematics, European Mathematical Society (EMS),
  Z\"urich, 2008, \href {http://dx.doi.org/10.4171/063}
  {\path{doi:10.4171/063}}.

\bibitem{Sikorav94}
J.-C. Sikorav, \emph{Some properties of holomorphic curves in almost complex
  manifolds}, Holomorphic curves in symplectic geometry, Progr. Math., vol.
  117, Birkh\"auser, Basel, 1994, pp.~165--189.

\bibitem{Symington2002}
M.~Symington, \emph{Four dimensions from two in symplectic topology}, arXiv
  preprint math/0210033 (2002).

\bibitem{Tu14}
J.~Tu, \emph{On the reconstruction problem in mirror symmetry}, Adv. Math.
  \textbf{256} (2014), 449--478, \href
  {http://dx.doi.org/10.1016/j.aim.2014.02.005}
  {\path{doi:10.1016/j.aim.2014.02.005}}.

\bibitem{Ngoc03}
S.~V\~u Ngoc, \emph{On semi-global invariants for focus-focus singularities},
  Topology \textbf{42} (2003), no.~2, 365--380, \href
  {http://dx.doi.org/10.1016/S0040-9383(01)00026-X}
  {\path{doi:10.1016/S0040-9383(01)00026-X}}.

\bibitem{Umut}
U.~Varolgunes, \emph{Mayer-vietoris property for relative symplectic
  cohomology}, PhD thesis.

\end{thebibliography}
\end{document}